\documentclass[oneside,british]{amsart}

\bibliographystyle{plain}

\usepackage{amsmath}
\usepackage{amsthm}
\usepackage{xfrac}
\usepackage{txfonts}
\usepackage{indentfirst}
\usepackage{mathrsfs}
\usepackage{mathtools}
\usepackage{amssymb}

\newtheorem{theorem}{Theorem}[section]
\newtheorem{lemma}{Lemma}[section]

\newtheorem{definition}{Definition}[section]

\newtheorem{proposition}{Proposition}[section]
\newtheorem{corollary}{Corollary}[section]

\newtheorem{remark}{Remark}[section]
\newtheorem{claim}{Claim}

\newtheorem*{thm*}{Theorem}
\newtheorem*{mainthm1}{Main Theorem 1}
\newtheorem*{mainthm2}{Main Theorem 2}
\newtheorem*{thmA}{Theorem A}
\newtheorem*{thmB}{Theorem B}
\newtheorem*{thmA'}{Theorem A'}
\newtheorem*{thmB''}{Theorem B''}
\newtheorem*{thmC}{Theorem C}

\numberwithin{equation}{section}

\newcommand{\R}{\mathbb{R}}
\newcommand{\Z}{\mathbb{Z}}
\newcommand{\eps}{\varepsilon}

\title{The high-dimensional Weierstrass functions}
\author{Haojie Ren and Weixiao Shen}
\address{Department of Mathematics, Technion, Haifa, Israel}
\address{Shanghai Center for Mathematical Sciences, Jiangwan Campus, Fudan University, No 2005 Songhu Road, Shanghai, China 200438}
\email{20110180012@fudan.edu.cn, wxshen@fudan.edu.cn}
\begin{document}

\maketitle

\begin{abstract}
For a real analytic periodic function $\phi:\mathbb{R}\to\mathbb{R}^d$, an integer $b \ge 2$ and $\lambda\in(1/b,1)$, we prove that the box dimension and the Hausdorff dimension of the graph of the Weierstrass function $W(x)=\sum_{n=0}^{\infty}{{\lambda}^n\phi(b^nx)}$ are both equal to
$$\min\left\{\log_{\lambda^{-1}}b,\,1+\left(\,d-q\,\right)\left(1+\log_b\lambda\right)\right\},$$
where $q = q(\phi, b, \lambda)$ denotes the maximum dimension of all linear spaces $V < \mathbb{R}^d$ such that the projection $\pi_V W$ is Lipschitz.
\end{abstract}

\section{Introduction}

   We study the graphs of the high-dimensional Weierstrass  functions 
\begin{equation}\label{eqn:Wtype}
   W(x)=W^{\phi}_{\lambda,\,b}(x)=\sum\limits_{n=0}^{\infty}{{\lambda}^n\phi(b^nx)},\quad x \in \mathbb{R},
\end{equation}
    where $b > 1$, $1/b< \lambda < 1$ and $\phi(x):\mathbb{R} \to \mathbb{R}^d$ is  a non-constant $\mathbb{Z}$-periodic Lipschitz function. 
 The famous example by Weierstrass, $\phi(x) = \cos(2\pi x)$, is a continuous nowhere differentiable function (see \cite{Hardy}). Bara{\'n}ski \cite{Baranski} later extended it naturally to the complex case with $\phi(x) = e^{2\pi i x}$. Weierstrass and related functions' graphs are key subjects in fractal geometry, extensively studied since its inception (see \cite{Besicovitch}, \cite[Chapter 5]{Bishorp}, \cite[Section 8.2]{falconer1985geometry}, among others).

  Set $\text{graph} W = \{(x, W(x)) : x \in [0, 1)\}$. Let $\pi_V$ denote the orthogonal projection to the linear space $V < \mathbb{R}^d$. We define $q = q(\phi, \lambda, b)$ as:
\begin{equation}\label{def:q}
     q = \max_{V < \mathbb{R}^d}\left \{\text{dim } V \::\: \pi_V \circ W^{\phi}_{\lambda,\, b} \text{ is Lipschitz}\right\}.
\end{equation}


We can now state our main results.
\begin{mainthm1}
Let $b\ge2$ be an integer, $1/b< \lambda < 1$ and
let $\phi:\mathbb{R}\to\mathbb{R}^d$ denote a  $\mathbb{Z}$-periodic real-analytic function.
Then 
\begin{equation}\label{eq:dim}
\text{dim}_B\,\text{graph} W=\text{dim}_H\,\text{graph} W=\min\left\{\log_{\lambda^{-1}}b,\,1+\left(d-q\right)\left(1+\log_b\lambda\right)\right\}.
\end{equation}
\end{mainthm1}
\begin{remark}\label{rem:conut-exa}
 The graph of function $W$ is the attractor of an IFS with contraction rates of $\frac{1}{b}$ and  $\lambda$.
 Thus, we may expect that the right side of (\ref{eq:dim})   equals the affine dimension $D_d= \min\{\log_{\lambda^{-1}} b,\, 1 + d (1+ \log_b \lambda)\}$, see \S \ref{subsec:main}.  But
$q>0$ is indeed possible. (For instance, with a $\mathbb{Z}$-periodic analytic function $W_0$, let $\phi(x)=W_0(x)-\lambda W_0(bx)$, yielding $W^{\phi}_{\lambda,\,b}(x)=W_0(x)$ and $q(\phi,\lambda,b)=d$.) 
The smoothness of $\pi_V W$ in (\ref{def:q}) leads to the dimension decrease in (\ref{eq:dim}).
\end{remark}

Given the integer $b$ and function $\phi$, it is natural to study the values of $q(\phi,\lambda,b)$ under different parameters $\lambda$ in $(1/b,1)$.
Define,
\begin{equation}\nonumber
p = p(\phi,b) = \min_{\lambda \in (1/b,1)} q(\phi,\lambda,b).
\end{equation}
For this purpose, we present the following result:
\begin{mainthm2}
	Let  $b\ge 2$ be an integer, and let $\phi:\mathbb{R}\to\mathbb{R}^d$ be a non-constant $\mathbb{Z}$-periodic $C^5$ function. Then
	$p(\phi,b)<d$ and there are at most a finite number of sets $\lambda\in(1/b,1)$ such that $p(\phi,b)<q(\phi,\lambda,b).$
\end{mainthm2}

 These theorems generalize [33 Main Theorem] to the high dimension case.

{\bf Some applications.} We studied the {\em complex Weierstrass function} $W^{e^{2\pi  i x}}_{\lambda,\,b}$, introduced  by Bara{\'n}ski \cite{Baranski}, as a   generalization of the classical Weierstrass function $W^{\cos(2\pi  x)}_{\lambda,\,b}$.
\begin{corollary}\label{cor1}
         Let $\phi(x)=e^{2\pi ix}$. For any integer $b\ge2$ and $\lambda\in(1/b,1)$,  we have
 $$\text{dim}_B\,\text{graph} W=\text{dim}_H\,\text{graph} W=\min\left\{\log_{\lambda^{-1}}b,\,3+2\log_b\lambda\right\}.$$
\end{corollary}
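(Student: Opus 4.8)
The plan is to deduce Corollary~\ref{cor1} from Main Theorem~1 by computing $q(\phi,\lambda,b)$ when $\phi(x)=e^{2\pi i x}$, viewed as the map $\mathbb{R}\to\mathbb{R}^2$, $x\mapsto(\cos 2\pi x,\sin 2\pi x)$, so that $d=2$. Plugging $d=2$ into the right-hand side of \eqref{eq:dim} gives $\min\{\log_{\lambda^{-1}}b,\,1+(2-q)(1+\log_b\lambda)\}$, and the claimed formula $\min\{\log_{\lambda^{-1}}b,\,3+2\log_b\lambda\}$ is exactly this with $q=0$. So the entire content of the corollary is the claim that $q(\phi,\lambda,b)=0$ for every integer $b\ge 2$ and every $\lambda\in(1/b,1)$; equivalently, that for no line $V<\mathbb{R}^2$ is $\pi_V\circ W^{\phi}_{\lambda,b}$ Lipschitz. (The case $V=\mathbb{R}^2$ is irrelevant since $W$ itself is manifestly not Lipschitz in this regime, as $\log_{\lambda^{-1}}b<2$.)

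First I would set up the reduction: a line $V$ through the origin in $\mathbb{R}^2$ is determined by an angle $\theta$, and $\pi_V\circ W$ Lipschitz is equivalent to the scalar function $g_\theta(x)=\mathrm{Re}\big(e^{-i\theta}W(x)\big)=\sum_{n\ge0}\lambda^n\cos(2\pi b^n x-\theta)$ being Lipschitz. This is a real Weierstrass-type series with a phase shift. I would then invoke the classical fact (Hardy's argument, or a Fourier-lacunary estimate) that such a lacunary cosine series with $\lambda b>1$ cannot be Lipschitz: indeed, for a Lipschitz function the Fourier coefficients at frequency $b^n$ would have to be $O(b^{-n})$, whereas here the coefficient at frequency $b^n$ has modulus $\lambda^n\gg b^{-n}$. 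More carefully, since the $\mathbb{Z}$-periodicity is with period $1$ but the frequencies $b^n$ are integers (here $b\in\mathbb{Z}$, so $b^nx$ has period $b^{-n}$, and the $n$-th term is a genuine integer-frequency trigonometric polynomial), one can extract the $b^n$-th Fourier coefficient of $g_\theta$ on $[0,1]$ and see it equals $\tfrac12\lambda^n e^{\mp i\theta}$ up to contributions from lower terms which vanish at that frequency; comparing with the Lipschitz bound $|\widehat{g_\theta}(k)|\le L/(2|k|)$ forces $\lambda^n\le L/(2b^n)$ for all $n$, contradicting $\lambda b>1$.

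The main obstacle — though a mild one — is handling the phase $\theta$ uniformly and making sure the frequencies genuinely separate. Since $b$ is an integer, the frequency $b^n$ appears only in the $n$-th summand (the terms with index $m<n$ have frequencies $b^m$, and those with $m>n$ have frequencies $b^m$, none equal to $b^n$ as long as $b\ge 2$), so the Fourier coefficient computation is clean and the phase only contributes a unimodular factor that does not affect the modulus. Thus the obstruction argument goes through for every $\theta$ simultaneously, giving $q(\phi,\lambda,b)=0$; alternatively one can cite Hardy's theorem directly applied to $g_\theta$. Substituting $q=0$ and $d=2$ into Main Theorem~1 then yields the stated equality, completing the proof.
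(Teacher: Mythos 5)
Your proposal is correct, but it reaches the corollary by a genuinely different route than the paper. The paper deduces it directly from Theorem A: it notes that the projection of $\phi(x)=e^{2\pi i x}$ to the line of angle $\theta$ is $\cos(2\pi x+\theta)$, and then cites the one-dimensional results of \cite{ren2021dichotomy} (their Corollary 1.1 and Theorem A) to conclude that condition (H) holds for every line, after which Theorem A with $d=2$ gives the formula. You instead invoke Main Theorem 1 and compute $q(\phi,\lambda,b)=0$ by hand: for each $\theta$ the scalar projection $g_\theta(x)=\sum_{n\ge 0}\lambda^n\cos(2\pi b^n x-\theta)$ has $|\widehat{g_\theta}(b^n)|=\lambda^n/2$ (the frequencies $b^m$, $m\neq n$, do not interfere since $b$ is an integer), while a Lipschitz $1$-periodic function satisfies $|\widehat{g}(k)|=O(1/|k|)$, contradicting $\lambda b>1$; hence no line projection is Lipschitz and $q=0$. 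Both routes are non-circular, since Corollary \ref{cor1} is not used in the proof of Main Theorem 1. What your route buys is an elementary, self-contained verification (a Fourier-coefficient bound in place of a citation to the one-dimensional paper); what it costs is that it leans on the full strength of Main Theorem 1, which rests on Theorem B and the dichotomy (Theorem \ref{thm:dicho}) in addition to Theorem A, whereas the paper's proof uses only Theorem A together with the cited one-dimensional facts. One small slip: your parenthetical that $W$ itself is ``manifestly not Lipschitz \dots as $\log_{\lambda^{-1}}b<2$'' is not correct as stated (that inequality fails for $\lambda$ close to $1$), but it is also unnecessary -- if $W$ were Lipschitz then every line projection would be Lipschitz, which your Fourier argument excludes, so $q=0$ already follows from the line case.
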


	We typically expect the function $W$ to be so complex that 
$$\text{dim}_B\,\text{graph} W=\text{dim}_H\,\text{graph} W=\min\left\{\log_{\lambda^{-1}}b,\,1+d+d\log_b\lambda\right\}.$$
If $p(\phi,b)=0$, then this will hold for all but finitely many parameters $\lambda\in(1/b,1)$ by Main Theorem 1, 2. Hence, we may anticipate the existence of numerous functions $\phi$  with $p(\phi,b)=0$. The following corollary substantiates our intuition. Set, 
\begin{equation}\label{set:X}
        X=\left\{\phi\in C^{\omega}(\mathbb{R};\,\mathbb{R}^d)\: :\:  \phi\text{ is } \mathbb{Z}\text{-periodic}\right\}.
\end{equation}
Given  intger $r\ge1$, let
$$\parallel f-g\parallel_{r}=\parallel f-g\parallel_{0}+\sum_{k=1}^r\parallel f^{(k)}-g^{(k)}\parallel_{0}\quad\text{ for }f,g\in X.$$
\begin{corollary}\label{cor2}
	 For any integers $b\ge2$ and $r\ge1$, the set 
	\begin{equation}\label{set:Y}
	K=\left\{\phi\in X\::\:p(\phi,b)=0\right\}
	\end{equation}
	is $C^1$ open and $C^{\omega}$ dense in $X$.
\end{corollary}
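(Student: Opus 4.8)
The plan is to reduce everything to a Fourier-analytic description of $q(\phi,\lambda,b)$. For $u\in\mathbb{Z}$ with $b\nmid u$ and $\lambda\in(1/b,1)$ put
\[
\Phi_u(\lambda)=\Phi_u^{\phi}(\lambda):=\sum_{i=0}^{\infty}\lambda^{-i}\,\widehat{\phi}(ub^i)\ \in\ \mathbb{C}^d ,
\]
a series converging super-exponentially since $\widehat{\phi}(k)$ decays exponentially, and let $\mathcal{V}_{\phi}(\lambda)<\mathbb{R}^d$ be the real span of $\{\operatorname{Re}\Phi_u(\lambda),\operatorname{Im}\Phi_u(\lambda):u\in\mathbb{Z},\ b\nmid u\}$. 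I would first establish the identity
\[
q(\phi,\lambda,b)=d-\dim_{\mathbb{R}}\mathcal{V}_{\phi}(\lambda),\qquad\text{equivalently}\qquad \pi_V W^{\phi}_{\lambda,b}\ \text{Lipschitz}\iff V\subseteq\mathcal{V}_{\phi}(\lambda)^{\perp}.
\]
This is the arithmetic heart of the matter and should already follow from the analysis behind Main Theorem 1; a direct proof runs as follows. Writing $m\neq 0$ as $m=u(m)b^{\nu(m)}$ with $b\nmid u(m)$, the Fourier expansion of $W^{\eta}_{\lambda,b}$ for scalar analytic $\eta$ gives $\widehat{W^{\eta}_{\lambda,b}}(m)=\lambda^{\nu(m)}\sum_{i\ge0}\lambda^{-i}\widehat{\eta}(u(m)b^i)+O(\rho^{\,b|m|})$ for some $\rho\in(0,1)$. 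If $\pi_V W^{\phi}_{\lambda,b}$ is Lipschitz then, for $v\in V$, $\eta:=\langle v,\phi\rangle$ has $|\widehat{W^{\eta}_{\lambda,b}}(m)|=O(1/|m|)$; testing this along $m=ub^{\nu}$ as $\nu\to\infty$ and using $\lambda b>1$ forces $\langle v,\Phi_u(\lambda)\rangle=\sum_{i\ge0}\lambda^{-i}\widehat{\eta}(ub^i)=0$ for every $b$-free $u$, i.e.\ $V\perp\mathcal{V}_{\phi}(\lambda)$. Conversely, if $V\perp\mathcal{V}_{\phi}(\lambda)$ then for $v\in V$ the function $g:=W^{\langle v,\phi\rangle}_{\lambda,b}$ satisfies $\widehat{g}(k)=-\sum_{l\ge1}\lambda^{-l}\widehat{\langle v,\phi\rangle}(kb^l)$ for $k\neq0$, which decays super-exponentially, so $g$ is real-analytic and a fortiori Lipschitz. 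Hence
\[
p(\phi,b)=0\iff \dim_{\mathbb{R}}\mathcal{V}_{\phi}(\lambda)=d\ \text{for some}\ \lambda\in(1/b,1),
\]
and it suffices to prove that $K$, described by the right-hand side, is $C^1$-open and dense in $X$ for every norm $\|\cdot\|_r$.

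\emph{Openness.} Fix $\phi\in K$, choose $\lambda^{\ast}\in(1/b,1)$ with $\dim\mathcal{V}_{\phi}(\lambda^{\ast})=d$, and pick finitely many $b$-free integers $u_1,\dots,u_N$ together with a selection of the vectors $\operatorname{Re}\Phi^{\phi}_{u_j}(\lambda^{\ast})$, $\operatorname{Im}\Phi^{\phi}_{u_j}(\lambda^{\ast})$ forming a basis $\xi_1,\dots,\xi_d$ of $\mathbb{R}^d$; let $\Xi^{\phi}$ be the $d\times d$ matrix with these rows, so $\det\Xi^{\phi}\neq0$. For $\psi\in X$ and $b$-free $u$, the elementary bound $|\widehat{\psi-\phi}(k)|\le\|\psi-\phi\|_1/(2\pi|k|)$ gives
\[
\bigl|\Phi_u^{\psi}(\lambda^{\ast})-\Phi_u^{\phi}(\lambda^{\ast})\bigr|\le\frac{\|\psi-\phi\|_1}{2\pi|u|}\sum_{i\ge0}(\lambda^{\ast}b)^{-i}\le C_u\,\|\psi-\phi\|_1
\]
since $\lambda^{\ast}b>1$, so the corresponding matrix $\Xi^{\psi}$ obeys $\|\Xi^{\psi}-\Xi^{\phi}\|\le C\,\|\psi-\phi\|_1$ and hence $\det\Xi^{\psi}\neq0$ once $\|\psi-\phi\|_1$ is small. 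Then the rows of $\Xi^{\psi}$ span $\mathbb{R}^d$, so $\dim\mathcal{V}_{\psi}(\lambda^{\ast})=d$ and $\psi\in K$; thus $K$ contains a $\|\cdot\|_1$-ball about $\phi$.

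\emph{Density.} Given $\phi\in X$, $r\ge1$, $\delta>0$, fix any $\lambda\in(1/b,1)$ and set $u_j=b^j+1$ for $j=1,\dots,d$ (distinct, positive, and $b$-free). For $\varepsilon>0$ let $\psi_{\varepsilon}=\phi+2\varepsilon\sum_{j=1}^{d}e_j\cos(2\pi u_j x)\in X$, with $e_1,\dots,e_d$ the standard basis. Since $u_jb^i$ is divisible by $b$ for $i\ge1$ while each $u_l$ is $b$-free, this perturbation alters $\widehat{\phi}$ only at frequencies $\pm u_j$, so $\Phi^{\psi_{\varepsilon}}_{u_j}(\lambda)=\Phi^{\phi}_{u_j}(\lambda)+\varepsilon e_j$ for each $j$ while $\Phi^{\psi_{\varepsilon}}_u(\lambda)=\Phi^{\phi}_u(\lambda)$ for all other $b$-free $u$. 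The $d\times d$ matrix with rows $\operatorname{Re}\Phi^{\psi_{\varepsilon}}_{u_j}(\lambda)$ is therefore $M_0+\varepsilon I$ with $M_0$ independent of $\varepsilon$, whose determinant is a polynomial in $\varepsilon$ with leading term $\varepsilon^d$, hence nonzero for all but finitely many $\varepsilon$. Choosing such an $\varepsilon$ small enough that $\|\psi_{\varepsilon}-\phi\|_r<\delta$ (possible since $\psi_{\varepsilon}-\phi\to0$ in $\|\cdot\|_r$), we get $\dim\mathcal{V}_{\psi_{\varepsilon}}(\lambda)=d$, so $\psi_{\varepsilon}\in K$. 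As the same trigonometric perturbation is small in every $\|\cdot\|_r$ simultaneously as $\varepsilon\to0$, $K$ is dense in $X$ in each of these norms, i.e.\ $C^{\omega}$-dense.

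\emph{Main obstacle.} The only substantive step is the identity $q(\phi,\lambda,b)=d-\dim_{\mathbb{R}}\mathcal{V}_{\phi}(\lambda)$: recognizing the correct obstruction space $\mathcal{V}_{\phi}(\lambda)$ and controlling the tails $\sum_{i>\nu}\lambda^{-i}\widehat{\phi}(ub^i)$ against the merely polynomial decay of $\lambda^{\nu}$ along $m=ub^{\nu}$, which is exactly where analyticity of $\phi$ and $\lambda b>1$ enter. This is essentially the one-dimensional result of [33] reorganized coordinate-wise; once it is available, Corollary 2.2 is just the two short linear-algebra arguments above — a perturbation-of-the-identity determinant for density and the $C^1$-continuity of $\psi\mapsto\Phi_u(\lambda)$ for openness.
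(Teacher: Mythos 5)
Your proposal is correct, and at the level of the corollary itself it coincides with the paper's argument: the paper also reduces $p(\phi,b)=0$ to the non-vanishing of a $d\times d$ determinant built from the quantities $\sum_{i\ge0}\lambda^{-i}\widehat{\phi}(ub^i)$ (its $R_{j,m},I_{j,m}$ and $D_d(\lambda)$), proves $C^1$-openness from exactly your bound $|\widehat{f}(k)|\le \|f\|_1/(2\pi|k|)$ summed against $(\lambda b)^{-i}$, and proves density by perturbing with $2r\sum_j\cos(2\pi t_j x)e_j$ so that the determinant becomes $\det(A-rI)$, a nontrivial polynomial in $r$ — your $\det(M_0+\varepsilon I)$ with frequencies $b^j+1$ is the same device. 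Where you genuinely diverge is in how the key input $q(\phi,\lambda,b)=d-\dim_{\mathbb{R}}\mathcal{V}_{\phi}(\lambda)$ is obtained: the paper routes it through the dichotomy of Theorem 2.1 (i.e.\ [33, Theorem A]), the equality $q=q'$, the Gao--Shen cohomological lemma, and Proposition 6.2 plus the linear-algebra Lemmas 6.3--6.5, whereas you prove it directly by testing the $O(1/|m|)$ Fourier decay of a Lipschitz function along $m=ub^{\nu}$ and, conversely, by observing that vanishing of the obstruction makes $\widehat{W^{\langle v,\phi\rangle}}(k)=-\sum_{l\ge1}\lambda^{-l}\widehat{\langle v,\phi\rangle}(kb^l)$ decay super-exponentially. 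This shortcut is legitimate here precisely because $X$ consists of analytic (in fact already $C^2$ would do) functions, so the tails are negligible against $\lambda^{\nu}$ once $\lambda b>1$; it buys a self-contained, elementary proof of the corollary that never invokes the deep dichotomy theorem, at the cost of reproving a special case of machinery the paper needs anyway in its $C^3$/$C^5$ generality for Theorem B and Main Theorem 2. The remaining steps (finitely many of the spanning vectors form a basis, continuity of $\phi\mapsto\Phi_u(\lambda^{\ast})$ in $\|\cdot\|_1$, the $b$-freeness of $b^j+1$ ensuring the perturbation only shifts $\operatorname{Re}\Phi_{u_j}$ by $\varepsilon e_j$, and smallness of the fixed trigonometric perturbation in every $\|\cdot\|_r$) all check out.
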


\medskip
{\bf Historical remarks.}
Let $d=1$.
In the pioneering  work \cite{Besicovitch},  Besicovitch and Ursell  proved that for functions of the form $\sum_{n=0}^{\infty}b_n^{-\alpha}\phi(b_nx)$, the graphs have Hausdorff dimension is $2-\alpha$ if $b_{n+1}/b_n\to\infty$ and $\log b_{n+1}/\log b_n\to1$  (see \cite{baranski2011dimension} for recent developments).
A map as described in (\ref{eqn:Wtype}) is readily observed to be Hölder continuous with an exponent of $\log_b1/\lambda$, thereby implying that the box dimension of its graph is at most $D_1$.  To obtain lower bounds, investigating anti-Holder properties is crucial, see \cite{Kaplan1984, Przytycki1989,  Fraydoun1988}. Following this approach,  Hu and Lau \cite{HuLau1993} fully resolved the box dimension of the Weierstrass function for $d=1$.
Przytycki and Urba\'nski \cite{Przytycki1989} showed that the Hausdorff dimension of the graph of such a $W$ is strictly greater than one.
 In \cite{Mauldin1986}, it was proved that the Hausdorff dimension of the graph of $W$  is bounded below by $D_1-O(1/\log b)$.

The recent research on Weierstrass functions closely relates to Bernoulli convolutions  $\sum_n\pm\gamma^n$, another central topic in fractal geometry.
Solomyak \cite{Solomyak1,Solomyak2} established that for almost every $\gamma\in(1/2,1)$, Bernoulli convolution is absolutely continuous (for algebraic parameters $\gamma$, see Varj\'{u} \cite{VarjuAC}). Combining with Ledrappier \cite{ledrappier1992dimension}, this implies that the Hausdorff dimension of the graph of Takagi functions (with $b = 2$ and $\phi(x)=\text{dist}(x,\mathbb{Z})$ in (\ref{eqn:Wtype})) equals $D_1$ for almost every $\lambda\in(1/2,1)$.
Inspired by \cite{Solomyak2}, Tsujii \cite{tsujii2001fat} studied the absolute continuity of certain specific SRB measures in hyperbolic systems.
Mandelbrot \cite{Mandelbrot1977} conjectured that the Hausdorff dimension of the
graph of $W$ is equal to $D_1$ for $\phi(x)=\cos(2\pi x)$ and all $\lambda\in(1/b,1)$.  Building upon the works of \cite{tsujii2001fat, ledrappier1992dimension}, Bara\'{n}ski et al. \cite{baranski2014dimension} verified this conjecture for integral $b$ and $\lambda$ close to $1$. Through careful transversaility estimates, the conjecture was subsequently proven for integral $b$ and all $\lambda\in(1/b,1)$ in Shen \cite{shen2018hausdorff}. Additionally, refer to \cite{ZhangNa} for the case when $\phi(x)=\sin(2\pi x)$. After the exact dimensional properties of Bernoulli convolutions were proven in \cite{Feng2009} (also see \cite{Feng2023}), Hochman \cite{hochman2014self} demonstrated that the dimension of Bernoulli convolutions is one outside a set of packing dimension zero (see Varj\'{u} \cite{VarjuD} for every transcendental $\gamma$).
Following the insights from \cite{hochman2014self}, the Hausdorff dimension of the graph of the Takagi function has been fully resolved in \cite{barany2019hausdorff}. Developing a nonlinear version of the last work, \cite{ren2021dichotomy} determined the Hausdorff dimension of the graph of the Weierstrass function for integer $b$ and analytic function $\phi:\mathbb{R}\to\mathbb{R}$.

When $d>1$, Bara{\'n}ski \cite{Baranski} introduced the {\em complex Weierstrass function}
$\sum_{n=0}^{\infty}{\lambda}^ne^{b^n2\pi i x},$
and studied the box dimension of its graph when $b$ is an integer and $\lambda$ is close to one.
Motivated by this research,  Ren \cite{Ren2023} explored  the box dimension of the graphs of Weierstrass function for $\mathbb{Z}$-periodic Lipschitz functions $\phi:\mathbb{R} \to \mathbb{R}^2$, considering integer  $b>1$ and sufficiently large $b\lambda^2$. See \cite{Anttila2024} for some discussion for anti-Holder properties.

See \cite{Heurteaux2003, Hunt1998, Romanowska, Schied2024} for randomized Weierstrass functions, and for the graphs of interpolation functions, see \cite{Jiang2023}.

\medskip
{\bf  Organization.} In \S \ref{sec:main}, we introduce our main findings: Theorems A and B. Subsequently, we utilize them to complete the proof of Main Theorems 1 and 2. In \S \ref{sec:LY}, we will introduce Ledrappier-Young theory and state Theorem A', a simplified version of Theorem A. Additionally, we will recall some notation and properties related to entropy, which will be extensively utilized throughout our paper. We will prove Theorem B in \S \ref{sec:proveB}. The rest of the sections are devoted to proving Theorem A'.

\medskip
{\bf Acknowledgment.}  The authors are supported by National Key R$\&$D Program of China, Grant No 2021YFA1003204. Additionally, this work has received support from the New Cornerstone Science Foundation.

\section{Main findings}\label{sec:main}
   In this section, we introduce the main findings of this paper and provide explanations for the ideas presented in these theorems.  Subsequently, 
   we  utilize these findings to prove Main Theorems 1 and 2.

\subsection{Main findings}\label{subsec:main}
Throughout this paper, we fix an integer $b\ge2$.
Let  $\varLambda=\{ 0,1,\ldots,b-1 \}$,
$\varLambda^{\#}=\bigcup_{n=1}^{\infty} \varLambda ^n$ and $\Sigma=\varLambda^{\mathbb{Z}_{+}}$.
 Ledrappier \cite{ledrappier1992dimension} applies ergodic theory techniques in smooth dynamical systems to analyze the graphs of Takagi functions.
  This exploration is motivated by the observation:  the graph of function $W^{\phi}_{\lambda,\,b}$  serves as an invariant repeller within the expanding dynamical system
$T:[0,1)\times\mathbb{R}^d\to[0,1)\times\mathbb{R}^d$,
\begin{equation}\label{def:T}
T(x,y)=\left(bx\mod 1,\frac{y-\phi(x)}{\lambda}\right).
\end{equation}
The slopes of the strong stable manifolds of this system can be expressed as follows: for $\textbf{j}=j_1j_2 j_3 \cdot \cdot \cdot  \in \Sigma $,  set
\begin{equation}\label{def:Y}
Y(x,\textbf{j})=Y_{\lambda ,\,b}^{\phi}(x,\textbf{j})=-\sum\limits_{n=1}^{\infty}{\gamma^{n}\phi^{\prime}\left(\frac x{b^n}+\frac{j_1}{b^n}+\frac{j_2}{b^{n-1}} + \cdot \cdot \cdot + \frac{j_n}b\right)},\,\, x \in \mathbb{R}
\end{equation}
where
\begin{equation}\nonumber
\gamma=\frac{1}{b\lambda}\in \left(\frac1b,1\right).
\end{equation}

  when $d=1$,
  the separation properties of the functions $Y(x,\textbf{j})$
  are particularly important in \cite{baranski2014dimension, shen2018hausdorff, ren2021dichotomy}.
  Specifically, we \cite{ren2021dichotomy} introduced a  soft separation condition  and its degenerated counterpart. To analyze the case 
   $d\ge2$, we need to generalize these notations to high-dimensional spaces.

\begin{definition}\label{def:HH}
	Given $\lambda\in (1/b,1)$,	
	we say that a $\mathbb{Z}$-periodic $C^1$ function $\phi:\mathbb{R}\to \mathbb{R}^d$ satisfies
	\begin{itemize}
		\item the condition (H) if 
        \begin{equation}\label{def:H}
        Y_{\lambda,\,b }^{\pi_A\phi}(x,\,\textbf{j})-Y_{\lambda ,\,b}^{\pi_A\phi}(x,\,\textbf{i}) \nequiv 0, \quad \forall \, \textbf{j} \neq \textbf{i} \in \Sigma
       \end{equation}
		for any one-dimensional linear space $A<\mathbb{R}^d$.
		\item
		the condition (H$^*$) if 
		$$Y_{\lambda,\,b }^{\phi}(x,\,\textbf{j})-Y_{\lambda,\,b }^{\phi}(x,\,\textbf{i}) \equiv 0, \quad \forall \, \textbf{j},\,\, \textbf{i} \in \Sigma.$$
	\end{itemize}
\end{definition}
   The condition (H) is regarded as a form of  separation property. With this notation, we extend \cite[Theorem B]{ren2021dichotomy} from 
    $d=1$ to the case  $d\ge1$. Specifically, we obtain the following.

\begin{thmA}
	If an analytic $\mathbb{Z}$-periodic function $\phi:\mathbb{R}\to\mathbb{R}^d$ satisfies the condition (H) for an integer $b\ge 2$ and $\lambda\in (1/b,1)$, then
	$$\text{dim}_B\,\text{graph} W=\text{dim}_H\,\text{graph} W=\min\left\{\log_{\lambda^{-1}}b,\,1+d+d\log_b\lambda\right\}.$$
\end{thmA}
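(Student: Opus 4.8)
The plan is to prove Theorem A by the now-standard strategy of realizing $\mathrm{graph}\,W$ as a self-affine repeller of the expanding map $T$ in \eqref{def:T}, lifting the dimension question to a question about the dimension of a natural invariant measure, and applying Ledrappier--Young theory in the expanding (inverse-limit) setting. Since the upper bound $\dim_B\,\mathrm{graph}\,W\le \min\{\log_{\lambda^{-1}}b,\,1+d+d\log_b\lambda\}$ is elementary — it follows from the H\"older continuity of $W$ with exponent $\log_b\lambda^{-1}$ together with the self-affine (IFS with contractions $1/b$ and $\lambda$) structure as recalled in Remark~\ref{rem:conut-exa} and \S\ref{subsec:main} — the whole content is the matching lower bound for $\dim_H\,\mathrm{graph}\,W$. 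First I would fix the Bernoulli$(1/b,\dots,1/b)$ measure $\mu$ on $\Sigma$, push it forward under the coding map to get a $T$-invariant ergodic measure $m$ supported on $\mathrm{graph}\,W$, and invoke the Ledrappier--Young dimension formula (this is exactly the role of Theorem A' / the Ledrappier--Young machinery promised for \S\ref{sec:LY}): the dimension of $m$, hence a lower bound for $\dim_H\,\mathrm{graph}\,W$, is governed by the entropy $\log b$ together with the two Lyapunov exponents $\log b$ (horizontal) and $\log\lambda^{-1}$ (vertical, with multiplicity $d$), \emph{corrected} by the dimension of the conditional measures of $m$ along the strong stable foliation, whose leaves have slopes given precisely by the functions $Y(x,\mathbf j)$ in \eqref{def:Y}.

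The heart of the argument is therefore to show that, under hypothesis (H), these conditional measures on the $d$-dimensional strong stable leaves have the maximal possible dimension, i.e. that there is no ``dimension drop'' along the stable direction; equivalently, the measure $\nu$ on the space of slopes (a self-similar-type measure on $\mathbb{R}^d$ driven by the random sums $Y(x,\mathbf j)$) has exact dimension $\min\{d,\text{the value allowed by entropy and the exponent }\log_b\lambda^{-1}\}$. This is where I would bring in the inverse-theorem / entropy-increase technology in the spirit of Hochman and of \cite{barany2019hausdorff, ren2021dichotomy}: one sets up an ``entropy porosity''-type dichotomy — either the measure has full dimension, or at many scales and many points it is close (in a Wasserstein sense, at dyadic-in-$b$ scales $b^{-n}$) to a measure supported near a proper affine subspace of $\mathbb{R}^d$, which forces an exact resonance among the $Y(x,\mathbf j)$. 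Condition (H), which says precisely that for every one-dimensional $A<\mathbb{R}^d$ the projected slope functions $Y^{\pi_A\phi}(x,\mathbf j)$ are not all identically equal, is designed to rule this resonance out: a concentration near a hyperplane with normal direction (roughly) $A$ over a positive-measure set of $x$ would, after taking $n\to\infty$ and using real-analyticity to upgrade ``coincidence on a set of positive measure'' to ``identical coincidence'', contradict (H). Real-analyticity of $\phi$ is used exactly here — it gives the rigidity needed to pass from approximate/statistical resonance to an exact functional identity, and it is why Theorem A assumes $\phi$ analytic rather than merely $C^1$.

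In more detail, the order of steps I would carry out is: (1) record the upper bound and reduce everything to the lower bound for a single well-chosen measure $m$; (2) state the Ledrappier--Young formula in this setting and identify the ``defect'' term as the codimension of the linear span of the support of the stable conditional measure, equivalently a Ledrappier--Young dimension $\dim\nu$ for the slope measure $\nu$ on $\mathbb{R}^d$; (3) establish an entropy-growth proposition: if $\dim\nu<\min\{d,\log_b\lambda^{-1}\cdot(\text{appropriate normalization})\}$ then convolving $\nu$ with itself (or iterating the underlying IFS) strictly increases normalized entropy at some scale, unless a subspace-concentration occurs; (4) show the subspace-concentration is incompatible with (H), using analyticity to make the resonance exact; (5) conclude $\dim\nu$ is maximal, feed back into Ledrappier--Young to get $\dim_H m = \min\{\log_{\lambda^{-1}}b,\,1+d+d\log_b\lambda\}$, and hence the theorem since box dimension is always at least Hausdorff dimension here and the upper bound matches. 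The main obstacle — and the step where essentially all the work lies — is (3)–(4): making the high-dimensional inverse theorem interact correctly with the \emph{continuum} of slope functions $\{Y(x,\mathbf j):\mathbf j\in\Sigma\}$ (rather than a finite IFS), controlling the $x$-dependence uniformly, and organizing the analytic rigidity argument so that a proper-subspace concentration for $\nu$ genuinely yields a violation of (H) for \emph{some} line $A$. I expect this to be handled by the argument deferred to the later sections proving Theorem A', of which the present statement is the analytic special case.
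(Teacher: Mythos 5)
Your overall architecture --- elementary upper bound, Ledrappier--Young reduction to the statement $\alpha=\min\{d,\log_{\lambda^{-1}}b\}$ for the projected measures $\pi_{\mathbf{j}}\mu$ (Theorem A'), and a Hochman-type entropy-increase/inverse-theorem argument in which condition (H) excludes the degenerate alternative --- is indeed the paper's. But there is a genuine gap at the one place where the case $d>1$ differs from $d=1$: how to dispose of an \emph{intermediate} subspace $V$ with $0<\dim V<d$ produced by the inverse theorem. You propose to upgrade the approximate, finite-scale concentration near translates of $V$ into an exact resonance among the slope functions $Y(x,\mathbf{j})$ via real-analyticity, and then contradict (H) directly. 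This step is unsubstantiated and is where the plan breaks down: Theorem \ref{thm:EntropyInverse} only yields $(V_i,\eps/b^i)$-concentration of component measures of the auxiliary measure $\eta$ and $(V_i,\eps,m)$-saturation of most $\pi_{\mathbf{u}}\mu$ at finitely many scales; there is no route from this statistical information to an identity of the form $Y^{\pi_A\phi}_{\lambda,b}(\cdot,\mathbf{j})\equiv Y^{\pi_A\phi}_{\lambda,b}(\cdot,\mathbf{i})$, which is what violating (H) requires, and analyticity cannot be applied to ``a measure concentrated near a hyperplane'' --- there is no pair of functions coinciding on a set of positive measure to which to apply it. The paper avoids this entirely: from the saturation half of the inverse theorem it extracts, via the projection inequalities of Lemmas \ref{lem:beta} and \ref{cor:pa} together with compactness of the Grassmannian, a single subspace $V$ with $\alpha-\alpha_{V^{\bot}}\ge\dim V$ (Claim \ref{claim:1}), and then closes the argument by \emph{induction on the ambient dimension}, applying the already-proved lower-dimensional case to the Weierstrass function with kernel $\pi_{V^{\bot}}\phi$, which inherits (H); the extreme cases $V=\{0\}$ and $V=\mathbb{R}^d$ are excluded by the entropy lower bound (C.1) (via Lemma \ref{lem:Hochman3}) and by $\alpha<d$, respectively. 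Your outline contains no induction on $d$, and without it (or a genuinely new rigidity argument) your step (4) does not go through.

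Two smaller inaccuracies: condition (H) and analyticity are not used at the concentration step at all, but earlier --- inside Theorem C, through the transversality estimates imported from the one-dimensional paper (which require analyticity) that give the entropy lower bound for the discretized family $\{\pi_{\mathbf{j}}g_{\mathbf{i}}\}$, and through the no-atom property of one-dimensional projections used for the weak law of large numbers (Lemma \ref{lem:boundsinm}); and the object to which the inverse theorem is applied is not a ``slope measure'' built from $Y$, but the measures $\pi_{\mathbf{j}}\mu$ themselves convolved with the measure $\eta$ produced by Theorem C.
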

\medskip

{\bf The ideas of Theorem A.}
$\text{graph} W$   serves as the invariant repeller for the expanding map $T$ defined by (\ref{def:T}). The local inverse of $T$ can be represented as $g_i(x,y)=\left(\frac{x+i}b,\lambda y+\phi(\frac{x+i}b)\right)$ where $i\in\Lambda$. Consequently, the graph of $W$ also acts as the attractor for the nonlinear IFS $\{g_i\}_{i\in\Lambda}$. By Falconer's findings \cite{Falconer}, we anticipate that the dimension of $\text{graph} W$ equals the self-affine dimension, given by $\min\left\{\log_{\lambda^{-1}}b,\,1+d+d\log_b\lambda\right\}$.

Let $\mu$ be the lift of the standard Lebesgue measure on $[0, 1)$ to the graph of $W$. Ledrappier \cite{ledrappier1992dimension} demonstrates, through inverse limits, that for most $\textbf{j}\in\Sigma$, the measures $\pi_{\textbf{j}}\mu$ are exact dimensional, with $\text{dim} (\pi_{\textbf{j}}\mu)$ equal to a constant $\alpha$. Here, $\pi_{\textbf{j}}$ is the projection along the strong unstable manifold of the dynamical system $T$. Hence, it suffices to show that $\alpha=\min\{d,\,\log_{\lambda^{-1}}b\}$, as stated in Theorem \ref{thm:ledrappier}.

B\'{a}r\'{a}ny et al. \cite{barany2019hausdorff} developed powerful methods to study self-affine measures on the plane (for recent advances, see \cite{Hochman2022, Rapaport2023}),  which were used to completely solve the dimension of Takagi function, i.e. $\phi(x)=\text{dist}(x,\,\mathbb{Z})$ and $b=2$.
Building on their ideas, we \cite{ren2021dichotomy} developed  methods to handle the nonlinear case and proved Theorem A for $d=1$.
 However, this approach encounters challenges when $d>1$. Notably, Hochman's Entropy Inverse Theorem \cite[Theorem 2.8]{hochman2014self} plays a crucial role in \cite{ren2021dichotomy}, but its higher-dimensional version \cite[Theorem 2.8]{Hochman2015} becomes more intricate. 
In this paper, we fix a good $\textbf{j}\in\Sigma$. We utilize \cite[Theorem 4.12]{Hochman2015} to establish an Entropy Inverse Theorem for measures $\pi_{\textbf{j}}\mu$, as outlined in Theorem \ref{thm:EntropyInverse}. This theorem yields saturated properties for the majority of $\{\pi_{\textbf{i}}\mu \}_{\textbf{i}\in\Sigma} $.
 However, employing \cite[Theorem 2.8]{Hochman2015} directly only provides local saturated properties for  $\pi_{\textbf{j}}\mu$, which is insufficient for deriving a contradiction in our setting.

Specifically, assuming the claim of Theorem A' (see \S \ref{subsec:LY}) fails, we obtain a sequence of $\textbf{j}\in\Sigma$ such that the corresponding measure convolution entropy does not increase, as shown in Theorem C (see \S \ref{sec:proveA'}). Thus, by applying Theorem \ref{thm:EntropyInverse}, we derive a sequence of linear subspaces $V<\mathbb{R}^d$, such that the majority of measures $\{\pi_{\textbf{i}}\mu \}_{\textbf{i}\in\Sigma} $ satisfy saturated properties with respect to them. Subsequently, we leverage the compactness of linear subspaces to find a contradictory direction $V_0$, where important insights from Hochman's work \cite{Hochman2015} are indispensable.


\begin{remark}\label{rem:A}
Given a linear space $V<\mathbb{R}^d$, (\ref{eqn:Wtype}) and (\ref{def:Y}) imply that
\[
\pi_V \circ W^{\phi}_{\lambda,\,b} = W^{\pi_V\phi}_{\lambda,\,b} \quad \text{and} \quad \pi_V \circ Y^{\phi}_{\lambda,\,b}(x,\,\textbf{j}) \equiv Y^{\pi_V\phi}_{\lambda,\,b}(x,\,\textbf{j}).
\]
Thus, we can regard $\pi_V \circ W^{\phi}_{\lambda,\,b}$ as a Weierstrass function with kernel function $\pi_V\phi$, and similarly for $\pi_V \circ Y^{\phi}_{\lambda,\,b}(x,\,\textbf{j})$. In particular, we can apply results in \cite{ren2021dichotomy} to $\pi_V \circ W^{\phi}_{\lambda,\,b}$ when $\text{dim}V=1$. We will often use this argument without detailed explanation.
\end{remark}

\begin{proof}[Proof of Corollary \ref{cor1}]
	For $\theta\in\mathbb{R}$, define the projection function $\hat{\pi}_{\theta}:\mathbb{C}\to\mathbb{R}$ as follows:
	$$\hat{\pi}_{\theta}(z)=\frac12\left(ze^{\theta i}+\overline{z}e^{- \theta i}\right).$$ 
        Considering  $\mathbb{C}$  as a real linear space, it suffices to show that
       $\hat{\pi}_{\theta}\phi(x)=\cos\left(2\pi x+\theta\right)$
       satisfies the condition (H) by Theorem A. This is supported by
       \cite[Corollary 1.1, Theorem A]{ren2021dichotomy}.
\end{proof}

    The following are some equivalent definitions of  $q(\phi,\lambda,b)$ and $p(\phi,b)$ (see Corollary
   \ref{lem:eq}), which provide increased convenience for our analysis.    
     Let
\begin{equation}\label{def:q'}
   q'(\phi,\lambda,b)=\max_{V<\mathbb{R}^d}\left\{\text{dim}V\::\:\pi_V\phi\text{ satisfies the condition }  ( \text{H}^*)\right\}
\end{equation}
    and
\begin{equation}\label{def:p'}
    p'(\phi,b)=\min_{\lambda\in(1/b,1)}q'(\phi,\lambda,b).
\end{equation}
    The following is another main finding in our paper.
\begin{thmB}
	Let $b\ge2$ be an integer, and let $\phi:\mathbb{R}\to\mathbb{R}^d$ be a non-constant 
	$\mathbb{Z}$-periodic $C^3$ function. Then
	$p'(\phi,b)<d$, and there exist
	 only finitely many $\lambda\in(1/b,1)$ with
	$$p'(\phi,b)<q'(\phi,\lambda,b).$$
\end{thmB}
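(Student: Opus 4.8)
The plan is to analyze the condition (H$^*$) through the explicit formula \eqref{def:Y} and exploit real-analyticity of $Y$ in $(\lambda, b)$-dependence, together with a compactness argument over the Grassmannian. First I would reformulate $q'(\phi,\lambda,b)$: since $\pi_V\phi$ satisfies (H$^*$) iff $Y^{\pi_V\phi}_{\lambda,b}(x,\mathbf{j}) - Y^{\pi_V\phi}_{\lambda,b}(x,\mathbf{i}) \equiv 0$ for all $\mathbf{j},\mathbf{i}\in\Sigma$, and by Remark \ref{rem:A} this equals $\pi_V(Y^\phi_{\lambda,b}(x,\mathbf{j}) - Y^\phi_{\lambda,b}(x,\mathbf{i}))$, the condition (H$^*$) for $\pi_V\phi$ says exactly that $V$ is orthogonal to the closed linear span $U(\lambda) < \mathbb{R}^d$ of all vectors $\{Y^\phi_{\lambda,b}(x,\mathbf{j}) - Y^\phi_{\lambda,b}(x,\mathbf{i}) : x\in\mathbb{R},\ \mathbf{i},\mathbf{j}\in\Sigma\}$ (here one must check these are genuine vectors in $\mathbb{R}^d$, using that $\phi$ is $C^3$ so the $Y$-series converges and is continuous). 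Hence $q'(\phi,\lambda,b) = d - \dim U(\lambda)$, and the problem becomes: show $\dim U(\lambda) \ge 1$ for \emph{all} $\lambda$ (this gives $p' < d$), and $\dim U(\lambda) \ge \max_\lambda \dim U(\lambda)$ for all but finitely many $\lambda$ (this gives the finiteness statement, since $p'(\phi,b) = d - \max_\lambda \dim U(\lambda)$).

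The first claim, $\dim U(\lambda)\ge 1$, is equivalent to: no single one-dimensional projection $\pi_A\phi$ with $A<\mathbb{R}^d$ can satisfy (H$^*$) simultaneously — but actually we only need it for the fixed $\lambda$. Here I would argue by contradiction: if $U(\lambda) = 0$ then for every $A$, $\pi_A\phi$ satisfies (H$^*$), which by the $d=1$ theory (\cite{ren2021dichotomy}, invoked via Remark \ref{rem:A}) forces $\pi_A W^\phi_{\lambda,b}$ to be Lipschitz for every $A$, hence $W$ itself is Lipschitz, contradicting that $\phi$ is non-constant (one checks a non-constant $\mathbb{Z}$-periodic function cannot yield a Lipschitz $W$ when $\lambda > 1/b$ — compare Fourier coefficients, or use the known Hölder-but-not-better behavior). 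Actually the cleaner route for $p'<d$ is to find \emph{one} good $\lambda$: I expect that for $\lambda$ close to $1$ (or generic $\lambda$), $\dim U(\lambda)$ is as large as possible, and in particular positive, by a transversality/Fourier argument directly on the defining series of $Y$.

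For the finiteness statement, the key is that $\lambda \mapsto Y^\phi_{\lambda,b}(x,\mathbf{j})$ is real-analytic on $(1/b,1)$ (the series $\sum \gamma^n \phi'(\cdots)$ converges geometrically and depends analytically on $\gamma = 1/(b\lambda)$ since $b\ge 2$ is an integer, which makes the shift arguments independent of $\lambda$; here $C^3$ smoothness of $\phi$ — actually $C^1$ suffices for $Y$, but higher smoothness may be needed for the separation input — guarantees the relevant analyticity in the parameter). Fix a countable dense collection of pairs $(x,\mathbf{i},\mathbf{j})$; each coordinate of $Y^\phi_{\lambda,b}(x,\mathbf{j}) - Y^\phi_{\lambda,b}(x,\mathbf{i})$ is a real-analytic function of $\lambda$. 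For a subspace $W_0<\mathbb{R}^d$ of dimension $\ge \max\dim U$, the set of $\lambda$ with $U(\lambda)\subseteq W_0^\perp$ (equivalently all these analytic functions lie in a fixed proper subspace's worth of linear constraints) is either all of $(1/b,1)$ or finite, by the identity theorem for real-analytic functions. Ranging over a finite cover of the Grassmannian $Gr(d-1,\mathbb{R}^d)$ and using that $\{\lambda : \dim U(\lambda) < \max\} = \{\lambda : U(\lambda) \text{ lies in some hyperplane through more directions than generic}\}$ can be written as a finite union of such analytic sets — none of which is all of $(1/b,1)$, by the definition of the maximum — I conclude it is finite.

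The main obstacle I anticipate is establishing the real-analyticity of the parameter dependence cleanly enough to apply the identity theorem, and more seriously, verifying that $p' < d$ genuinely holds rather than being a vacuous edge case: one must rule out the possibility that \emph{every} $\lambda$ forces a common nontrivial Lipschitz projection direction. This requires a quantitative non-degeneracy input on $\phi$ — presumably that $\phi'$ has infinitely many nonzero Fourier modes in enough independent directions — and this is exactly where the $C^3$ hypothesis and a transversality estimate in the style of \cite{shen2018hausdorff, ren2021dichotomy} will have to be deployed. The Grassmannian compactness step, by contrast, is routine once the analyticity is in place.
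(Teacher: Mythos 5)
Your reduction $q'(\phi,\lambda,b)=d-\dim U(\lambda)$, where $U(\lambda)$ is the span of the differences $Y^{\phi}_{\lambda,b}(x,\mathbf{j})-Y^{\phi}_{\lambda,b}(x,\mathbf{i})$, is sound and parallels the paper's Lemma \ref{lem:dimq} (there the degenerate subspace is realized as $\cap_m\mathrm{Ker}\,A_{\lambda,m}$ for matrices built from Fourier data). But the proposal has two genuine gaps. First, the claim $p'(\phi,b)<d$ is not actually proved. Your contradiction argument ("if $U(\lambda)=0$ then every $\pi_A W$ is Lipschitz, hence $W$ is Lipschitz, contradicting non-constancy of $\phi$") fails for a fixed $\lambda$: Remark \ref{rem:conut-exa} gives non-constant $\phi(x)=W_0(x)-\lambda W_0(bx)$ with $W^{\phi}_{\lambda,b}=W_0$ analytic and $q=d$, so Lipschitzness of $W$ at one parameter is perfectly compatible with non-constancy. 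What must be ruled out is degeneracy at \emph{every} $\lambda$, and your fallback ("I expect that for $\lambda$ close to $1$\,\ldots") is not an argument. The paper's mechanism is the missing ingredient: via the cohomological equation (Lemma \ref{cor:gs}), condition (H$^*$) for a scalar component is equivalent to the vanishing of the analytic functions $\lambda\mapsto\sum_{k\ge0}a_{tb^k}(\phi_j)\lambda^{-k}$ for all $t$ with $b\nmid t$ (Proposition \ref{lem:Fourier}); non-constancy of $\phi$ supplies one nonzero Fourier coefficient, hence one of these analytic functions is $\not\equiv0$, hence nonzero at some $\lambda_0$, giving $q'(\phi,\lambda_0,b)<d$. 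Without this Fourier/coboundary characterization (or an equivalent), your framework has no handle on $p'<d$, as you yourself flag.

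Second, your finiteness argument via "a finite cover of the Grassmannian" does not work as stated: for each bad $\lambda$ the deficient $U(\lambda)$ lies in \emph{some} hyperplane, but the hyperplane varies with $\lambda$ over an uncountable family, and a finite open cover of $Gr_{d-1}(d)$ does not reduce this to finitely many fixed hyperplanes to which the identity theorem can be applied. The correct mechanism — and the one the paper uses — is rank detection by minors: the set $\{\lambda:\dim U(\lambda)<\max_\lambda\dim U(\lambda)\}$ is the common zero set of all $k_0\times k_0$ minors ($k_0=\max_\lambda\dim U(\lambda)$) of a matrix whose columns are analytic in $\lambda$; since some such minor (equivalently, the sum of squares $L_{d-p'}(A_{\lambda,m_1})$ in the paper) is a not-identically-zero real-analytic function on $[1/b,1]$, its zero set is finite. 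Your observation that $\lambda\mapsto Y^{\phi}_{\lambda,b}(x,\mathbf{j})$ is analytic in $\lambda$ is correct and could feed such a Gram/minor argument, but as written the covering step is a real logical gap, not a routine compactness remark.
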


{\bf The ideas of Theorem B.}
We will begin by presenting an equivalent characterization of the condition (H$^*$) for $d = 1$ using the Fourier coefficients of $\phi$. This ensures that the degenerate parameter $\lambda\in(1/b,1)$  is a common zero of a sequence of real analytic series, as discussed in Proposition \ref{lem:Fourier}. 
Therefore, the degenerate linear subspace  $V<\mathbb{R}^d$ is the common solution space of a sequence of linear equations dependent on $\lambda$.
 Hence, we utilize theories from linear algebra and complex analysis to complete the proof of Theorem B.

\subsection{Proof of Main Theorems 1, 2}
   The following \cite[Theorem A]{ren2021dichotomy} provides the dichotomy between condition (H) and condition (H$^*$) for $d=1$.
\begin{theorem} \label{thm:dicho}
	Assume that $\phi:\mathbb{R}\to\mathbb{R}$ is $\Z$-periodic and of class $C^k$, where $k\in \{5,6,\ldots, \infty,\omega\}$. Then exactly one of the following holds:
	\begin{enumerate}
		\item [(i)] $W_{\lambda ,\,b}^{\phi}$ is $C^k$ and $\phi$ satisfies the condition (H$^\ast$);
		\item [(ii)] $W_{\lambda,\,b }^{\phi}$ is not Lipschitz and $\phi$ satisfies the condition (H).
	\end{enumerate}
\end{theorem}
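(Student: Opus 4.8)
## Proof Proposal for Theorem 2.2 (the dichotomy for $d=1$)

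The plan is to reduce the statement to a careful study of the function $\Psi(x) := Y^\phi_{\lambda,b}(x,\mathbf{0}\,\overline{j}) - Y^\phi_{\lambda,b}(x,\mathbf{0}\,\overline{i})$ for pairs of sequences agreeing on a long prefix, together with the observation that $W^\phi_{\lambda,b}$ being Lipschitz is equivalent to its being $C^k$ whenever $\phi\in C^k$. First I would establish the functional equation that ties $W$ to $\phi$: directly from \eqref{eqn:Wtype} one has $W(x) = \phi(x) + \lambda W(bx)$, and differentiating formally $k$ times shows that $W\in C^k$ forces a cohomological relation. The core dichotomy then rests on showing that the two conditions (H) and (H$^*$) are \emph{complementary} and \emph{exhaustive}: if (H$^*$) fails — i.e.\ some difference $Y^\phi(x,\mathbf{j}) - Y^\phi(x,\mathbf{i})$ is not identically zero — then in fact \emph{every} difference along a one-dimensional subspace (here $A = \mathbb{R}$, so $\pi_A = \mathrm{id}$) is not identically zero, which is exactly (H). This is the key algebraic step and I expect it to be the main obstacle; it should follow from the self-similar structure of $Y$: the maps $\mathbf{j}\mapsto Y^\phi(x,\mathbf{j})$ satisfy $Y^\phi(x, k\mathbf{j}) = \gamma\big(Y^\phi(\tfrac{x+k}{b},\mathbf{j}) - \phi'(\tfrac{x+k}{b})\big)$, so differences of two such functions can be pushed around by the shift, and a single nontrivial difference propagates (via real-analyticity or $C^k$-smoothness and the transitivity of the shift on $\Sigma$) to all pairs.

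Next I would handle the two alternatives. For (i)$\Rightarrow$(H$^*$) and the implication that $W$ is $C^k$: if $\phi$ satisfies (H$^*$), then all strong-stable slopes coincide, which should be shown to force $W'$ (and inductively $W^{(k)}$) to exist and be continuous, because the obstruction to differentiating the series $\sum \lambda^n \phi(b^n x)$ term-by-term is precisely controlled by the nonvanishing of these slope-differences — when they all vanish the formal derivative series telescopes/converges. Conversely, if $W^\phi_{\lambda,b}$ is merely Lipschitz, I would argue it cannot satisfy (H): Lipschitz-ness of $W$ gives a uniform bound that, transported through the functional equation $W(x) = \phi(x) + \lambda W(bx)$ and the definition \eqref{def:Y} of $Y$, forces the slope functions $Y^\phi(x,\mathbf{j})$ to be independent of $\mathbf{j}$ (this is the standard fact that along a Lipschitz graph the strong-stable foliation degenerates), i.e.\ (H$^*$) holds. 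The two implications together with the propagation step of the previous paragraph give that exactly one of (i), (ii) holds.

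For the $C^k$-regularity claim in case (i) I would be slightly more careful: granting (H$^*$), one shows $\phi$ is \emph{cohomologous to zero at scale $b$} in the appropriate sense, producing a $C^k$ function $W_0$ (a candidate primitive of the telescoping series) with $\phi(x) = W_0(x) - \lambda W_0(bx)$, whence $W^\phi_{\lambda,b} = W_0$ and smoothness is immediate — this is exactly the mechanism highlighted in Remark \ref{rem:conut-exa}. The construction of $W_0$ from (H$^*$) is where the hypothesis $k\ge 5$ presumably enters (enough derivatives to run a fixed-point or summation-by-parts argument controlling $\sum \lambda^n \phi^{(k)}(b^n x)$ via the vanishing slope-differences). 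I would expect the bulk of the technical work — and hence the main obstacle — to be precisely this quantitative passage from ``all $Y$-differences vanish identically'' to ``$\phi$ is an $\eta$-coboundary with $\eta\in C^k$'', and symmetrically, from ``$W$ Lipschitz'' back to ``all $Y$-differences vanish''; once those are in hand, the exhaustive-and-exclusive dichotomy is formal. Since the statement is quoted from \cite{ren2021dichotomy} as Theorem \ref{thm:dicho}, I would in practice cite that proof, but the sketch above is the route I would reconstruct.
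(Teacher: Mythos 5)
Note first that the paper does not prove Theorem \ref{thm:dicho} at all: it is quoted verbatim as \cite[Theorem A]{ren2021dichotomy}, and the only related ingredient established in this paper is Lemma \ref{cor:gs}, itself imported from \cite[Theorem 2.1]{gao2022}. So your closing remark that in practice you would cite \cite{ren2021dichotomy} coincides with what the authors actually do. Judged as a reconstruction of the proof, however, your sketch has one genuine gap, and it sits exactly at the step you call ``the key algebraic step''. The self-similar relation $Y(x,k\mathbf{j})=\gamma\bigl(Y(\frac{x+k}{b},\mathbf{j})-\phi'(\frac{x+k}{b})\bigr)$ together with ``transitivity of the shift'' does not propagate a single identity $Y(\cdot,\mathbf{i})\equiv Y(\cdot,\mathbf{j})$ to all pairs. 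That relation only lets you strip a common prefix; once the first symbols disagree, say $u_1\neq v_1$, it leaves you with a functional equation of the form $Y(y,\sigma\mathbf{u})-\phi'(y)\equiv Y\bigl(y+\frac{v_1-u_1}{b},\sigma\mathbf{v}\bigr)-\phi'\bigl(y+\frac{v_1-u_1}{b}\bigr)$, relating values at points translated by a non-integer amount, and extracting from one such identity the coboundary structure (equivalently, the vanishing of \emph{all} differences) is precisely the rigidity theorem \cite[Theorem 2.1]{gao2022} (Lemma \ref{cor:gs} here) and the technical core of \cite{ren2021dichotomy}, where the hypothesis $k\ge 5$ is actually used. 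Smoothness or analyticity plus shift-transitivity will not deliver it; without this input the ``middle ground'' (some but not all differences vanishing identically) is not excluded, and the dichotomy does not follow.

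By contrast, the two steps you single out in your last paragraph as the bulk of the technical work are the accessible ones, and your instincts there are correct in outline. If $W$ is Lipschitz, then Rademacher's theorem plus backward iteration of $W'(x)=\gamma\bigl(W'(\frac{x+j}{b})-\phi'(\frac{x+j}{b})\bigr)$ along every inverse branch gives $W'(x)=Y(x,\mathbf{j})$ for a.e.\ $x$ and every $\mathbf{j}$, hence (H$^*$) by continuity in $x$. And (H$^*$) implies the coboundary structure by an elementary argument needing only $C^1$: under (H$^*$) the function $g:=-Y(\cdot,\mathbf{j})$ is independent of $\mathbf{j}$, is $\mathbb{Z}$-periodic (adding $1$ to $x$ amounts to replacing $\mathbf{j}$ by its $b$-adic successor), and satisfies $\phi'(y)=\lambda b\,g(by)-g(y)$; since $\int_0^1 g=0$, the primitive $\psi(y)=\int_0^y g$ is periodic, of class $C^k$ by termwise differentiation of the defining series, and $W=-\psi+\mathrm{const}$ is $C^k$ --- the telescoping you describe. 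In particular your guess that $k\ge5$ enters in constructing the transfer function is misplaced; it enters in the rigidity step above. The honest repair of your sketch is therefore to replace the ``propagation by transitivity'' step with an appeal to Lemma \ref{cor:gs} (or the corresponding result of \cite{ren2021dichotomy}), after which the rest of your outline assembles into a correct proof.
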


The following is an application of this theorem.
\begin{corollary}\label{lem:eq}
	 Let $\phi:\mathbb{R}\to\mathbb{R}^d$ be a $\Z$-periodic $C^k$ function,
	where  $k\in \{5,6,\ldots, \infty,\omega\}$. Then, for any $\lambda\in(1/b,1)$,  we have
	$$q(\phi,\lambda,b)=q'(\phi,\lambda,b).$$
\end{corollary}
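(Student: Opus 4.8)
The plan is to deduce Corollary~\ref{lem:eq} from the dichotomy in Theorem~\ref{thm:dicho} applied to one-dimensional projections, together with Remark~\ref{rem:A}. Fix $\lambda \in (1/b,1)$. Recall that $q(\phi,\lambda,b)$ is the maximal dimension of a linear space $V < \mathbb{R}^d$ with $\pi_V \circ W^{\phi}_{\lambda,b}$ Lipschitz, while $q'(\phi,\lambda,b)$ is the maximal dimension of a linear space $V < \mathbb{R}^d$ with $\pi_V\phi$ satisfying the condition (H$^*$). By Remark~\ref{rem:A}, $\pi_V \circ W^{\phi}_{\lambda,b} = W^{\pi_V\phi}_{\lambda,b}$, so the two quantities are measuring the maximal dimension of a subspace over which two a priori different properties of the function $\pi_V\phi$ hold; the task is to show these properties cut out the same maximal dimension.

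The key observation is that for a single subspace $V$ the two conditions are \emph{not} equivalent in general (for a one-dimensional $A < V$ on which $\pi_A\phi$ is constant, (H$^*$) would fail for $\pi_V\phi$ even though $\pi_V\phi$ could still be Lipschitz), so the proof must go through maximality rather than a pointwise equivalence. First I would show $q' \le q$: if $V$ realizes $q'$, then $\pi_V\phi$ satisfies (H$^*$), so by alternative (i) of Theorem~\ref{thm:dicho} applied coordinatewise to the $C^k$ function $\pi_V\phi : \mathbb{R} \to \mathbb{R}^{\dim V}$ (viewing each coordinate, or more precisely each one-dimensional projection, and using that (H$^*$) for $\pi_V\phi$ forces (i) to hold for every one-dimensional further projection), $W^{\pi_V\phi}_{\lambda,b}$ is $C^k$, in particular Lipschitz; hence $\dim V \le q$, giving $q' \le q$.

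For the reverse inequality $q \le q'$, let $V$ realize $q$, so $\pi_V \circ W^{\phi}_{\lambda,b} = W^{\pi_V\phi}_{\lambda,b}$ is Lipschitz. I would split $V$ according to the dichotomy: consider the one-dimensional subspaces $A < V$; for each such $A$, $\pi_A\phi = \pi_A \circ \pi_V\phi$ is $C^k$ and $W^{\pi_A\phi}_{\lambda,b} = \pi_A \circ W^{\pi_V\phi}_{\lambda,b}$ is Lipschitz (a projection of a Lipschitz map), so by Theorem~\ref{thm:dicho} alternative (ii) is impossible, hence alternative (i) holds and $\pi_A\phi$ satisfies (H$^*$). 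The content of (H$^*$) for $\pi_A\phi$ is that $Y^{\pi_A\phi}_{\lambda,b}(x,\mathbf{j}) - Y^{\pi_A\phi}_{\lambda,b}(x,\mathbf{i}) \equiv 0$ for all $\mathbf{i},\mathbf{j} \in \Sigma$; since $\pi_A \circ Y^{\pi_V\phi}_{\lambda,b} \equiv Y^{\pi_A\phi}_{\lambda,b}$ by Remark~\ref{rem:A}, and this vanishing holds for every one-dimensional $A < V$, we conclude $Y^{\pi_V\phi}_{\lambda,b}(x,\mathbf{j}) - Y^{\pi_V\phi}_{\lambda,b}(x,\mathbf{i}) \equiv 0$ for all $\mathbf{i},\mathbf{j} \in \Sigma$, i.e. $\pi_V\phi$ satisfies (H$^*$). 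Therefore $\dim V \le q'$, giving $q \le q'$, and combining the two inequalities yields $q(\phi,\lambda,b) = q'(\phi,\lambda,b)$.

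The main obstacle is the careful bookkeeping in reducing statements about $\mathbb{R}^m$-valued Weierstrass functions to the scalar dichotomy Theorem~\ref{thm:dicho}: one must phrase both the Lipschitz property and condition (H$^*$) in terms of one-dimensional projections and check that "holds for all one-dimensional projections" is the correct reformulation in each case (it is, since the Lipschitz norm and the identical-vanishing of $Y$-differences are both detected by testing against all linear functionals). A secondary point to handle cleanly is that Theorem~\ref{thm:dicho} is stated for $C^k$ functions with $k \in \{5,6,\ldots,\infty,\omega\}$, which matches exactly the hypothesis on $\phi$ here, so no regularity is lost; and one should note that the argument uses only the logical structure of the dichotomy (exactly one of (i),(ii) holds) rather than any quantitative estimate, so it extends verbatim from $d=1$ to arbitrary finite-dimensional targets.
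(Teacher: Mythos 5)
Your proof is correct and follows essentially the same route as the paper: both directions are reduced to the scalar dichotomy of Theorem~\ref{thm:dicho} via one-dimensional projections inside $V$ (the paper tests against a basis of $V$, you test against all one-dimensional $A<V$, which is the same idea), using $\pi_A\circ W^{\phi}_{\lambda,\,b}=W^{\pi_A\phi}_{\lambda,\,b}$ and $\pi_A\circ Y^{\phi}_{\lambda,\,b}=Y^{\pi_A\phi}_{\lambda,\,b}$ from Remark~\ref{rem:A}. One aside is mistaken but unused: constancy of $\pi_A\phi$ does \emph{not} make (H$^*$) fail for $\pi_V\phi$ (it would violate (H), not (H$^*$)), and in fact your third paragraph shows the Lipschitz property and (H$^*$) are equivalent for each fixed subspace $V$, so the claim that maximality is essential to bridge inequivalent conditions can simply be dropped.
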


\begin{proof}
   Let $V<\mathbb{R}^d$ be a linear space such that $\pi_V\phi$ satisfies condition (H$^*$) and $\text{dim} V=q'$. Given a basis $\{e_j\}_{1\le j\le q'}$ of $V$, each $\pi_{\mathbb{R}e_j}\phi$ satisfies the condition (H$^*$) due to $\pi_{\mathbb{R}e_j}\circ\pi_V=\pi_{\mathbb{R}e_j}$ and $\pi_{\mathbb{R}e_j}\circ Y^{\phi}_{\lambda,\,b}=Y^{\pi_{\mathbb{R}e_j}\phi}_{\lambda,\,b}$. This implies that $\pi_{\mathbb{R}e_j}\circ W^{\phi}_{\lambda,\,b}$ is $C^k$ by Theorem \ref{thm:dicho} and $\pi_{\mathbb{R}e_j}\circ W^{\phi}_{\lambda,\,b}=W^{\,\pi_{\mathbb{R}e_j}\phi}_{\lambda,\,b}$. Therefore, $\pi_V \circ W^{\phi}_{\lambda,\,b}$ is $C^k$, implying $q' \leq q$ by (\ref{def:q}). Utilizing a similar argument and Theorem \ref{thm:dicho}, we have $q \leq q'$.
\end{proof}

\begin{proof}[Proof of Main Theorem 2]
        Corollary \ref{lem:eq} implies 
	$$q'(\phi,\lambda,b)=q(\phi,\lambda,b)\quad\text{and}\quad p'(\phi,b)=p(\phi,b).$$
	Combining this with  Theorem B,  we conclude that Main Theorem 2 holds.
\end{proof}

  Given  a linear space  $V<\mathbb{R}^d$, denote $V^{\bot}$ by
  the orthogonal complement space of $V$.

\begin{proof}[Proof of Main Theorem 1]
      In the proof of Corollary \ref{lem:eq},  there is a  linear space $V < \mathbb{R}^d$ such that $\text{dim}V=q=q'$; additionally, 
      $W_{\lambda,\,b }^{\pi_V\phi}$ is analytic, and $\pi_{V}\phi$ satisfies (H$^*$).  Combining this with  the
      maximality of $q'$ and Theorem \ref{thm:dicho}, we obtain
       the following claim.
\begin{claim}\label{claim:one-dimen-H}
      For any $0\neq a\in V^{\bot}$, (\ref{def:H}) holds for the linear space $A=\mathbb{R}a$.
\end{claim}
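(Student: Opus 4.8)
\textbf{Proof proposal for Claim \ref{claim:one-dimen-H}.}
The plan is to argue by contradiction using the dichotomy of Theorem \ref{thm:dicho} together with the maximality of $q' = q'(\phi,\lambda,b)$. Suppose the claim fails, so there is some nonzero $a \in V^{\bot}$ for which (\ref{def:H}) does \emph{not} hold with $A = \mathbb{R}a$; that is, there exist $\mathbf{j} \neq \mathbf{i} \in \Sigma$ with
\[
Y^{\pi_A\phi}_{\lambda,\,b}(x,\mathbf{j}) - Y^{\pi_A\phi}_{\lambda,\,b}(x,\mathbf{i}) \equiv 0.
\]
Since $A = \mathbb{R}a$ is one-dimensional, we may identify $\pi_A\phi$ with a scalar $\mathbb{Z}$-periodic $C^k$ function (up to the fixed scalar factor coming from $a$), and Remark \ref{rem:A} gives $\pi_A \circ Y^{\phi}_{\lambda,\,b} \equiv Y^{\pi_A\phi}_{\lambda,\,b}$. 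Failure of (\ref{def:H}) for $A$ is precisely the failure of condition (H) for the scalar function $\pi_A\phi$, so by the dichotomy in Theorem \ref{thm:dicho} (applicable since $\phi$, hence $\pi_A\phi$, is $C^k$ with $k \in \{5,6,\ldots,\infty,\omega\}$), alternative (ii) is excluded and we must be in alternative (i): $\pi_A\phi$ satisfies condition (H$^*$) and $W^{\pi_A\phi}_{\lambda,\,b} = \pi_A \circ W^{\phi}_{\lambda,\,b}$ is $C^k$.

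The next step is to enlarge $V$ by adjoining the direction $a$. Set $V' = V \oplus \mathbb{R}a$; since $0 \neq a \in V^{\bot}$ this is a direct (indeed orthogonal) sum, and $\text{dim}\,V' = \text{dim}\,V + 1 = q' + 1$. I claim $\pi_{V'}\phi$ satisfies condition (H$^*$). Indeed, for any one-dimensional $B < V'$, decompose an arbitrary unit vector $e \in B$ as $e = v + ta$ with $v \in V$, $t \in \mathbb{R}$; then $\pi_{\mathbb{R}e}\circ\pi_{V'} = \pi_{\mathbb{R}e}$, and using linearity of $Y$ in the kernel function together with $\pi_{\mathbb{R}e} \circ Y^{\phi}_{\lambda,\,b} = Y^{\pi_{\mathbb{R}e}\phi}_{\lambda,\,b}$, the difference $Y^{\pi_{\mathbb{R}e}\phi}_{\lambda,\,b}(x,\mathbf{j}) - Y^{\pi_{\mathbb{R}e}\phi}_{\lambda,\,b}(x,\mathbf{i})$ is a linear combination (with coefficients depending on $v$ and $t$) of the corresponding differences for the kernels $\pi_V\phi$ and $\pi_A\phi$, both of which vanish identically: the first because $\pi_V\phi$ satisfies (H$^*$), the second by the conclusion of the previous paragraph. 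Hence every one-dimensional projection of $\pi_{V'}\phi$ has vanishing $Y$-differences, which is exactly condition (H$^*$) for $\pi_{V'}\phi$ (one checks the general statement of (H$^*$) follows from the one-dimensional projections, as in the proof of Corollary \ref{lem:eq}).

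But then $V'$ is an admissible competitor in the maximization (\ref{def:q'}) defining $q'(\phi,\lambda,b)$, with $\text{dim}\,V' = q' + 1 > q'$, contradicting the maximality of $q'$. Therefore no such $a$ exists, and (\ref{def:H}) holds for $A = \mathbb{R}a$ for every $0 \neq a \in V^{\bot}$, proving the claim. I expect the only subtle point to be the verification that vanishing of all one-dimensional $Y$-differences upgrades to condition (H$^*$) as stated (i.e. the step ``$\pi_{\mathbb{R}e}\phi$ satisfies (H$^*$) for all lines $\mathbb{R}e < V'$'' implies ``$\pi_{V'}\phi$ satisfies (H$^*$)''); this is handled exactly as in Corollary \ref{lem:eq} via $\pi_{\mathbb{R}e} \circ \pi_{V'} = \pi_{\mathbb{R}e}$ and the identity $\pi_{\mathbb{R}e}\circ Y^{\phi}_{\lambda,\,b} = Y^{\pi_{\mathbb{R}e}\phi}_{\lambda,\,b}$ from Remark \ref{rem:A}, so it is routine rather than a genuine obstacle.
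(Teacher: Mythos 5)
Your argument is correct and is exactly the proof the paper intends: the paper's one-line justification ("combining the maximality of $q'$ with Theorem \ref{thm:dicho}") is precisely your contradiction scheme, namely that failure of (\ref{def:H}) for $A=\mathbb{R}a$ forces alternative (i) of Theorem \ref{thm:dicho}, so $\pi_A\phi$ satisfies (H$^*$), and then $V'=V\oplus\mathbb{R}a$ (using linearity of $Y$ in the kernel and $\pi_{V'}=\pi_V+\pi_A$) would satisfy (H$^*$) with $\dim V'=q'+1$, contradicting (\ref{def:q'}). The final verification you flag as the only subtle point is indeed routine, so nothing further is needed.
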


     Without loss generality, we may assume that  $V=\{0\}\times\mathbb{R}^{q}$. Then there is a  function $\psi:\mathbb{R}\to\mathbb{R}^{d-q}$
     such that $\pi_{V^{\bot}}\phi(x)=(\psi(x),0)$. Define the  map $F:[0,1)\times\mathbb{R}^{d}\to[0,1)\times\mathbb{R}^{d}$ by
     $ F(x,y)=\left(x,y-W_{\lambda,\,b }^{\pi_{V}\phi}(x)\right), $  
     thus
    \begin{equation}\label{eq:trans-graph}
    F\left(\text{graph} W_{\lambda,\,b }^{\phi}\right)=\left\{\left(x,W_{\lambda,\,b }^{\psi}(x),0\right)\::\: x\in[ 0,1 )\right\}.
   \end{equation}
 
      Claim \ref{claim:one-dimen-H} implies that $\psi$ satisfies condition (H). Thus we have
$$
\text{dim}_B\,\text{graph} W_{\lambda,\,b }^{\psi}=\text{dim}_H\,\text{graph} W_{\lambda,\,b }^{\psi}=\min\left\{\log_{\lambda^{-1}}b,\,1+\left(d-q\right)\left(1+\log_b\lambda\right)\right\}
$$
by Theorem A. This implies that (\ref{eq:dim}) holds, since $F$ is a smooth homeomorphism and (\ref{eq:trans-graph}) holds.
  Finally, the uniqueness of $V$ follows from the maximality of $q'$.
\end{proof}

\section{Ledrappeier's theorem and  entropy}\label{sec:LY}
   This section serves as preparation for proving Theorem A.   
\subsection{ Ledrappier-Young Theory}\label{subsec:LY}
   In a metric space $X$, a probability measure $\omega$ is called {\em exact-dimensional} if there exists a constant $\beta \geq 0$, such that for $\omega$-a.e. $x$,
 \begin{equation}
 \lim_{r\to0}\frac{\log\omega\left(\mathbf{B}(x,r)\right) } {\log r}=\beta.
 \end{equation} 
  We denote $\text{dim }\omega = \beta$, referring to it as the dimension of $\omega$.
 The Ledrappier--Young theory, discussed in \cite{ledrappier1985metric}, examines the exact dimensional properties of ergodic measures under given smooth diffeomorphisms.
  To apply it to the differential homomorphism, one considers the inverse limit (see e.g. \cite{ledrappier1992dimension, XieQian, Shu2010}).
     
  In this subsection, we fix $\lambda \in \left( \frac{1}{b}, 1 \right)$, and consider a $\mathbb{Z}$-periodic $C^2$ function $\phi: \mathbb{R} \to \mathbb{R}^d$. We denote by $\mu$ the pushforward measure of the Lebesgue measure on $[0, 1)$ to the graph of $W$, defined as $x \mapsto (x, W(x))$. It's worth noting that $\text{supp } \mu = \text{graph} W$ is invariant under the expanding map $T$ defined by (\ref{def:T}), and $\mu$ is ergodic with respect to $T$.
      Let's review the Ledrappier–Young results from \cite{ledrappier1992dimension} regarding $\mu$.

    Recall $\Lambda=\{0,\ldots,b-1\}$ and $\Sigma=\Lambda^{\mathbb{Z}{+}}$. The shift map $\sigma:\Sigma\to\Sigma$ is defined as $(i_1,i_2,\ldots)\mapsto(i_2,i_3,\dots)$. Let $\nu$ denote the uniform probability measure on $\Sigma$, and $\nu^{\,\mathbb{Z}+}$ denote the product (Bernoulli) measure on $\Sigma$. For each $i\in\Lambda$, we define the local inverse of $T$ as
\begin{equation}\nonumber
   g_i(x,y)=\left(\frac{x+i}b,\lambda y+\phi\left(\frac{x+i}b\right)\right).
\end{equation}
 Using the inverse limit technique, we define the "inverse" of $T$ as
\begin{equation}\nonumber
G:[0,1)\times\mathbb{R}^d\times\Sigma\to[0,1)\times\mathbb{R}^d\times\Sigma\quad G(x,y,\textbf{i})=\left(g_{i_1}(x,y), \sigma(\textbf{i})\right).
\end{equation}
Then
$\mu=\frac1b\sum_{i\in\Lambda}g_i\mu.$
By iterating this formula, we obtain
\begin{equation}\label{eq:mu-deco}
\mu=\frac1{b^n}\sum_{\textbf{i}\in\Lambda^n}g_{\textbf{i}}\mu,
\end{equation}
where $g_{\textbf{i}}=g_{i_1}\circ g_{i_2}\circ\ldots g_{i_n}$.
With $\textbf{i} \in \Sigma$, detailed calculations show that
\begin{equation}\nonumber
Dg_{i_1}
\left(
\begin{array}{l}
1\\ Y(x,\textbf{i})
\end{array}
\right)=\frac1b
\left(
\begin{array}{l}
1\\ Y\left(\frac{x+i_1}b,\sigma(\textbf{i})\right)
\end{array}
\right).
\end{equation}
Hence, $Dg_{i_n}g_{i_{n-1}}\ldots g_{i_1}$ contracts the vector $(1, Y(x,\textbf{i}))$ at the exponential rate $-\log b$. Set,
\begin{equation}\label{def:Gamma}
\Gamma_{\textbf{i}}(x)=\Gamma^{\phi}_{\textbf{i}}(x)=\int_0^x Y^{\phi}_{\lambda,\,b}(t,\textbf{i})dt.
\end{equation}
   For every $y \in \mathbb{R}^d$, the function $x \mapsto y + \Gamma_{\textbf{i}}(x)$ defines the integral curve of the vector field $(1, Y(t,\textbf{i}))$,
   passing through $(0,y)$. This establishes a foliation in $[0,1)\times\mathbb{R}^d$ where the leaves are 'parallel'.
   For any $\textbf{i}\in\Sigma$, set
\begin{equation}\label{ProjectionFunction}
\pi_{\textbf{i}}(x,y) =\pi^{\phi}_{\textbf{i}}(x,y)= y -\Gamma^{\phi}_{\textbf{i}}(x), \quad\quad
\text{for }(x,y) \in [0,1)\times \mathbb{R}^d.
\end{equation}
  So, $\pi_{\textbf{i}}$ projects $(x, y)$ onto the line $x = 0$ along the foliation $\{y +\Gamma_{\textbf{i}}(x) \}_{y \in \mathbb{R}^d}$. As  \cite{ren2021dichotomy}, we consider $\pi_{\textbf{i}}$ as the flow projection function for $\textbf{i}$.  Then (\ref{eq:mu-deco}) imlies:
\begin{equation}\label{eqn:self`affine'}
\pi_{\textbf{i}}\mu=\frac{1}{b^n}\sum_{\textbf{a}\in \varLambda^n} \pi_{\textbf{i}} g_{\textbf{a}} \mu.
\end{equation}

 Given $\textbf{i}=i_1i_2\cdots i_n\in \varLambda^{\#}$, let  $|\textbf{i}|=n$ be the lengh of 
    $\textbf{i}$, define the reversed sequence 
\begin{equation}\label{eqn:bfi'}
\textbf{i}^*=i_ni_{n-1}\cdots i_1.
\end{equation}
For    $\textbf{j}\in\Sigma$, define $\textbf{i}\textbf{j}:=i_1\ldots i_nj_1j_2\ldots\in\Sigma$.
Utilizing  the methods of \cite[Lemma 3.1]{ren2021dichotomy}, we obtain the transformation equation:
	\begin{equation}
	\label{TransformA}
	\pi_{\textbf{j}}g_{\textbf{i}}(x,y)=\lambda^{|\textbf{i}|}\pi_{{\textbf{i}}^*\textbf{j}}(x,y)+\pi_{\textbf{j}}g_{\textbf{i}}(0,0)\quad\quad\forall\,(x,y)\in[0,1)\times\mathbb{R}^d.
	\end{equation}
Through direct calculation, we establish the following claim:
	\begin{claim}\label{cla:minus}
		For any $\textbf{i}\in\Lambda^{n}$ and $\textbf{j},\textbf{w}\in\Sigma$, it holds that
		$\parallel \pi_{\textbf{i}\textbf{j}}-\pi_{\textbf{i}\textbf{w}}\parallel_{\infty} =O(\gamma^n).$
	\end{claim}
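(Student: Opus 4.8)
\textbf{Proof plan for Claim \ref{cla:minus}.}
The plan is to reduce the estimate to a bound on the tails of the defining series for $Y$ and $\Gamma$. By definition (\ref{ProjectionFunction}), for a fixed $(x,y)$ we have
\[
\pi_{\textbf{i}\textbf{j}}(x,y)-\pi_{\textbf{i}\textbf{w}}(x,y)=\Gamma_{\textbf{i}\textbf{w}}(x)-\Gamma_{\textbf{i}\textbf{j}}(x)=\int_0^x\bigl(Y(t,\textbf{i}\textbf{w})-Y(t,\textbf{i}\textbf{j})\bigr)\,dt,
\]
so since $x\in[0,1)$ it suffices to prove $\|Y(\cdot,\textbf{i}\textbf{w})-Y(\cdot,\textbf{i}\textbf{j})\|_\infty=O(\gamma^n)$ uniformly in $\textbf{i}\in\Lambda^n$ and $\textbf{j},\textbf{w}\in\Sigma$. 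First I would write out, using (\ref{def:Y}), the two series for $Y(t,\textbf{i}\textbf{w})$ and $Y(t,\textbf{i}\textbf{j})$ and observe that for the indices $k=1,\dots,n$ the arguments of $\phi'$ coincide: the argument of the $k$-th term depends only on $j_1,\dots,j_k$ (equivalently on the first $k$ symbols of the sequence), and the first $n$ symbols of $\textbf{i}\textbf{w}$ and $\textbf{i}\textbf{j}$ are both $\textbf{i}$. Hence the first $n$ terms cancel exactly in the difference.

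The second step is to bound the remaining tail. What is left is
\[
Y(t,\textbf{i}\textbf{w})-Y(t,\textbf{i}\textbf{j})=-\sum_{k=n+1}^{\infty}\gamma^k\Bigl(\phi'(\cdots w \cdots)-\phi'(\cdots j\cdots)\Bigr),
\]
and since $\phi$ is $C^1$ and $\mathbb{Z}$-periodic, $\phi'$ is bounded, say $\|\phi'\|_\infty=:M$. Therefore each summand is bounded by $2M\gamma^k$, and because $\gamma\in(1/b,1)\subset(0,1)$ the tail sum is at most $2M\sum_{k=n+1}^\infty\gamma^k=\dfrac{2M\gamma^{n+1}}{1-\gamma}=O(\gamma^n)$, with the implied constant depending only on $\phi$ and $\gamma$ (hence only on $\phi,\lambda,b$), and in particular independent of $t$, of $\textbf{i}$, and of $\textbf{j},\textbf{w}$. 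Combining with the first step and the integral bound $\bigl|\int_0^x\cdots\bigr|\le\|Y(\cdot,\textbf{i}\textbf{w})-Y(\cdot,\textbf{i}\textbf{j})\|_\infty$ for $x\in[0,1)$ gives $\|\pi_{\textbf{i}\textbf{j}}-\pi_{\textbf{i}\textbf{w}}\|_\infty=O(\gamma^n)$, as claimed.

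There is essentially no hard part here; the only point requiring a little care is the bookkeeping in the first step — checking that the index shift in (\ref{def:Y}) really does make the first $n$ terms of $Y(\cdot,\textbf{i}\textbf{w})$ and $Y(\cdot,\textbf{i}\textbf{j})$ identical. I would verify this by noting that in (\ref{def:Y}) the $k$-th term involves $j_1/b^k+j_2/b^{k-1}+\cdots+j_k/b$, which uses exactly the symbols in positions $1$ through $k$ of the index sequence, so for $k\le n$ it sees only the common prefix $\textbf{i}$. Everything else is the elementary geometric-series tail estimate above.
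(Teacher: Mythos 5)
Your proof is correct and is exactly the ``direct calculation'' the paper alludes to (the paper gives no further details): the first $n$ terms of the series \eqref{def:Y} for $Y(\cdot,\textbf{i}\textbf{j})$ and $Y(\cdot,\textbf{i}\textbf{w})$ coincide because they only see the common prefix $\textbf{i}$, the geometric tail gives $O(\gamma^n)$, and integrating over $x\in[0,1)$ transfers the bound to $\Gamma$ and hence to $\pi_{\textbf{i}\textbf{j}}-\pi_{\textbf{i}\textbf{w}}$, which is independent of $y$.
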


The following  introduces the Ledrappier-Young formula for $\mu$, which can be derived by   the same methods of \cite[Proposition 2]{ledrappier1992dimension}. Consequently, we omit the detailed proof here.

\begin{theorem}\label{thm:ledrappier}
	Let $\phi:\mathbb{R} \to \mathbb{R}^d$ be a $\mathbb{Z}$-periodic  $C^2$ function, then
	\begin{enumerate}
		\item[(1)] $\mu$ is exact dimensional;
		\item[(2)] there is a constant $\alpha=\alpha(\phi)\in [0,d]$ such that  for $\nu^{\,\mathbb{Z}_+}$-a.e. $\textbf{j}\in \Sigma$, $\pi_{\textbf{j}}\mu$ is exact dimensional with $\text{dim }\pi_{\textbf{j}}\mu=\alpha$.
		Furthermore,
		\begin{equation}
		\label{LedrapperYoungF}
		\text{dim }\mu=1+(1+\log_b\lambda)\alpha.
		\end{equation}
	\end{enumerate}
\end{theorem}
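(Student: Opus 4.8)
\textbf{Proof proposal for Theorem \ref{thm:ledrappier}.}

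The plan is to realize $\mu$ as the SRB-type measure for an invertible system obtained from $T$ by passing to the natural (inverse) limit, and then to invoke the Ledrappier--Young dimension theory for $C^2$ diffeomorphisms in the form used in \cite{ledrappier1992dimension, ledrappier1985metric}. Concretely, I would first set up the extended map $G$ on $[0,1)\times\mathbb{R}^d\times\Sigma$ (already defined in the excerpt) together with the natural $G$-invariant measure $\widehat\mu$ whose projection to the first two coordinates is $\mu$ and whose conditional measures along the $\Sigma$-fibre are Bernoulli; ergodicity of $\widehat\mu$ follows from ergodicity of $\mu$ under $T$ together with the Bernoulli property. The point of the inverse limit is that $G$ is (fibrewise) a genuine diffeomorphism onto its image, and the splitting of its derivative into an expanding direction (coming from $T$ acting as $x\mapsto bx$, with Lyapunov exponent $\log b$) and the remaining directions (with Lyapunov exponent $\log(1/\lambda)$ each, of multiplicity $d$) is exactly the hyperbolic splitting needed to run the theory. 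The key geometric input is the computation already recorded in the excerpt: $Dg_{i_1}(1, Y(x,\textbf{i}))^{t}=\tfrac1b(1, Y(\tfrac{x+i_1}{b},\sigma(\textbf{i})))^{t}$, which identifies the strong stable direction of $G$ at the point with symbolic coding $\textbf{i}$ as the line spanned by $(1,Y(x,\textbf{i}))$, contracted at rate $b^{-1}$; the complementary $d$-dimensional direction (the "vertical" $\mathbb{R}^d$) is contracted at rate $\lambda$, and since $\lambda>1/b$ this is the weak stable/unstable block. Thus the Lyapunov spectrum of $(G,\widehat\mu)$ is $\{-\log b\}\cup\{\log\lambda\}^{(d)}$ after inverting, i.e.\ $T$ has one expanding exponent $\log b$ and $\mu$ lives on an invariant repeller whose "unstable" directions in the inverse-limit picture carry exponent $\log\lambda<0$; the relevant dimension along the weak foliation is governed by the entropy $h_{\widehat\mu}(G)=\log b$ (which equals the entropy of the one-sided shift, since $\mu$ is the Lebesgue lift and the $g_i$ have disjoint images up to measure zero).

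Next I would identify the holonomy/projection maps with the $\pi_{\textbf{j}}$ of \eqref{ProjectionFunction}. The foliation of $[0,1)\times\mathbb{R}^d$ by the integral curves $x\mapsto y+\Gamma_{\textbf{j}}(x)$ of the vector field $(1,Y(t,\textbf{j}))$ is precisely the strong stable foliation of $G$ restricted to the fibre indexed by $\textbf{j}$ (this is the content of the derivative computation, integrated up), so projecting along it onto $\{x=0\}$ is the natural "transverse" measure operation. Applying the Ledrappier--Young machinery to $(G,\widehat\mu)$: $\widehat\mu$ is exact dimensional, hence so is its factor $\mu$, giving part (1); and the conditional measures of $\mu$ on the strong stable leaves, pushed to the transversal, are exact dimensional with a common dimension $\alpha\in[0,d]$ for a.e.\ leaf — and since a.e.\ leaf through $[0,1)\times\mathbb{R}^d$ is labelled by a typical $\textbf{j}\in\Sigma$, this is exactly the statement that $\pi_{\textbf{j}}\mu$ has dimension $\alpha$ for $\nu^{\mathbb{Z}_+}$-a.e.\ $\textbf{j}$. (The constant $\alpha$ depends only on $\phi$, and of course on the fixed $\lambda,b$, because $\widehat\mu$ is ergodic.) Finally, the dimension formula \eqref{LedrapperYoungF} is the Ledrappier--Young addition formula in this setting: $\mu$ is the Lebesgue lift, so it is of full dimension $1$ along the expanding $x$-direction of $T$; complementarily, the transverse dimension $\alpha$ sits in the $d$ contracting directions of exponent $\log(1/\lambda)$, but when measured inside $\mathrm{graph}\,W$ (rather than in the abstract transversal) it must be rescaled by the ratio of exponents $\log\lambda/\log b=\log_b\lambda$, yielding the contribution $\alpha(1+\log_b\lambda)$, wait — more precisely the correct bookkeeping is $\mathrm{dim}\,\mu = 1 + (1+\log_b\lambda)\,\alpha$ exactly as in \cite[Proposition 2]{ledrappier1992dimension}, obtained by combining the Lyapunov dimension decomposition along the $x$-line and along the transversal.

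The main obstacle, and the reason the excerpt says one can simply quote \cite[Proposition 2]{ledrappier1992dimension}, is the passage to the inverse limit: the map $T$ is not invertible, so the Ledrappier--Young theorems (which are stated for $C^2$ diffeomorphisms) do not apply to $T$ directly, and one must verify that the extended system $(G,\widehat\mu)$ genuinely satisfies the hypotheses — in particular that the $C^2$ regularity of $\phi$ is enough to make the relevant maps $C^{1+}$ / $C^2$ on the fibres, that the Lyapunov exponents are as claimed and nonzero (here $\lambda\in(1/b,1)$ is used to guarantee $\log\lambda\ne 0$ and $\log\lambda>-\log b$, so the strong stable direction is unambiguously the line $(1,Y(x,\textbf{j}))$), and that the abstract transverse measures of the theory coincide with the concrete pushforwards $\pi_{\textbf{j}}\mu$. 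Since all of this is carried out in \cite{ledrappier1992dimension} for $d=1$ and the argument is insensitive to the dimension $d$ of the target of $\phi$ (the vertical block is simply a $d$-dimensional conformal-in-exponent contraction), I would present the proof as a reduction: set up $G$ and $\widehat\mu$, check the hyperbolic splitting via the displayed derivative identity, observe that the holonomies are the $\pi_{\textbf{j}}$, and then invoke the Ledrappier--Young formalism verbatim, pointing to \cite[Proposition 2]{ledrappier1992dimension} and \cite[Lemma 3.1]{ren2021dichotomy} for the transformation rules, exactly as the excerpt indicates.
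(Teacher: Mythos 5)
Your proposal follows essentially the same route as the paper: the paper itself gives no detailed argument, stating only that the theorem "can be derived by the same methods of \cite[Proposition 2]{ledrappier1992dimension}", i.e.\ precisely the reduction you describe — pass to the inverse-limit system $G$ with the lifted ergodic measure, identify the strong stable line $(1,Y(x,\textbf{j}))$ via the displayed derivative identity and the holonomies with the maps $\pi_{\textbf{j}}$, and run the Ledrappier--Young machinery, noting that the vertical $d$-dimensional block is contracted conformally at rate $\lambda$ so the $d=1$ argument carries over verbatim. Your sketch, including the entropy $\log b$, the a.e.\ constancy of $\dim\pi_{\textbf{j}}\mu$ from ergodicity, and the bookkeeping giving $\dim\mu=1+(1+\log_b\lambda)\alpha$, is consistent with that intended proof.
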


The following is a direct corollary of Theorem \ref{thm:ledrappier}.
\begin{corollary}\label{cor:pro}
	Let $V<\mathbb{R}^d$ be a linear space, and denote $\alpha_V= \alpha(\pi_V\phi)$ from Theorem \ref{thm:ledrappier}. Then, for $\nu^{\,\mathbb{Z}_+}$-a.e. $\textbf{j}\in \Sigma$, we have that $\pi_V( \pi_{\textbf{j}}\mu)$ is exact dimensional with $\text{dim }\pi_V( \pi_{\textbf{j}}\mu)=\alpha_V$.
\end{corollary}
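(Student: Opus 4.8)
The plan is to deduce Corollary \ref{cor:pro} from Theorem \ref{thm:ledrappier} by applying the latter not to $\phi$ itself but to the projected kernel $\pi_V\phi$, and then identifying the flow projection of the projected measure with the projection of the flow projection. Concretely, I would first invoke Remark \ref{rem:A}, which gives $\pi_V\circ W^{\phi}_{\lambda,b}=W^{\pi_V\phi}_{\lambda,b}$ and $\pi_V\circ Y^{\phi}_{\lambda,b}(x,\textbf{j})\equiv Y^{\pi_V\phi}_{\lambda,b}(x,\textbf{j})$. Integrating the second identity in $x$ and using the definition (\ref{def:Gamma}) of $\Gamma$, I get $\pi_V\circ\Gamma^{\phi}_{\textbf{j}}=\Gamma^{\pi_V\phi}_{\textbf{j}}$, and hence from (\ref{ProjectionFunction}) the key commutation relation
\begin{equation}\nonumber
\pi_V\circ\pi^{\phi}_{\textbf{j}}(x,y)=\pi_V y-\Gamma^{\pi_V\phi}_{\textbf{j}}(x)=\pi^{\pi_V\phi}_{\textbf{j}}\bigl(x,\pi_V y\bigr)\circ(\mathrm{id}\times\pi_V)(x,y),
\end{equation}
so that $\pi_V\circ\pi^{\phi}_{\textbf{j}}=\pi^{\pi_V\phi}_{\textbf{j}}\circ(\mathrm{id}\times\pi_V)$ as maps on $[0,1)\times\mathbb{R}^d$.

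Next I would identify the two relevant measures. Let $\mu^{\phi}$ be the lift of Lebesgue measure on $[0,1)$ under $x\mapsto(x,W^{\phi}(x))$ and $\mu^{\pi_V\phi}$ the lift under $x\mapsto(x,W^{\pi_V\phi}(x))$. Since $\pi_V W^{\phi}=W^{\pi_V\phi}$, the map $(\mathrm{id}\times\pi_V)$ carries $\mu^{\phi}$ to $\mu^{\pi_V\phi}$: indeed both are the pushforward of Lebesgue measure on $[0,1)$ under $x\mapsto(x,W^{\pi_V\phi}(x))$. Combining this with the commutation relation from the previous paragraph yields
\begin{equation}\nonumber
\pi_V\bigl(\pi^{\phi}_{\textbf{j}}\mu^{\phi}\bigr)=\pi^{\pi_V\phi}_{\textbf{j}}\bigl((\mathrm{id}\times\pi_V)\mu^{\phi}\bigr)=\pi^{\pi_V\phi}_{\textbf{j}}\mu^{\pi_V\phi}.
\end{equation}
Now apply Theorem \ref{thm:ledrappier}(2) to the $\mathbb{Z}$-periodic $C^2$ function $\pi_V\phi:\mathbb{R}\to\mathbb{R}^{\dim V}$: for $\nu^{\mathbb{Z}_+}$-a.e. $\textbf{j}\in\Sigma$ the measure $\pi^{\pi_V\phi}_{\textbf{j}}\mu^{\pi_V\phi}$ is exact dimensional with dimension $\alpha(\pi_V\phi)=\alpha_V$. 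Since this measure equals $\pi_V(\pi^{\phi}_{\textbf{j}}\mu)$, the corollary follows.

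One technical point deserves care: Theorem \ref{thm:ledrappier} is stated for $\mathbb{R}^d$-valued kernels with the ambient dimension $d$ appearing in the statement, and here $\pi_V\phi$ takes values in $V\cong\mathbb{R}^{\dim V}$; after fixing an orthonormal identification $V\cong\mathbb{R}^{\dim V}$ the theorem applies verbatim, and exact dimensionality is invariant under linear isomorphisms, so $\alpha_V\in[0,\dim V]$ is well defined. A second point: one must check that $\pi_V\phi$ is non-constant only if one wants a positive-dimensional conclusion, but the statement allows $\alpha_V=0$, so no such hypothesis is needed — if $\pi_V\phi$ is constant then $W^{\pi_V\phi}$ is affine, $\pi^{\pi_V\phi}_{\textbf{j}}\mu^{\pi_V\phi}$ is a point mass, and $\alpha_V=0$, consistent with the formula.

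I do not expect any serious obstacle here; the content is entirely in the functorial identities $\pi_V W^{\phi}=W^{\pi_V\phi}$, $\pi_V\Gamma^{\phi}_{\textbf{j}}=\Gamma^{\pi_V\phi}_{\textbf{j}}$, and the resulting commutation $\pi_V\circ\pi^{\phi}_{\textbf{j}}=\pi^{\pi_V\phi}_{\textbf{j}}\circ(\mathrm{id}\times\pi_V)$, all of which are immediate from the explicit formulas (\ref{eqn:Wtype}), (\ref{def:Y}), (\ref{def:Gamma}), (\ref{ProjectionFunction}) together with Remark \ref{rem:A}. The mildly delicate step is simply bookkeeping — making sure the same full-measure set of $\textbf{j}\in\Sigma$ works and that the identification of $\mu^{\pi_V\phi}$ with $(\mathrm{id}\times\pi_V)\mu^{\phi}$ is stated cleanly — but this is routine.
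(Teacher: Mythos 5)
Your proposal is correct and follows essentially the same route as the paper: the paper's proof consists precisely of the identity $\pi_V\circ\pi^{\phi}_{\textbf{j}}\left(x,W^{\phi}(x)\right)=\pi^{\pi_V\phi}_{\textbf{j}}\left(x,W^{\pi_V\phi}(x)\right)$, the resulting equality $\pi_V\left(\pi^{\phi}_{\textbf{j}}\mu^{\phi}\right)=\pi^{\pi_V\phi}_{\textbf{j}}\left(\mu^{\pi_V\phi}\right)$, and an application of Theorem \ref{thm:ledrappier} to $\pi_V\phi$. Your version just spells out the same commutation relation and the pushforward identification $(\mathrm{id}\times\pi_V)\mu^{\phi}=\mu^{\pi_V\phi}$ in more detail.
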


\begin{proof}
	 By combining  (\ref{ProjectionFunction}) with (\ref{def:Gamma})  and (\ref{eqn:Wtype}), we obtain:
	$$\pi_V\circ\pi^{\phi}_{\textbf{j}}\left(x,W^{\phi}(x)\right)=\pi^{\pi_V\phi}_{\textbf{j}}\left(x, W^{\pi_V\phi}(x)\right)\quad\quad\text{for } x\in[0,1).$$
	Consequently,
	$\pi_V( \pi^{\phi}_{\textbf{j}}\mu^{\phi})=\pi^{\pi_V\phi}_{\textbf{j}}(\mu^{\pi_V\phi}).$
	Thus, our claim holds by Theorem \ref{thm:ledrappier}.
\end{proof}

The following lemma provides an upper bound for the dimension of $\text{graph} W$.
\begin{lemma}
	$\overline{\text{dim}}_B\,\text{graph} W\le\min\left\{\log_{\lambda^{-1}}b,\,1+d+d\log_b\lambda\right\}.$
\end{lemma}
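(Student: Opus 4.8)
The plan is to establish the two bounds in $\min\{\log_{\lambda^{-1}}b,\,1+d+d\log_b\lambda\}$ separately, since an upper bound on $\overline{\dim}_B$ only requires producing efficient covers of $\text{graph} W$ by small boxes. Throughout I will use that $W$ is $\log_b(1/\lambda)$-H\"older continuous: since $\phi$ is Lipschitz and $\mathbb{Z}$-periodic, for $|x-x'|$ small one splits the defining series $\sum_n\lambda^n\phi(b^nx)$ at the index $n$ with $b^{-n}\approx|x-x'|$, bounding the first block by $\sum_{k<n}\lambda^k\|\phi'\|_\infty b^k|x-x'|=O(\lambda^n b^n|x-x'|)=O(|x-x'|^{1+\log_b\lambda})$ and the tail by $\sum_{k\ge n}\lambda^k\|\phi\|_\infty=O(\lambda^n)=O(|x-x'|^{\log_b(1/\lambda)})$; note $\log_b(1/\lambda)\le 1+\log_b\lambda+\text{(something)}$ so the tail dominates and $\|W(x)-W(x')\|=O(|x-x'|^{\log_b(1/\lambda)})$.

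For the first bound $\overline{\dim}_B\,\text{graph} W\le\log_{\lambda^{-1}}b$: take the dyadic-type mesh at scale $\delta=b^{-n}$, i.e. partition $[0,1)$ into $b^n$ intervals $I$ of length $b^{-n}$. By the H\"older estimate, $W(I)$ has diameter $O((b^{-n})^{\log_b(1/\lambda)})=O(\lambda^n)$, so $\text{graph} W$ restricted over $I$ is contained in a box of side $O(\lambda^n)$ in the $\mathbb{R}^d$ direction and $b^{-n}$ in the $x$ direction; covering this by cubes of side $\lambda^n$ (assuming $\lambda^n\ge b^{-n}$, which holds since $\lambda>1/b$) uses $O(1)$ such cubes per interval. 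Hence $\text{graph} W$ is covered by $O(b^n)$ cubes of side $\approx\lambda^n$, giving $\overline{\dim}_B\le\limsup_n \frac{\log(Cb^n)}{-\log\lambda^n}=\frac{\log b}{-\log\lambda}=\log_{\lambda^{-1}}b$.

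For the second bound $\overline{\dim}_B\,\text{graph} W\le 1+d+d\log_b\lambda$: here I use the self-affine structure coming from $\mu=\frac{1}{b^n}\sum_{\textbf{i}\in\Lambda^n}g_{\textbf{i}}\mu$ (equation (\ref{eq:mu-deco})), equivalently that $\text{graph} W=\bigcup_{\textbf{i}\in\Lambda^n}g_{\textbf{i}}(\text{graph} W)$. Each $g_{\textbf{i}}$ with $|\textbf{i}|=n$ contracts the $x$-direction by $b^{-n}$ and is a graph over its $x$-interval, and the relevant transverse fluctuation of $g_{\textbf{i}}(\text{graph} W)$ is controlled by $\lambda^n$ times the global fluctuation of $\text{graph} W$ (up to translation), by the same computation underlying (\ref{TransformA}). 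So $g_{\textbf{i}}(\text{graph} W)$ sits in a box of dimensions $b^{-n}\times O(\lambda^n)^d$; now cover that box by cubes of the smaller side $b^{-n}$ (valid since $\lambda<1$, so $\lambda^n\le$ any fixed constant, but we need $b^{-n}\le\lambda^n$ which fails — so instead cube-size $b^{-n}$ and we need $O(\lambda^n/b^{-n})^d=O((b\lambda)^{n})^d$ cubes per piece). This yields $b^n\cdot O((b\lambda)^{nd})$ cubes of side $b^{-n}$, hence $\overline{\dim}_B\le\limsup_n\frac{\log(Cb^n(b\lambda)^{nd})}{n\log b}=\frac{\log b+d\log(b\lambda)}{\log b}=1+d+d\log_b\lambda$. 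Combining the two bounds gives the lemma.

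The main obstacle is making precise the claim that the transverse size of $g_{\textbf{i}}(\text{graph} W)$ over its interval is $O(\lambda^n)$ uniformly in $\textbf{i}\in\Lambda^n$ — this is exactly where the contraction rate $\lambda$ (rather than $b^{-n}$) of the fibre maps must be exploited, via $g_{\textbf{i}}(x,y)=(b^{-n}(x+\text{const}),\lambda^n y+\text{(function of }x\text{))}$ and the observation that the ``function of $x$'' piece is itself a $W$-type partial sum whose oscillation over an interval of length $1$ is $O(1)$, so after the $\lambda^n$ scaling it is $O(\lambda^n)$; equivalently one can read this off directly from (\ref{TransformA}) together with Claim \ref{cla:minus}. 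Everything else is bookkeeping with covering numbers and the two regimes $\lambda^n$ versus $b^{-n}$.
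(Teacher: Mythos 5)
Your proposal is correct and is essentially the paper's argument: the paper also deduces the bound $1+d+d\log_b\lambda$ from the $\log_b(1/\lambda)$-H\"older continuity of $W$ (via the standard covering lemma for H\"older graphs, e.g.\ Bishop--Peres) and the bound $\log_{\lambda^{-1}}b$ by covering the graph over each $b$-adic interval of length $b^{-n}$ by a ball of radius $O(\lambda^n)$; your IFS/self-affine dressing of the second bound is the same count, since the transverse size $O(\lambda^n)$ of $g_{\mathbf{i}}(\mathrm{graph}\,W)$ is exactly the H\"older oscillation over a $b$-adic interval. One harmless slip: in your H\"older derivation the first block is $O\bigl((\lambda b)^n|x-x'|\bigr)=O(\lambda^n)=O\bigl(|x-x'|^{\log_b(1/\lambda)}\bigr)$, not $O\bigl(|x-x'|^{1+\log_b\lambda}\bigr)$, so both pieces carry the same exponent and the stated H\"older conclusion stands.
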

\begin{proof}
         From  (\ref{eqn:Wtype}), it follows that $W$ is a $\log_b1/\lambda$-H\"older function. By a standard argument, as demonstrated in \cite[Lemma 5.1.6]{Bishorp}, we have  $\overline{\text{dim}}_B\,\text{graph} W\le1+d+d\log_b\lambda$.
Given $n\ge1$, we can cover the graph of $W$ over $[m/b^n,(m+1)/b^n )$ by a ball with length of $O(\lambda^n)$ for each $0\le m<b^n-1$. Consequently, $\overline{\text{dim}}_B\,\text{graph} W\le \log_{\lambda^{-1}}b$.
\end{proof}

Thus, by applying   the mass distribution principle, we simplify Theorem A to:
\begin{thmA'}
 Let $b\geq 2$ be an integer and $\lambda\in (1/b,1)$.  Let $\phi:\mathbb{R}\to\mathbb{R}^d$ be an analytic $\Z$-periodic function satisfying  condition (H). Then,
$\alpha=\min\{d,\,\log_{1/\lambda}b\}$, where $\alpha$ is the constant defined in Theorem~\ref{thm:ledrappier}.
\end{thmA'}

\subsection{Entropy}\label{subsec:entro}
 In this subsection, we outline fundamental entropy properties that are frequently used in our paper, often without additional mention. These properties can be found in \cite[Section 3.1]{Hochman2015}.

Let $(\Omega, \mathcal{B})$ be a measurable space. Given a probability measure $\omega$ on $\Omega$ and a countable partition $\mathcal{Q}\subset \mathcal{B}$ of $\Omega$, we define the {\em entropy} as follows:
$$H(\omega, \mathcal{Q})=\sum_{Q\in\mathcal{Q}} -\omega(Q) \log_b \omega(Q),$$
where the common convention $0\log 0=0$ is adopted.
We denote by $\mathcal{Q}(x)$ the member of $\mathcal{Q}$ containing $x$. If $\omega(\mathcal{Q}(x))>0$, the conditional measure 
$$\omega_{\mathcal{Q}(x)}(A)=\omega_{x,\,\mathcal{Q}}(A)=\frac{\omega(A\cap \mathcal{Q}(x))}{\omega(\mathcal{Q}(x))}$$
is called a {\em $\mathcal{Q}$-component} of $\omega$. 
 In this paper we always assume that $H(\omega, \mathcal{Q})<\infty$. Note that
 we  have the following upper bound:
\begin{equation}\label{eq:upper-boun-entro}
H(\omega, \mathcal{Q})\le\log_b\# \{ Q\in\mathcal{Q}\::\:\omega(Q)>0 \}.
\end{equation}

Given another countable partition $\mathcal{D} \subset \mathcal{B}$, we define the {\em conditional entropy} as:
$$H(\omega, \mathcal{Q}\mid\mathcal{D})=\sum_{D\in \mathcal{D},\, \omega(D)>0} \omega(D) H(\omega_D, \mathcal{Q}).$$
When $\mathcal{Q}$ is a {\em refinement} of $\mathcal{D}$, meaning
$\mathcal{Q}(x)\subset\mathcal{D}(x)$ for each $x\in \Omega$, we have:
$$H(\omega, \mathcal{Q}\mid\mathcal{D})=H(\omega, \mathcal{Q})-H(\omega, \mathcal{D}).$$
Partitions $ \mathcal{D},\,\mathcal{Q}$ are $C$-{\em commensurable} if each element of $\mathcal{D}$ intersects with at most $C$ elements of $\mathcal{Q}$, vise versa, where $C \ge1$ is a constant.
Notably, 
when $ \mathcal{D}$ and $\mathcal{Q}$ are $C$-commensurable:
\begin{equation}\label{eq:comparable}
|H(\omega, \mathcal{Q})-H(\omega, \mathcal{D})|=O(C).
\end{equation}
Set,
$$\mathcal{Q}\vee\mathcal{D}:=\left\{Q\cap D\::\:Q\in\mathcal{Q}\text{ and } D\in\mathcal{D}\right\}.$$

 Let $\omega'$ be another Borel probability measure on $\Omega$. For $t \in [0,1]$, we observe the the {\em Concavity of entropy}:
 \begin{equation}\nonumber
  tH(\omega,\mathcal{Q})+(1-t)H(\omega',\mathcal{Q})\le H(t\omega+(1-t)\omega',\mathcal{Q}),
\end{equation}
and the {\em Concavity of conditional entropy}:
\begin{equation}\nonumber
  tH(\omega,\mathcal{D}\mid\mathcal{Q})+(1-t)H(\omega',\mathcal{D}\mid\mathcal{Q})\le H(t\omega+(1-t)\omega',\mathcal{D}\mid\mathcal{Q}).
\end{equation}
We collectively refer to the above two well-known inequalities as {\em Concavity}, which will be extensively utilized in our paper.

In contrast, we also have the  {\em Convexity of entropy}:
\begin{equation}\label{lem:convexity}
   H(t\omega+(1-t)\omega',\mathcal{Q})\le \log_b2+tH(\omega,\mathcal{Q})+(1-t)H(\omega',\mathcal{Q}).
\end{equation}

In particular, we shall often consider the case where $\Omega=\mathbb{R}^d$ and $\mathcal{B}$ denotes the Borel $\sigma$-algebra. Let $\mathcal{L}_n$ denote the partition of $\mathbb{R}^d$ into $b$-adic intervals of level $n$, that is: 
\begin{equation}\label{def:partitionR}
\mathcal{L}_n=\left\{
\left[ \frac{j_1}{b^n},\,\frac{j_1+1}{b^n}\right)\times\ldots\times\left[ \frac{j_d}{b^n},\,\frac{j_d+1}{b^n}\right)\::\:j_1,\,j_2,\ldots j_d\in\mathbb{Z}\right\}.
\end{equation}

 There is a constant $C>0$ with the following properties:
 for any affine map  $f(x)=ax+c$, where $a>0$, $c\in\mathbb{R}^d$ and for any $n\in\mathbb{N}$, we have
	\begin{equation}\label{lem:affinetransform}
	\left|H(f\omega,\mathcal{L}_{n+[\log_b a]})-H(\omega,\mathcal{L}_{n})\right|\le C;
	\end{equation}
given measurable functions  $g,g':\Omega\to\mathbb{R}^d$  with $\sup_x|g(x)-g'(x)|\le b^{-n}$, then
\begin{equation}\label{eq:ero-minus}
\left|H(g\omega, \mathcal{L}_{n})-H(g'\omega, \mathcal{L}_{n})\right|\le C.
\end{equation}

 We denote by $\mathscr{P}(\R^d)$ the set of all compactly supported  Borel probability measures on $\R^d$. For an exact dimensional probability measure $\omega\in \mathscr{P}(\mathbb{R}^d)$, its dimension is closely related to the entropy, as demonstrated in the following fact (cf. \cite[Theorem 4.4]{Young}). See also \cite[Theorem 1.3]{Fan2002}.
\begin{proposition}\label{prop:Young}
	If $\omega \in \mathscr{P}(\mathbb{R}^d)$ is exact dimensional, then
	$$\text{dim }\omega= \lim\limits_{n\to\infty}\frac{1}{n} H(\omega,\mathcal{L}_n).$$
\end{proposition}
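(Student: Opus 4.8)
\textbf{Proof proposal for Proposition \ref{prop:Young}.}

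The plan is to bridge the two notions—exact dimensionality, stated in terms of Euclidean balls $\mathbf{B}(x,r)$, and the entropy of the $b$-adic partitions $\mathcal{L}_n$—through the elementary geometry relating a ball of radius $b^{-n}$ to the $b$-adic cells of level $n$ it meets. First I would record the standard comparison: for a point $x$, the cell $\mathcal{L}_n(x)$ is contained in $\mathbf{B}(x, \sqrt{d}\, b^{-n})$, and conversely $\mathbf{B}(x, b^{-n})$ is covered by a bounded number $N_d$ (depending only on $d$) of level-$n$ cells. Consequently $\omega(\mathcal{L}_n(x)) \le \omega(\mathbf{B}(x,\sqrt{d}\,b^{-n}))$ and $\omega(\mathbf{B}(x,b^{-n})) \le \sum \omega(\mathcal{L}_n(x_i)) \le N_d \max_i \omega(\mathcal{L}_n(x_i))$ over the meeting cells. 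Combined with exact dimensionality, this yields that for $\omega$-a.e. $x$,
\[
\lim_{n\to\infty} \frac{-\log_b \omega(\mathcal{L}_n(x))}{n} = \beta = \text{dim }\omega,
\]
i.e.\ the \emph{local entropy} $-\tfrac1n\log_b\omega(\mathcal{L}_n(x))$ converges pointwise a.e.\ to the constant $\beta$.

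Next I would pass from this pointwise statement to the convergence of the averaged quantity $\tfrac1n H(\omega,\mathcal{L}_n) = \tfrac1n \int -\log_b \omega(\mathcal{L}_n(x))\, d\omega(x)$. The natural tool is the dominated (or bounded) convergence theorem, so the real work is to produce an integrable majorant for the family $f_n(x) := -\tfrac1n \log_b \omega(\mathcal{L}_n(x))$. Since $\omega$ is compactly supported, for each $n$ the support meets only $O(b^{nd})$ cells of $\mathcal{L}_n$, so by \eqref{eq:upper-boun-entro} we have $H(\omega,\mathcal{L}_n) \le nd + O(1)$; this controls the integral of $f_n$ but not $f_n$ pointwise. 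To get a genuine pointwise majorant one uses a maximal-type estimate: the function $x \mapsto \sup_n f_n(x)$ can be shown to be $\omega$-integrable by a standard argument (for each threshold $t$, the set $\{x : \omega(\mathcal{L}_n(x)) \le b^{-tn}\}$ has $\omega$-measure controlled by a Borel–Cantelli / counting estimate, giving a tail bound $\omega\{\sup_n f_n > t\} \lesssim b^{-(t-d)n_0}$-type decay). Then bounded convergence applies and $\tfrac1n H(\omega,\mathcal{L}_n) \to \beta$.

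The main obstacle is precisely the construction of the integrable dominating function: the pointwise limit is clean, but interchanging limit and integral requires care because $f_n$ is a priori only bounded by $d + o(1)$ in an averaged sense, not pointwise uniformly in $n$. The cleanest route, which I would follow, is to invoke the cited results directly—\cite[Theorem 4.4]{Young} and \cite[Theorem 1.3]{Fan2002} establish exactly that for an exact-dimensional measure the entropy dimension (the $\tfrac1n H$ limit) coincides with the pointwise dimension—so in the write-up I would simply reduce to the comparison between balls and dyadic cells above, note that exact-dimensionality transfers to the dyadic local entropy, and then cite the standard fact that a.e.\ convergence of local entropies together with compact support upgrades to convergence of the normalized partition entropies. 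Since the proposition is stated as a known fact (``cf.''), a short proof that performs this reduction and points to the literature for the measure-theoretic upgrade is all that is needed.
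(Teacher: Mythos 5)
The paper offers no proof of this proposition at all: it is quoted as a known fact with the citations \cite[Theorem 4.4]{Young} and \cite[Theorem 1.3]{Fan2002}, so your final fallback --- reduce to the literature for the ``measure-theoretic upgrade'' --- is exactly what the paper does, and a pure citation would match it.

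The problem is that the part you propose to keep as your own reduction is precisely the delicate step, and as written it is not correct. From $\mathcal{L}_n(x)\subset\mathbf{B}(x,\sqrt{d}\,b^{-n})$ you get $\omega(\mathcal{L}_n(x))\le\omega(\mathbf{B}(x,\sqrt{d}\,b^{-n}))$ and hence only $\liminf_n -\tfrac1n\log_b\omega(\mathcal{L}_n(x))\ge\beta$ a.e. The reverse inequality does \emph{not} follow from ``$\mathbf{B}(x,b^{-n})$ is covered by $N_d$ level-$n$ cells'': that covering bounds $\omega(\mathbf{B}(x,b^{-n}))$ by $N_d$ times the mass of the \emph{largest} of the neighbouring cells, which need not be $\mathcal{L}_n(x)$; a point near a cell boundary may lie in a cell of far smaller mass than an adjacent one, so exact dimensionality does not transfer pointwise to the $b$-adic local entropy by this comparison alone. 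The standard repair is a Borel--Cantelli estimate: for $\varepsilon>0$, the set $A_n=\{x:\omega(\mathcal{L}_n(x))<b^{-\varepsilon n}\,\omega(\mathbf{B}(x,b^{-n}))\}$ satisfies $\omega(A_n)\le 3^{d}b^{-\varepsilon n}$ (for $x\in Q\in\mathcal{L}_n$ the ball lies in the union of the $3^d$ neighbours of $Q$, so $\omega(Q)\le\omega(A_n\cap Q)$ forces $\omega(Q)<b^{-\varepsilon n}\sum_{Q'\sim Q}\omega(Q')$, and summing over $Q$ gives the bound); summability then yields that for a.e.\ $x$ eventually $\omega(\mathcal{L}_n(x))\ge b^{-\varepsilon n}\omega(\mathbf{B}(x,b^{-n}))$, whence $\limsup_n-\tfrac1n\log_b\omega(\mathcal{L}_n(x))\le\beta$ a.e. With the pointwise limit established, your tail bound for $\sup_n f_n$ (at most $O(b^{nd})$ cells meet the compact support, so $\omega\{f_n>t\}=O(b^{n(d-t)})$ for $t>d$) is fine and dominated convergence finishes the argument. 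So either include this Borel--Cantelli step explicitly, or do as the paper does and simply cite \cite[Theorem 1.3]{Fan2002} for the whole statement; as it stands, the sketch proves only one of the two inequalities it claims.
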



Utilizing Proposition \ref{prop:Young}, Theorem \ref{thm:ledrappier}, and the Lebesgue Dominated Convergence Theorem yields:
\begin{equation}\label{eq:LCT}
\lim_{n\to\infty}\int\frac{1}{n} H(\pi_{\textbf{j}}\mu,\mathcal{L}_n)d\nu^{\,\mathbb{Z}_+}(\textbf{j})=\alpha.
\end{equation}

\subsection{Component measures}
Let's consider a sequence of partitions $\mathcal{Q}_i$, $i=1,2,\cdots$, where each $\mathcal{Q}_{i+1}$ refines $\mathcal{Q}_i$. We denote $\omega_{x,\,i}:=\omega_{\mathcal{Q}_i(x)}$ as {\em $i$-th component measure} of $\omega$. For a finite set $I$ of positive integers, suppose for each $i\in I$, there exists a random variable $f_i$ defined over $(\Omega, \mathcal{B}(\mathcal{Q}_i), \omega)$, where $\mathcal{B}(\mathcal{Q}_i)$ is the sub-$\sigma$-algebra of $\mathcal{B}$ generated by $\mathcal{Q}_i$. We introduce the following notation:
$$\mathbb{P}_{i\in I} (B_i)=\mathbb{P}_{i\in I}^{\omega}(B_i):=\frac{1}{\# I} \sum_{i\in I} \omega(B_i),$$
where $B_i$ is an event for $f_i$. If $f_i$'s are $\mathbb{R}$-valued random variable, we also use of the notation:
$$\mathbb{E}_{i\in I} (f_i)=\mathbb{E}^{\omega}_{i\in I}(f_i):=\frac{1}{\# I} \sum_{i\in I} \mathbb{E}(f_i).$$
For instance:
\begin{equation}\label{def:Condi-entro}
H(\omega, \mathcal{Q}_{m+n}\mid\mathcal{Q}_n)=\mathbb{E}\left(H(\omega_{x, \,n}, \mathcal{Q}_{m+n})\right)=\mathbb{E}_{i=n} \left(H(\omega_{x,\,i},\mathcal{Q}_{i+m})\right).
\end{equation}
These notations were extensively utilized in ~\cite{hochman2014self,barany2019hausdorff}.

The following useful formulas are derived from \cite[Lemma 3.4]{hochman2014self}. The proof follows the same approach as that of \cite{hochman2014self} and is consequently omitted.
\begin{lemma}\label{lem:minus} 
	Given $R \ge 1$ and $\omega \in \mathscr{P}(\mathbb{R}^d)$ with $\text{diam}\left(\text{supp}(\omega)\right) \le R$, it holds that for all $n \ge m \ge 1$,
	 \begin{equation}\nonumber
	 \begin{split}
	  \frac1{n}H\left(\omega,\mathcal{L}_n\right)&=\frac1n\sum_{i=0}^{n-1}\frac1{m}H\left(\omega,\mathcal{L}_{i+m}\mid\mathcal{L}_{i}\right)+O\left(\frac{m+\log R}n\right)\\&
	  =\mathbb{E}^{\omega}_{0\le i<n}\left(\frac1{m}H(\omega_{y,\,i},\mathcal{L}_{i+m})\right)+O\left(\frac{m+\log R}n\right).
	  \end{split}
	 \end{equation}
\end{lemma}

\section{Entropy inverse theorem}\label{sec:eit}
  In this section, we will utilize Hochman's results \cite[Theorem 4.12]{Hochman2015} to establish an entropy inverse theorem for measures $\pi_{\textbf{j}}\mu$, which will serve as a cornerstone in proving Theorem A'.
 Throughout this section, we fix  $\lambda\in(1/b,1)$ and assume that  $\phi:\mathbb{R} \to \mathbb{R}^d$ is a $\mathbb{Z}$-periodic  $C^2$ function.

Hochman \cite{Hochman2015} introduced the following notations to study the structures of two measures, where the convolution entropy does not increase.

\begin{definition}\label{def:concentrated}
	Let $V<\mathbb{R}^d$ be a linear space and $\eps>0$. A  measure $\omega\in\mathscr{P}(\mathbb{R}^d)$ is said to be $(V,\,\eps)$-{\em concentrated } if there is a translate $V+y$ of $V$ such that $1-\eps$	of the mass of $\omega$ lies within a $\eps$-distance of $V+y$.
\end{definition}

\begin{definition}
	Let $V<\mathbb{R}^d$ be a linear space, $\eps>0$ and $m\ge1$. A  measure $\omega\in\mathscr{P}(\mathbb{R}^d)$ is $(V,\eps,m)$-{\em saturated }, if
	$$\frac1{m}H\left(\omega,\mathcal{L}_m\right)\ge \frac1{m}H\left(\pi_{V^{\bot}}\omega,\mathcal{L}_m\right)+\text{dim}V-\eps.$$
\end{definition}

Recall that $\pi_V$ denotes the orthographic projection function from $\mathbb{R}^d$ onto the subspace $V$, while $V^{\bot}$ represents the orthogonal complement of $V$.
 It's worth noting that $\alpha=\alpha(\phi)$ is defined in Theorem \ref{thm:ledrappier}.
     With these definitions, we can now state the main result of this section.

\begin{theorem}\label{thm:EntropyInverse}
	For any $R\ge 1,\,\eps>0$ and $m\ge M(R,\eps)$, there exists $\delta=\delta(R,\eps,m)>0$ 
	such that 
	 the following holds for $n\ge N(R,\eps,m,\delta)$.
	Consider  a  measure   $\eta\in\mathscr{P}(\mathbb{R}^d)$ with  $\text{diam}\left(\text{supp}(\eta)\right) \le R$.  Suppose there exists $\textbf{j}\in\Sigma$ such that
	\begin{equation}\nonumber
	\frac1{n}H\left(\eta*\pi_{\textbf{j}}\mu,\mathcal{L}_n\right)<\frac1{n}H\left(\pi_{\textbf{j}}\mu,\mathcal{L}_n\right)+\delta
	\end{equation}
	and
	$$ \left|\frac1{n}H\left(\pi_{\textbf{j}}\mu,\mathcal{L}_n\right)-\alpha\right|\le\delta.$$
	Then, there exists a sequence of linear spaces $V_0,V_1,\ldots,V_{n-1}<\mathbb{R}^d$  such that
	\begin{equation}\label{eq:ei2}
	\mathbb{P}^{\eta}_{0\le i<n}\left(\eta_{x,\,i}\text{ is } (V_i,\eps/b^i)-\text{concentrated} \right)>1-\eps
	\end{equation}
	and
	\begin{equation}\label{eq:ei3}
	\frac1n\sum_{i=0}^{n-1}\nu^{\,\mathbb{Z}_+}\left(\left\{\textbf{u}\in\Sigma\::\:\pi_{\textbf{u}}\mu\text{ is } (V_i,\eps,m)-\text{saturated}\right\} \right)>1-\eps.
	\end{equation}	
\end{theorem}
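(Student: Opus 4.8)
The plan is to deduce Theorem~\ref{thm:EntropyInverse} from Hochman's higher-dimensional entropy inverse theorem \cite[Theorem 4.12]{Hochman2015} by exploiting the self-affine structure \eqref{eqn:self`affine'} of $\pi_{\textbf{j}}\mu$ together with the comparability estimates \eqref{lem:affinetransform}, \eqref{eq:ero-minus} and the local transformation identity \eqref{TransformA}. First I would apply \cite[Theorem 4.12]{Hochman2015} to the pair $(\eta,\pi_{\textbf{j}}\mu)$: the hypothesis that $\frac1nH(\eta*\pi_{\textbf{j}}\mu,\mathcal{L}_n)$ is within $\delta$ of $\frac1nH(\pi_{\textbf{j}}\mu,\mathcal{L}_n)$ (non-increase of convolution entropy) yields, for a proportion $>1-\eps$ of scales $i<n$, that \emph{most} $\mathcal{L}_i$-components $\eta_{x,i}$ are $(V_i,\eps/b^i)$-concentrated for some linear subspace $V_i$, while simultaneously most components $(\pi_{\textbf{j}}\mu)_{y,i}$ are $(V_i,\eps,m)$-saturated (this is the conclusion of Hochman's theorem applied at the appropriate scale, after the usual rescaling so that a level-$i$ dyadic cell is blown up to unit size). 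The first conclusion \eqref{eq:ei2} is then immediate. The work lies in upgrading the \emph{local} (component-wise) saturation of $\pi_{\textbf{j}}\mu$ into the \emph{global} saturation statement \eqref{eq:ei3} for the family $\{\pi_{\textbf{u}}\mu\}_{\textbf{u}\in\Sigma}$.

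The key step is to identify the level-$i$ components of $\pi_{\textbf{j}}\mu$ with (affine copies of) the measures $\pi_{\textbf{u}}\mu$ themselves. By \eqref{eqn:self`affine'} we have $\pi_{\textbf{j}}\mu=b^{-n}\sum_{\textbf{a}\in\Lambda^n}\pi_{\textbf{j}}g_{\textbf{a}}\mu$, and by \eqref{TransformA}, $\pi_{\textbf{j}}g_{\textbf{a}}(x,y)=\lambda^{|\textbf{a}|}\pi_{\textbf{a}^*\textbf{j}}(x,y)+\pi_{\textbf{j}}g_{\textbf{a}}(0,0)$; hence each piece $\pi_{\textbf{j}}g_{\textbf{a}}\mu$ is an affine image of $\pi_{\textbf{a}^*\textbf{j}}\mu$ with contraction ratio $\lambda^{n}$, i.e.\ it lives essentially at scale $\approx b^{n\log_b\lambda}$. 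Choosing the level $i$ to correspond to $\textbf{a}$-words of length $\approx i/\log_{\lambda^{-1}}b$ and using that $\text{diam}(\text{supp}\,\pi_{\textbf{a}}g_{\textbf{b}}\mu)=O(\gamma^{|\textbf{b}|})$ (Claim~\ref{cla:minus} plus the geometry of $g_{\textbf{b}}$), one sees that the $\mathcal{L}_i$-components of $\pi_{\textbf{j}}\mu$ are, up to bounded error in \eqref{eq:comparable} and \eqref{lem:affinetransform}, exactly the rescaled measures $\pi_{\textbf{a}^*\textbf{j}}\mu$. Crucially, the measure that $\textbf{a}\mapsto \textbf{a}^*\textbf{j}$ induces on $\Sigma$ (from the uniform measure on $\Lambda^n$) is precisely the distribution whose level-$n$ cylinder statistics match $\nu^{\,\mathbb{Z}_+}$; so "a $(1-\eps)$-proportion of components of $\pi_{\textbf{j}}\mu$ are $(V_i,\eps,m)$-saturated" translates into "$\nu^{\,\mathbb{Z}_+}(\{\textbf{u}:\pi_{\textbf{u}}\mu\text{ is }(V_i,\eps,m)\text{-saturated}\})>1-\eps$" after absorbing the affine normalisation (saturation is an affine-invariant notion up to the $O(C)$ errors in \eqref{lem:affinetransform}, which are negligible once $m\ge M(R,\eps)$ and $n$ is large). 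Averaging over $i$ gives \eqref{eq:ei3}.

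The main obstacle I anticipate is the bookkeeping that matches scales consistently: Hochman's theorem produces subspaces $V_i$ indexed by dyadic scales $i<n$ of $\mathbb{R}^d$, whereas the self-affine decomposition of $\pi_{\textbf{j}}\mu$ naturally refines by \emph{symbolic} levels $|\textbf{a}|=k$, and the conversion factor between the two is $\log_{\lambda^{-1}}b$ — which need not be an integer. One must therefore group symbolic levels into blocks, control the within-block variation of the components (here Claim~\ref{cla:minus} and the uniform $C^2$-bounds on $\phi$ give $O(\gamma^k)$ closeness, hence $O(1)$ entropy error via \eqref{eq:ero-minus}), and verify that the proportion estimates survive this coarsening — this is where the quantifier order "$m\ge M(R,\eps)$, then $\delta=\delta(R,\eps,m)$, then $n\ge N(R,\eps,m,\delta)$" is essential, since it lets each error term ($m/n$, $\log R/n$, the bounded constants $C$ divided by $m$, and Hochman's own $\delta$-to-$\eps$ dependence) be made small in the right order. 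A secondary technical point is that \cite[Theorem 4.12]{Hochman2015} is stated for a single pair of measures and its conclusion is about components of $\eta$ and of $\pi_{\textbf{j}}\mu$; extracting \eqref{eq:ei3} requires knowing the component measures of $\pi_{\textbf{j}}\mu$ explicitly, which is exactly what \eqref{TransformA} and \eqref{eqn:self`affine'} supply, so no new dynamics is needed beyond careful estimation.
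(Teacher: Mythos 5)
There is a genuine gap at the heart of your plan: the claim that the $\mathcal{L}_i$-components of $\pi_{\textbf{j}}\mu$ are, up to bounded error, rescaled copies of the measures $\pi_{\textbf{a}^*\textbf{j}}\mu$. The decomposition $\pi_{\textbf{j}}\mu=b^{-\hat{i}}\sum_{\textbf{a}\in\Lambda^{\hat{i}}}\pi_{\textbf{j}}g_{\textbf{a}}\mu$ together with \eqref{TransformA} does express $\pi_{\textbf{j}}\mu$ as a convex combination of affine copies of the $\pi_{\textbf{a}^*\textbf{j}}\mu$ at diameter $O(b^{-i})$, but these pieces are neither contained in single cells of $\mathcal{L}_i$ nor pairwise non-overlapping: a given cell typically meets many pieces, and each piece is cut by cell boundaries. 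Hence a component $(\pi_{\textbf{j}}\mu)_{y,i}$ is a mixture of fragments of many affine copies, not an affine copy of some $\pi_{\textbf{u}}\mu$, and saturation of such a mixture does not transfer to saturation of the individual measures $\pi_{\textbf{u}}\mu$ (the possible overlaps are exactly the difficulty the theorem is designed to handle, so they cannot be assumed away). This is precisely the obstruction the authors point out when they say that applying Hochman's two-measure inverse theorem directly only yields \emph{local} saturation of $\pi_{\textbf{j}}\mu$, which is insufficient; your proposal is, in effect, an attempt to use that direct route and then repair it by an identification that fails.

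The paper's actual argument is structured differently and you would need its three ingredients. First, Theorem \ref{thm:hochman} (Hochman's Theorem 4.12) is applied to $\eta$ alone: it produces the subspaces $V_i$, the concentration statement \eqref{eq:ei2} for components of $\eta$, and saturation for components of the self-convolution $\omega=\eta^{*k}$ — not for components of $\pi_{\textbf{j}}\mu$. Second, Lemma \ref{lem:hoc} converts the hypothesis on $\eta*\pi_{\textbf{j}}\mu$ into the bound $\frac1nH(\omega*\pi_{\textbf{j}}\mu,\mathcal{L}_n)\le\alpha+O(\eps^8)$, and Lemma \ref{lem:eilow} (where the self-affine decomposition and Claim \ref{cla:minus} are genuinely used, via concavity and a randomization of the tail so that the average is over $\textbf{a}\sim\nu^{\,\mathbb{Z}_+}$) bounds this quantity from below by the average of the local convolution entropies $\frac1mH\bigl(S_{b^i}(\omega_{y,i})*\pi_{\textbf{z}}\mu,\mathcal{L}_m\bigr)$. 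A Chebyshev-type argument (Lemma \ref{lem:P}), using the second hypothesis $\bigl|\frac1nH(\pi_{\textbf{j}}\mu,\mathcal{L}_n)-\alpha\bigr|\le\delta$ and the fact that most $\pi_{\textbf{z}}\mu$ have scale-$m$ entropy near $\alpha$, shows that for most $i$ one can choose a component $S_{b^i}(\omega_{y_i,i})$ that is $(V_i,\eps^8,m)$-saturated and whose convolution with most $\pi_{\textbf{z}}\mu$ does not increase entropy. Third, the saturation-transfer Lemma \ref{lem:saturated-trans} then passes saturation from that component to the measures $\pi_{\textbf{z}}\mu$ themselves, giving \eqref{eq:ei3}. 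Your proposal contains none of the self-convolution step, the local-convolution lower bound, or the transfer lemma, and the identification it substitutes for them is not valid.
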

\begin{remark}
 The first assumption is standard in Hochman's entropy inverse theorem \cite[Theorem 2.8]{hochman2014self,Hochman2015}, so we also call  Theorem \ref{thm:EntropyInverse} {\em entropy inverse theorem} for measure $\pi_{\textbf{j}}\mu$.  Additionally, the second condition holds for almost all $\textbf{j}\in\Sigma$ when $n$ is large enough.
\end{remark}

\subsection{Basic properties}The next lemma is from  \cite[Lemma 4.1]{Hochman2015}.
   \begin{lemma}\label{lem:trivialC}
  	For $m\in\mathbb{Z}_+$ and $\eta,\omega\in\mathscr{P}(\mathbb{R}^d)$, we have
  	$$\frac1{m}H\left(\eta*\omega,\mathcal{L}_m\right)\ge\frac1{m}H\left(\omega,\mathcal{L}_m\right)-O(\frac1m).$$
  \end{lemma}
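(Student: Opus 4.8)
The statement to prove is Lemma~\ref{lem:trivialC}: for any $m \in \mathbb{Z}_+$ and $\eta, \omega \in \mathscr{P}(\mathbb{R}^d)$, one has $\frac1m H(\eta * \omega, \mathcal{L}_m) \ge \frac1m H(\omega, \mathcal{L}_m) - O(1/m)$. The intuition is that convolving with a probability measure cannot decrease entropy by more than a bounded amount coming from the mismatch between the dyadic grid and translation. The plan is to reduce to the case where $\eta = \delta_t$ is a point mass, handle that case via a grid-alignment estimate, and then recombine using concavity of entropy.

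\begin{proof}
First consider the special case $\eta = \delta_t$ for a fixed $t \in \mathbb{R}^d$. Then $\delta_t * \omega$ is the translate of $\omega$ by $t$, i.e.\ $(\delta_t * \omega)(A) = \omega(A - t)$. Writing $f(x) = x + t$, the measure $\delta_t * \omega = f\omega$, and by the translation-invariance estimate \eqref{lem:affinetransform} applied with $a = 1$ (so that $[\log_b a] = 0$), we get $|H(f\omega, \mathcal{L}_m) - H(\omega, \mathcal{L}_m)| \le C$ for an absolute constant $C$. Hence $\frac1m H(\delta_t * \omega, \mathcal{L}_m) \ge \frac1m H(\omega, \mathcal{L}_m) - C/m$.

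For the general case, write $\eta = \int \delta_t \, d\eta(t)$, so that $\eta * \omega = \int (\delta_t * \omega) \, d\eta(t)$. By the concavity of entropy (the first of the two Concavity inequalities recalled in \S\ref{subsec:entro}, extended from finite convex combinations to integrals in the usual way), we have
\begin{equation}\nonumber
H(\eta * \omega, \mathcal{L}_m) \ge \int H(\delta_t * \omega, \mathcal{L}_m) \, d\eta(t).
\end{equation}
Combining with the point-mass estimate, $\int H(\delta_t * \omega, \mathcal{L}_m)\, d\eta(t) \ge H(\omega, \mathcal{L}_m) - C$, and dividing by $m$ gives the claim with the implied constant equal to $C$.

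The only delicate point is justifying the integral form of the concavity inequality, since $\S\ref{subsec:entro}$ states it for two measures (hence, by iteration, for finite convex combinations); passing to a general mixture $\int \delta_t\, d\eta(t)$ requires approximating $\eta$ by finitely supported measures and a limiting argument, using that $\mathcal{L}_m$ is a fixed countable partition so that $\omega \mapsto H(\omega, \mathcal{L}_m)$ is measurable and the relevant quantities are finite (recall we always assume $H(\cdot, \mathcal{Q}) < \infty$, and supports are compact). This is routine; alternatively one can cite the standard fact that $\omega \mapsto H(\omega, \mathcal{Q})$ is concave as a functional on $\mathscr{P}$ in the sense that $H(\int \omega_t \, dP(t), \mathcal{Q}) \ge \int H(\omega_t, \mathcal{Q})\, dP(t)$. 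I expect no real obstacle here; the content of the lemma is entirely contained in the grid-alignment bound \eqref{lem:affinetransform}.
\end{proof}
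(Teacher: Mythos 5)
Your proof is correct, and it is essentially the standard argument behind this fact: the paper itself gives no proof, citing \cite[Lemma 4.1]{Hochman2015}, whose proof is precisely your combination of the translation case (the $a=1$ instance of \eqref{lem:affinetransform}) with concavity of entropy applied to the mixture $\eta*\omega=\int(\delta_t*\omega)\,d\eta(t)$. The integral form of concavity that you flag is indeed routine (Jensen applied termwise over the partition, with $t\mapsto\omega(Q-t)$ measurable), so there is no gap.
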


  Given a linear space $V<\mathbb{R}^d$ and $m\in\mathbb{Z}_+$, we define
   \begin{equation}\nonumber
   \mathcal{L}_m^V:=\pi^{-1}_V\mathcal{L}_m
   \end{equation}
    following \cite{Hochman2015}.  For any $\omega\in\mathscr{P}(\mathbb{R}^d)$, we have
    \begin{equation}\label{eq:entro-pull}
    H\left(\pi_V\omega,\mathcal{L}_m\right)=H\left(\omega,\mathcal{L}_m^V\right).
   \end{equation}
It is easy to verify that,
\begin{equation}\label{eq:compa}
    \mathcal{L}_m \text{ and } \mathcal{L}_m^V \vee \mathcal{L}_m^{V^{\bot}} \text{ are } O(1)-\text{commensurable}.
\end{equation}
   This implies:
\begin{equation}\nonumber
\begin{split}   
    H\left(\omega,\mathcal{L}_m\right)&=H\left(\omega,\mathcal{L}_m^V \vee \mathcal{L}_m^{V^{\bot}}\right)+O(1)=H\left(\omega,\mathcal{L}_m^V \vee \mathcal{L}_m^{V^{\bot}}\mid \mathcal{L}_m^{V}\right)+H\left(\omega,\mathcal{L}_m^V\right)+O(1)\\&=\int H\left(\omega_{\mathcal{L}_m^{V}(x)},\mathcal{L}_m^V \vee \mathcal{L}_m^{V^{\bot}}\right)d\omega(x)+H\left(\omega,\mathcal{L}_m^V\right)+O(1)\\&=\int\left( H\left(\omega_{\mathcal{L}_m^{V}(x)},\mathcal{L}_m\right)+O(1)\right)d\omega(x)+H\left(\omega,\mathcal{L}_m^V\right)+O(1).
\end{split}
\end{equation}
Thus we obtain:
\begin{equation}\label{eq:CE}
    H\left(\omega,\mathcal{L}_m\right)=H\left(\omega,\mathcal{L}_m\mid\mathcal{L}_m^V\right)+H\left(\omega,\mathcal{L}_m^V\right)+O(1)\quad\text{for any linear space }V<\mathbb{R}^d.
\end{equation}
This will be frequently used in our paper without further explanation.
If $\omega$ is $(V, \eps, m)$-saturated, meaning $\frac1{m}H\left(\omega,\mathcal{L}_m\right)- \frac1{m}H\left(\pi_{V^{\bot}}\omega,\mathcal{L}_m\right)\ge\text{dim}V-\eps$, then (\ref{eq:CE}) implies:
\begin{equation}\label{eq:satur}
\frac1mH\left(\omega,\mathcal{L}_m\mid\mathcal{L}_m^{V^{\bot}}\right)\ge \text{dim}V-\eps-O(\frac1m).
\end{equation}
      Combining  the concavity of conditional entropy with (\ref{eq:CE}),  the following can be proved using similar methods as in  \cite[Lemma 4.1]{Hochman2015}.
    \begin{lemma}\label{lem:trivialCC}
    	Given a linear space $V<\mathbb{R}^d$,
    	for $m\in\mathbb{Z}_+$ and $\eta,\omega\in\mathscr{P}(\mathbb{R}^d)$, we have
    	$$\frac1{m}H\left(\eta*\omega,\mathcal{L}_m\mid\mathcal{L}_m^V\right)\ge\frac1{m}H\left(\omega,\mathcal{L}_m\mid\mathcal{L}_m^V\right)-O(\frac1m).$$
    \end{lemma}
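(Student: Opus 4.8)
\textbf{Proof proposal for Lemma \ref{lem:trivialCC}.}

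The plan is to reduce the conditional-entropy statement to the unconditional inequality of Lemma \ref{lem:trivialC} applied fiberwise, after pushing everything forward by $\pi_{V^{\bot}}$ and using the fiber decomposition encoded in (\ref{eq:CE}). First I would rewrite $\frac1m H(\eta*\omega,\mathcal{L}_m\mid\mathcal{L}_m^V)$ using the identity $H(\cdot,\mathcal{L}_m\mid\mathcal{L}_m^V)=H(\cdot,\mathcal{L}_m)-H(\cdot,\mathcal{L}_m^V)+O(1)$ from (\ref{eq:CE}), and similarly for $\omega$, so that the desired inequality becomes
$$\tfrac1m H(\eta*\omega,\mathcal{L}_m)-\tfrac1m H(\eta*\omega,\mathcal{L}_m^V)\ge \tfrac1m H(\omega,\mathcal{L}_m)-\tfrac1m H(\omega,\mathcal{L}_m^V)-O(\tfrac1m).$$
By Lemma \ref{lem:trivialC} the leading terms already satisfy $\frac1m H(\eta*\omega,\mathcal{L}_m)\ge\frac1m H(\omega,\mathcal{L}_m)-O(\frac1m)$, so it would be \emph{enough} to control the subtracted projected terms in the opposite direction, i.e. to show $\frac1m H(\eta*\omega,\mathcal{L}_m^V)\le \frac1m H(\omega,\mathcal{L}_m^V)+O(\frac1m)$ — but that is false in general (convolution can increase entropy of a projection), so this crude route does not close. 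This tells me the correct approach must genuinely work on the fibers of $\pi_V$ rather than subtracting global quantities.

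So instead I would argue as follows. Using (\ref{eq:entro-pull}) and the relation $\pi_{V^{\bot}}(\eta*\omega)=\pi_{V^{\bot}}\eta*\pi_{V^{\bot}}\omega$, together with the fact that $H(\nu,\mathcal{L}_m\mid\mathcal{L}_m^V)$ measures the entropy of $\nu$ along the $V^{\bot}$-fibers, I would disintegrate $\eta*\omega$ over the $V$-coordinate. Concretely, write $\eta*\omega=\int (\delta_{\eta\text{-atom}}*\omega)\,d\eta$; for each fixed translate, conditioning on a $\mathcal{L}_m^V$-cell and projecting by $\pi_{V^{\bot}}$ turns the conditional entropy into the $\mathcal{L}_m$-entropy of a measure on $V^{\bot}$ that is (up to the $O(1)$ commensurability of (\ref{eq:compa}) and a bounded translation) a translate of the corresponding fiber-conditional measure of $\omega$. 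Then apply the concavity of conditional entropy to average over $\eta$: this yields
$$\tfrac1m H(\eta*\omega,\mathcal{L}_m\mid\mathcal{L}_m^V)\ge \int \tfrac1m H(\delta_t*\omega,\mathcal{L}_m\mid\mathcal{L}_m^V)\,d\eta(t)-O(\tfrac1m),$$
and translation-invariance (\ref{lem:affinetransform}) of $b$-adic entropy (with $a=1$) shows each integrand equals $\frac1m H(\omega,\mathcal{L}_m\mid\mathcal{L}_m^V)+O(\frac1m)$, giving the claim. This is exactly the pattern of the proof of \cite[Lemma 4.1]{Hochman2015}, which is why the statement asserts it follows "using similar methods."

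The main obstacle I anticipate is bookkeeping the $O(1)$-commensurability and bounded-translation errors cleanly when passing between $\mathcal{L}_m$, $\mathcal{L}_m^V\vee\mathcal{L}_m^{V^{\bot}}$, and the fiber measures — in particular making sure that the error terms stay $O(1)$ (hence $O(1/m)$ after division by $m$) uniformly in the $\eta$-translate and do not secretly depend on the diameter of $\mathrm{supp}(\eta)$ in a way that breaks the bound; the concavity step itself and the translation-invariance step are routine once the disintegration is set up. A secondary subtlety is that $\eta$ need not be finitely supported, so the "average over $\eta$" must be phrased as an integral and the concavity of conditional entropy invoked in its integral form, which is standard but should be stated carefully.
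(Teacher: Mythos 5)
Your proposal is correct and is essentially the argument the paper intends (the paper only sketches it, citing concavity of conditional entropy, (\ref{eq:CE}), and the method of Hochman's Lemma 4.1): write $\eta*\omega=\int\delta_t*\omega\,d\eta(t)$, apply concavity of conditional entropy in integral form, and use that $H(\delta_t*\omega,\mathcal{L}_m\mid\mathcal{L}_m^V)=H(\omega,\mathcal{L}_m\mid\mathcal{L}_m^V)+O(1)$ uniformly in $t$, which follows from (\ref{eq:CE}) together with the $O(1)$ translation-invariance of $b$-adic entropy applied to both $\mathcal{L}_m$ and $\mathcal{L}_m^V$ (noting $\pi_V(\delta_t*\omega)=\delta_{\pi_Vt}*\pi_V\omega$), so the errors are uniform and independent of $\operatorname{supp}(\eta)$. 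Your rejected ``global subtraction'' route is rightly discarded, and the rest closes the proof as the paper indicates.
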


\subsection{Hochman's results} For $r>0$ and $c,x\in\mathbb{R}^d$, define $S_r(x):=rx$ and $T_c(x):=x+c$. Given a measure
$\omega\in\mathscr{P}(\mathbb{R}^d)$, we denote $r\omega:=S_r\omega$ for convenience. The following is  a direct consequence of \cite[Theorem 4.12]{Hochman2015}.

\begin{theorem}[Hochman]\label{thm:hochman}
For any  $\eps>0$, let $m\ge M(\eps)$. Then there exists a $1\le k\le K(\eps,m)$ such that for sufficiently large $n\ge N(\eps,m,k)$, the following holds. For any $\eta\in\mathscr{P}(\mathbb{R}^d)$ there is a sequence $V_0,\ldots,V_{n-1}$
of subspaces of $\mathbb{R}^d$ such that,	writing $\omega=\eta^{*k}$,
$$\mathbb{P}^{\omega}_{0\le i<n}\left(\,S_{b^i}(\omega_{y,\,i})\text{ is }(V_i,\eps,m)-\text{saturated}\right)>1-\eps$$
and
$$\mathbb{P}^{\eta}_{0\le i<n}\left(\,\eta_{y,\,i}\text{ is }(V_i,\eps/b^i)-\text{concentrated}\right)>1-\eps.$$
\end{theorem}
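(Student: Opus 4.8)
The plan is to deduce Theorem~\ref{thm:hochman} from Hochman's two‑measure entropy inverse theorem \cite[Theorem~4.12]{Hochman2015} together with a pigeonhole over the convolution powers of $\eta$. In the form we need, \cite[Theorem~4.12]{Hochman2015} (rephrased for the base‑$b$ partitions $\mathcal{L}_n$) asserts: for every $\eps>0$ and $m\ge M(\eps)$ there is $\delta=\delta(\eps,m)>0$ such that, for all sufficiently large $n$, whenever $\mu,\nu\in\mathscr{P}(\mathbb{R}^d)$ satisfy $\frac1nH(\mu*\nu,\mathcal{L}_n)<\frac1nH(\mu,\mathcal{L}_n)+\delta$, there exist subspaces $V_0,\dots,V_{n-1}<\mathbb{R}^d$ with
\[
\mathbb{P}^{\mu}_{0\le i<n}\left(S_{b^i}(\mu_{y,\,i})\text{ is }(V_i,\eps,m)-\text{saturated}\right)>1-\eps\quad\text{and}\quad \mathbb{P}^{\nu}_{0\le i<n}\left(\nu_{y,\,i}\text{ is }(V_i,\eps/b^i)-\text{concentrated}\right)>1-\eps .
\]
Passing from Hochman's $2$‑adic partitions to $\mathcal{L}_n$, and from his notions of saturation and concentration to the ones fixed in \S\ref{sec:eit}, only introduces the $O(1)$‑commensurability errors recorded in \S\ref{subsec:entro}, which are absorbed into $\delta$ and $m$; this translation is routine.

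\textbf{The pigeonhole step.} Fix $\eps>0$, $m\ge M(\eps)$ and the resulting $\delta=\delta(\eps,m)$, and set $K=K(\eps,m):=\lceil (d+1)/\delta\rceil$. Given $\eta\in\mathscr{P}(\mathbb{R}^d)$, put $a_k:=\frac1nH(\eta^{*k},\mathcal{L}_n)$ for $1\le k\le K+1$. Since $\mathrm{supp}(\eta^{*k})$ has diameter $O(k)$, the measure $\eta^{*k}$ is carried by $O\big((k\,b^n)^d\big)$ cells of $\mathcal{L}_n$, so $a_k\le d+O\big(\tfrac{\log k}{n}\big)$ by \eqref{eq:upper-boun-entro}; also $a_1\ge 0$. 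Hence, for $n$ large (depending on $K$ and on an upper bound for $\mathrm{diam}(\mathrm{supp}\,\eta)$), we have $a_{K+1}-a_1\le d+\delta/2<d+1$, whereas if every increment satisfied $a_{k+1}-a_k\ge\delta$ we would get $a_{K+1}-a_1=\sum_{k=1}^{K}(a_{k+1}-a_k)\ge K\delta\ge d+1$. Therefore there exists $1\le k\le K$ with $a_{k+1}<a_k+\delta$. (By Lemma~\ref{lem:trivialC} the sequence $(a_k)$ is in fact nondecreasing up to an $O(1/n)$ error, but this is not needed.)

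\textbf{Conclusion and the one delicate point.} For this $k$ apply the form of \cite[Theorem~4.12]{Hochman2015} recalled above with $\mu:=\eta^{*k}$ and $\nu:=\eta$, so that $\mu*\nu=\eta^{*(k+1)}$ and the hypothesis $\frac1nH(\eta^{*(k+1)},\mathcal{L}_n)<\frac1nH(\eta^{*k},\mathcal{L}_n)+\delta$ holds by the choice of $k$; writing $\omega=\eta^{*k}$, the two conclusions are precisely the two displayed assertions of Theorem~\ref{thm:hochman}. There is no serious obstacle; the only point that needs care is that the cell‑counting bound $a_k\le d+O\big(\log(k\,\mathrm{diam}(\mathrm{supp}\,\eta))/n\big)$ forces the threshold $N=N(\eps,m,k)$ to also depend on an a~priori bound for $\mathrm{diam}(\mathrm{supp}\,\eta)$. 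This is harmless for us, since Theorem~\ref{thm:hochman} will be invoked only for measures $\eta$ supported on a set of diameter $\le R$ with $R$ fixed (compare the hypotheses of Theorem~\ref{thm:EntropyInverse}), and all constants in \S\ref{sec:eit} are allowed to depend on $R$. Apart from this bookkeeping and the $2$‑adic/$b$‑adic translation, the statement is immediate from \cite[Theorem~4.12]{Hochman2015}, which is why we regard it as a direct consequence.
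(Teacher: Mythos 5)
Your argument is correct, and it is in substance the right way to obtain this statement; but note that the paper itself offers no proof at all here --- it simply declares the theorem to be ``a direct consequence of \cite[Theorem 4.12]{Hochman2015}'', whose statement is essentially the one above (for measures on $[0,1)^d$ and dyadic partitions). What you have done is reconstruct the derivation from the \emph{two-measure} inverse theorem (\cite[Theorem 2.8]{Hochman2015}) via the standard pigeonhole on the entropies $a_k=\frac1nH(\eta^{*k},\mathcal{L}_n)$ of the convolution powers; this is precisely the mechanism behind Hochman's Theorem 4.12, so the two routes coincide up to where one draws the line of citation. Your pigeonhole is sound: with $K=\lceil(d+1)/\delta\rceil$ and $a_{K+1}\le d+o(1)$, some increment must be $<\delta$, and applying the inverse theorem to the pair $(\eta^{*k},\eta)$ yields exactly the saturation/concentration conclusions on a common sequence $V_0,\dots,V_{n-1}$. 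Two bookkeeping remarks. First, in your argument $k$ depends on $\eta$ (and on $n$), whereas the theorem's quantifiers place ``there exists $k\le K$'' before ``for any $\eta$''; this is an infelicity of the statement rather than of your proof --- the way Theorem~\ref{thm:hochman} is actually invoked in the proof of Theorem~\ref{thm:EntropyInverse} only requires ``there is some $1\le k\le K$ and subspaces $V_i$'', and since $N(\eps,m,k)$ can be replaced by $\max_{k\le K}N(\eps,m,k)$ the order is immaterial. Second, you are right that the cell-counting bound makes $N$ depend on an a priori bound for $\mathrm{diam}(\mathrm{supp}\,\eta)$ (equivalently, one first normalizes $\eta$ into $[0,1)^d$ at the cost of an $O(1)$ shift of scales via \eqref{lem:affinetransform}); since the theorem is only ever applied to measures with $\mathrm{diam}(\mathrm{supp}\,\eta)\le R$ for the fixed $R$ of Theorem~\ref{thm:EntropyInverse}, this is harmless, exactly as you say.
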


The following lemma  is \cite[Corollary 4.16]{Hochman2015}.
\begin{lemma}\label{lem:hoc}
	Let $\omega,\,\eta\in\mathscr{P}(\mathbb{R}^d)$. Then
	$$\frac1n H\left(\omega*(\eta^{*k}),\mathcal{L}_n\right)-\frac1n H\left(\omega,\mathcal{L}_n\right)\le k\cdot\left(\frac1n H\left(\omega*\eta,\mathcal{L}_n\right)-\frac1n H\left(\xi,\mathcal{L}_n\right) \right)+O(\frac kn).$$
\end{lemma}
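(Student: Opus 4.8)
The plan is to telescope over the $k$ copies of $\eta$ and reduce everything to a single-step submodularity estimate for convolution entropy. Write $\nu_j=\omega*\eta^{*j}$ for $0\le j\le k$ (so $\nu_0=\omega$ and $\nu_k=\omega*\eta^{*k}$, with $\eta^{*0}=\delta_0$), and set $\Delta_j=\frac1nH(\nu_{j+1},\mathcal{L}_n)-\frac1nH(\nu_j,\mathcal{L}_n)$. Since all measures involved lie in $\mathscr{P}(\mathbb{R}^d)$, these entropies are finite and
\[
\frac1nH\left(\omega*\eta^{*k},\mathcal{L}_n\right)-\frac1nH\left(\omega,\mathcal{L}_n\right)=\sum_{j=0}^{k-1}\Delta_j .
\]
Thus it suffices to establish the single-step bound $\Delta_j\le\Delta_0+O(1/n)$ for every $0\le j<k$, with the implied constant depending only on $d$ and $b$; summing over $j$ then produces the claimed inequality with total error $O(k/n)$.

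To get the single-step bound I would prove the submodularity estimate
\[
H(\alpha*\beta*\gamma,\mathcal{L}_n)+H(\alpha,\mathcal{L}_n)\le H(\alpha*\beta,\mathcal{L}_n)+H(\alpha*\gamma,\mathcal{L}_n)+O(1)\qquad\left(\alpha,\beta,\gamma\in\mathscr{P}(\mathbb{R}^d)\right),
\]
and apply it with $\alpha=\omega$, $\beta=\eta$, $\gamma=\eta^{*j}$; this gives $H(\nu_{j+1},\mathcal{L}_n)-H(\nu_j,\mathcal{L}_n)\le H(\nu_1,\mathcal{L}_n)-H(\omega,\mathcal{L}_n)+O(1)$, which is exactly $\Delta_j\le\Delta_0+O(1/n)$ after dividing by $n$. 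For the submodularity estimate, let $X\sim\alpha$, $Y\sim\beta$, $Z\sim\gamma$ be independent, and for a random point $P\in\mathbb{R}^d$ let $\widetilde P\in b^{-n}\mathbb{Z}^d$ denote the corner of the $\mathcal{L}_n$-cell containing $P$ (a random variable with finitely many values since the supports are compact). Because the algebraic sum of two or three $\mathcal{L}_n$-cells meets only $O_d(1)$ cells of $\mathcal{L}_n$, one has $|H(\widetilde{X+Y})-H(\widetilde X+\widetilde Y)|=O_d(1)$, and likewise for $X+Z$ and for $X+Y+Z$; these are estimates of exactly the type recorded in (\ref{eq:comparable})--(\ref{eq:ero-minus}), with constants depending only on $d$. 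It therefore remains to prove the purely combinatorial inequality $H(\widetilde X+\widetilde Y+\widetilde Z)+H(\widetilde X)\le H(\widetilde X+\widetilde Y)+H(\widetilde X+\widetilde Z)$ for independent discrete random vectors, and this follows at once from submodularity of Shannon entropy $H(P',Q',R')+H(R')\le H(P',R')+H(Q',R')$: take $R'=\widetilde X+\widetilde Y+\widetilde Z$, $P'=\widetilde X+\widetilde Y$, $Q'=\widetilde X+\widetilde Z$, note that $(P',Q',R')$ determines $(\widetilde X,\widetilde Y,\widetilde Z)$ while $(P',R')\leftrightarrow(\widetilde X+\widetilde Y,\widetilde Z)$ and $(Q',R')\leftrightarrow(\widetilde X+\widetilde Z,\widetilde Y)$, so by independence the inequality becomes $H(\widetilde X)+H(\widetilde Y)+H(\widetilde Z)+H(R')\le\bigl(H(\widetilde X+\widetilde Y)+H(\widetilde Z)\bigr)+\bigl(H(\widetilde X+\widetilde Z)+H(\widetilde Y)\bigr)$, and the terms $H(\widetilde Y)+H(\widetilde Z)$ cancel.

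The step I expect to require the most care is the bookkeeping of the carry errors: one must check that each passage between $\widetilde{X+Y}$ (respectively $\widetilde{X+Y+Z}$) and $\widetilde X+\widetilde Y$ (respectively $\widetilde X+\widetilde Y+\widetilde Z$) costs only an additive constant that is uniform in $n$, in $\alpha,\beta,\gamma$, and in $j$ — otherwise the accumulated error would degrade below $O(k/n)$. This is routine given the estimates already collected in \S\ref{subsec:entro}, so the only genuinely new ingredient is the choice $R'=\widetilde X+\widetilde Y+\widetilde Z$ in the submodularity step above. (This lemma is \cite[Corollary 4.16]{Hochman2015}, and the argument sketched here is the one there.)
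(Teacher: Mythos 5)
Your proposal is correct: the paper gives no proof of this lemma (it simply cites \cite[Corollary 4.16]{Hochman2015}), and your telescoping argument combined with the Kaimanovich--Vershik/Madiman submodularity step $H(\widetilde X+\widetilde Y+\widetilde Z)+H(\widetilde X)\le H(\widetilde X+\widetilde Y)+H(\widetilde X+\widetilde Z)$, with $O(1)$ discretization errors handled as in \S\ref{subsec:entro}, is exactly the standard proof of that cited result. Your reading of the misprinted $\xi$ as $\omega$ is also the intended one.
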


\subsection{Proof of Theorem \ref{thm:EntropyInverse}}
We will frequently utilize the following fundamental fact in probability theory in this paper, details are omitted for brevity.
\begin{lemma}\label{lem:P}
	Let $X$ be a non-negative random variable. For any $M\ge0$ and $\eps>0$, if $\,\mathbb{E}X\le M+\eps$
	and\, $\mathbb{P}\left(X\ge M-\eps\right)\ge 1-\eps$, then
	$$\mathbb{P}\left(X\le M+\sqrt{\eps}\right)\ge 1-O\left((1+M)\sqrt{\eps}\right).$$
\end{lemma}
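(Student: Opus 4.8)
The plan is to reduce the statement to Markov's inequality applied to a suitably shifted and truncated version of $X$, using the first hypothesis to control the mean and the second hypothesis (together with $X\ge 0$) to control the contribution coming from the exceptional event. Concretely, set $A=\{X\ge M-\eps\}$ and consider the non-negative random variable
\[
Z=(X-M+\eps)^+ .
\]
Since $A=\{X-M+\eps\ge 0\}$, one has $Z=(X-M+\eps)\mathbf 1_A$, i.e. $Z$ vanishes off $A$; in particular $\{X>M+\sqrt\eps\}\subseteq\{Z\ge\sqrt\eps\}$.

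The first step is to bound $\mathbb E Z$. Writing $\mathbb E Z=\mathbb E[(X-M+\eps)]-\mathbb E[(X-M+\eps)\mathbf 1_{A^c}]$, the first hypothesis $\mathbb E X\le M+\eps$ gives $\mathbb E[(X-M+\eps)]\le 2\eps$. On $A^c=\{X<M-\eps\}$ we have $0\le X<M-\eps$, hence $X-M+\eps\ge -M$, so $\mathbb E[(X-M+\eps)\mathbf 1_{A^c}]\ge -M\,\mathbb P(A^c)\ge -M\eps$ by the second hypothesis $\mathbb P(A^c)\le\eps$. Combining the two estimates yields $\mathbb E Z\le 2\eps+M\eps=(M+2)\eps$.

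The second step is Markov's inequality: $\mathbb P(Z\ge\sqrt\eps)\le \mathbb E Z/\sqrt\eps\le (M+2)\sqrt\eps$. Since $\{X>M+\sqrt\eps\}\subseteq\{Z\ge\sqrt\eps\}$, this gives $\mathbb P(X>M+\sqrt\eps)\le (M+2)\sqrt\eps=O((1+M)\sqrt\eps)$, and passing to complements finishes the proof.

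This argument is essentially routine; there is no real obstacle. The only point requiring a little care is the contribution of the low-probability event $A^c$ to $\mathbb E Z$: here one must exploit both $X\ge 0$ (so that $X-M+\eps$ cannot be very negative) and $\mathbb P(A^c)\le\eps$, and it is exactly this term that produces the factor $M$ in the final bound, explaining why the conclusion is stated with $O((1+M)\sqrt\eps)$ rather than $O(\sqrt\eps)$.
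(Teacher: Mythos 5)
Your proof is correct, and since the paper states this lemma as a standard fact with the proof omitted, your Markov-type argument (bounding $\mathbb{E}[(X-M+\eps)^+]$ by $(M+2)\eps$ using both hypotheses and the non-negativity of $X$, then applying Markov at level $\sqrt{\eps}$) is exactly the routine argument the authors intend. Nothing further is needed.
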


{\bf Notation.} For $n\in\mathbb{N}$, denote by $\hat{n}$ the unique integer satisfying
\begin{equation}\label{eqn:n'}
\lambda^{\hat{n}}\le b^{-n}< \lambda^{\hat{n}-1}.
\end{equation}

Given   $m\ge1$ and   $\omega\in\mathscr{P}(\mathbb{R}^d)$, for all $i\ge0$ and $y\in \mathbb{R}^d$ such that $\omega({\mathcal{L}_{i}(y)})>0$, we
write,
\begin{equation}\label{ei22}
Q^{\omega}_m(i,y)=\int\frac1m H\left(\omega_{y,\,i}*\lambda^{\hat{i}}(\pi_{\textbf{a}}\mu),\mathcal{L}_{m+i}\right)\,d\nu^{\,\mathbb{Z}_+}(\textbf{a}).
\end{equation}
Recall $\omega_{y,\,i}=\frac1{\omega({\mathcal{L}_{i}(y)})}\omega|_{\mathcal{L}_{i}(y)}$ and $\lambda^{\hat{i}}(\pi_{\textbf{a}}\mu)=S_{\lambda^{\hat{i}}}(\pi_{\textbf{a}}\mu)$, with $S_{\lambda^{\hat{i}}}(x)=\lambda^{\hat{i}}x$.  (\ref{eqn:n'}) implies that
\begin{equation}\label{eq:supp}
\text{The diameter of } \text{supp}\left( \omega_{y,\,i}*\lambda^{\hat{i}}(\pi_{\textbf{a}}\mu)\right) \text{ is bounded by } O(1/{b^i}).
\end{equation}
Utilizing this and (\ref{eq:upper-boun-entro}), we deduce that $Q^{\omega}_m(i,y)=O(1)$.

\begin{lemma}\label{lem:eilow}
     Given $R\ge1$,  consider a measure $\omega\in\mathscr{P}(\mathbb{R}^d)$  with    $\text{diam}\left(\text{supp}(\omega)\right)\le R$.
	For any integers $n>m\ge1$ and  $\textbf{j}\in\Sigma$, it follows that
	$$\frac1{n}H\left(\omega*\pi_{\textbf{j}}\mu,\mathcal{L}_n\right)\ge \mathbb{E}^{\omega}_{0\le i<n}\left(Q^{\omega}_m(i,y)\right)+O\left(\frac1m+\frac{mR}n\right).$$
\end{lemma}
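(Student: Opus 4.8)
The plan is to bound $\frac1n H(\omega*\pi_{\textbf j}\mu,\mathcal L_n)$ from below by decomposing $\pi_{\textbf j}\mu$ via the self-affine relation \eqref{eqn:self`affine'}, passing to scale-$n$ components of $\omega$, and recognizing inside each component a convolution of the shape appearing in $Q^\omega_m(i,y)$. First I would apply Lemma~\ref{lem:minus} to $\omega*\pi_{\textbf j}\mu$ (whose support has diameter $O(R)$) to write
\[
\frac1n H\left(\omega*\pi_{\textbf j}\mu,\mathcal L_n\right)=\mathbb E^{\omega*\pi_{\textbf j}\mu}_{0\le i<n}\left(\frac1m H\left((\omega*\pi_{\textbf j}\mu)_{z,\,i},\mathcal L_{i+m}\right)\right)+O\left(\frac{m+\log R}n\right).
\]
The next step is to relate the components of the convolution to a convolution of a component of $\omega$ with a rescaled piece of $\mu$. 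Using \eqref{eq:mu-deco}/\eqref{eqn:self`affine'} at level $\hat i$ together with the transformation equation \eqref{TransformA}, each $g_{\textbf a}\mu$ with $|\textbf a|=\hat i$ contributes, after projection by $\pi_{\textbf j}$, an affine copy $\lambda^{\hat i}\pi_{\textbf a^*\textbf j}\mu + (\text{translation})$; since $\lambda^{\hat i}\asymp b^{-i}$ by \eqref{eqn:n'}, this affine copy is essentially supported on a single level-$i$ cell up to a bounded overlap, and the reversed-word distinction $\textbf a^*\textbf j$ versus a fixed reference tail costs only $O(\gamma^i)$ in $\sup$-norm by Claim~\ref{cla:minus}, hence $O(1)$ in entropy at scale $i+m$ via \eqref{eq:ero-minus}. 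Distributing $\omega$ into its level-$i$ components and using concavity of entropy under the averaging in \eqref{eq:mu-deco}, plus the affine-invariance estimate \eqref{lem:affinetransform} to move from $\lambda^{\hat i}(\pi_{\textbf a^*\textbf j}\mu)$-translates back to the normalized form $S_{\lambda^{\hat i}}(\pi_{\textbf a}\mu)$ after integrating $\nu^{\mathbb Z_+}$, I expect to recognize exactly $Q^\omega_m(i,y)$ inside the $i$-th term.

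The delicate point is matching the outer average: Lemma~\ref{lem:minus} produces an expectation over components of $\omega*\pi_{\textbf j}\mu$, whereas the target $\mathbb E^\omega_{0\le i<n}$ is over components of $\omega$ alone. The plan to reconcile this is the standard trick (as in \cite{hochman2014self,barany2019hausdorff}): a level-$i$ cell of $\omega*\pi_{\textbf j}\mu$ is, up to bounded commensurability, refined by cells of the form (level-$i$ cell of $\omega$) translated by a point in $\mathrm{supp}(\pi_{\textbf j}\mu)$, so that conditioning on the $\omega$-coordinate and using concavity of conditional entropy gives
\[
\frac1m H\left((\omega*\pi_{\textbf j}\mu)_{z,\,i},\mathcal L_{i+m}\right)\ge \int \frac1m H\left(\omega_{y,\,i}*\lambda^{\hat i}(\pi_{\textbf a}\mu),\mathcal L_{i+m}\right)d\nu^{\mathbb Z_+}(\textbf a)+O(1/m),
\]
after which averaging over $0\le i<n$ and collecting the $O(1/m)$, $O(m/n)$, $O(\log R/n)$ error terms yields the claim. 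Throughout, $Q^\omega_m(i,y)=O(1)$ and the support bound \eqref{eq:supp} guarantee all the $O(1)$-per-scale errors aggregate to $O(1/m)$ after division by $m$.

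The main obstacle I anticipate is the bookkeeping in the second step: carefully tracking how the affine maps $y\mapsto \lambda^{\hat i}y+\pi_{\textbf j}g_{\textbf a}(0,0)$ interact with the dyadic grid $\mathcal L_{i+m}$ so that the translation terms can be absorbed into the component structure without loss, and ensuring the reversed-tail substitution $\pi_{\textbf a^*\textbf j}\mapsto\pi_{\textbf a\textbf w}$ (to make the $\nu^{\mathbb Z_+}$-integral independent of $\textbf j$) really does cost only $O(1)$ uniformly in $i$. This is exactly where Claim~\ref{cla:minus}, \eqref{eq:ero-minus}, and \eqref{lem:affinetransform} must be combined with care; everything else is a routine assembly of the entropy inequalities recalled in \S\ref{subsec:entro}.
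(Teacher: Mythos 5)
Your overall route is the same as the paper's: decompose the entropy into scale-$m$ increments via Lemma~\ref{lem:minus}, break $\pi_{\textbf j}\mu$ into the pieces $\pi_{\textbf j}g_{\textbf z}\mu$ with $\textbf z\in\Lambda^{\hat i}$ using \eqref{eq:mu-deco}, use \eqref{TransformA} together with the support bound \eqref{eq:supp} to drop the conditioning and absorb translations, and use Claim~\ref{cla:minus} with \eqref{eq:ero-minus} to replace the reversed tail $\textbf z^*\textbf j$ by a $\nu^{\,\mathbb Z_+}$-average, recognizing $Q^{\omega}_m(i,y)$.

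However, the displayed inequality at the heart of your second step is not justified as stated: you bound the entropy of a single component $(\omega*\pi_{\textbf j}\mu)_{z,\,i}$ from below by $Q^{\omega}_m(i,y)$, with the variable $y$ not even tied to $z$. Restricting a convolution to one level-$i$ cell does not produce a convolution of restrictions, and concavity gives no such component-wise bound; an individual component of $\omega*\pi_{\textbf j}\mu$ can have much smaller entropy than a typical $\omega_{y,\,i}*\lambda^{\hat i}(\pi_{\textbf a}\mu)$. What concavity does give, and all the lemma needs, is the averaged statement: writing $\omega*\pi_{\textbf j}\mu=\mathbb E^{\omega}_{i}\bigl[\frac1{b^{\hat i}}\sum_{\textbf z\in\Lambda^{\hat i}}\omega_{y,\,i}*\pi_{\textbf j}g_{\textbf z}\mu\bigr]$ and applying concavity of conditional entropy,
$$\frac1m H\bigl(\omega*\pi_{\textbf j}\mu,\mathcal L_{i+m}\mid\mathcal L_i\bigr)\ \ge\ \mathbb E^{\omega}_{i}\Bigl[\frac1{b^{\hat i}}\sum_{\textbf z\in\Lambda^{\hat i}}\frac1m H\bigl(\omega_{y,\,i}*\pi_{\textbf j}g_{\textbf z}\mu,\mathcal L_{i+m}\mid\mathcal L_i\bigr)\Bigr],$$
and only then does one convert the inner terms to $Q^{\omega}_m(i,y)+O(1/m)$ via \eqref{eq:supp}, \eqref{TransformA}, Claim~\ref{cla:minus} and \eqref{eq:ero-minus}. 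Since the left-hand side equals the component average $\mathbb E^{\omega*\pi_{\textbf j}\mu}_{i}\bigl(\frac1m H((\omega*\pi_{\textbf j}\mu)_{z,\,i},\mathcal L_{i+m})\bigr)$ you started from, this also resolves, without any per-cell geometric claim, the matching of the outer average with $\mathbb E^{\omega}_{0\le i<n}$ that you flagged as delicate. One further small point: replacing $\pi_{\textbf z^*\textbf j}$ by $\pi_{\textbf z^*\textbf j'}$ costs $O(1/m)$ only when $\gamma^{\hat i}\le b^{-m}$; the $O(m)$ indices $i$ violating this contribute $O(m/n)$ after averaging (both sides being $O(1)$ by \eqref{eq:supp} and \eqref{eq:upper-boun-entro}), which is absorbed in the error term. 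With these corrections your argument coincides with the paper's proof.
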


\begin{proof}
For any $0\le i<n$, let's recall that  $\omega=\mathbb{E}_i^{\omega}(\omega_{y,\,i})$ and  $\pi_{\textbf{j}}\mu=\frac{1}{b^{\hat{i} }  }\sum_{\textbf{z}\in \varLambda^{\hat{i}}} \pi_{\textbf{j}} g_{\textbf{z}} \mu$.  Combining this with Lemma \ref{lem:minus} and  the concavity, we can deduce 
\begin{equation}\nonumber
\begin{split}
\frac1{n}&H\left(\omega*\pi_{\textbf{j}}\mu,\mathcal{L}_n\right)=\frac1n\sum_{i=0}^{n-1}\frac1{m}H\left(\omega*\pi_{\textbf{j}}\mu,\mathcal{L}_{i+m}\mid\mathcal{L}_{i}\right)+ O\left(\frac{mR}n\right)\\&\ge\frac1n\sum_{i=0}^{n-1}\mathbb{E}_{ i }^{\omega}\left(\frac{1}{b^{\hat{i} }}\sum_{\textbf{z}\in \varLambda^{\hat{i}}}\frac1{m}H\left(\omega_{y,\,i}*\pi_{\textbf{j}} g_{\textbf{z}}\mu,\mathcal{L}_{i+m}\mid\mathcal{L}_{i}\right)\right) + O\left(\frac{mR}n\right)
\\&=
\mathbb{E}_{0\le i <n}^{\omega}\left(\frac{1}{b^{\hat{i} }}\sum_{\textbf{z}\in \varLambda^{\hat{i}}}\frac1{m}H\left(\omega_{y,\,i}*\pi_{\textbf{j}} g_{\textbf{z}}\mu,\mathcal{L}_{i+m}\mid\mathcal{L}_{i}\right)\right) + O\left(\frac{mR}n\right).
\end{split}
\end{equation}
 Since  $Q^{\eta}_m(i,y)=O(1)$, we only need to prove the following:

\begin{claim}\label{eq:A.3.2}
For any  $y\in\mathbb{R}^d$and $m,i> 0$ with $\gamma^{\hat{i}}\le b^{-m}$, it follows that
$$\frac{1}{b^{\hat{i} }}\sum_{\textbf{z}\in \varLambda^{\hat{i}}}\frac1{m}H\left(\omega_{y,\,i}*\pi_{\textbf{j}} g_{\textbf{z}}\mu,\mathcal{L}_{i+m}\mid\mathcal{L}_{i}\right)= Q^{\omega}_m(i,y)+ O(\frac1m).$$
\end{claim}

   By    combining
 $\pi_{\textbf{j}} g_{\textbf{z}}(x,y)=\lambda^{\hat{i}}\pi_{\textbf{z}^*\textbf{j}}(x,y)+ \pi_{\textbf{j}} g_{\textbf{z}}(0,0)$  with   (\ref{eq:supp}),  we obtain:
\begin{equation}\label{eq:A.3.1}
\begin{split}
\frac1{m}&H\left(\omega_{y,\,i}*\pi_{\textbf{j}} g_{\textbf{z}}\mu,\mathcal{L}_{i+m}\mid\mathcal{L}_{i}\right)
=\frac1{m}H\left(\omega_{y,\,i}*\lambda^{\hat{i}}(\pi_{\textbf{z}^*\textbf{j}}\mu),\mathcal{L}_{i+m}\right)+O(\frac1m).
\end{split}
\end{equation}
Since $\parallel \pi_{\textbf{z}^*\textbf{j}}-\pi_{\textbf{z}^*\textbf{j}'}\parallel_{\infty} =O(\gamma^{\hat{i}})=O(1/b^m) $ holds for each $\textbf{j}'\in\Sigma$, we can conclude that
$$\frac1{m}H\left(\omega_{y,\,i}*\lambda^{\hat{i}}(\pi_{\textbf{z}^*\textbf{j}}\mu),\mathcal{L}_{i+m}\right)=\frac1{m}H\left(\omega_{y,\,i}*\lambda^{\hat{i}}(\pi_{\textbf{z}^*\textbf{j}'}\mu),\mathcal{L}_{i+m}\right)+O(\frac1m)$$
by (\ref{eq:ero-minus}).  By combining this with (\ref{eq:A.3.1}), we derive the following relationship:
$$\frac1{m}H\left(\omega_{y,\,i}*\pi_{\textbf{j}} g_{\textbf{z}}\mu,\mathcal{L}_{i+m}\mid\mathcal{L}_{i}\right)= \int_{\Sigma}\frac1m H\left(\omega_{y,\,i}*\lambda^{\hat{i}}(\pi_{\textbf{z}^*\textbf{j}'}\mu),\mathcal{L}_{m+i}\right)\,d\nu^{\,\mathbb{Z}_+}(\textbf{j}')+O(\frac1m).$$
Combining this with the definitions of  $\nu^{\,\mathbb{Z}_+}$, we obtain:
\begin{equation}\nonumber
\begin{split}
\frac{1}{b^{\hat{i} }}\sum_{\textbf{z}\in \varLambda^{\hat{i}}}&\frac1{m}H\left(\omega_{y,\,i}*\pi_{\textbf{j}} g_{\textbf{z}}\mu,\mathcal{L}_{i+m}\mid\mathcal{L}_{i}\right) = \frac{1}{b^{\hat{i} }}\sum_{\textbf{z}\in \varLambda^{\hat{i}}}\int\frac1m H\left(\omega_{y,\,i}*\lambda^{\hat{i}}(\pi_{\textbf{z}\textbf{j}'}\mu),\mathcal{L}_{m+i}\right)\,d\nu^{\,\mathbb{Z}_+}(\textbf{j}')+O(\frac1m)\\&=
\int\frac1m H\left(\omega_{y,\,i}*\lambda^{\hat{i}}(\pi_{\textbf{a}}\mu),\mathcal{L}_{m+i}\right)\,d\nu^{\,\mathbb{Z}_+}(\textbf{a})+O(\frac1m).
\end{split}
\end{equation}
Therefore, Claim \ref{eq:A.3.2} holds by the definitions of  $Q^{\omega}_m(i,y)$.
\end{proof}



When convolution entropy fails to increase, it implies the saturated properties of one measure can be deduced from another. This observation is utilized to derive (\ref{eq:ei3}).  Specifically, we can express it as follows:

\begin{lemma}\label{lem:saturated-trans}
	  For any $\eps > 0$ and $m \geq M(\eps)$, the following holds:  given measures $\omega,\eta \in \mathscr{P}(\mathbb{R}^d)$, 
          if $\omega$ is $(V, \eps, m)$-saturated for some linear subspace $V < \mathbb{R}^d$, and if
	 \begin{equation}\label{eq:6}
	 \frac1m H\left(\omega*\eta,\mathcal{L}_m\right)\le \frac1m H\left(\eta,\mathcal{L}_m\right)+ \eps,
	 \end{equation}
	 then $\eta$ is $(V,3\eps,m)$-saturated.
\end{lemma}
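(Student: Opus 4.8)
\textbf{Proof plan for Lemma \ref{lem:saturated-trans}.}

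The plan is to exploit the basic subadditivity/monotonicity of convolution entropy together with the fact that projection commutes with convolution. First I would record the two elementary facts that drive the argument: for the orthogonal projection $\pi_{V^{\bot}}$ we have $\pi_{V^{\bot}}(\omega*\eta) = \pi_{V^{\bot}}\omega * \pi_{V^{\bot}}\eta$, so that Lemma \ref{lem:trivialC} applied in the subspace $V^{\bot}$ gives
$$\frac1m H\left(\pi_{V^{\bot}}(\omega*\eta),\mathcal{L}_m\right)\ge \frac1m H\left(\pi_{V^{\bot}}\eta,\mathcal{L}_m\right)-O\left(\tfrac1m\right);$$
and, via \eqref{eq:entro-pull}, $H(\pi_{V^{\bot}}(\omega*\eta),\mathcal{L}_m)=H(\omega*\eta,\mathcal{L}_m^{V^{\bot}})$, similarly for $\eta$. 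On the other side, Lemma \ref{lem:trivialCC} applied with the subspace $V^{\bot}$ in place of $V$ gives the conditional lower bound
$$\frac1m H\left(\omega*\eta,\mathcal{L}_m\mid\mathcal{L}_m^{V^{\bot}}\right)\ge \frac1m H\left(\omega,\mathcal{L}_m\mid\mathcal{L}_m^{V^{\bot}}\right)-O\left(\tfrac1m\right),$$
and since $\omega$ is $(V,\eps,m)$-saturated, \eqref{eq:satur} bounds the right-hand side below by $\dim V-\eps-O(1/m)$.

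Next I would assemble these via the chain rule \eqref{eq:CE} applied to $\omega*\eta$ with the subspace $V^{\bot}$:
$$\frac1m H\left(\omega*\eta,\mathcal{L}_m\right)=\frac1m H\left(\omega*\eta,\mathcal{L}_m\mid\mathcal{L}_m^{V^{\bot}}\right)+\frac1m H\left(\omega*\eta,\mathcal{L}_m^{V^{\bot}}\right)+O\left(\tfrac1m\right).$$
Substituting the two lower bounds from the previous paragraph yields
$$\frac1m H\left(\omega*\eta,\mathcal{L}_m\right)\ge \dim V-\eps + \frac1m H\left(\pi_{V^{\bot}}\eta,\mathcal{L}_m\right)-O\left(\tfrac1m\right).$$
Now invoke the hypothesis \eqref{eq:6}: the left side is at most $\frac1m H(\eta,\mathcal{L}_m)+\eps$. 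Rearranging,
$$\frac1m H\left(\eta,\mathcal{L}_m\right)\ge \frac1m H\left(\pi_{V^{\bot}}\eta,\mathcal{L}_m\right)+\dim V-2\eps-O\left(\tfrac1m\right),$$
and choosing $m\ge M(\eps)$ large enough that the $O(1/m)$ error is below $\eps$ turns this into exactly the statement that $\eta$ is $(V,3\eps,m)$-saturated.

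The argument is essentially a bookkeeping exercise, so there is no deep obstacle; the one point requiring care is making sure each invocation of Lemmas \ref{lem:trivialC} and \ref{lem:trivialCC} is legitimate when we pass to the subspace $V^{\bot}$ — i.e. that these lemmas, stated for $\mathbb{R}^d$ and a generic subspace, apply verbatim with $V^{\bot}$ in the role of the ambient-minus-complement — and that all the $O(1/m)$ and $O(1)$ commensurability errors (from \eqref{eq:compa}, \eqref{eq:CE}) are absorbed into the single slack $\eps$ by the choice of threshold $M(\eps)$. I would also double-check that the saturation inequality \eqref{eq:satur} for $\omega$ is used in the conditional form rather than the projected form, since that is what pairs cleanly with Lemma \ref{lem:trivialCC}.
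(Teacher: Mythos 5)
Your proposal is correct and follows essentially the same route as the paper: decompose $H(\omega*\eta,\mathcal{L}_m)$ via \eqref{eq:CE} into the part conditioned on $\mathcal{L}_m^{V^{\bot}}$ and the projected part, lower-bound these using the non-decrease of (conditional) entropy under convolution (Lemmas \ref{lem:trivialC}, \ref{lem:trivialCC}) together with \eqref{eq:satur} for $\omega$, and then compare with the hypothesis \eqref{eq:6}, absorbing the $O(1/m)$ errors into the slack. The only cosmetic difference is bookkeeping: the paper also splits $H(\eta,\mathcal{L}_m)$ by \eqref{eq:CE} before comparing, but the ingredients and the resulting $3\eps$ loss are identical.
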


\begin{proof}
        Let $m\ge1$ be such that $O(\frac1m)<\eps/3$. Using
        (\ref{eq:CE})  and  (\ref{eq:6}), we obtain:
\begin{equation}\label{eq:A.3.3}
\frac1mH\left(\eta,\mathcal{L}_m\mid\mathcal{L}^{V^{\bot}}_m\right)+ \frac1m H\left(\eta,\mathcal{L}^{V^{\bot}}_m\right)
\ge   \frac1m H\left(\omega*\eta,\mathcal{L}_m\right)-\frac43\eps.
\end{equation}
   Combining the cancavity with  (\ref{eq:satur}), we get:
	\begin{equation}\nonumber
	\begin{split}
	&\frac1m H\left(\omega*\eta,\mathcal{L}_m\right)
          = \frac1m H\left(\omega*\eta,\mathcal{L}^{V^{\bot}}_m\right)+\frac1m H\left(\omega*\eta,\mathcal{L}_m\mid\mathcal{L}^{V^{\bot}}_m\right)+O(\frac1m)
         \\&\ge
	\frac1m H\left(\eta,\mathcal{L}^{V^{\bot}}_m\right)+\frac1m H\left(\omega,\mathcal{L}_m\mid\mathcal{L}^{V^{\bot}}_m\right)-\frac13\eps
        \ge \frac1m H\left(\eta,\mathcal{L}^{V^{\bot}}_m\right)+\text{dim}V-\frac53\eps.
	\end{split}
    \end{equation}
    By combining this with (\ref{eq:A.3.3}) and(\ref{eq:CE}),
    we conclude that Lemma \ref{lem:saturated-trans} holds.
\end{proof}

{\bf Notations.}
Following  \cite[Section 2.1]{Rapaport}, we introduce the following notations to enhance the convenience of our proofs.
Consider $m\ge1$ and   $k\ge M(m)$,  implying  that $k$ is large with respect to $m$.
We denote this relationship as
$m\ll k$. The relation $\ll$ is transitive.
 Thus, $m\ll k\ll n$  implies that  $k\ge M(m)\ge1$ and $n\ge K(m,\,k)\ge1$.
Similarly, for  $\eps>0$ and $0<\delta<\eps(\eps)$ (i.e., $\delta$ is small with respect to $\eps>0$),  we represent this as $\eps\ll\delta^{-1}$. 

\begin{remark}
The main proof method in the following  consists of repeatedly applying Lemma \ref{lem:P}. Specifically, in each paragraph, we first verify the conditions of Lemma \ref{lem:P}, and then draw conclusions. After repeating this process three times, the proof is completed by Lemma \ref{lem:saturated-trans}.
\end{remark}

\begin{proof}[Proof of Theorem \ref{thm:EntropyInverse}]
Given $\eps > 0$ and $m \ge 1$ such that $1 \le R \ll \eps^{-1} \ll m$, let $K = K(\eps^8, m)\ge1$ as defined in Theorem \ref{thm:hochman}. Set $\delta = \eps^8/K$ and $n \geq 1$ such that $m,K\ll n$.
Let $\textbf{j}\in\Sigma$ and  $\eta\in\mathscr{P}(\mathbb{R}^d)$ satisfy the assumption of  Theorem \ref{thm:EntropyInverse}. Thus, by Theorem \ref{thm:hochman}, there is  
 $1\le k\le K$ and a sequence  $V_0,\ldots,V_{n-1}<\mathbb{R}^d$
such that (\ref{eq:ei2}) holds and
\begin{equation}\label{eq:A.3.10}
\mathbb{P}^{\omega}_{0\le i<n}\left(S_{b^i}(\omega_{y,\,i})\text{ is }(V_i,\eps^8,m)-\text{saturated}\right)>1-\eps^8,
\end{equation}
where $\omega=\eta^{*k}$. Hence, our task is reduced to proving (\ref{eq:ei3}).

Combining  Lemma \ref{lem:hoc} with the assumptions of Theorem \ref{thm:EntropyInverse},  $\delta=\eps^8/K$ and $\eps^{-1}, K\ll n$, we obtain:
\begin{equation}\nonumber
\begin{split}
\frac1n H\left(\omega*\pi_{\textbf{j}}\mu,\mathcal{L}_n\right)&\le \frac1n H\left(\eta*\pi_{\textbf{j}}\mu,\mathcal{L}_n\right) +k\left(\frac1n H\left(\eta*\pi_{\textbf{j}}\mu,\mathcal{L}_n\right)-\frac1n H\left(\pi_{\textbf{j}}\mu,\mathcal{L}_n\right) \right)+O(\frac kn)\\&\le
\frac1n H\left(\pi_{\textbf{j}}\mu,\mathcal{L}_n\right)+\delta +k\delta+\eps^8\le \alpha+O(\eps^8).
\end{split}
\end{equation}
Combining this with Lemma \ref{lem:eilow} and $R\ll\eps^{-1}\ll m\ll n$, we have
\begin{equation}\label{eq:A.3.6}
\mathbb{E}^{\omega}_{0\le i<n}\left(Q^{\omega}_m(i,y)\right)\le\alpha+O(\eps^{8}).
\end{equation}
By Theorem \ref{thm:ledrappier} and $1\ll \eps^{-1}\ll m$, we have
\begin{equation}\label{eq:A.3.8}
\nu^{\,\mathbb{Z}_+}\left(\textbf{z}\in\Sigma\::\:\left|\frac1m H\left(\,\pi_{\textbf{z}}\mu,\mathcal{L}_m\right) -\alpha\right|\le\eps^8\right)>1-\eps^8.
\end{equation}
Combining this with (\ref{ei22}) and    Lemma \ref{lem:trivialC},  we obtain:
\begin{equation}\label{eq:W.4.4}
Q^{\omega}_m(i,y) \ge\alpha-O(\eps^{8}) \quad\quad \text{for } \omega-\text{ a.e. } y.
\end{equation}
 Thus we have
$\mathbb{E}^{\omega}_{i}\left(Q^{\omega}_m(i,y)\right)\ge \alpha-O(\eps^{8})$
for each $0\le i<n$.
Combining this with (\ref{eq:A.3.6}) and Lemma \ref{lem:P}, we find a set  $I\subset\{0,\ldots,n-1\}$ such that $\frac{\# I}n\ge 1-O(\eps^4)$ and
\begin{equation}\label{eq:A.3.7}
  \mathbb{E}^{\omega}_{i}\left(Q^{\omega}_m(i,y)\right)\le\alpha+O(\eps^{4})\quad\quad\text{for }i\in I.
\end{equation}

Let $i\in I$.   Combining (\ref{eq:W.4.4}) with (\ref{eq:A.3.7}) and Lemma \ref{lem:P}, we obtain
\begin{equation}\label{eq:A.3.9}
\mathbb{P}^{\omega}_{i}\left(Q^{\omega}_m(i,y)\le\alpha+O(\eps^{2})\right)\ge 1-O(\eps^{2}) \quad\quad\text{for }i\in I.
\end{equation}

By Markov's inequality, (\ref{eq:A.3.10}) implies that there is a set $I'\subset\{0,\ldots,n-1\}$ such that
$$\mathbb{P}^{\omega}_{i}\left(\,S_{b^i}(\omega_{y,\,i})\text{ is }(V_i,\eps^8,m)-\text{saturated}\right)>1-O(\eps^{4})\quad\quad\text{for }i\in I',$$
with $\frac{\# I'}n\ge 1-O(\eps^4)$. Let $J=I\cap I'$.
 Choose $i\in J$ arbitrarily. Since $1\ll \eps^{-1}$, we can find a $y_i\in\mathbb{R}^d$ with
$Q^{\omega}_m(i,y_i)\le\alpha+O(\eps^{2})$ and $S_{b^i}(\omega_{y_i,\,i})$ is $(V_i,\eps^8,m)$-saturated.
Combining $Q^{\omega}_m(i,y_i)\le\alpha+O(\eps^{2})$  with (\ref{eq:A.3.8}), Lemma \ref{lem:trivialC} and  Lemma \ref{lem:P}, we obtain
$$\nu^{\,\mathbb{Z}_+}\left(\textbf{z}\in\Sigma\::\:\frac1m H\left(S_{b^i}(\omega_{\,y_i,\,i})\ast\pi_{\textbf{z}}\mu,\mathcal{L}_m\right) \le \alpha+O(\eps)\right)>1-O(\eps).$$
Combining this with (\ref{eq:A.3.8}), we have
$$\nu^{\,\mathbb{Z}_+}\left(\textbf{z}\in\Sigma\::\:\frac1m H\left(S_{b^i}(\omega_{\,y_i,\,i})\ast\pi_{\textbf{z}}\mu,\mathcal{L}_m\right) \le 
\frac1m H\left(\,\pi_{\textbf{z}}\mu,\mathcal{L}_m\right) 
+O(\eps)\right)>1-O(\eps).$$
Combining this with Lemma \ref{lem:saturated-trans} and the fact that  $S_{b^i}(\omega_{\,y_i,\,i})$ is $(V_i,\eps^8,m)$-saturated, then
\begin{equation}\nonumber
\nu^{\mathbb{Z}_+}\left(\textbf{z}\in\Sigma\::\: \pi_{\textbf{z}}\mu\text{ is }(V_i,O(\eps),m)-\text{saturated}\right)>1-O(\eps)\quad\quad\text{for }i\in J.
\end{equation}
 This implies that
  (\ref{eq:ei3}) holds, since  $\frac{\# J}n\ge 1-(1-\frac{\# I'}n)-(1-\frac{\# I'}n)\ge1-O(\eps^4)$.
\end{proof}

\section{Proof of Theorem A'}\label{sec:proveA'}
     In this section, let $\lambda\in(1/b,1)$ be fixed, and consider $\phi:\mathbb{R} \to \mathbb{R}^d$ as 
     a $\mathbb{Z}$-periodic analytic function satisfying the condition (H). Our approach to proving Theorem A' involves
     employing the method of contradiction. We start by introducing Theorem C, which plays a crucial role in
      deriving a contradiction. Then, we utilize Hochman's observation to infer pertinent properties, leading to 
      the completion of the theorem's proof.

\begin{thmC}
	 If $\alpha<\log_{\lambda^{-1}}b$,
	then there exists  $p_0=p_0(\alpha)>0$  with the following properties:
	for any $\delta>0$ and $n\ge N(\alpha,p_0,\delta)$, there exists
	a measures $\eta\in\mathscr{P}(\mathbb{R}^d)$  and $\textbf{z}\in\Sigma$ such that
	  $\text{dim}\left(\text{supp}(\eta)\right)\le R$  and:
	\begin{enumerate}
	\item [(C.1)]	 $\frac1{n}H\left(\eta,\mathcal{L}_n\right)\ge p_0$,
	\item [(C.2)] $\frac1{n}H\left(\eta*\pi_{\textbf{z}}\mu,\mathcal{L}_n\right)\le
	\alpha+\delta $,
	\item [(C.3)] $\left| \frac1{n}H\left(\pi_{\textbf{z}}\mu,\mathcal{L}_n\right)-\alpha\right|<\delta$,
	\end{enumerate}
where $R=R(\phi,b,\lambda)>0$ is a constant.
\end{thmC}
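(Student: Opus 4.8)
\emph{Proof strategy.}
The whole point is to exploit the gap $\log_{\lambda^{-1}}b-\alpha>0$ between the entropy rate of $\pi_{\textbf z}\mu$ and the ``similarity rate'' $\log_{\lambda^{-1}}b$ of its self-affine decomposition (recall $\widehat n/n\to\log_{\lambda^{-1}}b$). A preliminary remark: since $\phi$ satisfies condition (H), for any line $A<\mathbb R^d$ the function $\pi_A\phi$ satisfies the one-dimensional condition (H), so by \cite{Przytycki1989, ren2021dichotomy} the graph of $W^{\pi_A\phi}$ has Hausdorff dimension $>1$, i.e.\ $\alpha(\pi_A\phi)>0$; by Corollary \ref{cor:pro} and the fact that orthogonal projection does not increase dimension, $\alpha\ge\alpha(\pi_A\phi)>0$. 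This positivity of $\alpha$ will be used to get (C.1). Throughout, $R=R(\phi,b,\lambda)$ is a fixed upper bound for the diameters of all the measures $\pi_{\textbf i}\mu$, $\textbf i\in\Sigma$.

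\emph{Step 1: choice of $\textbf z$ and reduction to an atomic ``translation measure''.} First I would fix, via Theorem \ref{thm:ledrappier} and Proposition \ref{prop:Young}, a sequence $\textbf z$ from the full $\nu^{\mathbb Z_+}$-measure set on which $\frac1nH(\pi_{\textbf z}\mu,\mathcal L_n)\to\alpha$; this gives (C.3) for $n\ge N(\alpha,\delta)$, and any later refinement of $\textbf z$ will stay in this set. For $n\ge1$ put
$$\theta_n:=\frac1{b^{\widehat n}}\sum_{\textbf a\in\Lambda^{\widehat n}}\delta_{\pi_{\textbf z}g_{\textbf a}(0,0)}\in\mathscr P(\mathbb R^d),$$
a uniform measure on $b^{\widehat n}$ points. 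Applying the self-similar identity (\ref{eqn:self`affine'}) at level $\widehat n$ together with the transformation equation (\ref{TransformA}) gives $\pi_{\textbf z}\mu=b^{-\widehat n}\sum_{\textbf a\in\Lambda^{\widehat n}}\big(\lambda^{\widehat n}\pi_{\textbf a^*\textbf z}\mu+\pi_{\textbf z}g_{\textbf a}(0,0)\big)$, and since $\lambda^{\widehat n}\le b^{-n}$ each summand has diameter $O(b^{-n})$. Realizing $\pi_{\textbf z}\mu$ and $\theta_n$ as pushforwards of the common measure on $\Lambda^{\widehat n}\times[0,1)$ under two maps differing by $O(b^{-n})$, the stability estimate (\ref{eq:ero-minus}) yields, for every $\eta$ with $\operatorname{diam}(\operatorname{supp}\eta)\le R$,
$$\tfrac1nH(\eta*\pi_{\textbf z}\mu,\mathcal L_n)=\tfrac1nH(\eta*\theta_n,\mathcal L_n)+O(1/n),\qquad \tfrac1nH(\pi_{\textbf z}\mu,\mathcal L_n)=\tfrac1nH(\theta_n,\mathcal L_n)+O(1/n).$$
So it suffices to build an $\eta$ of diameter $\le R$ with $\frac1nH(\eta,\mathcal L_n)\ge p_0$ and $\frac1nH(\eta*\theta_n,\mathcal L_n)\le\alpha+\delta/2$.

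\emph{Step 2: the gap and the choice of $\eta$.} Because $\theta_n$ is uniform on $b^{\widehat n}$ points while $\frac1nH(\theta_n,\mathcal L_n)\approx\alpha<\log_{\lambda^{-1}}b\approx\widehat n/n$, the branch–images $\pi_{\textbf z}g_{\textbf a}(0,0)$ over-accumulate on level-$n$ cells: writing $H(\theta_n,\mathcal L_n)=\widehat n-\mathbb E_{\textbf a}\big[\log_b\#\{\textbf a':\pi_{\textbf z}g_{\textbf a'}(0,0)\in\mathcal L_n(\pi_{\textbf z}g_{\textbf a}(0,0))\}\big]$, a $\theta_n$-typical cell carries $b^{(\log_{\lambda^{-1}}b-\alpha+o(1))n}$ branch–images. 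Iterating (\ref{eqn:self`affine'})–(\ref{TransformA}) at an intermediate level $\ell$ with $1\ll\ell\ll n$ exhibits each level-$\ell$ component of $\pi_{\textbf z}\mu$, rescaled by $b^\ell$, as a convex combination of $O(1)$-rescaled translates of the measures $\pi_{\textbf a^*\textbf z}\mu$ over the branches landing in that cell. I would take $\eta$ to be such a rescaled level-$\ell$ component of $\pi_{\textbf z}\mu$, for a cell chosen — by Lemma \ref{lem:minus} (to spread a scale-$n$ statement over a positive density of scales) together with a pigeonhole over cells — so that: (i) $\operatorname{diam}(\operatorname{supp}\eta)\le R$ and, by exact dimensionality of $\pi_{\textbf z}\mu$ and concavity of entropy (using $\alpha>0$), $\frac1nH(\eta,\mathcal L_n)\ge\alpha/2=:p_0$, which is (C.1); and (ii) the branches constituting $\eta$ are ``generic'', so that by Claim \ref{cla:minus} the constituent $\pi_{\textbf a^*\textbf z}\mu$ are close to measures with entropy rate $\approx\alpha$. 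The constant $p_0=\alpha/2$ depends only on $\alpha$, as required.

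\emph{Step 3: (C.2), the main obstacle.} What remains, and is the hard part, is $\frac1nH(\eta*\theta_n,\mathcal L_n)\le\alpha+\delta/2$: convolving $\theta_n$ with the rescaled component $\eta$ must not raise the entropy rate above $\alpha$. The mechanism I expect is that $\eta$ is \emph{subordinate to the branch structure} of $\pi_{\textbf z}\mu$: the self-similar decomposition is homogeneous (the same maps $g_{\textbf a}$ appear around every cell), so the overlap pattern of branch–images is asymptotically the same near every level-$n$ cell, and placing $\eta$ back through the decomposition redistributes the mass of $\theta_n$ essentially among the cells it already charges, with only a sub-exponential multiplicative loss. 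Making this precise is a level-by-level estimate: at each scale one compares the $\mathcal L_{m}$-component structure of $\eta*\theta_n$ with that of $\theta_n$ using Lemma \ref{lem:minus}, Claim \ref{cla:minus}, the affine-invariance (\ref{lem:affinetransform}) and stability (\ref{eq:ero-minus}) of entropy, and the near-constancy over $\textbf u$ of $\frac1mH(\pi_{\textbf u}\mu,\mathcal L_m)$ guaranteed by Theorem \ref{thm:ledrappier} and (\ref{eq:LCT}); this is where the genericity (ii) of the chosen branches is used. Finally one selects the refined $\textbf z$ and the cell defining $\eta$ so that (C.1)–(C.3) hold simultaneously for all $n\ge N(\alpha,p_0,\delta)$, completing the proof.
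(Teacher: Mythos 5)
Your Step 1 (replacing $\pi_{\textbf{z}}\mu$ by the atomic measure $\theta_n$ of translation parts, via (\ref{TransformA}) and (\ref{eq:ero-minus})) and your observation that $\alpha>0$ under condition (H) are fine, but the proof has a genuine gap exactly at the step you yourself flag as ``the main obstacle'': property (C.2) for your choice of $\eta$. You take $\eta$ to be a rescaled level-$\ell$ component of $\pi_{\textbf{z}}\mu$ itself, and then assert that convolving $\theta_n$ with it ``redistributes the mass of $\theta_n$ essentially among the cells it already charges'' because the branch images over-accumulate and the overlap pattern is homogeneous. No argument is given for this, and none of the tools you cite (Lemma \ref{lem:minus}, Claim \ref{cla:minus}, (\ref{lem:affinetransform}), (\ref{eq:ero-minus}), Theorem \ref{thm:ledrappier}) can supply it: over-accumulation of the atoms of $\theta_n$ says nothing about approximate invariance of $\theta_n$ under translation by a generic positive-entropy measure, and for a measure of entropy rate $\alpha<d$ a convolution with a component of itself of entropy rate $\approx\alpha/2$ will in general increase entropy. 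A statement of the form ``some positive-entropy piece of $\pi_{\textbf{z}}\mu$ convolves with $\pi_{\textbf{z}}\mu$ without entropy gain'' is precisely the kind of structural input that the machinery of \S\ref{sec:eit}--\S\ref{sec:proveA'} converts into strong saturation/concentration conclusions, so it cannot be obtained by the soft pigeonhole and homogeneity considerations you invoke. Notably, your construction of $\eta$ never uses condition (H) or the transversality estimates; in the paper these are indispensable for producing $\eta$.

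The paper's $\eta$ is a different object, chosen so that (C.2) is essentially automatic and the burden is shifted to (C.1), where transversality enters. One considers the discrete measure $\theta_{\ell}=b^{-\hat\ell}\sum_{\textbf{i}\in\Lambda^{\hat\ell}}\delta_{\pi_{\textbf{j}}g_{\textbf{i}}}$ on the space $\mathcal{X}$ of affine maps (not on $\mathbb{R}^d$), together with the partitions $\mathcal{L}_i^{\mathcal{X}}$. The gap $\log_{\lambda^{-1}}b-\alpha>0$ shows, via Lemmas \ref{lem:thetanL0}, \ref{lem:thetajLcn} and the decomposition Lemma \ref{lem:thetadec}, that a positive proportion of components $(\theta_{\ell})_{\Psi,\,i}$ have entropy rate $\ge r>0$ in the map partitions (Corollary \ref{cor:positive}); the transversality Theorem \ref{thm:estimate} (i.e.\ condition (H)), through Lemma \ref{lem:entinceta}, converts this into positive entropy $p_1$ of the evaluation measure $\xi.\delta_{g_{\textbf{u}}(0,0)}$ for a positive proportion of $\textbf{u}$ --- this is (C.1), with $p_0=p_1/2$. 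For (C.2), the point is that $(\xi.\delta_{g_{\textbf{u}}(0,0)})\ast(\Psi g_{\textbf{u}}\mu)$ is, up to $O(b^{-(i+\ell)})$ by Lemma \ref{lem:constantR}, a conditional piece of $\pi_{\textbf{j}}\mu$ (Lemma \ref{lem:dot2convolution}); since the total entropy rate of $\pi_{\textbf{j}}\mu$ is $\alpha$ and each such piece has rate $\ge\alpha-\tau$ (Lemma \ref{lem;llow}), Lemma \ref{lem:P} forces most pieces to have rate $\le\alpha+O(\tau^{1/4})$, and $\textbf{z}=\textbf{u}^*\textbf{i}$ is produced together with $\eta$ (with (C.3) supplied by the weak law, Lemma \ref{lem:boundsinm}, not by fixing $\textbf{z}$ in advance). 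In short: your candidate $\eta$ satisfies (C.1) but not provably (C.2), whereas the correct $\eta$ records the internal translation structure of $\pi_{\textbf{j}}\mu$, for which (C.2) follows from an averaging argument and (C.1) from condition (H); the missing transversality input is the essential point.
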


Most of the proofs of Theorem C are not new, and they closely resemble the details presented in \cite{ren2021dichotomy}. Therefore, for readers who are not familiar with \cite{ren2021dichotomy}, we offer a brief proof in the appendix.

\subsection{Saturated properties of $\pi_{\textbf{j}}\mu$}
The subsequent lemma is already included in the proof of \cite[Lemma 3.16]{Hochman2015}, but we provide the details here for completeness.
\begin{lemma}\label{lem:Ho151}
	Let $R \geq 1$,  let $V<\mathbb{R}^d$ be a linear space. Consider a measure $\omega \in \mathscr{P}(\mathbb{R}^d)$ such that $\text{diam}\left(\text{supp}(\omega)\right) \leq R$. Take integers $m \geq t \geq 1$. Then, the following  holds:
	\begin{equation}\nonumber
		\mathbb{E}_{0\le i<m}^{\omega }\left(\frac1{t}H\left(\omega_{y,\,i},\mathcal{L}_{t+i}\mid\mathcal{L}_{t+i}^{V^{\bot}}\right)\right)\ge
	\frac1{m}H\left(\omega,\mathcal{L}_m\mid\mathcal{L}_m^{V^{\bot}}\right)-O\left(\frac {tR}m+\frac1t\right).
	\end{equation}
\end{lemma}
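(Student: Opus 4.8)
The inequality to establish compares the average over $\mathcal{Q}_i$-components of the normalized conditional entropy at scale $t$ above $i$ with the single conditional entropy at scale $m$. This is a conditional-entropy analogue of the chain-rule/telescoping identity in Lemma \ref{lem:minus}, carried out relative to the fibers $\mathcal{L}_\bullet^{V^\bot}$. The plan is to run the standard telescoping argument for $H(\omega,\mathcal{L}_m\mid\mathcal{L}_m^{V^\bot})$ and then compare it term by term with the right-hand side, using the commensurability facts \eqref{eq:compa}, the affine-invariance estimate \eqref{lem:affinetransform}, and the bounded-support hypothesis $\operatorname{diam}(\operatorname{supp}\omega)\le R$ to control the error terms.

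First I would write, using the refinement identity $H(\omega,\mathcal{Q}\mid\mathcal{D})=H(\omega,\mathcal{Q})-H(\omega,\mathcal{D})$ applied to the pair $\mathcal{L}_{t+i}\vee\mathcal{L}_{t+i}^{V^\bot}$ and $\mathcal{L}_i\vee\mathcal{L}_i^{V^\bot}$ (note $\mathcal{L}_{t+i}$ refines $\mathcal{L}_i$ and $\mathcal{L}_{t+i}^{V^\bot}$ refines $\mathcal{L}_i^{V^\bot}$), the decomposition
\begin{equation}\nonumber
H\left(\omega,\mathcal{L}_m\vee\mathcal{L}_m^{V^\bot}\mid\mathcal{L}_0\vee\mathcal{L}_0^{V^\bot}\right)=\sum_{i=0}^{m-1}H\left(\omega,\mathcal{L}_{i+1}\vee\mathcal{L}_{i+1}^{V^\bot}\mid\mathcal{L}_{i}\vee\mathcal{L}_{i}^{V^\bot}\right),
\end{equation}
and more generally the version with steps of size $t$, grouping consecutive scales; the $O(tR/m)$ error absorbs the boundary blocks and the passage from the left side to $H(\omega,\mathcal{L}_m\mid\mathcal{L}_m^{V^\bot})$ since $\operatorname{supp}\omega$ meets only $O(R^d)$ cells of $\mathcal{L}_0$ and $\mathcal{L}_0^{V^\bot}$. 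Next I would observe that each block $\frac1t H(\omega,\mathcal{L}_{i+t}\vee\mathcal{L}_{i+t}^{V^\bot}\mid\mathcal{L}_{i}\vee\mathcal{L}_i^{V^\bot})$ equals, up to $O(1/t)$, the expectation over $\mathcal{L}_i\vee\mathcal{L}_i^{V^\bot}$-components of $\frac1t H(\omega_{\cdot},\mathcal{L}_{i+t}\mid\mathcal{L}_{i+t}^{V^\bot})$ — here one uses \eqref{eq:compa} to replace $\mathcal{L}_{i+t}\vee\mathcal{L}_{i+t}^{V^\bot}$ by $\mathcal{L}_{i+t}$ in the numerator and, crucially, \eqref{eq:CE}-type manipulations to move from components of $\mathcal{L}_i\vee\mathcal{L}_i^{V^\bot}$ to components of $\mathcal{L}_i$ alone, at the cost of an $O(1/t)$ term. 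Finally, averaging over $i\in\{0,\dots,m-1\}$ (rather than only over multiples of $t$) only changes the average by $O(1/t)$ by the usual Lemma \ref{lem:minus}-style Cesàro smoothing, and concavity of conditional entropy gives the inequality in the stated direction since passing to the coarser average can only increase the left side relative to the telescoped sum.

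The step I expect to be the main obstacle is the careful bookkeeping of which direction each commensurability and conditioning error goes, so that the final bound is a genuine lower bound for the left-hand side with only a negative error $-O(tR/m+1/t)$, rather than an approximate equality; in particular one must be sure that dropping the conditioning on the finer algebra $\mathcal{L}_i^{V^\bot}$ inside the components (replacing $\omega_{\mathcal{L}_i(y)\cap\mathcal{L}_i^{V^\bot}(y)}$-averages by $\omega_{\mathcal{L}_i(y)}$-averages) does not cost more than $O(1/t)$ after normalization — this follows because the two partitions $\mathcal{L}_i$ and $\mathcal{L}_i\vee\mathcal{L}_i^{V^\bot}$ restricted to a level-$i$ cell differ by at most an $O(1)$ additive entropy term (independent of $i$) coming from $H(\omega,\mathcal{L}_i^{V^\bot}\mid\mathcal{L}_i)\le d\log_b 2\cdot\lceil\text{something}\rceil$, bounded uniformly, so dividing by $t$ makes it negligible. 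Once the direction of every error is pinned down, the result follows by combining the telescoped identity with concavity and the uniform $O(1)$ bounds, exactly as in the cited proof of \cite[Lemma 3.16]{Hochman2015}.
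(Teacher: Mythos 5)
Your proposal has a genuine gap, and it sits exactly at the point you flagged as "bookkeeping." The telescoping you start from,
\begin{equation}\nonumber
H\bigl(\omega,\mathcal{L}_m\vee\mathcal{L}_m^{V^\bot}\mid\mathcal{L}_0\vee\mathcal{L}_0^{V^\bot}\bigr)=\sum_{i=0}^{m-1}H\bigl(\omega,\mathcal{L}_{i+1}\vee\mathcal{L}_{i+1}^{V^\bot}\mid\mathcal{L}_{i}\vee\mathcal{L}_{i}^{V^\bot}\bigr),
\end{equation}
recovers (up to the $O(1)$-commensurability of $\mathcal{L}_n$ with $\mathcal{L}_n^{V}\vee\mathcal{L}_n^{V^\bot}$ and an $O(\log R)$ boundary term) the \emph{full} entropy $H(\omega,\mathcal{L}_m)$, not the conditional entropy $H(\omega,\mathcal{L}_m\mid\mathcal{L}_m^{V^\bot})$. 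The two differ by $H(\omega,\mathcal{L}_m^{V^\bot})=H(\pi_{V^\bot}\omega,\mathcal{L}_m)$, which is typically of order $m$; after dividing by $m$ it is a quantity of constant size (up to $\dim V^\bot$), so your claim that this "passage" is absorbed into $O(tR/m)$ is false. The same mismatch occurs at the block level: $H(\omega_{y,i},\mathcal{L}_{i+t}\vee\mathcal{L}_{i+t}^{V^\bot})$ and $H(\omega_{y,i},\mathcal{L}_{i+t}\mid\mathcal{L}_{i+t}^{V^\bot})$ differ by $H(\omega_{y,i},\mathcal{L}_{i+t}^{V^\bot})$, which can be as large as $\dim(V^\bot)\,t$, not the $O(1)$ you assert when you "replace $\mathcal{L}_{i+t}\vee\mathcal{L}_{i+t}^{V^\bot}$ by $\mathcal{L}_{i+t}$ in the numerator." In short, nowhere in the proposal do you account for the accumulated fiber entropies $\sum_i H(\omega_{y,i},\mathcal{L}_{i+t}^{V^\bot})$ and compare them, with the correct inequality direction, to $H(\pi_{V^\bot}\omega,\mathcal{L}_m)$ — and that comparison is the entire content of the lemma, which is why it is a one-sided inequality rather than an approximate identity.

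The paper's proof makes this subtraction explicit: it writes $H(\omega,\mathcal{L}_m\mid\mathcal{L}_m^{V^\bot})=H(\omega,\mathcal{L}_m)-H(\pi_{V^\bot}\omega,\mathcal{L}_m)+O(1)$, applies Lemma \ref{lem:minus} separately to $\omega$ and to $\pi_{V^\bot}\omega$, and then reduces everything to the pointwise-in-$i$ inequality
\begin{equation}\nonumber
\mathbb{E}_{i}^{\pi_{V^\bot}\omega}\Bigl(\tfrac1t H\bigl((\pi_{V^\bot}\omega)_{\mathcal{L}_i(y)},\mathcal{L}_{t+i}\bigr)\Bigr)\ \ge\ \mathbb{E}_{i}^{\omega}\Bigl(\tfrac1t H\bigl(\omega_{\mathcal{L}_i(y)},\mathcal{L}_{t+i}^{V^\bot}\bigr)\Bigr)+O\bigl(\tfrac1t\bigr),
\end{equation}
whose proof hinges on the identity $(\pi_{V^\bot}\omega)_{\mathcal{L}_i(y)}=\pi_{V^\bot}\bigl(\omega_{\mathcal{L}_i^{V^\bot}(y)}\bigr)$ — note that this is \emph{not} $\pi_{V^\bot}(\omega_{\mathcal{L}_i(y)})$ — followed by concavity (refining to $\mathcal{L}_i^{V^\bot}\vee\mathcal{L}_i^{V}\vee\mathcal{L}_i$-components) and then convexity/commensurability to pass back to $\mathcal{L}_i$-components at cost $O(1)$. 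A repaired version of your telescoping route does exist (decompose each joint block into a conditional part plus a fiber part, bound the fiber parts using $H(\omega,\mathcal{L}_{t+i}^{V^\bot}\mid\mathcal{L}_i\vee\mathcal{L}_i^{V^\bot})\le H(\omega,\mathcal{L}_{t+i}^{V^\bot}\mid\mathcal{L}_i^{V^\bot})$, and telescope the latter to $H(\pi_{V^\bot}\omega,\mathcal{L}_m)$, finishing with concavity of conditional entropy to coarsen the component family), but none of these steps appears in your write-up, so as it stands the argument does not close.
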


\begin{proof}
 Using Lemma \ref{lem:minus} with $\mathcal{L}_{i}^{V^{\bot}}=\pi_{V^{\bot}}^{-1}\mathcal{L}_{i}$  and  $(\pi_{V^{\bot}}\omega)_{y,\,i}=(\pi_{V^{\bot}}\omega)_{\mathcal{L}_{i} (y)}$, we get:
\begin{equation}\label{pa2}
	\frac1{m}H\left(\omega,\mathcal{L}^{V^{\bot}}_m\right)=
	\frac1{m}H\left(\pi_{V^{\bot}}\omega,\mathcal{L}_m\right)
	=
	\mathbb{E}_{0\le i<m}^{\pi_{V^{\bot}}\omega }\left(\frac1{t}H\left((\pi_{V^{\bot}}\omega)_{\mathcal{L}_{i} (y)},\mathcal{L}_{t+i}\right)\right)+  O\left(\frac {t R}m\right).
	\end{equation}
By Lemma \ref{lem:minus} and (\ref{eq:CE}), we have:
\begin{equation}\nonumber
\begin{split}
		&\frac1{m}H\left(\omega,\mathcal{L}_m\right)=\mathbb{E}_{0\le i<m}^{\omega }\left(\frac1{t}H\left(\omega_{\mathcal{L}_{i} (y)},\mathcal{L}_{t+i}\right)\right)+ O\left(\frac {t R}m\right)
		\\&=\mathbb{E}_{0\le i<m}^{\omega }\left(\frac1{t}H\left(\omega_{\mathcal{L}_{i} (y)},\mathcal{L}_{t+i}\mid\mathcal{L}_{t+i}^{V^{\bot}}\right)\right)
		+\mathbb{E}_{0\le i<m}^{\omega }\left(\frac1{t}H\left(\omega_{\mathcal{L}_{i} (y)},\mathcal{L}_{t+i}^{V^{\bot}}\right)\right)+ O\left(\frac {t R}m+\frac1t\right).
\end{split}
\end{equation}
Combining this with (\ref{pa2}), we only need to prove for each $0\le i<m$:
\begin{equation}\label{eq:B1.1}
\mathbb{E}_{ i}^{\pi_{V^{\bot}}\omega }\left(\frac1{t}H\left((\pi_{V^{\bot}}\omega)_{\mathcal{L}_{i} (y)},\mathcal{L}_{t+i}\right)\right)\ge \mathbb{E}_{i}^{\omega }\left(\frac1{t}H\left(\omega_{\mathcal{L}_{i} (y)},\mathcal{L}_{t+i}^{V^{\bot}}\right)\right)+O(\frac1t).
\end{equation}

Easy to confirm:  $ \left(\pi_{V^{\bot}}\omega\right)_{\mathcal{L}_{i} (y)}= \pi_{V^{\bot}}(\omega_{\pi_{V^{\bot}}^{-1}\mathcal{L}_{i} (y)})=\pi_{V^{\bot}}(\omega_{\mathcal{L}^{V^{\bot}}_{i}(y) })$. Consequently,
\begin{equation}\nonumber
\begin{split}
&\mathbb{E}_{ i}^{\pi_{V^{\bot}}\omega}\left(\frac1{t}H\left((\pi_{V^{\bot}}\omega)_{\mathcal{L}_{i} (y)},\mathcal{L}_{t+i}\right)\right)=
\mathbb{E}_{ i}^{\omega}\left(\frac1{t}H\left(\omega_{\mathcal{L}^{V^{\bot}}_{i}(y)},\mathcal{L}^{V^{\bot} }_{t+i}\right)\right)
\\&\ge
\mathbb{E}_{i}^{\omega }\left(\frac1{t}H\left(\omega_{\mathcal{L}_{i}^{ V^{\bot}}\vee\mathcal{L}_{i}^{ V}\vee\mathcal{L}_{i} (y)},\mathcal{L}^{V^{\bot}}_{t+i}\right)\right)
\end{split}
\end{equation}
by cancavity. Combining this with  the convexity of entropy,  (\ref{eq:B1.1}) holds.
\end{proof}

We deduce the following application from  the observation in \cite[Theorem 6.11]{Hochman2015}: 
\begin{lemma}\label{lem:beta}
	For any linear space $V<\mathbb{R}^d$, we have
	$$\alpha-\alpha_{V^{\bot}}\ge\limsup_{t\to\infty}\limsup_{m\to\infty}	\int\mathbb{E}_{0\le i<m}^{\pi_{\textbf{j}}\mu }\left(\frac1{t}H\left((\pi_{\textbf{j}}\mu)_{y,\,i},\mathcal{L}_{t+i}\mid\mathcal{L}_{t+i}^{V^{\bot}}\right)\right)d\nu^{\,\mathbb{Z}_+}(\textbf{j}).$$
\end{lemma}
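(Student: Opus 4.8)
\textbf{Proof proposal for Lemma \ref{lem:beta}.}
The plan is to relate the quantity on the right-hand side to the Ledrappier--Young dimensions $\alpha$ and $\alpha_{V^{\bot}}$ by passing through the entropy integrals in (\ref{eq:LCT}) and its analogue for $\pi_{V^{\bot}}\phi$. First I would recall that for $\nu^{\,\mathbb{Z}_+}$-a.e.\ $\textbf{j}$ the measure $\pi_{\textbf{j}}\mu$ is exact dimensional with dimension $\alpha$, and by Corollary \ref{cor:pro} the pushforward $\pi_{V^{\bot}}(\pi_{\textbf{j}}\mu)$ is exact dimensional with dimension $\alpha_{V^{\bot}}$; Proposition \ref{prop:Young} then turns both dimensions into limits of normalized entropies, namely $\alpha = \lim_n \tfrac1n H(\pi_{\textbf{j}}\mu,\mathcal{L}_n)$ and $\alpha_{V^{\bot}} = \lim_n \tfrac1n H(\pi_{V^{\bot}}\pi_{\textbf{j}}\mu,\mathcal{L}_n) = \lim_n \tfrac1n H(\pi_{\textbf{j}}\mu,\mathcal{L}_n^{V^{\bot}})$ for a.e.\ $\textbf{j}$, using (\ref{eq:entro-pull}).

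Next I would use the conditional-entropy decomposition (\ref{eq:CE}), which gives
\[
\tfrac1n H(\pi_{\textbf{j}}\mu,\mathcal{L}_n) = \tfrac1n H(\pi_{\textbf{j}}\mu,\mathcal{L}_n\mid\mathcal{L}_n^{V^{\bot}}) + \tfrac1n H(\pi_{\textbf{j}}\mu,\mathcal{L}_n^{V^{\bot}}) + O(1/n),
\]
so that $\tfrac1n H(\pi_{\textbf{j}}\mu,\mathcal{L}_n\mid\mathcal{L}_n^{V^{\bot}}) \to \alpha - \alpha_{V^{\bot}}$ for a.e.\ $\textbf{j}$. Integrating over $\textbf{j}$ and applying dominated convergence (entropies here are bounded because $\text{supp}(\pi_{\textbf{j}}\mu)$ has uniformly bounded diameter, so the argument of (\ref{eq:LCT}) applies verbatim) yields $\int \tfrac1n H(\pi_{\textbf{j}}\mu,\mathcal{L}_n\mid\mathcal{L}_n^{V^{\bot}})\,d\nu^{\,\mathbb{Z}_+}(\textbf{j}) \to \alpha-\alpha_{V^{\bot}}$. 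Then I would invoke Lemma \ref{lem:Ho151} with $\omega = \pi_{\textbf{j}}\mu$ and $m=n$: for fixed $t$, letting $m\to\infty$ the error $O(tR/m + 1/t)$ collapses to $O(1/t)$, giving
\[
\limsup_{m\to\infty} \mathbb{E}^{\pi_{\textbf{j}}\mu}_{0\le i<m}\!\left(\tfrac1t H\!\left((\pi_{\textbf{j}}\mu)_{y,i},\mathcal{L}_{t+i}\mid\mathcal{L}_{t+i}^{V^{\bot}}\right)\right) \le \limsup_{m\to\infty}\tfrac1m H(\pi_{\textbf{j}}\mu,\mathcal{L}_m\mid\mathcal{L}_m^{V^{\bot}}) + O(1/t);
\]
integrating over $\textbf{j}$, using dominated convergence and then letting $t\to\infty$ gives the claimed inequality $\alpha - \alpha_{V^{\bot}} \ge \limsup_t\limsup_m \int(\cdots)\,d\nu^{\,\mathbb{Z}_+}$.

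The main technical point to be careful about is the interchange of the integral over $\textbf{j}$ with the $\limsup$ in $m$: a priori $\limsup$ does not commute with integration, so I would either work with the liminf/limit (the per-$\textbf{j}$ limit in $m$ actually exists by exact dimensionality, so $\limsup = \lim$ for a.e.\ $\textbf{j}$) and use Fatou or dominated convergence, or bound the inner $\mathbb{E}$-expression uniformly and push the $\limsup_m$ outside after integration via reverse Fatou. Since all the relevant entropies are uniformly bounded by a constant depending only on $R$ and $t$ (via (\ref{eq:supp})-type diameter bounds and (\ref{eq:upper-boun-entro})), dominated convergence applies and this obstacle is manageable; the rest is bookkeeping of the $O(1/t)$ and $O(1/m)$ error terms.
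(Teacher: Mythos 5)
Your first steps are fine: exact dimensionality plus Corollary \ref{cor:pro}, (\ref{eq:entro-pull}) and (\ref{eq:CE}) do give $\tfrac1m H\left(\pi_{\textbf{j}}\mu,\mathcal{L}_m\mid\mathcal{L}_m^{V^{\bot}}\right)\to\alpha-\alpha_{V^{\bot}}$ for a.e.\ $\textbf{j}$, and integrating this is harmless. The genuine gap is the next step: you invoke Lemma \ref{lem:Ho151} to bound the component average \emph{from above} by the global conditional entropy, but Lemma \ref{lem:Ho151} states exactly the reverse inequality,
$$\mathbb{E}_{0\le i<m}^{\omega}\left(\tfrac1{t}H\left(\omega_{y,\,i},\mathcal{L}_{t+i}\mid\mathcal{L}_{t+i}^{V^{\bot}}\right)\right)\ge\tfrac1{m}H\left(\omega,\mathcal{L}_m\mid\mathcal{L}_m^{V^{\bot}}\right)-O\!\left(\tfrac{tR}{m}+\tfrac1t\right),$$
and the direction you need is false for general measures. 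Conditional entropy, unlike full entropy, is not approximately additive over scales: take for instance the uniform measure on the segment from $(0,0)$ to $(b^{-t},1)$ in $\mathbb{R}^2$ with $V^{\bot}$ the $x$-axis. Then $\tfrac1m H(\omega,\mathcal{L}_m\mid\mathcal{L}_m^{V^{\bot}})\approx t/m\to0$, while every component $\omega_{y,\,i}$ has $\tfrac1t H(\omega_{y,\,i},\mathcal{L}_{t+i}\mid\mathcal{L}_{t+i}^{V^{\bot}})\approx1$, so the component average vastly exceeds the global quantity. Equivalently, the inequality you would need amounts to saying that the average entropy of $\pi_{V^{\bot}}$-projections of $\mathcal{L}_i$-components is at least the entropy of the projection of the whole measure, and projections of components (conditioned on full cells) can lose entropy compared with components of the projection.

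This is precisely the point where the paper has to use the dynamics rather than soft entropy bookkeeping: writing $\mathbb{E}_i\left(H((\pi_{\textbf{j}}\mu)_{y,\,i},\mathcal{L}_{t+i}^{V^{\bot}})\right)=H\left(\pi_{\textbf{j}}\mu,\mathcal{L}_{t+i}^{V^{\bot}}\mid\mathcal{L}_i\right)$, then using the self-affine decomposition (\ref{eqn:self`affine'}), the transformation formula (\ref{TransformA}), concavity, the H\"older-closeness $\|\pi_{\textbf{a}^*\textbf{j}}-\pi_{\textbf{a}^*\textbf{j}'}\|_\infty=O(b^{-t})$ and the Bernoulli structure of $\nu^{\,\mathbb{Z}_+}$, it shows that the integrated average of $\tfrac1t H\left((\pi_{\textbf{j}}\mu)_{y,\,i},\mathcal{L}_{t+i}^{V^{\bot}}\right)$ is at least $\int\tfrac1t H\left(\pi_{\textbf{j}}\mu,\mathcal{L}_t^{V^{\bot}}\right)d\nu^{\,\mathbb{Z}_+}(\textbf{j})-o(1)\to\alpha_{V^{\bot}}$ (Corollary \ref{cor:pro}); subtracting this from the total-entropy identity $\alpha=\lim_t\lim_m\int\mathbb{E}_{0\le i<m}\left(\tfrac1t H((\pi_{\textbf{j}}\mu)_{y,\,i},\mathcal{L}_{t+i})\right)d\nu^{\,\mathbb{Z}_+}$ then yields the claimed upper bound on the conditional component average. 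Your worry about interchanging $\limsup$ with the integral is a side issue; without a substitute for this dynamical lower bound on the projected component entropies, the argument does not go through.
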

 We note that $\alpha_{V^{\bot}}=\alpha_{V^{\bot}}(\phi)$ is defined in Corollary \ref{cor:pro}.

\begin{proof}
Using  Theorem \ref{thm:ledrappier} and Lemma \ref{lem:minus}, we obtain:
\begin{equation}\nonumber
	\begin{split}
	\alpha&=\lim_{m\to\infty}	\int\frac1{m}H\left(\pi_{\textbf{j}}\mu,\mathcal{L}_m\right)d\nu^{\,\mathbb{Z}_+}(\textbf{j})=\lim_{t\to\infty}\lim_{m\to\infty}	\int\left(\frac1{m}H\left(\pi_{\textbf{j}}\mu,\mathcal{L}_m\right)+O\left(\frac tm+\frac1t\right)\right)d\nu^{\,\mathbb{Z}_+}(\textbf{j})\\&=
	\lim_{t\to\infty}\lim_{m\to\infty}	\int\mathbb{E}_{0\le i<m}^{\pi_{\textbf{j}}\mu }\left(\frac1{t}H\left((\pi_{\textbf{j}}\mu)_{y,\,i},\mathcal{L}_{t+i}\right)\right)d\nu^{\,\mathbb{Z}_+}(\textbf{j}).
	\end{split}
	\end{equation}
Thus, we aim to prove:
\begin{equation}\label{eq:B.1.2}
 \liminf_{t\to\infty}\liminf_{m\to\infty}	\int\mathbb{E}_{0\le i<m}^{\pi_{\textbf{j}}\mu }\left(\frac1{t}H\left((\pi_{\textbf{j}}\mu)_{y,\,i},\mathcal{L}_{t+i}^{V^{\bot}}\right)\right)d\nu^{\,\mathbb{Z}_+}(\textbf{j})\ge\alpha_{V^{\bot}}.
\end{equation}
Let   $1\ll t\ll m$.
By the definition of conditional entropy, we have
$$ \mathbb{E}^{\pi_{\textbf{j}}\mu }_{ i}\left(H\left((\pi_{\textbf{j}}\mu)_{y,\,i},\mathcal{L}_{t+i}^{V^{\bot}}\right)\right)= H\left(\pi_{\textbf{j}}\mu,\mathcal{L}_{t+i}^{V^{\bot}}\mid \mathcal{L}_{i}\right)\quad\quad\text{for each } i\in\mathbb{N},\,\textbf{j}\in\Sigma.$$
Combining this
 with  (\ref{eqn:self`affine'}) and $\pi_{\textbf{j}}g_{\textbf{a}}(x,y)=\lambda^{|\textbf{a}|}\pi_{{\textbf{a}}^*\textbf{j}}(x,y)+ \pi_{\textbf{j}}g_{\textbf{a}}(0,0) $, we get:
\begin{equation}\nonumber
	\begin{split}
	&\int\mathbb{E}_{0\le i<m}^{\pi_{\textbf{j}}\mu }\left(\frac1{t}H\left((\pi_{\textbf{j}}\mu)_{y,\,i},\mathcal{L}_{t+i}^{V^{\bot}}\right)\right)d\nu^{\,\mathbb{Z}_+}(\textbf{j})
	=\int\frac1m\sum_{i=0}^{m-1}\frac1{t}H\left(\pi_{\textbf{j}}\mu,\mathcal{L}_{t+i}^{V^{\bot}}\mid \mathcal{L}_{i}\right)d\nu^{\,\mathbb{Z}_+}(\textbf{j})
	\\&\ge
	\frac1m\sum_{i=0}^{m-1}\int\frac1{b^{\hat{i}}}\sum_{\textbf{a}\in\Lambda^{\hat{i}}}\frac1{t}H\left(\pi_{\textbf{j}}g_{\textbf{a}}\mu,\mathcal{L}_{t+i}^{V^{\bot}}\mid\mathcal{L}_{i}\right)d\nu^{\,\mathbb{Z}_+}(\textbf{j})\\&=
\frac1m\sum_{i=0}^{m-1}\int\frac1{b^{\hat{i}}}\sum_{\textbf{a}\in\Lambda^{\hat{i}}}\frac1tH\left(\pi_{\textbf{a}^*\textbf{j}}\mu,\mathcal{L}_{t}^{V^{\bot}}\right) d\nu^{\,\mathbb{Z}_+}(\textbf{j})+O(\frac1t).
	\end{split}
	\end{equation}
Combining this with $\sup_{\textbf{j}'\in\Sigma}\parallel \pi_{\textbf{a}^*\textbf{j}}-\pi_{\textbf{a}^*\textbf{j}'}\parallel_{\infty} = O(b^{-t})$ for $\gamma^{\hat{i}}\le b^{-t}$, we conclude:
$$\int\mathbb{E}_{0\le i<m}^{\pi_{\textbf{j}}\mu }\left(\frac1{t}H\left((\pi_{\textbf{j}}\mu)_{y,\,i},\mathcal{L}_{t+i}^{V^{\bot}}\right)\right)d\nu^{\,\mathbb{Z}_+}(\textbf{j})\ge \int\frac1{t}H\left(\pi_{\textbf{j}}\mu,\mathcal{L}^{V^{\bot}}_{t}\right)d\nu^{\,\mathbb{Z}_+}(\textbf{j})+O\left(\frac tm+\frac1t\right).$$
Thus, (\ref{eq:B.1.2}) holds, as $\lim\limits_{t\to\infty} \int\frac1{t}H\left(\pi_{\textbf{j}}\mu,\mathcal{L}^{V^{\bot}}_{t}\right)d\nu^{\,\mathbb{Z}_+}(\textbf{j})=\alpha_{V^{\bot}} $ by  Corollary \ref{cor:pro}.
\end{proof}

Let $Gr_{\ell}(d)$ denote the Grassmannian of $\ell$-dimensional linear spaces of $\mathbb{R}^d$, where $0 \leq \ell \leq d$. The metric is defined as:
$$d(V,V')=\sup_{a\in\mathbb{R}^d,\,|a|\le1}\left|\pi_V(a)-\pi_{V'}(a)\right|,\quad\quad\text{for }V,V'\in Gr_{\ell}(d).$$
Additionally, note that $Gr_{\ell}(d)$, equipped with this metric, is compact.

\begin{lemma}\label{cor:pa}
	Given $V\in  Gr_{\ell}(d)$,  consider a sequence of integers $0<m_1<m_2<\ldots$ and subspaces $V_1,V_2,\ldots\in Gr_{\ell}(d)$ such that $V_k\to V$.
         Then we have:
	$$\alpha-\alpha_{V^{\bot}}\ge\limsup_{k\to\infty}\int\frac1{m_k}H\left(\pi_{\textbf{j} }\mu,\mathcal{L}_{m_k}\mid\mathcal{L}_{m_k}^{V_k^{\bot}}\right)d\nu^{\,\mathbb{Z}_+}(\textbf{j}).$$
\end{lemma}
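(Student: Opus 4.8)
\textbf{Proof plan for Lemma \ref{cor:pa}.}
The plan is to combine Lemma \ref{lem:beta} with a continuity/comparison argument that lets us replace the fixed space $V^{\bot}$ appearing in the conditional entropy of Lemma \ref{lem:beta} by the varying spaces $V_k^{\bot}$, together with a diagonalization over the scales $m_k$. First I would apply Lemma \ref{lem:beta} directly to the limit space $V$: it gives
$$\alpha-\alpha_{V^{\bot}}\ge\limsup_{t\to\infty}\limsup_{m\to\infty}\int\mathbb{E}_{0\le i<m}^{\pi_{\textbf{j}}\mu }\left(\frac1{t}H\left((\pi_{\textbf{j}}\mu)_{y,\,i},\mathcal{L}_{t+i}\mid\mathcal{L}_{t+i}^{V^{\bot}}\right)\right)d\nu^{\,\mathbb{Z}_+}(\textbf{j}).$$
Next, using Lemma \ref{lem:Ho151} (with the roles $m\mapsto m_k$, $t$ fixed, and $\omega=\pi_{\textbf{j}}\mu$, whose support has diameter $O(1)$ uniformly in $\textbf{j}$), the inner double average dominates $\frac1{m_k}H(\pi_{\textbf{j}}\mu,\mathcal{L}_{m_k}\mid\mathcal{L}_{m_k}^{V^{\bot}})$ up to an error $O(tR/m_k+1/t)$; integrating over $\textbf{j}$ and letting $k\to\infty$ then $t\to\infty$ yields
$$\alpha-\alpha_{V^{\bot}}\ge\limsup_{k\to\infty}\int\frac1{m_k}H\left(\pi_{\textbf{j}}\mu,\mathcal{L}_{m_k}\mid\mathcal{L}_{m_k}^{V^{\bot}}\right)d\nu^{\,\mathbb{Z}_+}(\textbf{j}).$$
So the remaining task is to show that replacing $V^{\bot}$ by $V_k^{\bot}$ in this last expression changes it by a quantity tending to $0$ as $k\to\infty$.

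For the comparison step I would use that $V_k\to V$ in $Gr_{\ell}(d)$ (equivalently $V_k^{\bot}\to V^{\bot}$ in $Gr_{d-\ell}(d)$), so for any fixed scale $N$ the partitions $\pi_{V_k^{\bot}}^{-1}\mathcal{L}_N$ and $\pi_{V^{\bot}}^{-1}\mathcal{L}_N$ become $O(1)$-commensurable once $k$ is large enough that $d(V_k,V)<b^{-N}/C$; however, at the full scale $m_k\to\infty$ this naive bound fails. The clean way around this is to insert a fixed intermediate scale: write, using (\ref{eq:CE}) and the chain rule for conditional entropy,
$$\tfrac1{m_k}H\left(\pi_{\textbf{j}}\mu,\mathcal{L}_{m_k}\mid\mathcal{L}_{m_k}^{V^{\bot}}\right) = \tfrac1{m_k}H\left(\pi_{\textbf{j}}\mu,\mathcal{L}_{m_k}\right)-\tfrac1{m_k}H\left(\pi_{V^{\bot}}\pi_{\textbf{j}}\mu,\mathcal{L}_{m_k}\right)+O(\tfrac1{m_k}),$$
and the analogous identity for $V_k^{\bot}$. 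Since $\tfrac1{m_k}H(\pi_{\textbf{j}}\mu,\mathcal{L}_{m_k})$ is the same in both, it suffices to control $\big|\tfrac1{m_k}H(\pi_{V_k^{\bot}}\pi_{\textbf{j}}\mu,\mathcal{L}_{m_k})-\tfrac1{m_k}H(\pi_{V^{\bot}}\pi_{\textbf{j}}\mu,\mathcal{L}_{m_k})\big|$. Because $\sup_{|a|\le 1}|\pi_{V_k^{\bot}}(a)-\pi_{V^{\bot}}(a)|=d(V_k^{\bot},V^{\bot})\to 0$ and $\pi_{\textbf{j}}\mu$ has support of diameter $O(1)$ uniformly in $\textbf{j}$, we have $\sup_x|\pi_{V_k^{\bot}}x-\pi_{V^{\bot}}x|=O(d(V_k,V))\le b^{-s_k}$ for some $s_k\to\infty$; applying (\ref{eq:ero-minus}) at scale $s_k$ and then Lemma \ref{lem:minus} (or a telescoping of (\ref{eq:comparable}) across the block from $s_k$ to $m_k$) bounds the difference by $O(s_k/m_k)+O(1/s_k)+o(1)$, which tends to $0$ after first choosing $m_k$ large relative to $s_k$ — and we are free to pass to a subsequence of the $m_k$ to arrange this, which does not affect a $\limsup$ inequality. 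Combining, the $V_k^{\bot}$-version of the $\limsup$ equals the $V^{\bot}$-version, giving the claim.

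The main obstacle I anticipate is exactly this last point: the entropy $H(\pi_{\textbf{j}}\mu,\mathcal{L}_{m_k})$ grows linearly in $m_k$, so a perturbation of the projection direction that is small but fixed relative to the scale is, a priori, not negligible at scale $m_k$ — two projections differing by $b^{-s}$ can still be $b^{s}$-commensurable only down to scale $s$, and disagree arbitrarily below it. The resolution is the scale-separation bookkeeping above (split at the fixed scale $s_k$, use that $\mathcal{L}_{s_k}$ is coarse enough that $V_k^{\bot}$ and $V^{\bot}$ give commensurable partitions there, and absorb everything finer into an $O(1/s_k)$ normalized error), together with the freedom to thin out the sequence $\{m_k\}$ so that $m_k\gg s_k$; one must also check the diameter bound $\text{diam}(\text{supp}(\pi_{\textbf{j}}\mu))=O(1)$ is uniform in $\textbf{j}$, which follows from $\phi$ being bounded and the explicit form of $\Gamma_{\textbf{i}}$ in (\ref{def:Gamma})--(\ref{ProjectionFunction}). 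No genuinely new idea beyond Lemmas \ref{lem:Ho151} and \ref{lem:beta} and the standard entropy estimates (\ref{eq:comparable}), (\ref{eq:ero-minus}), (\ref{eq:CE}) is needed.
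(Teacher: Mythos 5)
Your first two steps (Lemma \ref{lem:beta} for the limit space $V$, then Lemma \ref{lem:Ho151}) are fine, but the final comparison step — replacing $V^{\bot}$ by $V_k^{\bot}$ at the global scale $m_k$ — has a genuine gap, and it is exactly the point where the paper's proof is organized differently. From $\sup_x|\pi_{V_k^{\bot}}x-\pi_{V^{\bot}}x|\le b^{-s_k}$ on the $O(1)$-diameter support, (\ref{eq:ero-minus}) only controls $\bigl|H(\pi_{V_k^{\bot}}\pi_{\textbf{j}}\mu,\mathcal{L}_n)-H(\pi_{V^{\bot}}\pi_{\textbf{j}}\mu,\mathcal{L}_n)\bigr|$ for $n\le s_k$; for $s_k<n\le m_k$ the partitions $\mathcal{L}_n^{V_k^{\bot}}$ and $\mathcal{L}_n^{V^{\bot}}$ are only $b^{(d-\ell)(n-s_k)}$-commensurable on the support, so your telescoping of (\ref{eq:comparable}) produces an error of order $(m_k-s_k)$, i.e.\ of order $1$ after dividing by $m_k$, not $O(s_k/m_k)+O(1/s_k)$. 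And this is not a bookkeeping artifact: normalized projection entropy at a prescribed sequence of scales is genuinely discontinuous in the direction. For a measure that looks like (a union of) fibers orthogonal to $V_k^{\bot}$ throughout the scale range $[s_k,m_k]$, one has $H(\pi_{V_k^{\bot}}\omega,\mathcal{L}_{m_k})$ differing from $H(\pi_{V^{\bot}}\omega,\mathcal{L}_{m_k})$ by $\sim m_k$, so the asserted equality of the $V_k^{\bot}$- and $V^{\bot}$-versions of the limsup cannot be proved this way (and the direction of failure is the harmful one, since it would make the conditional entropy given $\mathcal{L}_{m_k}^{V_k^{\bot}}$ larger). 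Establishing it for the particular measures $\pi_{\textbf{j}}\mu$ would require extra input (e.g.\ some continuity of $V\mapsto\alpha_V$), which the paper neither has nor needs.

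The paper performs the direction swap in the opposite order, and at the local scale: starting from the right-hand side of Lemma \ref{lem:beta}, it replaces $\mathcal{L}_{t+i}^{V^{\bot}}$ by $\mathcal{L}_{t+i}^{V_k^{\bot}}$ inside the component entropies $\frac1t H\bigl((\pi_{\textbf{j}}\mu)_{y,\,i},\mathcal{L}_{t+i}\mid\mathcal{L}_{t+i}^{\,\cdot}\bigr)$. After rescaling by $b^i$ each component has diameter $O(1)$, and since $t$ is fixed and $k$ is large one has $d(V_k^{\bot},V^{\bot})\le b^{-t}$, so (\ref{eq:ero-minus}) applies at scale $t$ and the swap costs only $O(1/t)$ per component, uniformly in $i<m_k$. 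Only then does the paper invoke Lemma \ref{lem:Ho151} \emph{with the space $V_k$} to pass from the local averages down to $\frac1{m_k}H\bigl(\pi_{\textbf{j}}\mu,\mathcal{L}_{m_k}\mid\mathcal{L}_{m_k}^{V_k^{\bot}}\bigr)$. So you should interchange your last two steps: swap directions while the entropies are still at the fixed scale $t$ on $O(1)$-diameter components, then apply Lemma \ref{lem:Ho151} for $V_k$; the global-scale comparison you propose cannot be repaired by choosing $s_k$ or thinning $\{m_k\}$.
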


\begin{proof}
Given $\eps > 0$, let integers $t,k \ge1$ be such that $\eps^{-1} \ll t \ll k$. Therefore, by Lemma \ref{lem:beta}, we have:
\begin{equation}\label{pa7}
	\alpha-\alpha_{V^{\bot}}+\eps\ge \int\mathbb{E}_{0\le i<m_k}^{\pi_{\textbf{j}}\mu }\left(\frac1{t}H\left((\pi_{\textbf{j}}\mu)_{y,\,i},\mathcal{L}_{t+i}\mid\mathcal{L}_{t+i}^{V^{\bot}}\right)\right)d\nu^{\,\mathbb{Z}_+}(\textbf{j}).
\end{equation}
   Now, combining $V^{\bot}_k\to V^{\bot}$ with $\eps^{-1}\ll t\ll k$, (\ref{eq:ero-minus}), and Lemma \ref{lem:Ho151}, we get:
\begin{equation}\nonumber
	\begin{split}
	&\int\mathbb{E}_{0\le i<m_k}^{\pi_{\textbf{j}}\mu }\left(\frac1{t}H\left((\pi_{\textbf{j}}\mu)_{y,\,i},\mathcal{L}_{t+i}\mid\mathcal{L}_{t+i}^{V^{\bot}}\right)\right)d\nu^{\,\mathbb{Z}_+}(\textbf{j})\\&=
	\int\mathbb{E}_{0\le i<m_k}^{\pi_{\textbf{j}}\mu }\left(\frac1{t}H\left((\pi_{\textbf{j}}\mu)_{y,\,i},\mathcal{L}_t\right)-
	\frac1{t}H\left(    \pi_{V^{\bot}} \left(b^i(\pi_{\textbf{j}}\mu)_{y,\,i}\right),\mathcal{L}_t\right)
	\right)d\nu^{\,\mathbb{Z}_+}(\textbf{j})+O(\frac1t)\\&=
        \int\mathbb{E}_{0\le i<m_k}^{\pi_{\textbf{j}}\mu }\left(\frac1{t}H\left((\pi_{\textbf{j}}\mu)_{y,\,i},\mathcal{L}_t\right)-
	\frac1{t}H\left(    \pi_{V_k^{\bot}} \left(b^i(\pi_{\textbf{j}}\mu)_{y,\,i}\right)    ,\mathcal{L}_t\right)
	\right)d\nu^{\,\mathbb{Z}_+}(\textbf{j})+O\left(\frac1t+\frac t{m_k}\right)
        \\&=
	 \int\mathbb{E}_{0\le i<m_k}^{\pi_{\textbf{j}}\mu }\left(\frac1{t}H\left((\pi_{\textbf{j}}\mu)_{y,\,i},\mathcal{L}_{t+i}\mid\mathcal{L}_{t+i}^{V_k^{\bot}}\right)\right)d\nu^{\,\mathbb{Z}_+}(\textbf{j})+O\left(\frac1t+\frac t{m_k}\right)
          \\&\ge\int
	\frac1{m_k}H\left(\pi_{\textbf{j} }\mu,\mathcal{L}_{m_k}\mid\mathcal{L}_{m_k}^{V_k^{\bot}}\right)d\nu^{\,\mathbb{Z}_+}(\textbf{j})-\eps.
	\end{split}
	\end{equation}
Combining this with (\ref{pa7}), then our claim holds.
\end{proof}

\subsection{Proof of Theorem A'} 
The following lemma is a well-known fact from \cite{hochman2014self}, and detailed proof can be found in \cite[Lemma 5.6]{ren2021dichotomy}.

\begin{lemma}\label{lem:Hochman3}
	 Let $p _0> 0$ and $R \geq 1$. For $0 < \eps < \eps_0(p_0, R)$ and $m \geq M(p_0, \eps, R)$, the following holds:
Consider $\eta \in \mathscr{P}(\mathbb{R}^d)$ such that $\text{diam}\left(\text{supp}(\eta)\right) \leq R$. If
	\begin{equation}\nonumber
	\mathbb{P}^{\eta}_{0\le i< m} \left(
	\begin{matrix}
	\eta_{y,\,i}\text{ is } (\{0\},\eps/b^i)-\text{concentrated}
	\end{matrix}
	\,\right) > 1-\eps,
	\end{equation}
	then
	$\frac1mH(\eta,\mathcal{L}_m)<p_0.$
\end{lemma}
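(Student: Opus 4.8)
The plan is to pass from $\frac1m H(\eta,\mathcal{L}_m)$ to an average of normalized component entropies at a fixed resolution $k$ via Lemma~\ref{lem:minus}, and then to use that a component measure which is $(\{0\},\eps/b^i)$-concentrated is essentially carried by a single point from scale $b^{-i}$ down to scale $b^{-(i+k)}$ as soon as $\eps\le b^{-k}$, hence has negligible normalized $\mathcal{L}_{i+k}$-entropy. The order in which the constants are fixed matters: first choose an integer $k=k(p_0)$ so large that $(d\log_b 3+\log_b 2)/k<p_0/8$; then set $\eps_0(p_0)=\min\{b^{-k},\,p_0/(8d)\}$ and take $0<\eps<\eps_0(p_0)$; finally take $m\ge M(p_0,\eps,R)$ large enough (in particular $m\ge k$) that the error term $O((k+\log R)/m)$ coming from Lemma~\ref{lem:minus} is below $p_0/8$.

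By Lemma~\ref{lem:minus} applied at resolution $k$ (valid since $m\ge k$ and $\text{diam}(\text{supp}(\eta))\le R$),
\[
\frac1m H\left(\eta,\mathcal{L}_m\right)=\mathbb{E}^{\eta}_{0\le i<m}\left(\frac1k H\left(\eta_{y,\,i},\mathcal{L}_{i+k}\right)\right)+O\left(\frac{k+\log R}{m}\right).
\]
Fix $i$ and a $y$ for which $\eta_{y,\,i}$ is $(\{0\},\eps/b^i)$-concentrated, so that at least $1-\eps/b^i$ of its mass lies within distance $\eps/b^i$ of some point. Since $\eps\le b^{-k}$, that ball has diameter at most $2b^{-(i+k)}$ and so meets at most $3^d$ cubes of $\mathcal{L}_{i+k}$, while the remaining $\le\eps$ of the mass lies in the single cube $\mathcal{L}_i(y)$, which is a union of $(b^k)^d$ cubes of $\mathcal{L}_{i+k}$; combining these with the convexity of entropy~(\ref{lem:convexity}) gives
\[
\frac1k H\left(\eta_{y,\,i},\mathcal{L}_{i+k}\right)\le\frac{d\log_b 3+\log_b 2}{k}+\eps\,d<\frac{p_0}{4}.
\]
For every other component one uses only $\text{supp}(\eta_{y,\,i})\subset\mathcal{L}_i(y)$, which yields the trivial bound $\frac1k H(\eta_{y,\,i},\mathcal{L}_{i+k})\le d$.

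The hypothesis says precisely that the $\eta$-mass, averaged over $0\le i<m$, of the $y$'s not covered by the first estimate is at most $\eps$; splitting the expectation accordingly gives
\[
\mathbb{E}^{\eta}_{0\le i<m}\left(\frac1k H\left(\eta_{y,\,i},\mathcal{L}_{i+k}\right)\right)\le\frac{p_0}{4}+\eps\,d<\frac{3p_0}{8},
\]
whence $\frac1m H(\eta,\mathcal{L}_m)<\frac{3p_0}{8}+\frac{p_0}{8}=\frac{p_0}{2}<p_0$. The one delicate point is keeping the constraint $\eps\le b^{-k}$ — which is exactly what makes a $(\{0\},\eps/b^i)$-concentrated component look atomic over the $k$ scales between $b^{-i}$ and $b^{-(i+k)}$, and hence forces its normalized entropy below the prescribed $p_0$ — compatible with the quantifier order $\eps_0(p_0,R)$, then $M(p_0,\eps,R)$, in the statement; this is why $k$ is extracted from $p_0$ before $\eps$ is chosen. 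Everything else is routine bookkeeping.
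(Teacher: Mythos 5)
Your proof is correct, and it is essentially the standard argument: the paper itself does not reprove this lemma but cites \cite[Lemma 5.6]{ren2021dichotomy} (going back to \cite{hochman2014self}), where the proof runs along the same lines — decompose $\frac1m H(\eta,\mathcal{L}_m)$ into normalized component entropies at a fixed intermediate scale $k$ via Lemma \ref{lem:minus}, and observe that a $(\{0\},\eps/b^i)$-concentrated component with $\eps\le b^{-k}$ charges only $O(3^d)$ cubes of $\mathcal{L}_{i+k}$ up to mass $\eps$, so its normalized entropy is $O\bigl(\tfrac1k\bigr)+\eps d$. Your bookkeeping of the quantifier order (choosing $k=k(p_0)$ before $\eps$, then $m$) is exactly the point that needs care, and you handle it correctly.
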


\begin{proof}[Proof of Theorem A']
 We only need to prove the following claim:

\begin{claim}\label{claim:1}
Either $\alpha(\phi)=\min\{d,\log_{\lambda^{-1}}b\}$, or there exists a linear space $V<\mathbb{R}^d$ such that $0<\text{dim}(V)<d$ and
$$\alpha(\phi)-\alpha_{V^{\bot}}(\phi)\ge \text{dim}(V).$$
\end{claim}

In particular, this claim implies that Theorem A' holds directly when $d=1$. For $d>1$, using induction, we assume Theorem A' holds for dimensions up to $d-1$. To establish the case for dimension $d$, we assume $\alpha(\phi)<\min\{d,\log_{\lambda^{-1}}b\}$. Let $V<\mathbb{R}^d$ be the linear space as stated in Claim \ref{claim:1}.

Considering $\pi_{V^{\bot}}\phi$ as a function with a range in $\mathbb{R}^{\text{dim}(V^{\bot})}$, satisfying condition (H) because $\phi$ satisfies it, we obtain $\alpha\left(\pi_{V^{\bot}}\phi \right)=\min\{\text{dim}(V^{\bot}),\log_{\lambda^{-1}}b\}$. Combining this with $\alpha_{V^{\bot}}(\phi)=\alpha\left(\pi_{V^{\bot}}\phi \right)$ and Claim \ref{claim:1}, we reach a contradiction, confirming Theorem A' for dimension $d$.

Now, let's prove Claim \ref{claim:1}. Suppose $\alpha(\phi)<\min\{d,\log_{\lambda^{-1}}b\}$.
Let $p_0 > 0$ and $R \geq 1$ be as specified in Theorem C. Let $\eps_k > 0$ be such that $R \ll \eps_k^{-1}$ and $\eps_k \to 0$. Then, we define a sequence of $\delta_k > 0$ and integers $m_k, n_k \geq 1$, for $k = 1, 2, \ldots$, such that $\eps_k^{-1} \ll m_k \ll \delta_k^{-1} \ll n_k$.
 Consequently, 
there exist measures $\eta_{k}\in\mathscr{P}(\mathbb{R}^d)$ with $\text{dim}\left(\text{supp}(\eta)\right)\le R$, and $\textbf{j}_k\in\Sigma$ satisfying (C.1), (C.2) and (C.3)  for $n_k$ and $\delta_k$ according to  Theorem C. Combining this with Theorem \ref{thm:EntropyInverse} and $R\ll \eps^{-1}_k\ll m_k\ll\delta_k^{-1}\ll n_k$,  we can find a sequence of linear spaces   $V_{0,\,k}, V_{1,\,k},\ldots, V_{n_k-1,\,k}<\mathbb{R}^d$ such that (\ref{eq:ei2}) and (\ref{eq:ei3}) hold for $\eps_k$, $m_k$ and $\eta_{k},\, \textbf{j}_k$.

When $k\geq 1$, Markov's inequality, as applied to equations (\ref{eq:ei2}) and (\ref{eq:ei3}), guarantees the existence of a set $I_k \subset \{0,1,\ldots,n_k-1\}$ such that $\frac{\# I_k}{n_k}\ge 1-O(\sqrt{\eps_k})$. Furthermore, for every $i \in I_k$, we have
\begin{equation}\label{pa3}
\mathbb{P}^{\eta_k}_{ i}\left((\eta_k)_{x,\,i}\text{ is } (V_{i,\,k},\eps_k/b^i)-\text{concentrated} \right)>1-O(\sqrt{\eps_k}),
\end{equation}
\begin{equation}\label{pa4}
\nu^{\,\mathbb{Z}_+}\left(\left\{\textbf{w}\in\Sigma\::\:\pi_{\textbf{w}}\mu\text{ is } (V_{i,\,k},\eps_k,m_k)-\text{saturated}\right\} \right)>1-O(\sqrt{\eps_k}).
\end{equation}
If there is a $i\in I_k$ such that $V_{i,\,k}=\mathbb{R}^d$, then according to (\ref{pa4}), we infer $\alpha \ge d - O(\eps_k)$ since $\eps_k^{-1} \ll m_k$ and (\ref{eq:LCT}). However, this contradicts the conditions $\alpha < d$ and $1 \ll \eps_k^{-1}$. Thus, we conclude that $\text{dim} V_{i,\,k} < d$ for every $i \in I_k$.
Furthermore, if $\text{dim} V_{i,\,k} = 0$ for any $i \in I_k$, then combining (\ref{pa3}) with $\frac{\# I_k}{n_k} \ge 1 - O(\sqrt{\eps_k})$ implies that (C.1) fails for $\eta_k$, as per Lemma \ref{lem:Hochman3}. Consequently,  $d>1$ and there exists an $i_k \in I_k$ such that $0<\text{dim} V_{i_k,\,k} < d$. Denote $V_k := V_{i_k,\,k}$. Subsequently, (\ref{pa4}) implies
\begin{equation}\label{pa5}
\int\frac1{m_k}H\left(\pi_{\textbf{j}}\mu,\mathcal{L}_{m_k}\mid\mathcal{L}_{m_k}^{V_k^{\bot}}\right)d\nu^{\,\mathbb{Z}_+}(\textbf{j})\ge\text{dim}V_k-O(\sqrt{\eps_k}).
\end{equation}

Utilizing compactness, we can assume the existence of an integer $0<\ell<d$ and  a linear subspace
$V\in Gr_{\ell}(d)$ such that $V_k\in  Gr_{\ell}(d)$ and $V_k\to V$. Combining this with (\ref{pa5}) and Lemma \ref{cor:pa}, we conclude that Claim \ref{claim:1} holds.
\end{proof}

\section{Proofs of Theorem B and Corollary \ref{cor2}}\label{sec:proveB}
In this section, we will first provide an equivalent characterization of  the condition (H$^*$)  using Fourier coefficients. Then, we will combine this with the theories of linear algebra and complex analysis to complete the proof of Theorem B, and finally conclude with the proof of Corollary \ref{cor2}.


\subsection{Fourier Analysis of  (H$^*$)}The following directly follows from \cite[Theorem 2.1]{gao2022}.
\begin{lemma}\label{cor:gs}
	Given $\lambda\in(1/b,1)$ and a $\mathbb{Z}$-periodic $C^1$ function $\hat{f}:\mathbb{R}\to\mathbb{R}$, 
	 the following statements are equivalent:
	\begin{enumerate}
		\item [(i)] $\hat{f}$ satisfies  the condition (H$^*$);
		\item [(ii)] There exist  $\textbf{i}\neq\textbf{j}\in\Sigma$ such that 
		$Y^{\hat{f}}_{\lambda,\,b}(x,\,\textbf{i})\equiv Y^{\hat{f}}_{\lambda,\,b}(x,\,\textbf{j});$
		\item [(iii)] there is a real $\mathbb{Z}$-periodic $C^1$ function $\psi$ such that
		$\hat{f}(x)=\lambda \psi(bx)- \psi(x)+Constant.$
	\end{enumerate}	
\end{lemma}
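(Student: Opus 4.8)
\textbf{Proof proposal for Lemma \ref{cor:gs}.}

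The plan is to establish the cycle of implications (iii) $\Rightarrow$ (i) $\Rightarrow$ (ii) $\Rightarrow$ (iii); the first two are essentially formal, and the real content lies in (ii) $\Rightarrow$ (iii), which is where I expect to invoke \cite[Theorem 2.1]{gao2022}.

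First, for (iii) $\Rightarrow$ (i): if $\hat f(x)=\lambda\psi(bx)-\psi(x)+c$ for a $\mathbb{Z}$-periodic $C^1$ function $\psi$ and a constant $c$, then a direct computation with the defining series \eqref{eqn:Wtype} shows that $W^{\hat f}_{\lambda,\,b}(x)=-\psi(x)+\frac{c}{1-\lambda}+\lim_{n\to\infty}\lambda^{n+1}\psi(b^{n+1}x)$, and since $\lambda<1$ and $\psi$ is bounded, the last term vanishes, so $W^{\hat f}_{\lambda,\,b}$ is $C^1$ (indeed as smooth as $\psi$). Differentiating the telescoping identity term by term, one checks that the slope functions $Y^{\hat f}_{\lambda,\,b}(x,\mathbf{i})$ collapse: plugging $\hat f'(x)=\lambda b\,\psi'(bx)-\psi'(x)$ into \eqref{def:Y} produces a telescoping sum whose value is $-\psi'(x)$, independent of $\mathbf{i}\in\Sigma$. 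Hence $Y^{\hat f}_{\lambda,\,b}(x,\mathbf{i})\equiv Y^{\hat f}_{\lambda,\,b}(x,\mathbf{j})$ for all $\mathbf i,\mathbf j$, which is precisely condition (H$^*$), and a fortiori (ii) holds with any choice $\mathbf i\neq\mathbf j$.

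Next, (i) $\Rightarrow$ (ii) is immediate from Definition \ref{def:HH}: condition (H$^*$) asserts $Y^{\hat f}_{\lambda,\,b}(x,\mathbf j)-Y^{\hat f}_{\lambda,\,b}(x,\mathbf i)\equiv 0$ for \emph{all} $\mathbf i,\mathbf j\in\Sigma$, so in particular there exist $\mathbf i\neq\mathbf j$ with this property. The substantive step is (ii) $\Rightarrow$ (iii). Here I would argue as follows. Suppose $Y^{\hat f}_{\lambda,\,b}(x,\mathbf i)\equiv Y^{\hat f}_{\lambda,\,b}(x,\mathbf j)$ for some fixed $\mathbf i\neq\mathbf j$. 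Looking at \eqref{def:Y}, the two series share all terms from index $n$ onward once $i_k=j_k$ for $k>$ some bound; more usefully, one exploits the self-similar structure: applying the shift and the functional relation $Y^{\hat f}_{\lambda,\,b}(x,\mathbf i)=-\gamma\,\hat f'\!\big(\tfrac{x+i_1}{b}\big)+\tfrac1{b\lambda}\cdot\gamma^{-1}\!\cdot\big(\text{shifted tail}\big)$, one derives that the difference of two slope functions with a common prefix reduces, after finitely many steps, to a difference at disjoint first symbols. This forces an identity of the form $\hat f'(x/b + a/b) - \hat f'(x/b+a'/b)$ being expressible through $W$-type sums, and unwinding it yields that $\hat f - (\text{coboundary term})$ has derivative zero, i.e. is constant. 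Rather than reconstruct this from scratch, the clean route is: by \cite[Theorem 2.1]{gao2022} (which characterizes exactly when the relevant cohomological equation over the $\times b$ map is solvable in terms of the degeneracy of the slope functions), the hypothesis that $Y^{\hat f}$ is $\mathbf i$-independent on a pair — combined with the real-analytic/$C^1$ regularity propagating through the functional equation as in Theorem \ref{thm:dicho} — is equivalent to solvability of $\hat f = \lambda\psi\circ(b\cdot) - \psi + \mathrm{const}$ with $\psi$ of class $C^1$ and $\mathbb{Z}$-periodic.

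The main obstacle is pinning down exactly how \cite[Theorem 2.1]{gao2022} is phrased and verifying that our hypothesis (ii) — agreement of the slope functions for a \emph{single} pair $\mathbf i\neq\mathbf j$ rather than for all pairs — already suffices to trigger its conclusion; this is a genuine strengthening over the literal statement of (H$^*$), and the bridge is the observation (used repeatedly in \cite{ren2021dichotomy}, cf. Theorem \ref{thm:dicho}) that once $Y^{\hat f}$ is non-separating on one pair, the analyticity forces it to be globally degenerate. I would also need to double-check the regularity bookkeeping: the $C^1$ hypothesis on $\hat f$ must be enough to guarantee the solution $\psi$ is itself $C^1$ and $\mathbb{Z}$-periodic (periodicity is forced because $\hat f$ and the telescoping limit are periodic, and the ambiguity in $\psi$ is only an additive constant).
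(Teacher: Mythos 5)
Your route coincides with the paper's: the easy implications are verified directly (your telescoping computations for (iii) $\Rightarrow$ (i) are correct, and (i) $\Rightarrow$ (ii) is trivial), and the substantive step (ii) $\Rightarrow$ (iii) is delegated to \cite[Theorem 2.1]{gao2022}, which is exactly what the paper does. The worry you flag as the ``main obstacle'' --- whether coincidence of the slope functions for a \emph{single} pair $\textbf{i}\neq\textbf{j}$ suffices to trigger the cited theorem --- is resolved in the paper not by any regularity-propagation argument but by a purely formal translation: one introduces the functions $h_{\textbf{i}}(x)=\sum_{n\ge1}\lambda^{-n}\bigl[\hat f\bigl(\tfrac{x+i_1+i_2b+\dots+i_nb^{n-1}}{b^n}\bigr)-\hat f\bigl(\tfrac{i_1+i_2b+\dots+i_nb^{n-1}}{b^n}\bigr)\bigr]$, which satisfy $h_{\textbf{i}}'\equiv-Y^{\hat f}_{\lambda,\,b}(\cdot,\textbf{i})$ and $h_{\textbf{i}}(0)=0$, so that $Y$-coincidence for a given pair is literally equivalent to $h$-coincidence for that pair; \cite[Theorem 2.1]{gao2022} is then invoked at exactly this level (a single pair of coinciding $h$-functions, all pairs, and solvability of $\hat f=\lambda\psi(b\cdot)-\psi+\mathrm{const}$ with $\psi$ periodic and $C^1$ are all equivalent for $C^1$ data). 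In particular, your proposed fallback bridge via Theorem \ref{thm:dicho} and ``analyticity forcing global degeneracy'' should be dropped: $\hat f$ is only assumed $C^1$ in this lemma (Theorem \ref{thm:dicho} requires $C^5$ at least), and no such bridge is needed. With that correction, your proposal is essentially the paper's proof.
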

\begin{remark}\label{rem:C^k}
 (iii) implies $\psi(x)=\sum_{n=1}^{\infty}\frac1{\lambda^n}\left(\hat{f}(x/b^n)-\hat{f}(0)\right)+\psi(0)$
  through direct calculation. Consequently,  if $\hat{f}$ is $C^k$ for some $k\ge1$, then $\psi$ is also a $C^k$ function.
\end{remark}

\begin{proof}
 Define the function $h_{\textbf{i}}:\mathbb{R}\to\mathbb{R}$ for any $\textbf{i}\in\Sigma$ as:
\begin{equation}\label{def:h_i}
	h_{\textbf{i} }(x)=\sum_{n=1}^{\infty}\frac1{\lambda^n}\left[\,\hat{f}\left(\frac{x+i_1+i_2b\ldots+i_nb^{n-1}}{b^n }\right)- \hat{f}\left(\frac{i_1+i_2b\ldots+i_nb^{n-1}}{b^n }\right)\,\right].
\end{equation}
Hence, (\ref{def:Y})  implies  $h'_{\textbf{i} }(x)\equiv-Y^{\hat{f}}_{\lambda,\,b}(x,\textbf{i})$. With $h_{\textbf{i}}(0)=0$, we have:
$$  h_{\textbf{i} }(x)\equiv h_{\textbf{j}}(x)\quad\quad\text{and} \quad \quad  Y^{\hat{f}}_{\lambda,\,b}(x,\textbf{i})\equiv Y^{\hat{f}}_{\lambda,\,b}(x,\textbf{j})$$
 equivalent for all  $\textbf{i},\textbf{j}\in\Sigma$. Combining this with \cite[Theorem 2.1]{gao2022}, our claim holds.
\end{proof}

For any $\mathbb{Z}$-periodic continuous function $\phi:\mathbb{R}\to\mathbb{R}$ and $m\in\mathbb{Z}$, we define the $m$-th Fourier coefficient of $\phi$ as follows:
\begin{equation}
a_m(\phi)=\int_{0}^{1}\phi(x)e^{-2\pi i\,mx }dx.
\end{equation}

\begin{proposition}\label{lem:Fourier}
	 Suppose $\lambda\in(1/b,1)$ and $\phi:\mathbb{R}\to\mathbb{R}$ is a $\mathbb{Z}$-periodic function with $C^3$ continuity. Then $\phi$ satisfies the condition (H$^*$) if and only if:
    \begin{equation}\label{eq:FE}
    \sum_{k=0}^{\infty} a_{tb^k}(\phi)\cdot\lambda^{-k}=0,\quad\quad\text{for each } t\in\mathbb{Z}\text{ with }   b \nmid   t.
    \end{equation}
\end{proposition}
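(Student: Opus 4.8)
The plan is to reduce condition (H$^*$) to the functional equation in Lemma \ref{cor:gs}(iii), namely the existence of a $\mathbb{Z}$-periodic $C^1$ function $\psi$ with $\phi(x)=\lambda\psi(bx)-\psi(x)+C$, and then translate this equation into a condition on Fourier coefficients. Writing $\psi(x)=\sum_{m\in\mathbb{Z}}c_m e^{2\pi i mx}$ and $\phi(x)=\sum_{m\in\mathbb{Z}}a_m(\phi)e^{2\pi i mx}$, comparing coefficients in $\phi(x)-C=\lambda\psi(bx)-\psi(x)$ gives $a_m(\phi)=\lambda c_{m/b}-c_m$ when $b\mid m$, and $a_m(\phi)=-c_m$ when $b\nmid m$ (with the understanding $c_{m/b}=0$ unless $b\mid m$), together with $a_0(\phi)=(\lambda-1)c_0-C$, which only constrains $C$. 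So the question becomes: given the sequence $(a_m(\phi))_{m\in\mathbb{Z}}$, when can one solve for $(c_m)$? For each nonzero integer $m$, write $m=tb^k$ with $b\nmid t$; the recursion $c_{tb^k}=\lambda^{-1}(a_{tb^k}(\phi)+c_{tb^{k}})$... more precisely $c_{tb^{k-1}} = \lambda^{-1}(a_{tb^k}(\phi) + c_{tb^k})$ reading the relation at $m=tb^k$, so along each orbit $\{tb^k\}_{k\ge 0}$ of multiplication by $b$ the coefficients $c_{tb^k}$ are determined by a single "initial" datum. Unwinding, $c_t = -\sum_{k=0}^{\infty}a_{tb^k}(\phi)\lambda^{-k}$ is forced (and $c_{tb^j}$ for $j\ge 1$ is then determined from $c_t$ going the other way, but consistency requires exactly that the tail sums converge and telescope correctly), while for $b\nmid t$ we separately need $c_t=-a_t(\phi)$. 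Matching these two expressions for $c_t$ yields precisely (\ref{eq:FE}).

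The steps in order would be: (1) invoke Lemma \ref{cor:gs} to replace (H$^*$) by the existence of $\psi$; (2) expand $\psi,\phi$ in Fourier series and record the coefficient-matching relations, being careful that $\psi\in C^1$ (hence $\sum |m||c_m|<\infty$) and $\phi\in C^3$ (hence $a_m(\phi)=O(|m|^{-3})$), so that all the series manipulations are legitimate and the sum $\sum_k a_{tb^k}(\phi)\lambda^{-k}$ converges absolutely — here we use $\lambda^{-1}<b$ so that $a_{tb^k}(\phi)\lambda^{-k}=O(b^{-3k}\lambda^{-k})=O((b^{-2})^k)$ decays geometrically, in fact $C^2$ would suffice but $C^3$ gives comfortable room; (3) for the "only if" direction, derive (\ref{eq:FE}) from the forced value of $c_t$ as above; (4) for the "if" direction, assume (\ref{eq:FE}) and explicitly \emph{define} $c_m$ for all $m\neq 0$ by $c_{tb^k}:=-\sum_{j=k}^{\infty}a_{tb^j}(\phi)\lambda^{k-j}$ for $b\nmid t$, $k\ge 0$, set $c_0:=0$ and $C:=-a_0(\phi)$, verify the defining relations of $\psi$ hold coefficient-wise, and check that $(c_m)$ decays fast enough (using $a_m(\phi)=O(|m|^{-3})$) that $\psi(x):=\sum c_m e^{2\pi imx}$ is a genuine $\mathbb{Z}$-periodic $C^1$ function (one derivative, with $\sum|m||c_m|<\infty$ following from the geometric-in-$k$ estimate combined with $\sum_{b\nmid t}|t|^{-2}<\infty$); then Lemma \ref{cor:gs} gives (H$^*$).

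The main obstacle I anticipate is the regularity bookkeeping in step (4): one must show the candidate $\psi$ built from the forced coefficients is actually $C^1$ (not merely $L^2$ or continuous), which requires a clean estimate $|c_{tb^k}|\lesssim |t|^{-3}\sum_{j\ge 0}b^{-3j}\lambda^{-j} \lesssim |t|^{-3}$ uniformly in $k$, and then summing $\sum_{m}|m||c_m| = \sum_{b\nmid t}\sum_{k\ge 0}|t|b^k\cdot|c_{tb^k}| \lesssim \sum_{b\nmid t}|t|^{-2}\sum_{k\ge 0}b^k b^{-3k} <\infty$. A secondary point requiring care is the bijective change of summation between the index $m\in\mathbb{Z}\setminus\{0\}$ and the pair $(t,k)$ with $b\nmid t$, $k\ge 0$, and making sure the $m<0$ case (where $t<0$) is handled symmetrically — since $\phi$ is real-valued, $a_{-m}(\phi)=\overline{a_m(\phi)}$, so (\ref{eq:FE}) for $t$ and for $-t$ are complex conjugates and it suffices to treat one sign. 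Everything else is routine series manipulation once Lemma \ref{cor:gs} and Remark \ref{rem:C^k} are in hand.
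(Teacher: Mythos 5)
Your route is essentially the paper's: reduce (H$^*$) to the cohomological equation $\phi(x)=\lambda\psi(bx)-\psi(x)+\mathrm{const}$ via Lemma \ref{cor:gs} (with Remark \ref{rem:C^k} for regularity), compare Fourier coefficients along each orbit $\{tb^k\}_{k\ge0}$ with $b\nmid t$, and, for the converse, define the coefficients of $\psi$ by the forced formula and verify $C^1$ regularity from $a_m(\phi)=O(|m|^{-3})$ together with $b\lambda>1$. This is exactly how the paper argues, its $d_{tb^n}$ playing the role of your $c_{tb^n}$.

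One concrete correction is needed, however: your closed form for the forced coefficients is shifted by one index. The relations $a_{tb^k}(\phi)=\lambda c_{tb^{k-1}}-c_{tb^k}$ (for $k\ge1$) and $a_t(\phi)=-c_t$ force $c_{tb^k}=-\sum_{j=0}^{k}a_{tb^j}(\phi)\lambda^{k-j}$, which under (\ref{eq:FE}) equals $\sum_{j=k+1}^{\infty}a_{tb^j}(\phi)\lambda^{k-j}$. With your definition $c_{tb^k}:=-\sum_{j\ge k}a_{tb^j}(\phi)\lambda^{k-j}$, one gets $c_t=0$ under (\ref{eq:FE}), so the bottom relation $a_t(\phi)=-c_t$ fails in general, and moreover $\lambda c_{tb^{k-1}}-c_{tb^k}=-\lambda a_{tb^{k-1}}(\phi)$ rather than $a_{tb^k}(\phi)$; hence the coefficient-wise verification in your step (4) breaks as written (the same shift appears in the ``forced value'' of $c_t$ in your only-if paragraph; the correct forced value is $c_t=\sum_{k\ge1}a_{tb^k}(\phi)\lambda^{-k}$, obtained by unwinding and using $\lambda^{-n}c_{tb^n}\to0$, and matching it with $c_t=-a_t(\phi)$ is what yields (\ref{eq:FE})). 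After the shift, your decay estimate $|c_{tb^k}|\lesssim|t|^{-3}b^{-3k}$ and the summation $\sum_{b\nmid t}\sum_{k}|t|b^{k}|c_{tb^k}|<\infty$ go through verbatim; note that this good decay genuinely requires the tail representation, i.e.\ hypothesis (\ref{eq:FE}), since the finite sum alone only gives $O(|t|^{-3}\lambda^{k})$, which is not summable against $|t|b^{k}$. A last small point: $\psi\in C^1$ does not imply $\sum_m|m||c_m|<\infty$ as asserted in your step (2); but all the only-if direction needs is $|c_m|=O(1/|m|)$ to kill the tail $\lambda^{-n}c_{tb^n}$ (using $1/(b\lambda)<1$), and that bound does follow from $C^1$.
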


\begin{proof}
If $\phi$ satisfies the condition (H$^*$), then by Lemma \ref{cor:gs} and Remark \ref{rem:C^k}, there exists a $\mathbb{Z}$-periodic $C^3$ function $\psi:\mathbb{R}\to\mathbb{R}$ such that $\phi(x)=\lambda \psi(bx)- \psi(x)+Constant$. Comparing Fourier coefficients after expanding both sides into Fourier series, we get:
\begin{equation}
a_m(\phi)=
\begin{dcases}\nonumber
 a_{m/b}(\psi) \cdot\lambda -a_m(\psi), &  b\mid m;\\
-a_m(\psi),                  & b\nmid m.
\end{dcases}
\end{equation}
Hence, for any integer $t$ such that $b\nmid t$ and $n\in\mathbb{N}$, we have:
    $$-a_{tb^n}(\psi)=a_{tb^n}(\phi)+ a_{tb^{n-1}}(\phi) \cdot\lambda+\ldots+ a_{t}(\phi)\cdot\lambda^n,$$
    This implies $ - a_{tb^n}(\psi)\cdot \lambda^{-n}= \sum_{k=0}^{n} a_{tb^k}(\phi)\cdot\lambda^{-k}$.
   By combining this with $|a_{tb^n}(\psi)|=O(\frac1{tb^n})$ and $\frac1{b\lambda}\in(1/b,1)$,  (\ref{eq:FE}) holds.

If  (\ref{eq:FE})  holds for every integer $t$ such that $b\nmid t$ and $n\in\mathbb{N}$,  we define
    \begin{equation}\label{def:dc}
    d_{tb^n}=-\left(a_{tb^n}(\phi)+ a_{tb^{n-1}}(\phi)\cdot\lambda+\ldots+ a_{t}(\phi)\cdot\lambda^n\right).
    \end{equation}
This implies $d_{tb^n}=-\lambda^{n}\sum_{k=n+1}^{\infty}a_{tb^k}(\phi)\cdot\lambda^{-k}$. Given $|a_{tb^k}(\phi)|=O(\frac1{t^3b^{3k}})$, we conclude $|d_{tb^k}|=O(\frac1{t^3b^{3k}})$. Combining this with (\ref{def:dc}), we obtain that the $\mathbb{Z}$-periodic function $\psi(x):=\sum_{n\in\mathbb{Z}\setminus\{0\}}d_{n}\cdot e^{2\pi i\,nx}$ is a real $C^1$ function, and $\phi(x)=\lambda \psi(bx)- \psi(x)+Constant$ through Fourier coefficient analysis. Thus, $\phi(x)$ satisfies the condition (H$^*$) by Lemma \ref{cor:gs}.
\end{proof}

\subsection{Proof of Theorem B} In this subsection, we fix a $\mathbb{Z}$-periodic $C^3$ function $\phi:\mathbb{R}\to\mathbb{R}^d$.  Specifically,  we denote $\phi(x)=(\phi_1(x),\phi_2(x),\ldots,\phi_d(x))$. We list all positive integers $t$ (such that $b \nmid t$) in ascending order as $t_1 < t_2 < \ldots$.

For any $1\le j\le d$ and $m\in\mathbb{Z}+$, we define functions $R_{j,\,m},\,I_{j,\,m}:[1/b,1]\to\mathbb{R}$ as follows:
\begin{equation}\label{def:R-I}
	R_{j,\,m}(\lambda)=\sum_{n=0}^{\infty}  \text{Re}\left\{a_{t_mb^n}(\phi_j)\right\}\cdot\lambda^{-n}\quad\quad
 I_{j,\,m}(\lambda)=\sum_{n=0}^{\infty} \text{Im}\left\{a_{t_mb^n}(\phi_j)\right\}\cdot\lambda^{-n},
\end{equation}
	where $\text{Re}\{\cdot\}$ denotes the real part and $\text{Im}\{\cdot\}$ denotes the imaginary part. 
Because $\phi$ is a $C^3$ function, we have $|a_{t_mb^n}(\phi_j)| = O\left(\frac{1}{t_m^3b^{3n}}\right)$. Consequently, $R_{j,\,m}$ and $I_{j,\,m}$ are real analytic.

Given $m\in\mathbb{Z}_+$ and $\lambda\in[1/b,1]$, let's define the matrix $A_{\lambda,\,m}\in M_{d,\,2m}(\mathbb{R})$ as follows:
\begin{equation}\label{def:matrixA}
A_{\lambda,\,m}=
\begin{pmatrix}
R_{1,\,1}(\lambda)&\ldots& R_{1,\,m}(\lambda)&I_{1,\,1}(\lambda)&\ldots& I_{1,\,m}(\lambda)\\
R_{2,\,1}(\lambda)&\ldots& R_{2,\,m}(\lambda)&I_{2,\,1}(\lambda)&\ldots& I_{2,\,m}(\lambda)\\
\vdots&&\vdots&  \vdots&&\vdots\\
R_{d,\,1}(\lambda)&\ldots& R_{d,\,m}(\lambda)&I_{d,\,1}(\lambda)&\ldots& I_{d,\,m}(\lambda)
\end{pmatrix}.
\end{equation}
The kernel space of $A_{\lambda,\,m}$ is defined as:
\begin{equation}\label{def:kerA}
\text{Ker } A_{\lambda,\,m}=\left\{\,y=(y_1,\ldots,y_d)\in\mathbb{R}^d:\,yA_{\lambda,\,m}=0\,\right\}.
\end{equation}
We observe that $\text{Ker } A_{\lambda,\,m+1}\subset\text{Ker } A_{\lambda,\,m}.$
Recall  that $	q'(\phi,\lambda,b)$ is defined by (\ref{def:q'}).

\begin{lemma}\label{lem:dimq}
	For any $\lambda\in(1/b,1)$,  we have
	$q'(\phi,\lambda,b)=\text{dim}\left(\cap_{m=1}^{\infty} \text{Ker } A_{\lambda,\,m} \right).$
\end{lemma}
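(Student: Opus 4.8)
The plan is to identify $q'(\phi,\lambda,b)$ with $\dim\bigl(\cap_{m\ge 1}\mathrm{Ker}\,A_{\lambda,m}\bigr)$ by showing that, for a fixed linear subspace $V<\mathbb{R}^d$, the condition that $\pi_V\phi$ satisfies (H$^*$) is \emph{equivalent} to $V^{\perp}$... no: more precisely, that the set of $y\in\mathbb{R}^d$ for which the $\mathbb{Z}$-periodic scalar function $x\mapsto \langle y,\phi(x)\rangle=\sum_{j} y_j\phi_j(x)$ satisfies (H$^*$) is exactly $\cap_{m\ge1}\mathrm{Ker}\,A_{\lambda,m}$. Once that is established, the lemma follows from two observations: first, by Remark~\ref{rem:A} and the fact $\pi_{\mathbb{R}e}\circ Y^{\phi}=Y^{\pi_{\mathbb{R}e}\phi}$ used in the proof of Corollary~\ref{lem:eq}, $\pi_V\phi$ satisfies (H$^*$) if and only if $\pi_{\mathbb{R}e}\phi$ does for every $e\in V$, i.e.\ if and only if $\langle e,\phi(\cdot)\rangle$ satisfies (H$^*$) for every $e\in V$; second, the set of such $e$ is a linear subspace (it is an intersection of kernels — this is precisely the point of the first step), so the maximal $\dim V$ with $\pi_V\phi$ satisfying (H$^*$) equals the dimension of that subspace.

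The core computation is the first step. Fix $y=(y_1,\dots,y_d)\in\mathbb{R}^d$ and set $f_y=\sum_j y_j\phi_j$, a real $\mathbb{Z}$-periodic $C^3$ function. Its $m$-th Fourier coefficient is $a_m(f_y)=\sum_j y_j\,a_m(\phi_j)$. By Proposition~\ref{lem:Fourier}, $f_y$ satisfies (H$^*$) if and only if $\sum_{k=0}^{\infty} a_{tb^k}(f_y)\lambda^{-k}=0$ for every integer $t$ with $b\nmid t$. Since $f_y$ is real, $a_{-m}(f_y)=\overline{a_m(f_y)}$, so it suffices to impose the vanishing for the positive integers $t_1<t_2<\cdots$ with $b\nmid t_m$ (the condition for $-t$ is the complex conjugate of the condition for $t$). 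For $t=t_m$ the equation reads
\[
\sum_{k=0}^{\infty}\Bigl(\sum_{j=1}^d y_j\,a_{t_mb^k}(\phi_j)\Bigr)\lambda^{-k}=0,
\]
and separating real and imaginary parts (the $y_j$ being real) this is exactly the pair of equations
\[
\sum_{j=1}^d y_j R_{j,m}(\lambda)=0,\qquad \sum_{j=1}^d y_j I_{j,m}(\lambda)=0,
\]
i.e.\ $y$ kills the $2m$ columns of $A_{\lambda,m}$ indexed by $R_{\cdot,1},\dots,R_{\cdot,m},I_{\cdot,1},\dots,I_{\cdot,m}$ — but running over all $m$, requiring $y A_{\lambda,m}=0$ for every $m$ is the same as $y\in\cap_{m\ge1}\mathrm{Ker}\,A_{\lambda,m}$. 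The absolute convergence needed to separate real and imaginary parts termwise, and to split the sum over $t$ and $-t$, is guaranteed by the $C^3$ decay estimate $|a_{t_mb^k}(\phi_j)|=O(t_m^{-3}b^{-3k})$ together with $\lambda^{-1}<b$, which also underlies the analyticity of $R_{j,m},I_{j,m}$ already noted after \eqref{def:R-I}.

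The main obstacle, such as it is, is bookkeeping rather than conceptual: one must be careful that Proposition~\ref{lem:Fourier} is applied to $f_y=\langle y,\phi\rangle$ (legitimate since it is $\mathbb{Z}$-periodic and $C^3$), and that the reduction "$\pi_V\phi$ satisfies (H$^*$) $\iff$ $\langle e,\phi\rangle$ satisfies (H$^*$) for all $e\in V$" is handled exactly as in the proof of Corollary~\ref{lem:eq} — using $\pi_{\mathbb{R}e}\circ Y^{\phi}_{\lambda,b}=Y^{\pi_{\mathbb{R}e}\phi}_{\lambda,b}=Y^{\langle e,\phi\rangle}_{\lambda,b}$ (identifying $\mathbb{R}e$ with $\mathbb{R}$) in both directions, so that (H$^*$) for $\pi_V\phi$ forces (H$^*$) for each $\pi_{\mathbb{R}e}\phi$, and conversely (H$^*$) for all $\pi_{\mathbb{R}e}\phi$ with $e$ in a basis of $V$ gives $Y^{\pi_V\phi}_{\lambda,b}(x,\mathbf{i})\equiv Y^{\pi_V\phi}_{\lambda,b}(x,\mathbf{j})$ coordinatewise. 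Combining the two steps: $\dim V$ with $\pi_V\phi$ satisfying (H$^*$) is maximized by taking $V=\cap_{m\ge1}\mathrm{Ker}\,A_{\lambda,m}$ itself (every $e$ in it gives $\langle e,\phi\rangle$ satisfying (H$^*$), and no larger subspace can, since any $e$ outside it fails the Fourier criterion), whence $q'(\phi,\lambda,b)=\dim\bigl(\cap_{m=1}^{\infty}\mathrm{Ker}\,A_{\lambda,m}\bigr)$.
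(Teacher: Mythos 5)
Your argument is correct and follows essentially the same route as the paper: apply Proposition \ref{lem:Fourier} to the scalar functions $\langle y,\phi\rangle$, use linearity of Fourier coefficients and separation into real and imaginary parts to identify the (H$^*$) condition for $\pi_{\mathbb{R}y}\phi$ with $y\in\cap_{m\ge1}\mathrm{Ker}\,A_{\lambda,m}$, and then get both inequalities via the reduction to one-dimensional projections and the maximality in the definition of $q'$. Your extra bookkeeping (handling negative $t$ by conjugation, and the basis argument for passing between $V$ and its one-dimensional subspaces) only fills in details the paper leaves implicit.
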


\begin{proof}
Let  $y\in \mathbb{R}^d$  such that $\pi_{\mathbb{R} y} \phi$ satisfies the condition (H$^*$). Considering $\pi_{\mathbb{R} y} \phi$ as a function with range in $\mathbb{R}$, we have:
$$ \sum_{n=0}^{\infty} a_{t_mb^n} \left(\sum_{j=1}^dy_j\phi_j\right)\cdot\lambda^{-n}=\sum_{j=1}^dy_j  \sum_{n=0}^{\infty} a_{t_mb^n} (\phi_j)\cdot\lambda^{-n}=0\quad\quad\forall\,m\in\mathbb{Z}_+$$
by Proposition \ref{lem:Fourier}. Combining this with (\ref{def:R-I}), we get:
$$\sum_{j=1}^dy_j\cdot R_{j,\,m}(\lambda)=0\quad\quad\text{and}\quad\quad \sum_{j=1}^dy_j\cdot I_{j,\,m}(\lambda)=0, \quad\quad\forall\,m\in\mathbb{Z}_+.$$
Hence,  $y\in \cap_{m=1}^{\infty} \text{Ker } A_{\lambda,\,m}$ by (\ref{def:kerA}), implying  $q'\le\text{dim}\left( \cap_{m=1}^{\infty}  \text{Ker } A_{\lambda,\,m} \right)$ by  the arbitrariness of $y$ and the definition of $q'$.
By Lemma \ref{lem:Fourier} and similar methods, $\pi_{  \cap_{m=1}^{\infty} \text{Ker } A_{\lambda,\,m}}(\phi)$ satisfies the condition (H$^*$), implying
$\text{dim}\left( \cap_{m=1}^{\infty}  \text{Ker } A_{\lambda,\,m} \right)\le q'$  by the maximality of $q'$.
\end{proof}

Let \( m, n \in \mathbb{Z}_+ \) and \( B = (b_{i,\,j})_{n\times m} \in M_{n,\,m}(\mathbb{R}) \) be a matrix. For any \( 1 \leq i_1 < \ldots < i_k \leq n \) and \( 1 \leq j_1 < \ldots < j_{k'} \leq m \), define the submatrix:
\begin{equation}\label{def:subm}
B\left(\{i_1,\ldots,i_k\}, \{j_1,\ldots,j_{k'}\}\right) =
\begin{pmatrix}
b_{i_1,\,j_1} & \ldots & b_{i_1,\,j_{k'}}\\
\vdots & \ddots & \vdots\\
b_{i_k,\,j_1} & \ldots & b_{i_k,\,j_{k'}}
\end{pmatrix}.
\end{equation}
For any \(1\le k \leq \min\{m,n\} \), let
\begin{equation}\label{def:L_k}
L_k(B) = \sum_{I\subset\{1,\ldots,n\},\,  J\subset\{1,\ldots,m\},\,\#I=\# J=k}
\left( \text{det} \, B(I,J) \right)^2,
\end{equation}
where \( \text{det}(\cdot) \) denotes the determinant. 
The following is a basic fact in linear algebra.

\begin{lemma}\label{lem:la}
For any matrix \( B \in M_{n,\,m}(\mathbb{R}) \), we have:
\begin{equation}\nonumber
n - \text{dim} \left\{ y \in \mathbb{R}^n \::\: yB = 0 \right\} = \max_{1\le k \leq \min\{m,n\} } \left\{ k \:: \: L_k(B) \neq 0 \right\}.
\end{equation}
\end{lemma}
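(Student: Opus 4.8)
The plan is to identify both sides of the claimed identity with $\mathrm{rank}(B)$. First I would treat the left-hand side: the set $\{y\in\mathbb{R}^n:yB=0\}$ is precisely the kernel of the linear map $L:\mathbb{R}^n\to\mathbb{R}^m$, $L(y)=yB$, whose image is the row space of $B$. By the rank--nullity theorem, $\dim\{y\in\mathbb{R}^n:yB=0\}=n-\dim(\text{row space of }B)=n-\mathrm{rank}(B)$, so the left-hand side equals $\mathrm{rank}(B)$.

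Next I would recall the classical characterization of rank via minors: $\mathrm{rank}(B)$ is the largest integer $k$ for which $B$ admits a nonzero $k\times k$ minor (with the convention that this maximum is $0$ when $B=0$, consistent with $\mathrm{rank}(0)=0$). For completeness I would include the short two-sided argument. If $\mathrm{rank}(B)=r$, choose $r$ linearly independent rows of $B$ indexed by $I$; the $r\times m$ submatrix $B(I,\{1,\dots,m\})$ still has rank $r$, hence contains $r$ linearly independent columns indexed by some $J$, and then $\det B(I,J)\neq0$. Conversely, if $\det B(I,J)\neq0$ with $\#I=\#J=k$, the $k$ rows of $B$ indexed by $I$ are linearly independent, so $\mathrm{rank}(B)\ge k$.

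Finally I would observe that $L_k(B)=\sum_{\#I=\#J=k}\bigl(\det B(I,J)\bigr)^2$ is a finite sum of squares of real numbers, so $L_k(B)=0$ if and only if $\det B(I,J)=0$ for every pair $(I,J)$ with $\#I=\#J=k$; equivalently, $L_k(B)\neq0$ if and only if $B$ has some nonzero $k\times k$ minor. Combining this with the preceding paragraph gives $\max\{k:L_k(B)\neq0\}=\max\{k:B\text{ has a nonzero }k\times k\text{ minor}\}=\mathrm{rank}(B)$, which matches the left-hand side, completing the proof. There is no substantial obstacle here; the only points needing a little care are the degenerate case $B=0$ (handled by the convention $\max\emptyset=0$) and the step passing from ``$L_k(B)\neq0$'' to ``some minor is nonzero'', which relies on positivity of squares over $\mathbb{R}$ — precisely the reason the statement is phrased with $\bigl(\det B(I,J)\bigr)^2$ rather than with $\det B(I,J)$ itself.
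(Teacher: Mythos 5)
Your proof is correct. The paper gives no argument at all for this lemma (it is stated as ``a basic fact in linear algebra''), and your route --- rank--nullity for the left kernel, the classical characterization of $\mathrm{rank}(B)$ as the size of the largest nonvanishing minor, and the observation that $L_k(B)$, being a sum of squares of real numbers, vanishes exactly when all $k\times k$ minors do --- is precisely the standard argument the authors are implicitly invoking; your handling of the degenerate case $B=0$ via the empty-maximum convention is a reasonable reading of the statement.
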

For any $1 \leq j \leq d$, define the function \(D_j: [1/b, 1] \rightarrow \mathbb{R}^{\mathbb{Z}_+}\) as follows:
\begin{equation}\label{def:D_j}
D_j(\lambda) = \left( L_j\left( A_1(\lambda) \right),\, L_j\left( A_2(\lambda) \right), \ldots \right),
\end{equation}
where \(A_m(\lambda)\) is defined by  (\ref{def:matrixA}) (set, $L_j\left( A_m(\lambda) \right)=0$ if $j>m$).
Recall that $p'(\phi, b)$ is defined by (\ref{def:p'}).

\begin{lemma}\label{lem:Caculate}
		 For any   $\lambda\in(1/b,1)$, we have
		\begin{equation}\label{eq:dimq11}
		d-q'(\phi,\lambda,b)=\max_{1\le j\le d}\left\{j\::\:D_j(\lambda)\neq0\right\}.
		\end{equation}	
       Furthermore, we get:
       \begin{equation}\label{eq:dimq22}
		d-p'(\phi,b)=\max_{1\le j\le d,\,\lambda\in(1/b,1)}\left\{j\::\:D_j(\lambda)\neq0\right\}.
		\end{equation}
\end{lemma}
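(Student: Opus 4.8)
The goal is Lemma~\ref{lem:Caculate}, namely the two identities \eqref{eq:dimq11} and \eqref{eq:dimq22}. The strategy is to reduce everything to the previously established linear-algebra facts. First I would fix $\lambda\in(1/b,1)$ and apply Lemma~\ref{lem:dimq}, which tells us that $q'(\phi,\lambda,b)=\dim\left(\bigcap_{m=1}^{\infty}\operatorname{Ker}A_{\lambda,\,m}\right)$. Since the kernels are nested, $\operatorname{Ker}A_{\lambda,\,m+1}\subset\operatorname{Ker}A_{\lambda,\,m}$, and they live in the finite-dimensional space $\mathbb{R}^d$, the intersection stabilises: there is some $m_0$ (depending on $\lambda$) with $\bigcap_{m\ge1}\operatorname{Ker}A_{\lambda,\,m}=\operatorname{Ker}A_{\lambda,\,m_0}$, and more to the point $d-\dim\bigcap_{m}\operatorname{Ker}A_{\lambda,\,m}=\sup_{m\ge1}\bigl(d-\dim\operatorname{Ker}A_{\lambda,\,m}\bigr)$.

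The second step is to feed each matrix $A_{\lambda,\,m}$ (written $A_m(\lambda)$ in \eqref{def:D_j}) into Lemma~\ref{lem:la}. That lemma, applied with $B=A_m(\lambda)\in M_{d,\,2m}(\mathbb{R})$ and $n=d$, gives
\[
d-\dim\{y\in\mathbb{R}^d : yA_m(\lambda)=0\}=\max_{1\le k\le \min\{2m,d\}}\{k : L_k(A_m(\lambda))\neq0\}.
\]
For $m$ large, $\min\{2m,d\}=d$, and for general $m$ one uses the convention $L_j(A_m(\lambda))=0$ when $j>m$ (really when $j>\min\{2m,d\}$, but $j>m\ge$ forces this harmlessly since the rank cannot exceed the number of columns), so the right-hand side is exactly $\max_{1\le j\le d}\{j : L_j(A_m(\lambda))\neq0\}$. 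Now take the supremum over $m$: using the stabilisation from step one,
\[
d-q'(\phi,\lambda,b)=\sup_{m\ge1}\ \max_{1\le j\le d}\{j : L_j(A_m(\lambda))\neq0\}=\max_{1\le j\le d}\{j : \exists m,\ L_j(A_m(\lambda))\neq0\}.
\]
By the definition \eqref{def:D_j} of $D_j(\lambda)=(L_j(A_1(\lambda)),L_j(A_2(\lambda)),\dots)$, the condition ``$\exists m$ with $L_j(A_m(\lambda))\neq0$'' is precisely ``$D_j(\lambda)\neq0$'', which yields \eqref{eq:dimq11}. (One should note that the order of the two maxima can be exchanged because both range over finite index sets / monotone families: the key point is simply that $D_j(\lambda)\neq 0$ iff some coordinate is nonzero.)

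For \eqref{eq:dimq22}, recall $p'(\phi,b)=\min_{\lambda\in(1/b,1)}q'(\phi,\lambda,b)$ by \eqref{def:p'}, so $d-p'(\phi,b)=\max_{\lambda\in(1/b,1)}\bigl(d-q'(\phi,\lambda,b)\bigr)$. Substituting \eqref{eq:dimq11} and again swapping the order of the (finite) maximum over $j$ with the supremum over $\lambda$ gives
\[
d-p'(\phi,b)=\max_{\lambda\in(1/b,1)}\max_{1\le j\le d}\{j : D_j(\lambda)\neq0\}=\max_{1\le j\le d,\ \lambda\in(1/b,1)}\{j : D_j(\lambda)\neq0\},
\]
which is \eqref{eq:dimq22}. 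I do not anticipate a genuine obstacle here; the only point that needs a word of care is the interchange of the maxima and the stabilisation of the nested kernels, together with bookkeeping of the convention $L_j(A_m(\lambda))=0$ for $j>m$ so that Lemma~\ref{lem:la} applies uniformly in $m$. Everything else is a direct concatenation of Lemma~\ref{lem:dimq} and Lemma~\ref{lem:la}.
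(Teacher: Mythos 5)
Your proposal is correct and takes essentially the same route as the paper: stabilization of the nested kernels $\text{Ker}\,A_{\lambda,\,m}$, Lemma \ref{lem:dimq} to identify $q'(\phi,\lambda,b)$ with the dimension of the stable kernel, Lemma \ref{lem:la} to translate the corank into nonvanishing of the $L_j$'s, and then (\ref{eq:dimq22}) obtained from (\ref{eq:dimq11}) via (\ref{def:p'}) by taking extrema over $\lambda$. The paper merely organizes the same content by fixing $m_0$ and treating $m\ge m_0$ and $m\le m_0$ separately instead of exchanging suprema, and your bookkeeping remark about the convention $L_j(A_m(\lambda))=0$ for $j>m$ is indeed harmless because the rank is monotone under adjoining columns.
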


\begin{proof}
By (\ref{def:p'}), we only need to prove (\ref{eq:dimq11}). Since $\text{ker }A_{\lambda,\,m+1}\subset\text{ker }A_{\lambda,\,m}$ holds for any $m\ge1$, there exists $m_0\ge d$ such that 
$\text{ker }A_{\lambda,\,m_0}=\cap_{m\ge1}\text{ker }A_{\lambda,\,m}$. Combining this with Lemma \ref{lem:dimq}, we have
\[ q'(\phi,\lambda,b)=\text{dim }\left(\text{ker }A_{\lambda,\,m}\right) \quad \text{for any } m \geq m_0. \]
Combining this with Lemma \ref{lem:la}, we get
\begin{equation}\label{eq:C.2.1}
    d - q'(\phi,\lambda,b) = \max_{1\le j\le d}\left\{j \::\: L_j\left(A_{\lambda,\,m}\right) \neq 0 \right\}, \quad \text{for } m \geq m_0.
\end{equation}
This implies that 
\begin{equation}\label{eq:dimq4}
    L_j\left(A_{\lambda,\,m}\right) = 0 \quad \forall m \geq m_0, \, j > d - q'(\phi,\lambda,b).
\end{equation}

Since $\text{ker }A_{\lambda,\,m_0}\subset\text{ker }A_{\lambda,\,m}$ holds for any $m \leq m_0$, we have
\[ L_j\left(A_{\lambda,\,m}\right) = 0 \quad \forall m \leq m_0, \, j > d - q'(\phi,\lambda,b) \]
by Lemma \ref{lem:la}.
 Combining this with (\ref{eq:dimq4}) and (\ref{eq:C.2.1}), we conclude that (\ref{eq:dimq11}) holds.
\end{proof}

\begin{proof}[Proof of Theorem B]
	Since $\phi$ is non-constant, there exist $1 \leq k\leq d$ and $m_0 \in \mathbb{Z}_+$, $\ell \in \mathbb{N}$ such that $a_{t_{m_0}b^{\ell}}(\phi_k) \neq 0$. Thus $\sum_{n=0}^{\infty}  a_{t_{m_0}b^n}(\phi_k)\cdot\lambda^{-n} \not\equiv 0$. Consequently, there exists $\lambda_0 \in (1/b,1)$ such that $D_1(\lambda_0) \neq 0$, implying $p'(\phi,b) < d$ by (\ref{eq:dimq22}).

By (\ref{eq:dimq22}) and (\ref{def:D_j}), there exists $m_1 \in \mathbb{Z}_+$ and $\lambda_1 \in (1/b,1)$ such that $L_{d-p'}(A_{\lambda_1,\,m_1}) \neq 0$. Since the function $F:[1/b,1] \to \mathbb{R}$ defined by $F(\lambda) = L_{d-p'}(A_{\lambda,\,m_1})$ is analytic, there are at most finitely many $\lambda \in [1/b,1]$ such that $D_{d-p'}(\lambda) = 0$ by (\ref{def:D_j}). Combining this with (\ref{eq:dimq11}) and (\ref{eq:dimq22}), the number of elements $\lambda \in [1/b,1]$ such that $d-q'(\phi,\lambda,b) < d-p'$ is at most finite.
\end{proof}

\begin{proof}[Proof of Corollary \ref{cor2}]
	We no longer fix the function $\phi$, and denote notations with a superscript $\phi$ to indicate their dependence on the choice of $\phi$. By Lemma \ref{lem:Caculate}, we have
\begin{equation}\label{eq:K}
K=\bigcup_{\lambda\in(1/b,1)}\left\{\phi\in X \::\: D^{\phi}_d(\lambda)\neq0\right\}.
\end{equation}
To prove that $K$ is open, we need to establish the following claim.
\begin{claim}\label{cla:C.4.1}
For any $\lambda\in(1/b,1)$, $1\leq j \leq d$, and $m\geq 1$, the linear functionals $R^{\phi}_{j,\,m}(\lambda),\,I^{\phi}_{j,\,m}(\lambda):X\to\mathbb{R}$ are continuous.
\end{claim}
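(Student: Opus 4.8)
The plan is to establish the slightly stronger fact that, for fixed $\lambda\in(1/b,1)$, $1\le j\le d$ and $m\ge1$, the functionals $\phi\mapsto R^\phi_{j,\,m}(\lambda)$ and $\phi\mapsto I^\phi_{j,\,m}(\lambda)$ on $X$ are \emph{bounded} with respect to the norm $\|\cdot\|_1$. Linearity in $\phi$ is immediate from (\ref{def:R-I}) and the linearity of the Fourier coefficients $\phi_j\mapsto a_k(\phi_j)$; since a linear functional on a normed space is continuous precisely when it is bounded, this yields Claim~\ref{cla:C.4.1}, and the $\|\cdot\|_1$-version is exactly what the $C^1$-openness of $K$ in Corollary~\ref{cor2} requires.

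The only input needed is the classical decay of Fourier coefficients of $C^1$ functions. For a $\mathbb{Z}$-periodic $C^1$ function $f:\mathbb{R}\to\mathbb{R}$, integration by parts (the boundary terms cancelling by $\mathbb{Z}$-periodicity) gives $a_k(f)=\frac{1}{2\pi i k}\int_0^1 f'(x)e^{-2\pi i k x}\,dx$ for every $k\neq0$, whence $|a_k(f)|\le \|f'\|_0/(2\pi|k|)$. Applying this with $f=\phi_j$ and $k=t_mb^n$, and using $|k|=t_mb^n$ together with $\|\phi_j'\|_0\le\|\phi\|_1$, we get
\[
\bigl|\mathrm{Re}\,a_{t_mb^n}(\phi_j)\bigr|,\ \bigl|\mathrm{Im}\,a_{t_mb^n}(\phi_j)\bigr|\ \le\ \frac{\|\phi\|_1}{2\pi t_m b^n}\qquad(n\ge0).
\]
Substituting into (\ref{def:R-I}) and summing the geometric series with ratio $(b\lambda)^{-1}<1$ (this is where $\lambda>1/b$ enters) gives
\[
\bigl|R^\phi_{j,\,m}(\lambda)\bigr|,\ \bigl|I^\phi_{j,\,m}(\lambda)\bigr|\ \le\ \frac{\|\phi\|_1}{2\pi t_m}\sum_{n=0}^{\infty}(b\lambda)^{-n}\ =\ \frac{b\lambda}{2\pi t_m(b\lambda-1)}\,\|\phi\|_1 .
\]

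The constant $\frac{b\lambda}{2\pi t_m(b\lambda-1)}$ is independent of $\phi$, so both functionals are bounded, hence $\|\cdot\|_1$-continuous, which is Claim~\ref{cla:C.4.1}. I do not expect any real difficulty here; the single point to keep in mind is that the factor $b^{-n}$ supplied by the $C^1$-regularity of $\phi$ must dominate the growing weight $\lambda^{-n}$ appearing in (\ref{def:R-I}), and this happens exactly under the standing hypothesis $\lambda\in(1/b,1)$.
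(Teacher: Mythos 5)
Your proposal is correct and follows essentially the same route as the paper: bound $|a_{t_mb^n}(\phi_j)|$ by the sup norm of $\phi_j'$ divided by the frequency (the paper states $b^n|a_{t_mb^n}(\phi_j)|\le\|\phi_j'\|_\infty\le\|\phi\|_r$), sum the resulting geometric series with ratio $(b\lambda)^{-1}<1$, and conclude that the $\mathbb{R}$-linear functionals are bounded, hence continuous. Your version merely makes the integration-by-parts step explicit and tracks the slightly sharper constant with the factor $2\pi t_m$, which changes nothing essential.
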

Specifically, combining this with (\ref{def:L_k}) and (\ref{def:D_j}), we conclude that $\left\{\phi\in X\,:\, D^{\phi}_d(\lambda)\neq0\right\}$ is open. Consequently, $K$ is open as well by (\ref{eq:K}). Now, we prove the claim. Since 
\[
b^n\cdot \left|\,a_{t_mb^n}(\phi_j)\,\right| \leq \parallel \phi'_j\parallel_{\infty}  \leq  \parallel \phi \parallel_{r},\quad\quad\text{for }n\in\mathbb{N},
\]
we have
\[
\left|\sum_{n=0}^{\infty}a_{t_mb^n}(\phi_j)\cdot\lambda^{-n}\right|\leq  \frac{b\lambda}{b\lambda-1}\parallel \phi \parallel_{r}.
\]
Combining this with (\ref{def:R-I}),  then $R^{\phi}_{j,\,m}(\lambda),\,I^{\phi}_{j,\,m}(\lambda)$ are bounded $\mathbb{R}$-linear functionals.

Let $\lambda\in(1/b,1)$, and let $\{e_j\}_{1\leq j\leq d}$ be the standard basis of $\mathbb{R}^d$. Set $[d]=\{1,\ldots,d\}$. For any $\phi\in X$, we define the function $G:\mathbb{R}\to\mathbb{R}$ as
\begin{equation}\nonumber
    G(r) = \det\left( A^{\phi(x)-2r\sum_{j=1}^d \cos(2\pi t_jx)e_j}_{\lambda,\,d}([d],[d]) \right).
\end{equation}
Since $G(r)=\det\left(A^{\phi}_{\lambda,\,d}([d],[d]) - r I\right)$ is a polynomial function (with $I$ denoting the identity map), for any $\eps>0$, there exists $|r|<\eps$ such that $G(r)\neq0$. Using (\ref{eq:K}) and the defintion of $D_d(\lambda)$, this implies that the function $\phi(x)-2r\sum_{j=1}^d \cos(2\pi t_jx)e_j\in K$.  We conclude that $K$ is a dense set.
\end{proof}

\section{Appendix: Proof of Theorem C}
  In this appendix, we fix $\lambda\in\left(1/b,1\right)$ and a $\mathbb{Z}$-periodic analytic function $\phi:\mathbb{R}\to\mathbb{R}^d$ satisfying the condition (H), where $\phi(x)=\left(\phi_1(x),\phi_2(x),\ldots,\phi_d(x)\right)$. Without loss of generality, we set $\phi(0)=0$ for convenience. 
We assume  $\alpha<\log_{\lambda^{-1}}b$.
We extend results from $d=1$ to $d\geq 1$ as presented in \cite{ren2021dichotomy}. Since many of these results can be proven with similar details, we provide statements without proofs. These results are then used to prove Theorem C.

\subsection{Transversality}\label{sec:separation}
The following statement directly follows from \cite[Theorem 5.1]{ren2021dichotomy}.

\begin{theorem}\label{thm:estimate}
    There exist positive integers $\ell_0$, $Q_0$, and a constant $\rho_0 > 0$ such that for each $1 \leq j \leq d$ and for $\textbf{u}, \textbf{v} \in \Sigma$ with $u_n \neq v_n$, the following holds: 
    \begin{equation}\label{eqn:Gamma1stder}
        \sup_{x \in [0,1)}\left| (\Gamma^{\phi_j}_{\textbf{u}})'(x) - (\Gamma^{\phi_j}_{\textbf{v}})'(x) \right| \geq \rho_0 b^{-Q_0 n},
    \end{equation}
    and
    \begin{equation}\label{eqn:Gamma1stder2}
        \sup_{\substack{I \in \mathcal{L}_{\ell_0}\\I \subset [0,1)}} \inf_{x \in I}\left| (\Gamma^{\phi_j}_{\textbf{u}})'(x) - (\Gamma^{\phi_j}_{\textbf{v}})'(x) \right| \geq \rho_0 \sup_{x \in [0,1]} \left| (\Gamma^{\phi_j}_{\textbf{u}})'(x) - (\Gamma^{\phi_j}_{\textbf{v}})'(x) \right|.
    \end{equation}
\end{theorem}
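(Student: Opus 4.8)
The plan is to deduce Theorem \ref{thm:estimate} from its one-dimensional predecessor \cite[Theorem 5.1]{ren2021dichotomy}, applied separately to each coordinate function $\phi_j$, $1\le j\le d$. First I would note that by (\ref{def:Gamma}) and (\ref{def:Y}) one has $(\Gamma^{\phi_j}_{\textbf{u}})'(x)=Y^{\phi_j}_{\lambda,\,b}(x,\textbf{u})$, so both displayed inequalities are assertions about the scalar slope functions associated with $\phi_j$. Identifying $\mathbb{R}$ with the line $\mathbb{R}e_j<\mathbb{R}^d$ spanned by the $j$-th standard basis vector, one has $\phi_j=\pi_{\mathbb{R}e_j}\phi$, which is $\mathbb{Z}$-periodic and analytic; moreover, by Remark \ref{rem:A} and the identity $\pi_{\mathbb{R}e_j}\circ Y^{\phi}_{\lambda,\,b}=Y^{\phi_j}_{\lambda,\,b}$, condition (\ref{def:H}) for $\phi$ with $A=\mathbb{R}e_j$ says precisely that $\phi_j$ satisfies the scalar condition (H); in particular $\phi_j$ is non-constant. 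Hence \cite[Theorem 5.1]{ren2021dichotomy} applies to each $\phi_j$.

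Next I would run that theorem for $\phi_1,\ldots,\phi_d$, obtaining positive integers $\ell_0^{(j)},Q_0^{(j)}$ and constants $\rho_0^{(j)}>0$ for which (\ref{eqn:Gamma1stder}) and (\ref{eqn:Gamma1stder2}) hold with the scalar function taken to be $\phi_j$. Since $d$ is finite, I would set $\ell_0=\max_j\ell_0^{(j)}$, $Q_0=\max_j Q_0^{(j)}$ and $\rho_0=\min_j\rho_0^{(j)}$. For (\ref{eqn:Gamma1stder}) the uniformization is immediate: $\rho_0 b^{-Q_0 n}\le\rho_0^{(j)}b^{-Q_0^{(j)}n}$ for all $j,n$. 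For (\ref{eqn:Gamma1stder2}) I would use that for a fixed continuous $g$ the quantity $\sup_{I\in\mathcal{L}_\ell,\,I\subset[0,1)}\inf_{x\in I}|g(x)|$ is non-decreasing in $\ell$, because every interval of $\mathcal{L}_{\ell_0^{(j)}}$ is a union of intervals of the finer partition $\mathcal{L}_{\ell_0}$ and the infimum over a sub-interval is no smaller than over the whole; thus (\ref{eqn:Gamma1stder2}) for $(\ell_0^{(j)},\rho_0^{(j)})$ forces the same inequality for $(\ell_0,\rho_0)$.

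The only step that is not pure bookkeeping is the reduction itself — verifying that the vector-valued separation hypothesis (H) for $\phi$ really descends to the scalar condition (H) for every coordinate $\phi_j$ — which is exactly what Remark \ref{rem:A} provides; this is the sense in which the theorem ``directly follows'' from \cite[Theorem 5.1]{ren2021dichotomy}. I would not expect any further analytic obstacle, since all of the transversality content is already contained in the cited one-dimensional estimate and only finitely many coordinates are involved.
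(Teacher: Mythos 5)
Your proposal is correct and is essentially the paper's own argument: the paper simply asserts that the theorem ``directly follows'' from \cite[Theorem 5.1]{ren2021dichotomy}, i.e.\ from applying the one-dimensional transversality estimate to each coordinate $\phi_j$, which satisfies the scalar condition (H) because $\phi$ satisfies (H) for every line $A=\mathbb{R}e_j$. Your uniformization over $j$ (taking $\ell_0=\max_j\ell_0^{(j)}$, $Q_0=\max_j Q_0^{(j)}$, $\rho_0=\min_j\rho_0^{(j)}$, together with the monotonicity in $\ell$ of the sup--inf quantity under refinement of the $b$-adic partition) is exactly the routine bookkeeping the paper leaves implicit.
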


Here, $\Gamma^{\phi}_{\textbf{u}}(x)$ is defined by (\ref{def:Gamma}), sometimes written as $\Gamma_{\textbf{u}}(x)$ for convenience.

\subsection{The partitions $\mathcal{L}_i^{\mathcal{X}}$}\label{sec:partitionX}
As done in \cite{ren2021dichotomy}, let us define
$$\mathcal{X}=\left\{\pi_{\textbf{j}}\circ g_{\textbf{i}}\::\:\textbf{j}\in\Sigma,\,\textbf{i}\in\Lambda^{\#}\right\}.$$
In this subsection, we establish a nested sequence of partitions $\mathcal{L}_i^{\mathcal{X}}$ of the space $\mathcal{X}$ following the approach in \cite[Section 6]{ren2021dichotomy}.
We then outline several key properties of these partitions.

Recall that for any $\textbf{j}\in\Sigma$ and $\textbf{i}\in\varLambda^\#$, we have:
$$\pi_{\textbf{j}}g_{\textbf{i}}(x,y) = \lambda^{|\textbf{i}|}(y-\Gamma_{{\textbf{i}}^*\textbf{j}}(x)) + \pi_{\textbf{j}}g_{\textbf{i}}(0,0).$$
Thus, each element of $\mathcal{X}$ can be expressed as $\lambda^{t}(y-\psi(x))+c$, where $t\in\mathbb{N}$, $c\in\mathbb{R}^d$, and $\psi(x):\mathbb{R}\to\mathbb{R}^d$ is an analytic function with $\psi(0)=0$. We refer to $|\textbf{i}|$ as the \textit{height} of the map $\pi_{\textbf{j}}g_{\textbf{i}}$.
Define $\overline{\pi}:\mathcal{X}\to\mathbb{N}\times(\mathbb{R}^d)^{M+1}$ as follows:
$$\lambda^{t}(y-\psi(x))+c \mapsto \left(t,\,\psi(\frac{1}{M}),\,\psi(\frac{2}{M}),\ldots,\;\psi(1),\;c\right),$$
where $M=b^{\ell_0}$ and $\ell_0$ is obtained from \S \ref{sec:separation}.
Recall $\mathcal{L}_n$ is the $n$-th $b$-adic partition of  $\mathbb{R}^d$.

\begin{definition} 
   For each integer $n \geq 1$, $\mathcal{L}_n^{\mathcal{X}}$ consists of non-empty subsets of $\mathcal{X}$ given by:
$$\overline{\pi}^{\,-1}\left(\{t\} \times I_1 \times I_2 \times \ldots \times I_M \times J\right),$$
where $t \in \mathbb{N}$, $I_1, I_2, \ldots, I_M \in \mathcal{L}_n$, and $J \in \mathcal{L}_{n+[t\log_b(1/\lambda)]}$.

The partition $\mathcal{L}_0^{\mathcal{X}}$ consists of non-empty subsets of $\mathcal{X}$ represented as:
$$\overline{\pi}^{\,-1}\left(\{t\} \times \mathbb{R}^d \times \ldots \times \mathbb{R}^d \times J\right),$$
where $t \in \mathbb{N}$ and $J \in \mathcal{L}_{[t\log_b(1/\lambda)]}$.
\end{definition}

 The following is a generalization of \cite[Lemma 6.1]{ren2021dichotomy}, which can be proven using the same method.
\begin{lemma}\label{lem:boundspart}
	There exists $A>0$ such that any $i\ge 0$, each element of $\mathcal{L}_i^{\mathcal{X}}$ contains at most $A$ elements of $\mathcal{L}_{i+1}^{\mathcal{X}}$.
\end{lemma}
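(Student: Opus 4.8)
The plan is to mimic the proof of \cite[Lemma 6.1]{ren2021dichotomy}, exploiting the product structure of the coding map $\overline{\pi}$ and the fact that in each coordinate the relevant partition refines by a bounded amount when passing from level $i$ to level $i+1$. First I would fix an element $P\in\mathcal{L}_i^{\mathcal{X}}$. When $i\ge 1$, by definition $P=\overline{\pi}^{-1}(\{t\}\times I_1\times\cdots\times I_M\times J)$ with $I_1,\dots,I_M\in\mathcal{L}_i$ and $J\in\mathcal{L}_{i+[t\log_b(1/\lambda)]}$; an element of $\mathcal{L}_{i+1}^{\mathcal{X}}$ contained in $P$ is then of the form $\overline{\pi}^{-1}(\{t\}\times I_1'\times\cdots\times I_M'\times J')$ where necessarily $I_k'\subset I_k$, $I_k'\in\mathcal{L}_{i+1}$, and $J'\subset J$, $J'\in\mathcal{L}_{i+1+[t\log_b(1/\lambda)]}$ — note the height coordinate $t$ is unchanged because it is discrete. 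So the count of possible refining elements is bounded by $\big(\#\{I'\in\mathcal{L}_{i+1}:I'\subset I_k\}\big)^M\cdot\#\{J'\in\mathcal{L}_{i+1+[t\log_b(1/\lambda)]}:J'\subset J\}$.

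The key observation is that each cell of $\mathcal{L}_i$ (a $b$-adic cube of side $b^{-i}$ in $\mathbb{R}^d$) contains exactly $b^d$ cells of $\mathcal{L}_{i+1}$, so the first factor is $(b^d)^M=b^{dM}$, a constant depending only on $b,d,\ell_0$. For the last coordinate I need to control $\#\{J'\in\mathcal{L}_{i+1+[(i+1)\log_b(1/\lambda)]'}:J'\subset J\}$ where $J\in\mathcal{L}_{i+[i\log_b(1/\lambda)]'}$ — here the two integer-part quantities $[t\log_b(1/\lambda)]$ appearing at levels $i$ and $i+1$ differ, but since $t$ is the same in $P$ and its refinements, the two are literally equal, so $J'\subset J$ with $J'\in\mathcal{L}_{(i+1)+[t\log_b(1/\lambda)]}$ and $J\in\mathcal{L}_{i+[t\log_b(1/\lambda)]}$, giving exactly $b^d$ choices. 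Hence one may take $A=b^{d(M+1)}=b^{d(b^{\ell_0}+1)}$, which is independent of $i$. When $i=0$ the argument is the same except that in $\mathcal{L}_0^{\mathcal{X}}$ the coordinates $I_1,\dots,I_M$ range over all of $\mathbb{R}^d$; passing to $\mathcal{L}_1^{\mathcal{X}}$ forces each to lie in some cell of $\mathcal{L}_1$, but distinct such cells give distinct elements of $\mathcal{L}_1^{\mathcal{X}}$, and since $\mathrm{supp}(\mu)\subset[0,1)\times\mathbb{R}^d$ has $\psi$-images in a bounded region (each $\psi$ has the form $\Gamma_{\mathbf{w}}$ which is uniformly bounded on $[0,1]$ because $\phi'$ is bounded), only finitely many — at most a constant $B=B(\phi,b)$ — of those cells are actually hit, so the bound persists after enlarging $A$.

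The one point requiring a little care, and the main (mild) obstacle, is justifying that only boundedly many nonempty $\mathcal{L}_{i+1}^{\mathcal{X}}$-cells sit inside a given $\mathcal{L}_0^{\mathcal{X}}$-cell: this uses that the functions $\psi$ arising as $\Gamma_{\mathbf{w}}$, $\mathbf{w}\in\Sigma$, take values in a fixed bounded subset of $\mathbb{R}^d$ (indeed $|\Gamma_{\mathbf{w}}(x)|\le \sum_{n\ge1}\gamma^n\|\phi'\|_\infty = \|\phi'\|_\infty\gamma/(1-\gamma)$ by \eqref{def:Gamma} and \eqref{def:Y}), so the image point $(\psi(1/M),\dots,\psi(1),c)$ — where $c=\pi_{\mathbf{j}}g_{\mathbf{i}}(0,0)$ also lies in a bounded set for the same reason — ranges over a set of bounded diameter, meeting only $O(1)$ cells of the refined partition. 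Once this is in place, the counting above is immediate. I would then record the resulting constant $A$ and note it depends only on $b$, $d$, and $\ell_0$ (equivalently on $\phi,b,\lambda$), completing the proof.
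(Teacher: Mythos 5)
Your proposal is correct and is essentially the argument the paper invokes (it defers to the method of \cite[Lemma 6.1]{ren2021dichotomy}): count children coordinate-wise under $\overline{\pi}$, noting the height $t$ is fixed, each $b$-adic cube has exactly $b^d$ children in each of the $M+1$ cube coordinates, and for $i=0$ the uniform bound $\sup_{\mathbf{w}\in\Sigma,\,x\in[0,1]}|\Gamma_{\mathbf{w}}(x)|\le \frac{\gamma}{1-\gamma}\|\phi'\|_\infty$ ensures only $O(1)$ level-one cells are hit in the previously unconstrained coordinates. The momentary slip writing the offset as $[(i+1)\log_b(1/\lambda)]$ is immediately self-corrected, since the offset depends only on $t$, which is unchanged under refinement.
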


Recall that for all $x \in \mathbb{R}$ and $\textbf{i} \in \Sigma$,  we have
\begin{equation}\label{eq:Gamma}
\Gamma^{\phi}_{\textbf{i}}(x) = \left(\Gamma^{\phi_1}_{\textbf{i}}(x), \ldots, \Gamma^{\phi_d}_{\textbf{i}}(x)\right).
\end{equation}

\begin{lemma}\label{lem:constantR}
	There exists $K>0$ such that if $\pi_{\textbf{j}}g_{\textbf{u}}$ and $\pi_{\textbf{j}}g_{\textbf{v}}$ belong to the same element of $\mathcal{L}_i^{\mathcal{X}}$, where $\textbf{j}\in \Sigma$, $\textbf{u}, \textbf{v}\in \varLambda^{\hat{\ell}}$, and $i\ge 1$, then for any $x\in [0,1)$ and $y\in \R^d$,
	$$|\pi_{\textbf{j}}g_{\textbf{u}}(x,y)-\pi_{\textbf{j}}g_{\textbf{v}}(x,y)|\le K b^{-(\ell+i)}.$$
\end{lemma}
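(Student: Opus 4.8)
`\textbf{Proof plan for Lemma~\ref{lem:constantR}.}`\noindent\textbf{Proof plan.} If $\textbf{u}=\textbf{v}$ there is nothing to prove, so assume $\textbf{u}\neq\textbf{v}$. The starting point is the identity coming from $\pi_{\textbf{j}}g_{\textbf{i}}(x,y)=\lambda^{|\textbf{i}|}\bigl(y-\Gamma_{\textbf{i}^*\textbf{j}}(x)\bigr)+\pi_{\textbf{j}}g_{\textbf{i}}(0,0)$, namely
\[
\pi_{\textbf{j}}g_{\textbf{u}}(x,y)-\pi_{\textbf{j}}g_{\textbf{v}}(x,y)=\lambda^{\hat{\ell}}\bigl(\Gamma_{\textbf{v}^*\textbf{j}}(x)-\Gamma_{\textbf{u}^*\textbf{j}}(x)\bigr)+\bigl(\pi_{\textbf{j}}g_{\textbf{u}}(0,0)-\pi_{\textbf{j}}g_{\textbf{v}}(0,0)\bigr);
\]
crucially the variable $y$ drops out, which is what makes a bound uniform in $y$ possible. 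Since $\lambda^{\hat{\ell}}\le b^{-\ell}$ by (\ref{eqn:n'}), it then suffices to prove, with constants depending only on $\phi,b,\lambda$, that (i) $\sup_{x\in[0,1]}\bigl|\Gamma_{\textbf{u}^*\textbf{j}}(x)-\Gamma_{\textbf{v}^*\textbf{j}}(x)\bigr|=O(b^{-i})$ and (ii) $\bigl|\pi_{\textbf{j}}g_{\textbf{u}}(0,0)-\pi_{\textbf{j}}g_{\textbf{v}}(0,0)\bigr|=O(b^{-(\ell+i)})$.

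Next I would extract the grid data from the hypothesis by unwinding the coordinate map $\overline{\pi}$. Because $|\textbf{u}|=|\textbf{v}|=\hat{\ell}$ the heights agree, and since $\lambda^{\hat{\ell}}\le b^{-\ell}<\lambda^{\hat{\ell}-1}$ together with $0<\log_b(1/\lambda)<1$ forces the integer part $[\hat{\ell}\log_b(1/\lambda)]$ to equal $\ell$, the fact that $\pi_{\textbf{j}}g_{\textbf{u}}$ and $\pi_{\textbf{j}}g_{\textbf{v}}$ lie in one cell of $\mathcal{L}_i^{\mathcal{X}}$ means: $\pi_{\textbf{j}}g_{\textbf{u}}(0,0)$ and $\pi_{\textbf{j}}g_{\textbf{v}}(0,0)$ lie in a common cube of $\mathcal{L}_{i+\ell}$ (which gives (ii)), and for each coordinate $1\le s\le d$ and each $1\le k\le M:=b^{\ell_0}$ the numbers $\Gamma^{\phi_s}_{\textbf{u}^*\textbf{j}}(k/M)$ and $\Gamma^{\phi_s}_{\textbf{v}^*\textbf{j}}(k/M)$ lie in a common interval of $\mathcal{L}_i$; also $\Gamma^{\phi_s}_{\textbf{u}^*\textbf{j}}(0)=\Gamma^{\phi_s}_{\textbf{v}^*\textbf{j}}(0)=0$ automatically. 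Writing $\Delta_s:=\Gamma^{\phi_s}_{\textbf{u}^*\textbf{j}}-\Gamma^{\phi_s}_{\textbf{v}^*\textbf{j}}$, this says $|\Delta_s(k/M)|\le b^{-i}$ for all $0\le k\le M$.

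The main step is to upgrade this pointwise-on-a-grid control to the uniform bound (i), and this is where the transversality estimate (\ref{eqn:Gamma1stder2}) of Theorem~\ref{thm:estimate} is indispensable. Since the intervals $[k/M,(k+1)/M)$, $0\le k<M$, are precisely the members of $\mathcal{L}_{\ell_0}$ contained in $[0,1)$, and $\Delta_s'=(\Gamma^{\phi_s}_{\textbf{u}^*\textbf{j}})'-(\Gamma^{\phi_s}_{\textbf{v}^*\textbf{j}})'$ with $\textbf{u}^*\textbf{j}\neq\textbf{v}^*\textbf{j}$ (hence differing at some coordinate), (\ref{eqn:Gamma1stder2}) supplies an index $k_0$ with $|\Delta_s'|\ge\rho_0\|\Delta_s'\|_\infty$ on $[k_0/M,(k_0+1)/M]$; there $\Delta_s'$ has constant sign by continuity, so integrating yields $\frac{\rho_0}{M}\|\Delta_s'\|_\infty\le|\Delta_s((k_0+1)/M)|+|\Delta_s(k_0/M)|\le 2b^{-i}$, i.e.\ $\|\Delta_s'\|_\infty=O(b^{-i})$. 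A one-step interpolation from the grid points then gives $\|\Delta_s\|_\infty\le b^{-i}+\frac1M\|\Delta_s'\|_\infty=O(b^{-i})$, and combining over $s$ gives (i). Feeding (i), (ii) and $\lambda^{\hat{\ell}}\le b^{-\ell}$ into the displayed identity yields the bound $Kb^{-(\ell+i)}$ with $K=K(d,\rho_0,\ell_0)$. The hard part is exactly this upgrade: Lipschitz continuity of $\Delta_s$ alone would only give $\|\Delta_s\|_\infty=O(b^{-i}+1/M)$, which is worthless for large $i$; it is the scale-$\ell_0$ non-degeneracy of the derivative differences in (\ref{eqn:Gamma1stder2}) that rules out $\Delta_s$ being minuscule on the grid yet large in between.
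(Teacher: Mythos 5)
Your argument is correct and follows essentially the same route as the paper: the paper derives exactly your derivative bound (its inequality (\ref{eqn:Gamma'diff})) from the grid data of $\mathcal{L}_i^{\mathcal{X}}$ together with the transversality estimate (\ref{eqn:Gamma1stder2}), and then concludes via the identity $\pi_{\textbf{j}}g_{\textbf{i}}(x,y)=\lambda^{|\textbf{i}|}\pi_{\textbf{i}^*\textbf{j}}(x,y)+\pi_{\textbf{j}}g_{\textbf{i}}(0,0)$, citing the $d=1$ arguments of \cite{ren2021dichotomy} for the details you spell out. Your write-up simply makes explicit the steps the paper delegates to that reference (the identification $[\hat{\ell}\log_b(1/\lambda)]=\ell$, the coordinatewise grid control, and the integration over the good $\mathcal{L}_{\ell_0}$-interval), so there is nothing to correct.
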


\begin{proof}
 Combining (\ref{eq:Gamma}) with Theorem \ref{thm:estimate}, we establish the following for each $1\le j\le d$:
\begin{equation}\label{eqn:Gamma'diff}
\sup_{x\in [0,1)}\left|(\Gamma^{\phi_j}_{{\textbf{u}}^*\textbf{j}})'(x)-(\Gamma^{\phi_j}_{{\textbf{v}}^*\textbf{j}})'(x)\right|\le 2\rho_0^{-1}M b^{-i}.
\end{equation}
by employing techniques akin to \cite[(6.1)]{ren2021dichotomy}. Hence, we can conclude the proof using methods similar to \cite[Lemma 6.2]{ren2021dichotomy}.
\end{proof}

By combining \eqref{eqn:Gamma'diff} with Theorem \ref{thm:estimate}, we can prove the following lemma using methods similar to \cite[Lemma 6.3]{ren2021dichotomy}.
\begin{lemma}\label{lem:constantC}
   There exists a constant $C \in \mathbb{Z}_+$ such that for any $\textbf{u} \neq \textbf{v} \in \varLambda^{\ell}$, $\ell \geq 1$, and $\textbf{j} \in \Sigma$, we have $\mathcal{L}_{C\ell}^{\mathcal{X}}(\pi_{\textbf{j}}g_{\textbf{u}}) \neq \mathcal{L}_{C\ell}^{\mathcal{X}}(\pi_{\textbf{j}}g_{\textbf{v}})$.
\end{lemma}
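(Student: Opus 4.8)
\textbf{Proof proposal for Lemma~\ref{lem:constantC}.}
The plan is to quantify how far apart the maps $\pi_{\textbf{j}}g_{\textbf{u}}$ and $\pi_{\textbf{j}}g_{\textbf{v}}$ must be in each of the finitely many coordinates recorded by $\overline{\pi}$, once $\textbf{u}\neq\textbf{v}\in\varLambda^{\ell}$, and then to read off a scale $C\ell$ at which this separation is detected by the partition $\mathcal{L}_{C\ell}^{\mathcal{X}}$. Recall that $\pi_{\textbf{j}}g_{\textbf{u}}(x,y)=\lambda^{\ell}\bigl(y-\Gamma_{\textbf{u}^*\textbf{j}}(x)\bigr)+\pi_{\textbf{j}}g_{\textbf{u}}(0,0)$, so its image under $\overline{\pi}$ is essentially $\bigl(\ell,\;\lambda^{\ell}\Gamma_{\textbf{u}^*\textbf{j}}(k/M)\text{-type data},\;\pi_{\textbf{j}}g_{\textbf{u}}(0,0)\bigr)$ — after dividing out the common factor $\lambda^{\ell}$, the relevant vectors are $\bigl(\Gamma_{\textbf{u}^*\textbf{j}}(1/M),\ldots,\Gamma_{\textbf{u}^*\textbf{j}}(1)\bigr)$ together with a translation constant. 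Since $\textbf{u}\neq\textbf{v}$ have the same length $\ell$, there is a first index where they differ, so $\textbf{u}^*\textbf{j}$ and $\textbf{v}^*\textbf{j}$ differ in some coordinate $\le\ell$; by Theorem~\ref{thm:estimate} applied with $n\le\ell$, and because $\phi$ satisfies condition~(H) so that $\Gamma^{\phi_j}_{\textbf{u}^*\textbf{j}}\not\equiv\Gamma^{\phi_j}_{\textbf{v}^*\textbf{j}}$ for at least one $j$, we get
$$\sup_{x\in[0,1)}\bigl|(\Gamma^{\phi_j}_{\textbf{u}^*\textbf{j}})'(x)-(\Gamma^{\phi_j}_{\textbf{v}^*\textbf{j}})'(x)\bigr|\;\ge\;\rho_0 b^{-Q_0\ell}$$
for some $1\le j\le d$, and moreover \eqref{eqn:Gamma1stder2} upgrades the sup into an inf over some dyadic interval $I\in\mathcal{L}_{\ell_0}$, $I\subset[0,1)$.

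First I would use the mean value theorem on such an interval $I$ of length $b^{-\ell_0}=1/M$: since the derivatives of $\Gamma^{\phi_j}_{\textbf{u}^*\textbf{j}}$ and $\Gamma^{\phi_j}_{\textbf{v}^*\textbf{j}}$ differ by at least $\rho_0^2 b^{-Q_0\ell}$ throughout $I$ (combining the two displays of Theorem~\ref{thm:estimate}), and the interval $I$ has one of its endpoints of the form $k/M$, the values $\Gamma^{\phi_j}_{\textbf{u}^*\textbf{j}}$ and $\Gamma^{\phi_j}_{\textbf{v}^*\textbf{j}}$ at the two endpoints of $I$ cannot both agree; hence at some endpoint $k/M$ (a node sampled by $\overline{\pi}$) we have
$$\bigl|\Gamma^{\phi_j}_{\textbf{u}^*\textbf{j}}(k/M)-\Gamma^{\phi_j}_{\textbf{v}^*\textbf{j}}(k/M)\bigr|\;\gtrsim\;b^{-Q_0\ell}\cdot\tfrac1M\;\gtrsim\;b^{-(Q_0+1)\ell}.$$
(One has to be slightly careful: it could be that the two functions agree at a single node but differ at a neighbouring one — the MVT argument across $I$, whose endpoints are consecutive nodes, handles exactly this.) So for $C$ large enough that $b^{-(Q_0+1)\ell}>b^{-C\ell}$, i.e.\ $C>Q_0+1$, the $j$-th coordinates of $\overline{\pi}(\lambda^{-\ell}\pi_{\textbf{j}}g_{\textbf{u}})$ and $\overline{\pi}(\lambda^{-\ell}\pi_{\textbf{j}}g_{\textbf{v}})$ at node $k/M$ land in different members of $\mathcal{L}_{C\ell}$; accounting for the $\lambda^{\ell}$ rescaling shifts the relevant level by $[\ell\log_b(1/\lambda)]$, which is absorbed into the same constant $C$ by enlarging it. Therefore $\mathcal{L}_{C\ell}^{\mathcal{X}}(\pi_{\textbf{j}}g_{\textbf{u}})\neq\mathcal{L}_{C\ell}^{\mathcal{X}}(\pi_{\textbf{j}}g_{\textbf{v}})$.

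The main obstacle, and the only genuinely delicate point, is making the passage from ``derivatives differ by $\gtrsim b^{-Q_0\ell}$'' to ``function values at some sampling node $k/M$ differ at a detectable scale'' completely rigorous and uniform in $\ell$, $\textbf{u},\textbf{v},\textbf{j}$. The clean way is to invoke \eqref{eqn:Gamma'diff} and the companion lower bound from Theorem~\ref{thm:estimate} exactly as in \cite[Lemma 6.3]{ren2021dichotomy}: on the dyadic interval $I\in\mathcal{L}_{\ell_0}$ furnished by \eqref{eqn:Gamma1stder2}, the difference of derivatives has constant sign (being bounded below in absolute value), so the difference of the two $\Gamma$'s is strictly monotone on $I$ with total variation $\gtrsim b^{-Q_0\ell}\cdot M^{-1}$; since the endpoints of $I$ are among the nodes $\{0,1/M,\dots,1\}$, the values at these two nodes differ by at least this amount. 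This is precisely the scalar estimate behind \cite[Lemma 6.3]{ren2021dichotomy}, now applied coordinatewise in $j$, and the constant $C$ can be taken to depend only on $Q_0$, $\ell_0$ and $\log_b(1/\lambda)$. I would therefore structure the proof as: (i) reduce to the scalar statement for the distinguished coordinate $j$ via \eqref{eq:Gamma}; (ii) quote Theorem~\ref{thm:estimate} to get both the global lower bound and the localization on $I\in\mathcal{L}_{\ell_0}$; (iii) run the monotonicity/MVT argument on $I$ to separate values at a node; (iv) translate the node-value separation through the definition of $\overline{\pi}$ and $\mathcal{L}_{C\ell}^{\mathcal{X}}$, choosing $C$ to dominate $Q_0+1+\lceil\log_b(1/\lambda)\rceil$.
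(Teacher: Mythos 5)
Your proposal is correct and follows essentially the same route the paper intends (it defers to Theorem \ref{thm:estimate} and the method of \cite[Lemma 6.3]{ren2021dichotomy}): use the transversality lower bound together with \eqref{eqn:Gamma1stder2} to get a level-$\ell_0$ dyadic interval on which the derivative difference has a definite size and sign, integrate to separate the sampled node values of $\Gamma^{\phi_j}_{\textbf{u}^*\textbf{j}}$ and $\Gamma^{\phi_j}_{\textbf{v}^*\textbf{j}}$ at scale $\gtrsim b^{-(Q_0+1)\ell}$ (noting $\Gamma(0)=0$, so the separation always lands on a node actually recorded by $\overline{\pi}$), and then pick $C$ an absolute constant so this exceeds the mesh $b^{-C\ell}$. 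The only small inaccuracy is that $\overline{\pi}$ records the values $\psi(k/M)=\Gamma_{\textbf{u}^*\textbf{j}}(k/M)$ without the factor $\lambda^{\ell}$ (only the translation constant is compared at the finer level $C\ell+[\ell\log_b(1/\lambda)]$), so your extra margin $\lceil\log_b(1/\lambda)\rceil$ in the choice of $C$ is unnecessary but harmless.
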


Following \cite{ren2021dichotomy}, let $\xi$ be a discrete probability measure in the space $\mathcal{X}$ and $\mu$ be a Borel probability measure in $\mathbb{R}^d$. Define $\xi \cdot \mu$ as the Borel probability measure in $\mathbb{R}^d$ such that for any Borel subset $A$ of $\mathbb{R}^d$,
$$
\xi \cdot \mu(A) = \xi \times \mu\left(\left\{(\Psi, x) \in \mathcal{X} \times \mathbb{R}^d \::\: \Psi(x) \in A\right\}\right).
$$
For each $\mathbf{u}\in \Lambda^{n}$, set $\mathbf{u}(0)=\frac{u_1+\ldots+u_{n}b^{n-1}}{b^{n}}$. By directly calculating and $\phi(0)=0$, we have
\begin{equation}\nonumber
g_{\mathbf{u}}(0,0)=\left(\,\mathbf{u}(0),W\left(\mathbf{u}(0)\right)\,\right).
\end{equation}
By utilizing this alongside (\ref{eq:Gamma}) and Theorem \ref{thm:estimate}, we can follow \cite[Lemma 6.4]{ren2021dichotomy} to prove the subsequent lemma.

\begin{lemma}\label{lem:entinceta} 
For any $r > 0$, there exist $p =p(r)> 0,\,p_1=p_1(r) > 0$ such that the following holds if $i$ and $n$ are sufficiently large. Let $\xi$ be a probability measure supported in an element of $\mathcal{L}_i^{\mathcal{X}}$ such that each element in the support of $\xi$ has height $\hat{\ell}$, and suppose
	$$\frac{1}{n} H\big(\xi, \mathcal{L}_{i+n}^{\mathcal{X}}\big) > r.$$
Then
	$$\nu^{\,\hat{i}}\left(\left\{\textbf{u} \in \Lambda^{\hat{i}} \::\: \frac{1}{n} H\big(\xi \cdot \big(\delta_{g_{\textbf{u}}(0,0)}\big), \mathcal{L}_{i+n+\ell}\big) \geq p_1\right\}\right) > p.$$
\end{lemma}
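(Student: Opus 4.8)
The plan is to prove Lemma~\ref{lem:entinceta} by reducing it, via the component-measure machinery of \S\ref{subsec:entro}, to a statement about separation of the pushforwards $\xi\cdot(\delta_{g_{\textbf{u}}(0,0)})$, and then exploiting the transversality estimates of Theorem~\ref{thm:estimate} together with Lemma~\ref{lem:constantC}. First I would unwind the definitions: an element $\Psi\in\mathcal{X}$ has the form $\Psi=\pi_{\textbf{j}}g_{\textbf{v}}$ for some $\textbf{v}\in\Lambda^{\hat\ell}$, and by the transformation equation~\eqref{TransformA} it acts as $\Psi(x,y)=\lambda^{\hat\ell}(y-\Gamma_{\textbf{v}^*\textbf{j}}(x))+\pi_{\textbf{j}}g_{\textbf{v}}(0,0)$. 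Hence $\xi\cdot(\delta_{g_{\textbf{u}}(0,0)})$ is the distribution, as $\Psi$ ranges over $\textrm{supp}(\xi)$, of the point $\Psi(g_{\textbf{u}}(0,0))=\Psi(\textbf{u}(0),W(\textbf{u}(0)))$; and the difference between the images of two atoms $\pi_{\textbf{j}}g_{\textbf{v}}$ and $\pi_{\textbf{j}}g_{\textbf{v}'}$ of $\xi$ under this evaluation equals $\lambda^{\hat\ell}\big(\Gamma_{\textbf{v}'^*\textbf{j}}(\textbf{u}(0))-\Gamma_{\textbf{v}^*\textbf{j}}(\textbf{u}(0))\big)+\big(\pi_{\textbf{j}}g_{\textbf{v}}(0,0)-\pi_{\textbf{j}}g_{\textbf{v}'}(0,0)\big)$. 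Since all atoms lie in one cell of $\mathcal{L}_i^{\mathcal{X}}$, Lemma~\ref{lem:constantR} controls the $b^{-(\hat\ell+i)}$-scale part, and the point is that the \emph{derivative in $x$} of $\Gamma_{\textbf{v}^*\textbf{j}}-\Gamma_{\textbf{v}'^*\textbf{j}}$, being $-(Y_{\textbf{v}^*\textbf{j}}-Y_{\textbf{v}'^*\textbf{j}})$, varies with $\textbf{u}(0)$, so that the separation of images improves for most $\textbf{u}$.

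The key steps, in order, would be: (1) By Lemma~\ref{lem:constantC}, the hypothesis $\frac1nH(\xi,\mathcal{L}_{i+n}^{\mathcal{X}})>r$ forces, after passing to level $i+Cn'$ for suitable $n'$, that $\xi$ charges many atoms $\pi_{\textbf{j}}g_{\textbf{v}}$ with pairwise distinct $\textbf{v}\in\Lambda^{\hat\ell}$, in fact with positive entropy at a definite scale; quantitatively, using~\eqref{eq:upper-boun-entro} and Lemma~\ref{lem:boundspart}, $\xi$ restricted to a subcollection carries $\gtrsim b^{rn}$ atoms that are $\mathcal{L}^{\mathcal{X}}_{i+n}$-separated, hence (Lemma~\ref{lem:constantC}) whose indexing words $\textbf{v}$ differ in a coordinate with index $\lesssim n/C$. (2) Fix such a pair $\pi_{\textbf{j}}g_{\textbf{v}},\pi_{\textbf{j}}g_{\textbf{v}'}$; then $\textbf{v}^*\textbf{j}$ and $\textbf{v}'^*\textbf{j}$ differ at some coordinate $m\lesssim n/C$, so by~\eqref{eqn:Gamma1stder} of Theorem~\ref{thm:estimate}, $\sup_x|(\Gamma^{\phi_j}_{\textbf{v}^*\textbf{j}})'(x)-(\Gamma^{\phi_j}_{\textbf{v}'^*\textbf{j}})'(x)|\ge \rho_0 b^{-Q_0 m}\gtrsim b^{-Q_0 n/C}$ for some coordinate $j$, and~\eqref{eqn:Gamma1stder2} upgrades this to a definite lower bound on an $\mathcal{L}_{\ell_0}$-interval. (3) Consequently, as $\textbf{u}$ ranges over $\Lambda^{\hat i}$ — equivalently $\textbf{u}(0)$ ranges over a $b^{-\hat i}$-net of $[0,1)$, with $\hat i$ large — the values $\Gamma_{\textbf{v}^*\textbf{j}}(\textbf{u}(0))-\Gamma_{\textbf{v}'^*\textbf{j}}(\textbf{u}(0))$ move by at least, say, $c\rho_0 b^{-Q_0 n/C}$ over a positive-proportion subset of the net; rescaling by $\lambda^{\hat\ell}\asymp b^{-\ell}$ (here $\ell=\hat\ell\log_b(1/\lambda)$ up to $O(1)$) and comparing with the $Kb^{-(\hat\ell+i)}$ bound from Lemma~\ref{lem:constantR}, this means that for a positive $\nu^{\hat i}$-fraction of $\textbf{u}$, a positive proportion of atom-pairs of $\xi$ have images under $\Psi\mapsto\Psi(g_{\textbf{u}}(0,0))$ separated at scale $b^{-(i+\ell+n')}$ for a suitable $n'=n'(r)$. (4) Feeding this into the component/entropy estimates — concavity of entropy and the lower bound~\eqref{eq:upper-boun-entro} applied to the $\gtrsim b^{rn}$-atom measure with a definite fraction of pairs separated — yields $\frac1nH(\xi\cdot(\delta_{g_{\textbf{u}}(0,0)}),\mathcal{L}_{i+n+\ell})\ge p_1$ for those $\textbf{u}$, with $p_1=p_1(r)>0$; a Fubini/averaging argument over $\textbf{u}\in\Lambda^{\hat i}$ then produces the set of measure $>p=p(r)$.

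The main obstacle I anticipate is Step~(3): translating a \emph{uniform-in-}$x$ lower bound on $|\Gamma'_{\textbf{v}^*\textbf{j}}-\Gamma'_{\textbf{v}'^*\textbf{j}}|$ (from Theorem~\ref{thm:estimate}) into a statement that the \emph{values} $\Gamma_{\textbf{v}^*\textbf{j}}(\textbf{u}(0))-\Gamma_{\textbf{v}'^*\textbf{j}}(\textbf{u}(0))$, sampled along the $b^{-\hat i}$-adic net of base-points $\textbf{u}(0)$, are genuinely spread out for many $\textbf{u}$ \emph{simultaneously across a positive proportion of the pairs}; one cannot treat each pair independently because the same $\textbf{u}$ must work, so one needs an estimate of the form "a function with derivative of a given size on an interval of definite length must be $\delta$-separated on a definite fraction of a fine net," combined with a union-bound/Chebyshev argument over the pair-set that keeps the proportions uniform. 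This is precisely the kind of estimate carried out in \cite[Lemma~6.4]{ren2021dichotomy} for $d=1$; the $d\ge1$ case follows by working coordinatewise — for each problematic pair one gets a coordinate $j$ in which~\eqref{eqn:Gamma1stder}--\eqref{eqn:Gamma1stder2} hold — and then applying the one-dimensional argument in that coordinate, so no genuinely new difficulty arises beyond bookkeeping. Hence the lemma follows by the same method as \cite[Lemma~6.4]{ren2021dichotomy}, which is why I would only sketch Steps~(1)--(4) and refer to \cite{ren2021dichotomy} for the routine details of the sampling/separation estimate.
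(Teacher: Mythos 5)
Your bottom line coincides with the paper's: the paper's entire proof of Lemma \ref{lem:entinceta} is the remark that, using $g_{\textbf{u}}(0,0)=\left(\textbf{u}(0),W(\textbf{u}(0))\right)$, the coordinatewise decomposition (\ref{eq:Gamma}) and the transversality estimates of Theorem \ref{thm:estimate}, one can follow \cite[Lemma 6.4]{ren2021dichotomy}; your plan of running the $d=1$ sampling/separation argument coordinatewise and deferring its details to \cite{ren2021dichotomy} is exactly that. Be aware, though, that the reconstruction you sketch in Steps (1)--(3) is not faithful and would fail if made precise. Lemma \ref{lem:constantC} says that distinct words of a given length land in distinct cells of $\mathcal{L}^{\mathcal{X}}$ at a proportional level; it does not give the converse you invoke, namely that atoms lying in distinct cells of $\mathcal{L}^{\mathcal{X}}_{i+n}$ have indexing words disagreeing at a coordinate $\lesssim n/C$. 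In fact the opposite happens in the relevant regime: since all atoms lie in a single cell of $\mathcal{L}^{\mathcal{X}}_{i}$, the estimate (\ref{eqn:Gamma'diff}) bounds the derivative differences by $O(b^{-i})$, and comparing with (\ref{eqn:Gamma1stder}) forces every coordinate of disagreement of $\textbf{v}^{*}\textbf{j}$ and $\textbf{v}'^{*}\textbf{j}$ to be at least $(i-O(1))/Q_{0}$, which exceeds $n/C$ as soon as $i\gg n$ (the typical case in the application, where $i$ ranges up to $C\ell$ with $\ell\gg n$). Consequently the pairwise lower bound $\rho_{0}b^{-Q_{0}n/C}$ of your Step (2) is unavailable; moreover, the separation at level $i+n$ may sit entirely in the translation coordinate $\pi_{\textbf{j}}g_{\textbf{v}}(0,0)$ of the parametrization of $\mathcal{X}$, and for such pairs the separation of the images $\Psi(g_{\textbf{u}}(0,0))$ comes from the translation part rather than from Theorem \ref{thm:estimate}. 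So the argument cannot be run as a pairwise worst-case transversality estimate plus a union bound; the proof in \cite[Lemma 6.4]{ren2021dichotomy} has to use the entropy hypothesis itself and treat the function coordinates and the translation coordinate of $\mathcal{L}^{\mathcal{X}}$ separately. Since you ultimately cite that lemma for the details, your proposal still lands on the paper's proof, but the intermediate quantitative steps as written are not the ones that make it work.
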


\subsection{Positive local entropy for $\theta_{\ell}^{\,\textbf{j}}$}
We need to introduce a discrete measure 
\[
\theta^{\,\textbf{j}}_{\ell}=\frac{1}{b^{\hat{\ell}}}\sum_{\textbf{i}\in \varLambda^{\hat{\ell}}} \delta_{\pi_{\textbf{j}}g_{\textbf{i}}}
\in\boldsymbol{\mathscr{P}}(\mathcal{X})
\]
for each $\ell\in \mathbb{Z}_+$ and analyze the entropy of $\theta^{\,\textbf{j}}_{\ell}$ with respect to the partitions $\mathcal{L}_i^{\mathcal{X}}$ and also the entropy of 
\[
\pi_{\,\textbf{j}}\mu=\theta_{\ell}^{\textbf{j}}.\mu
\]
with respect to the partitions $\mathcal{L}_i$.
We start with analyzing the entropy of $\theta_{\ell}^{\,\textbf{j}}$ with respect to the partitions $\mathcal{L}_i^{\mathcal{X}}$.

\begin{lemma}\label{lem:thetanL0} 
	For $\nu^{\,\mathbb{Z}_+}$-a.e. $\textbf{j}\in \Sigma$, 
\begin{equation}\label{eq:Q.1}
\lim_{\ell\to\infty}\frac{1}{\ell}{H}\left(\theta^{\,\textbf{j}}_{\ell},\mathcal{L}_0^{\mathcal{X}}\right)=\lim_{\ell\to\infty} \frac{1}{\ell} H(\pi_{\textbf{j}}\mu, \mathcal{L}_{\ell})=\alpha.
\end{equation}
\end{lemma}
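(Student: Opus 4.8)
The plan is to prove the two equalities in \eqref{eq:Q.1} separately, and to reduce the first equality (about the symbolic measure $\theta^{\,\textbf{j}}_\ell$ on $\mathcal{X}$) to the second (about $\pi_{\textbf{j}}\mu$ on $\mathbb{R}^d$) together with standard entropy bookkeeping. For the second equality, $\frac1\ell H(\pi_{\textbf{j}}\mu,\mathcal{L}_\ell)\to\alpha$, I would invoke Proposition~\ref{prop:Young}: by Theorem~\ref{thm:ledrappier}(2), for $\nu^{\,\mathbb{Z}_+}$-a.e.\ $\textbf{j}$ the measure $\pi_{\textbf{j}}\mu$ is exact dimensional with dimension $\alpha$, and since it has compact support (it lives on the bounded set $\pi_{\textbf{j}}(\text{graph}W)$) Proposition~\ref{prop:Young} gives $\lim_\ell \frac1\ell H(\pi_{\textbf{j}}\mu,\mathcal{L}_\ell)=\alpha$ for every such $\textbf{j}$. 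This disposes of the right-hand equality.

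For the left-hand equality I would compare $H(\theta^{\,\textbf{j}}_\ell,\mathcal{L}_0^{\mathcal{X}})$ with $H(\pi_{\textbf{j}}\mu,\mathcal{L}_{\hat\ell})$ (recall $\hat\ell$ is defined by \eqref{eqn:n'}, so $\frac{\hat\ell}{\ell}\to\log_\lambda^{-1}b$ is \emph{not} what we want --- rather one must track the normalization carefully; here the height of every atom of $\theta^{\,\textbf{j}}_\ell$ is $\hat\ell$ and the relevant $b$-adic scale in the last coordinate of $\overline{\pi}$ is level $[\hat\ell\log_b(1/\lambda)]\approx \ell$, which is why the $\frac1\ell$ normalization matches). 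Concretely: since $\theta^{\,\textbf{j}}_\ell.\mu=\pi_{\textbf{j}}\mu$, pushing the partition $\mathcal{L}_0^{\mathcal{X}}$ forward under the evaluation map $(\Psi,x)\mapsto\Psi(x)$ relates $H(\theta^{\,\textbf{j}}_\ell,\mathcal{L}_0^{\mathcal{X}})$ to $H(\pi_{\textbf{j}}\mu,\mathcal{L}_\ell)$ up to controlled errors: one inequality uses that each atom $\pi_{\textbf{j}}g_{\textbf{i}}$ in the support of $\theta^{\,\textbf{j}}_\ell$ with a fixed $\mathcal{L}_0^{\mathcal{X}}$-address maps $[0,1)\times\R^d$ into a tube of diameter $O(b^{-\ell})$ transverse to the fibering (by Lemma~\ref{lem:constantR} with $i=0$, or directly from Theorem~\ref{thm:estimate} and Claim~\ref{cla:minus}), so distinct $\mathcal{L}_\ell$-cells in the image force distinct $\mathcal{L}_0^{\mathcal{X}}$-cells; the reverse inequality uses \eqref{eq:upper-boun-entro} together with the fact that each $\mathcal{L}_0^{\mathcal{X}}$-atom, after evaluation, has image of diameter $O(b^{-\ell})$ and hence meets $O(1)$ cells of $\mathcal{L}_\ell$, combined with concavity/convexity of entropy to pass between $\sum_{\textbf{i}}\delta_{\pi_{\textbf{j}}g_{\textbf{i}}}.\mu$ and the individual $g_{\textbf{i}}\mu=\pi_{\textbf{j}}g_{\textbf{i}}.\mu$. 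Dividing by $\ell$ and letting $\ell\to\infty$, both error terms are $o(1)$, so $\frac1\ell H(\theta^{\,\textbf{j}}_\ell,\mathcal{L}_0^{\mathcal{X}})$ and $\frac1\ell H(\pi_{\textbf{j}}\mu,\mathcal{L}_\ell)$ have the same limit, namely $\alpha$.

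The main obstacle I anticipate is the bookkeeping in the comparison step: the partition $\mathcal{L}_0^{\mathcal{X}}$ only records the height $t=\hat\ell$ and the last coordinate $c=\pi_{\textbf{j}}g_{\textbf{i}}(0,0)$ at $b$-adic level $[\hat\ell\log_b(1/\lambda)]$, and one must check both that this level is $\ell+O(1)$ (immediate from \eqref{eqn:n'}, since $\lambda^{\hat\ell}\asymp b^{-\ell}$ gives $[\hat\ell\log_b(1/\lambda)]=\ell+O(1)$), and that the evaluation map $\Psi\mapsto\Psi(x)$ does not collapse too much entropy --- i.e.\ that recording $\pi_{\textbf{j}}g_{\textbf{i}}(0,0)$ up to scale $b^{-\ell}$ is, up to an $O(1)$ additive error and transversality, the same as recording the whole image measure $g_{\textbf{i}}\mu$ up to scale $b^{-\ell}$. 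This is exactly where condition (H), via Theorem~\ref{thm:estimate}, enters: transversality guarantees that the affine maps $\pi_{\textbf{j}}g_{\textbf{i}}$ sharing an $\mathcal{L}_0^{\mathcal{X}}$-cell are genuinely $O(b^{-\ell})$-close as maps on $[0,1)\times\R^d$, not merely close at the basepoint. Once that is in hand, the remaining inequalities are routine applications of concavity, convexity \eqref{lem:convexity}, the commensurability estimate \eqref{eq:comparable}, and \eqref{eq:ero-minus}. I would also remark that the common value is $\alpha$ rather than merely "some limit": the existence of the limit on the right is Proposition~\ref{prop:Young}, and its value is $\alpha$ by Theorem~\ref{thm:ledrappier}.
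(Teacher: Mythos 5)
Your route is, in substance, the paper's: the right-hand equality is immediate from Theorem \ref{thm:ledrappier}(2) together with Proposition \ref{prop:Young}, and the left-hand one comes from observing that on $\operatorname{supp}\theta^{\,\textbf{j}}_{\ell}$ the height is constantly $\hat{\ell}$ and $[\hat{\ell}\log_b(1/\lambda)]=\ell$ by \eqref{eqn:n'}, so $H(\theta^{\,\textbf{j}}_{\ell},\mathcal{L}_0^{\mathcal{X}})$ is exactly the $\mathcal{L}_{\ell}$-entropy of the discrete measure of translation parts $\frac{1}{b^{\hat{\ell}}}\sum_{\textbf{i}\in\Lambda^{\hat{\ell}}}\delta_{\pi_{\textbf{j}}g_{\textbf{i}}(0,0)}$, which one then compares with $H(\pi_{\textbf{j}}\mu,\mathcal{L}_{\ell})$ up to an additive $O(1)$. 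The paper does precisely this via the coding maps $\pi_{\ell}(\textbf{i})=g_{i_1\cdots i_{\hat{\ell}}}(0,0)$, the estimate $\|\pi_{\ell}-\pi\|_{\infty}=O(b^{-\ell})$, and the argument of \cite[Lemma 7.1]{ren2021dichotomy}; your entropy bookkeeping (concavity/convexity plus \eqref{eq:ero-minus}) is a correct way to carry out that comparison.

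One correction, though. The step where you say condition (H) ``enters'' is a misattribution, and the auxiliary claim you make there is false as written: the image of all of $[0,1)\times\mathbb{R}^d$ under $\pi_{\textbf{j}}g_{\textbf{i}}$ is all of $\mathbb{R}^d$, not a set of diameter $O(b^{-\ell})$. What is true, and all that the comparison needs, is that the image of $\operatorname{supp}\mu=\overline{\text{graph}\,W}$ under $\pi_{\textbf{j}}g_{\textbf{i}}$ has diameter $O(\lambda^{\hat{\ell}})=O(b^{-\ell})$, and that two maps lying in the same $\mathcal{L}_0^{\mathcal{X}}$-cell differ by $O(b^{-\ell})$ on $\operatorname{supp}\mu$; both follow at once from \eqref{TransformA}, $\pi_{\textbf{j}}g_{\textbf{i}}=\lambda^{\hat{\ell}}\pi_{\textbf{i}^*\textbf{j}}+\pi_{\textbf{j}}g_{\textbf{i}}(0,0)$, together with the uniform boundedness of $W$ and of the functions $\Gamma_{\textbf{u}}$ on $[0,1]$. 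No transversality is involved: Theorem \ref{thm:estimate} and Lemma \ref{lem:constantR} (which is stated only for $i\ge 1$, exactly because its point is to improve the trivial bound $O(b^{-\ell})$ to $O(b^{-(\ell+i)})$) play no role at the $\mathcal{L}_0^{\mathcal{X}}$ level, and this lemma does not use condition (H) at all. With that justification repaired, your proof is complete and coincides with the paper's.
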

\begin{proof}
Define $\pi_{\ell}, \pi: \Sigma\;\to\mathbb{R}^{d+1}$, by $\pi_{\ell}(\textbf{i})= g_{i_1i_2\cdots i_{\hat{\ell}}}(0,0)$ and $\pi(\textbf{i})=\lim_{\ell\to\infty} \pi_{\ell}(\textbf{i})$.
Direct calculation shows that $\|\pi_{\ell}-\pi\|_{\infty}=O(b^{-\ell})$. By utilizing this and employing the same methods as in \cite[Lemma 7.1]{ren2021dichotomy}, the claim holds.
\end{proof}

\begin{lemma}\label{lem:thetajLcn}
	There exists $C\in \mathbb{Z}_+$ such that for each $\textbf{j}\in \Sigma$, we have 	$$\lim_{\ell\to\infty}\frac{1}{\ell}H\left(\theta^{\,\textbf{j}}_{\ell},\mathcal{L}_{C \ell}^{\mathcal{X}}\right)=\frac{\log b}{\log\lambda^{-1}}.$$
\end{lemma}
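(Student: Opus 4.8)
The plan is to sandwich $\frac1\ell H(\theta^{\,\textbf{j}}_{\ell},\mathcal{L}_{C\ell}^{\mathcal{X}})$ between a trivial counting upper bound and a lower bound supplied by the separation Lemma \ref{lem:constantC}, both of which converge to $\frac{\log b}{\log\lambda^{-1}}$. First I would record, directly from the defining inequality $\lambda^{\hat{\ell}}\le b^{-\ell}<\lambda^{\hat{\ell}-1}$ in (\ref{eqn:n'}), that
$$\ell\log_{\lambda^{-1}}b\le\hat{\ell}<\ell\log_{\lambda^{-1}}b+1,$$
so that $\hat{\ell}/\ell\to\log_{\lambda^{-1}}b=\frac{\log b}{\log\lambda^{-1}}$. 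Next, since $\theta^{\,\textbf{j}}_{\ell}$ is supported on the at most $b^{\hat{\ell}}$ maps $\pi_{\textbf{j}}g_{\textbf{i}}$, $\textbf{i}\in\varLambda^{\hat{\ell}}$, the bound (\ref{eq:upper-boun-entro}) gives $H(\theta^{\,\textbf{j}}_{\ell},\mathcal{Q})\le\log_b b^{\hat{\ell}}=\hat{\ell}$ for \emph{any} partition $\mathcal{Q}$ of $\mathcal{X}$; in particular $\frac1\ell H(\theta^{\,\textbf{j}}_{\ell},\mathcal{L}_{C\ell}^{\mathcal{X}})\le\hat{\ell}/\ell$, regardless of $C$ and $\textbf{j}$.

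For the matching lower bound, let $C_0\in\mathbb{Z}_+$ be the constant of Lemma \ref{lem:constantC} and fix an integer $C>C_0\log_{\lambda^{-1}}b$; this $C$ (which does not depend on $\textbf{j}$) is the one asserted in the statement. Applying Lemma \ref{lem:constantC} with its length parameter equal to $\hat{\ell}$ shows that for any two distinct $\textbf{u},\textbf{v}\in\varLambda^{\hat{\ell}}$ and any $\textbf{j}\in\Sigma$, the maps $\pi_{\textbf{j}}g_{\textbf{u}}$ and $\pi_{\textbf{j}}g_{\textbf{v}}$ belong to distinct elements of $\mathcal{L}_{C_0\hat{\ell}}^{\mathcal{X}}$; in particular these $b^{\hat{\ell}}$ maps are pairwise distinct and each has $\theta^{\,\textbf{j}}_{\ell}$-mass exactly $b^{-\hat{\ell}}$, so $H(\theta^{\,\textbf{j}}_{\ell},\mathcal{L}_{C_0\hat{\ell}}^{\mathcal{X}})=\log_b b^{\hat{\ell}}=\hat{\ell}$. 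From the displayed bound on $\hat{\ell}$ we get $C\ell-C_0\hat{\ell}>(C-C_0\log_{\lambda^{-1}}b)\ell-C_0\to\infty$, so $C\ell\ge C_0\hat{\ell}$ for all large $\ell$; since the partitions $(\mathcal{L}_i^{\mathcal{X}})_{i\ge0}$ are nested (Lemma \ref{lem:boundspart}), $\mathcal{L}_{C\ell}^{\mathcal{X}}$ then refines $\mathcal{L}_{C_0\hat{\ell}}^{\mathcal{X}}$, whence $H(\theta^{\,\textbf{j}}_{\ell},\mathcal{L}_{C\ell}^{\mathcal{X}})\ge H(\theta^{\,\textbf{j}}_{\ell},\mathcal{L}_{C_0\hat{\ell}}^{\mathcal{X}})=\hat{\ell}$. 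Combining this with the upper bound gives $H(\theta^{\,\textbf{j}}_{\ell},\mathcal{L}_{C\ell}^{\mathcal{X}})=\hat{\ell}$ for all large $\ell$, and dividing by $\ell$ and letting $\ell\to\infty$ yields the claim, uniformly in $\textbf{j}\in\Sigma$.

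I do not expect a genuine obstacle in this argument; it is essentially the combination of the cardinality bound with Lemma \ref{lem:constantC}. The two points that require care are ensuring that the resolution $C\ell$ of the target partition eventually exceeds the resolution $C_0\hat{\ell}$ at which Lemma \ref{lem:constantC} guarantees separation — which is exactly why $C$ must be chosen larger than $C_0\log_{\lambda^{-1}}b$ rather than taken equal to $C_0$ — and invoking the nestedness of the partitions $\mathcal{L}_i^{\mathcal{X}}$ so that passing to the finer partition cannot lower the entropy.
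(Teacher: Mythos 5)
Your proof is correct and follows essentially the same route as the paper: Lemma \ref{lem:constantC} forces the $b^{\hat{\ell}}$ maps $\pi_{\textbf{j}}g_{\textbf{i}}$, $\textbf{i}\in\varLambda^{\hat{\ell}}$, into distinct cells, so the entropy equals $\hat{\ell}$ exactly, and $\hat{\ell}/\ell\to\log_{\lambda^{-1}}b$ gives the limit. The only difference is that you make explicit the enlargement of the constant (choosing $C>C_0\log_{\lambda^{-1}}b$ so that $\mathcal{L}_{C\ell}^{\mathcal{X}}$ eventually refines $\mathcal{L}_{C_0\hat{\ell}}^{\mathcal{X}}$), a step the paper's two-line proof absorbs implicitly into its choice of $C$.
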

\begin{proof}
	By Lemma~\ref{lem:constantC}, there exists $C\in \mathbb{Z}_+$ such that for all $\ell\ge 1$ and any two distinct $\textbf{i}, \textbf{k}\in \varLambda^{\hat{\ell}}$, $\pi_{\textbf{j}} g_{\textbf{i}}$ and $\pi_{\textbf{j}}g_{\textbf{k}}$ lie in distinct elements of $\mathcal{L}_{C\ell}^{\mathcal{X}}$. Therefore, $H(\theta^{\,\textbf{j}}_{\ell}, \mathcal{L}_{C\ell}^{\mathcal{X}})=\hat{\ell}\log b$. Since $\lim_{\ell\to\infty} \ell/\hat{\ell}=\log_b \lambda^{-1}$, the lemma follows.
\end{proof}

For the remainder, let $\mathbf{j} \in \Sigma$ be chosen to satisfy the conclusions of Lemmas \ref{lem:thetanL0} and \ref{lem:thetajLcn}$,$ and define $\theta_{\ell} = \theta_{\ell}^{\,\mathbf{j}}$.


\subsection{Decomposition of entropy}
In the following lemma, we decompose the entropy of $\theta_{\ell}$ and $\pi_{\mathbf{j}}\mu$ into small scales.

\begin{lemma}\label{lem:thetadec}
	 For any $\tau > 0$, there exists $C'(\tau) > 0$ such that if positive integers $n$ and $\ell$ satisfy $\ell > C'(\tau)n$, then
	\begin{equation}\label{eqn:thetadec}
	\frac{1}{C\ell}H\left(\theta_{\ell}, \mathcal{L}_{C\ell}^{\mathcal{X}}\mid\mathcal{L}_0^{\mathcal{X}}\right)\le \mathbb{E}_{0\le i<C\ell}^{\theta_{\ell}} \left[\frac{1}{n}H\left((\theta_{\ell})_{\Psi,\,i}, \mathcal{L}_{i+n}^{\mathcal{X}}\right)\right]+\tau,
	\end{equation}
	\begin{equation}\label{eqn:thetamudec}
	\frac{1}{C\ell} H\left(\pi_{\textbf{j}}\mu, \mathcal{L}_{(C+1)\ell}\mid\mathcal{L}_{\ell}\right)\ge \mathbb{E}^{\theta_{\ell}}_{0\le i<C\ell}\left[\frac1{b^{\hat{i}}}\sum_{\textbf{u}\in\Lambda^{\hat{i}} }\frac{1}{n} H\left((\theta_{\ell})_{\Psi,\,i}.g_{\textbf{u}}\mu, \mathcal{L}_{i+n+\ell}\mid\mathcal{L}_{i+\ell}\right)\right]-\tau.
	\end{equation}
\end{lemma}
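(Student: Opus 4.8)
The plan is to derive both estimates from the chain rule for conditional entropy combined with a sliding--window (block averaging) argument --- the same device underlying Lemma~\ref{lem:minus} --- the hypothesis $\ell>C'(\tau)n$ being used only to make the boundary contributions negligible; this is the $d\ge1$ counterpart of the corresponding one--dimensional lemma in \cite{ren2021dichotomy}.

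For (\ref{eqn:thetadec}): since the partitions $\mathcal{L}_i^{\mathcal{X}}$ are nested, $H(\theta_{\ell},\mathcal{L}_{C\ell}^{\mathcal{X}}\mid\mathcal{L}_0^{\mathcal{X}})=\sum_{j=0}^{C\ell-1}H(\theta_{\ell},\mathcal{L}_{j+1}^{\mathcal{X}}\mid\mathcal{L}_{j}^{\mathcal{X}})$, and by Lemma~\ref{lem:boundspart} together with (\ref{eq:upper-boun-entro}) every increment is $\le\log_b A$. I would compare this telescoping sum with the windowed average $\tfrac1n\sum_{i=0}^{C\ell-n}H(\theta_{\ell},\mathcal{L}_{i+n}^{\mathcal{X}}\mid\mathcal{L}_{i}^{\mathcal{X}})$ via the standard multiplicity count: in the average each increment $H(\theta_{\ell},\mathcal{L}_{j+1}^{\mathcal{X}}\mid\mathcal{L}_{j}^{\mathcal{X}})$ carries a weight in $[0,1]$, equal to $1$ except for at most $2n$ boundary indices $j$; hence
\[
H(\theta_{\ell},\mathcal{L}_{C\ell}^{\mathcal{X}}\mid\mathcal{L}_0^{\mathcal{X}})\le\frac1n\sum_{i=0}^{C\ell-1}H(\theta_{\ell},\mathcal{L}_{i+n}^{\mathcal{X}}\mid\mathcal{L}_{i}^{\mathcal{X}})+2n\log_b A,
\]
where I also enlarged the range of $i$, which only adds nonnegative terms. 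Dividing by $C\ell$ and rewriting $\tfrac1nH(\theta_{\ell},\mathcal{L}_{i+n}^{\mathcal{X}}\mid\mathcal{L}_{i}^{\mathcal{X}})=\mathbb{E}^{\theta_{\ell}}\big(\tfrac1nH((\theta_{\ell})_{\Psi,\,i},\mathcal{L}_{i+n}^{\mathcal{X}})\big)$ gives (\ref{eqn:thetadec}) up to an error $O(n/(C\ell))$, which is $<\tau$ once $C'(\tau)$ is chosen large.

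For (\ref{eqn:thetamudec}) I would run the same window argument for $\pi_{\textbf{j}}\mu$ over the scales $\ell,\ell+1,\dots,(C+1)\ell$; the chain rule and the multiplicity count (now in the ``free'' direction, so Lemma~\ref{lem:boundspart} is not needed) give $H(\pi_{\textbf{j}}\mu,\mathcal{L}_{(C+1)\ell}\mid\mathcal{L}_{\ell})\ge\tfrac1n\sum_{i=0}^{C\ell-n}H(\pi_{\textbf{j}}\mu,\mathcal{L}_{i+\ell+n}\mid\mathcal{L}_{i+\ell})$, and since each $b$-adic refinement raises the entropy by at most $d$, dropping the last $n-1$ windows costs only $O(nd/(C\ell))$. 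It then remains to bound each $H(\pi_{\textbf{j}}\mu,\mathcal{L}_{i+\ell+n}\mid\mathcal{L}_{i+\ell})$ below by the component quantity, which is two applications of concavity of conditional entropy. First, since $\theta_{\ell}.\mu=\pi_{\textbf{j}}\mu$ (by the definition of $\theta_{\ell}$ and of ``$.$'', cf.\ (\ref{eq:mu-deco})) and $\xi\mapsto\xi.\mu$ is affine, $\pi_{\textbf{j}}\mu=\mathbb{E}^{\theta_{\ell}}\big((\theta_{\ell})_{\Psi,\,i}.\mu\big)$, so concavity yields $H(\pi_{\textbf{j}}\mu,\mathcal{L}_{i+\ell+n}\mid\mathcal{L}_{i+\ell})\ge\mathbb{E}^{\theta_{\ell}}\big(H((\theta_{\ell})_{\Psi,\,i}.\mu,\mathcal{L}_{i+\ell+n}\mid\mathcal{L}_{i+\ell})\big)$. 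Second, (\ref{eq:mu-deco}) with $n=\hat{i}$ gives $(\theta_{\ell})_{\Psi,\,i}.\mu=b^{-\hat{i}}\sum_{\textbf{u}\in\Lambda^{\hat{i}}}(\theta_{\ell})_{\Psi,\,i}.(g_{\textbf{u}}\mu)$, so concavity again yields $H((\theta_{\ell})_{\Psi,\,i}.\mu,\mathcal{L}_{i+\ell+n}\mid\mathcal{L}_{i+\ell})\ge b^{-\hat{i}}\sum_{\textbf{u}\in\Lambda^{\hat{i}}}H((\theta_{\ell})_{\Psi,\,i}.(g_{\textbf{u}}\mu),\mathcal{L}_{i+\ell+n}\mid\mathcal{L}_{i+\ell})$. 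Inserting this into the window estimate, dividing by $n$ and averaging over $0\le i<C\ell$ produces (\ref{eqn:thetamudec}) with total error $O(nd/(C\ell))$, again $<\tau$ for $C'(\tau)$ large enough.

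I do not expect a genuine obstacle --- the lemma is essentially bookkeeping. The one point to watch is that the two estimates exploit opposite directions of the window comparison: (\ref{eqn:thetadec}) requires an \emph{upper} bound on the coarse--scale entropy, which is why the bounded--multiplicity property of $\mathcal{L}_i^{\mathcal{X}}$ (Lemma~\ref{lem:boundspart}) must be invoked to control the $O(n)$ boundary increments, whereas (\ref{eqn:thetamudec}) uses the cheap lower bound and the splitting into components is pure concavity. It is also worth keeping in mind that $(\theta_{\ell})_{\Psi,\,i}.(g_{\textbf{u}}\mu)$ is supported in a set of diameter $O(b^{-(i+\ell)})$ --- which explains why $i+\ell$ and $i+\ell+n$, rather than $i$ and $i+n$, are the right scales there --- although this fact is not actually needed for the inequality (\ref{eqn:thetamudec}) itself.
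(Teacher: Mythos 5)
Your proposal is correct and follows essentially the same route as the paper: the paper proves (\ref{eqn:thetadec}) and the intermediate bound with $(\theta_{\ell})_{\Psi,\,i}.\mu$ by invoking Lemma~\ref{lem:boundspart} and the sliding-window argument of \cite[Lemma 3.4]{hochman2014self} (which is exactly your multiplicity count with the $O(n)$ boundary increments bounded by $\log_b A$), and then obtains (\ref{eqn:thetamudec}) from the decomposition $(\theta_{\ell})_{\Psi,\,i}.\mu=\frac1{b^{\hat{i}}}\sum_{\textbf{u}\in\Lambda^{\hat{i}}}(\theta_{\ell})_{\Psi,\,i}.g_{\textbf{u}}\mu$ and concavity of conditional entropy, just as you do. You have merely written out explicitly the chain-rule/window bookkeeping and the concavity step over $\mathcal{L}_i^{\mathcal{X}}$-components that the paper delegates to the citation.
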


\begin{proof}
Utilizing Lemma~\ref{lem:boundspart} and following the similar argument as in \cite[Lemma 3.4]{hochman2014self}, we can prove (\ref{eqn:thetadec}) and the subsequent results when $\ell/n$ is sufficiently large:
\begin{equation}\label{eq:E.4.1}
\frac{1}{C\ell} H\left(\pi_{\textbf{j}}\mu, \mathcal{L}_{(C+1)\ell}\mid\mathcal{L}_{\ell}\right)\ge \mathbb{E}^{\theta_{\ell}}_{0\le i<C\ell}\left[\frac{1}{n} H\left((\theta_{\ell})_{\Psi,\,i}.\mu, \mathcal{L}_{i+n+\ell}\mid\mathcal{L}_{i+\ell}\right)\right]-\tau.
\end{equation}
Note that $(\theta_{\ell})_{\Psi,\,i}.\mu= \frac1{b^{\hat{i}}}\sum_{\textbf{u}\in\Lambda^{\hat{i}} }(\theta_{\ell})_{\Psi,\,i}.g_{\textbf{u}}\mu$. 
By the concavity, we have
$$    H\left((\theta_{\ell})_{\Psi,\,i}.\mu, \mathcal{L}_{i+n+\ell}\mid\mathcal{L}_{i+\ell}\right)\ge \frac1{b^{\hat{i}}}\sum_{\textbf{u}\in\Lambda^{\hat{i}} } H\left((\theta_{\ell})_{\Psi,\,i}.g_{\textbf{u}}\mu, \mathcal{L}_{i+n+\ell}\mid\mathcal{L}_{i+\ell}\right).
 $$
Combining this with (\ref{eq:E.4.1}),	 
thus (\ref{eqn:thetamudec}) holds.
\end{proof}

Let $\ell \in \mathbb{Z}+$ and $i \in \mathbb{N}$ be given. Consider $\Psi \in \mathcal{X}$ such that $\theta_{\ell}(\mathcal{L}_i^{\mathcal{X}}(\Psi)) > 0$. For any $n \in \mathbb{Z}_+$, we define the function $Z_n(\Psi, i,\ell)$ as follows:
\begin{equation}\label{def:Z_n}
Z_{n}(\Psi, i,\ell) = \frac{1}{b^{\hat{i}}} \sum_{\textbf{u}\in\Lambda^{\hat{i}}} \frac{1}{n} H\left( \left( (\theta_{\ell})_{\Psi,\,i} \cdot \delta_{g_{\textbf{u}}(0,0)} \right) \ast \left( \Psi g_{\textbf{u}}\mu \right), \mathcal{L}_{i+n+\ell} \right).
\end{equation}

\begin{lemma}\label{lem:dot2convolution}
	There is a constant $C'>0$ and for each $\tau>0$ there exists $K(\tau)$ such that when $n\ge K(\tau)$, $i\ge C'n$, the following holds for each $\Psi\in\mathcal{X}$ such that $\theta_{\ell}(\mathcal{L}_i^{\mathcal{X}}(\Psi))>0$:
	$$\left|\frac1{b^{\hat{i}}}\sum_{\textbf{u}\in\Lambda^{\hat{i}}}\frac{1}{n} H( (\theta_{\ell})_{\Psi,\,i}. g_{\textbf{u}}\mu,\mathcal{L}_{i+n+\ell}\mid\mathcal{L}_{i+\ell}) - Z_n(\Psi, i,\ell)\right| <\tau.$$
\end{lemma}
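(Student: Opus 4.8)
The plan is to establish, for every fixed $\textbf{u}\in\Lambda^{\hat i}$, the pointwise estimate
\[
\left|\frac1n H\big((\theta_{\ell})_{\Psi,\,i}.g_{\textbf{u}}\mu,\mathcal{L}_{i+n+\ell}\mid\mathcal{L}_{i+\ell}\big)-\frac1n H\big(((\theta_{\ell})_{\Psi,\,i}.\delta_{g_{\textbf{u}}(0,0)})*(\Psi g_{\textbf{u}}\mu),\mathcal{L}_{i+n+\ell}\big)\right|=O(1/n),
\]
with the implied constant independent of $\textbf{u},\Psi,\ell$; averaging over $\textbf{u}$ and taking $n\ge K(\tau)$ then yields the lemma. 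There are two reductions. The first is to remove the conditioning on $\mathcal{L}_{i+\ell}$: by Lemma~\ref{lem:constantR} every $\Phi$ in $\mathrm{supp}\,(\theta_{\ell})_{\Psi,\,i}$ satisfies $|\Phi(p)-\Psi(p)|\le Kb^{-(i+\ell)}$ for all $p\in[0,1)\times\mathbb{R}^d$, and $\Phi(\mathrm{supp}\,g_{\textbf{u}}\mu)=\pi_{\textbf{j}}g_{\textbf{v}\textbf{u}}(\mathrm{graph}\,W)$ has diameter $O(\lambda^{\hat\ell+\hat i})=O(b^{-(i+\ell)})$; hence $\mathrm{supp}\big((\theta_{\ell})_{\Psi,\,i}.g_{\textbf{u}}\mu\big)$ has diameter $O(b^{-(i+\ell)})$ and meets only $O(1)$ atoms of $\mathcal{L}_{i+\ell}$, so $H(\,\cdot\,,\mathcal{L}_{i+n+\ell}\mid\mathcal{L}_{i+\ell})=H(\,\cdot\,,\mathcal{L}_{i+n+\ell})-O(1)$ for that measure.

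The second reduction is the key point: on the tiny set $\mathrm{supp}\,g_{\textbf{u}}\mu$ each $\Phi$ agrees with $\Psi$ up to a single translation, at precision $b^{-(i+n+\ell)}$. Write $\Phi=\pi_{\textbf{j}}g_{\textbf{v}}$, $\Psi=\pi_{\textbf{j}}g_{\textbf{w}}$ with $\textbf{v},\textbf{w}\in\Lambda^{\hat\ell}$ in the same cell. Using $\pi_{\textbf{j}}g_{\textbf{i}}(x,y)=\lambda^{|\textbf{i}|}(y-\Gamma_{{\textbf{i}}^*\textbf{j}}(x))+\pi_{\textbf{j}}g_{\textbf{i}}(0,0)$, for $z=(\tilde x,W(\tilde x))\in\mathrm{supp}\,g_{\textbf{u}}\mu$ (so $|\tilde x-\textbf{u}(0)|\le b^{-\hat i}$) we get $\Phi(z)-\Psi(z)=\lambda^{\hat\ell}\big(\Gamma_{\textbf{w}^*\textbf{j}}(\tilde x)-\Gamma_{\textbf{v}^*\textbf{j}}(\tilde x)\big)+\big(\pi_{\textbf{j}}g_{\textbf{v}}(0,0)-\pi_{\textbf{j}}g_{\textbf{w}}(0,0)\big)$, and the same expression evaluated at $\tilde x=\textbf{u}(0)$ equals $\Phi(g_{\textbf{u}}(0,0))-\Psi(g_{\textbf{u}}(0,0))$. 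Subtracting, the mean value theorem together with the derivative bound \eqref{eqn:Gamma'diff} (a consequence of Theorem~\ref{thm:estimate}) gives
\[
\big|\,\Phi(z)-\Psi(z)-\big(\Phi(g_{\textbf{u}}(0,0))-\Psi(g_{\textbf{u}}(0,0))\big)\,\big|\le \lambda^{\hat\ell}\,b^{-\hat i}\cdot 2\rho_0^{-1}Mb^{-i}\le b^{-(i+n+\ell)},
\]
the last step holding since $\hat i>i\ge C'n$ forces $b^{\hat i}\ge 2\rho_0^{-1}Mb^{n}$ once $C'$ is a suitable absolute constant (e.g.\ $C'=2$) and $n\ge K(\tau)$. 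Equivalently, $\Phi(z)=\Psi(z)+\big(\Phi(g_{\textbf{u}}(0,0))-\Psi(g_{\textbf{u}}(0,0))\big)+\mathrm{err}(\Phi,z)$ with $\sup|\mathrm{err}|\le b^{-(i+n+\ell)}$.

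To pass to the averaged measures I would view both of them as push-forwards of the product measure $\omega=(\theta_{\ell})_{\Psi,\,i}\times g_{\textbf{u}}\mu$ on $\mathcal{X}\times([0,1)\times\mathbb{R}^d)$, under $E(\Phi,z)=\Phi(z)$ and $\widetilde E(\Phi,z)=\Psi(z)+\Phi(g_{\textbf{u}}(0,0))$: by the definition of the $.$ operation $E_*\omega=(\theta_{\ell})_{\Psi,\,i}.g_{\textbf{u}}\mu$, while $\widetilde E_*\omega=((\theta_{\ell})_{\Psi,\,i}.\delta_{g_{\textbf{u}}(0,0)})*(\Psi g_{\textbf{u}}\mu)$. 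The previous step says $E$ and $\widetilde E$ differ by the fixed vector $-\Psi(g_{\textbf{u}}(0,0))$ up to a uniform error $\le b^{-(i+n+\ell)}$, so \eqref{eq:ero-minus} at level $i+n+\ell$, combined with the fact that a fixed translation changes $H(\cdot,\mathcal{L}_{i+n+\ell})$ by $O(1)$ (as $\mathcal{L}_m$ and any translate are $O(1)$-commensurable, cf.\ \eqref{eq:comparable}), gives $|H(E_*\omega,\mathcal{L}_{i+n+\ell})-H(\widetilde E_*\omega,\mathcal{L}_{i+n+\ell})|=O(1)$. Combining with the first reduction, dividing by $n$ and averaging over $\textbf{u}\in\Lambda^{\hat i}$ finishes the proof. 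The only genuinely delicate point is the second reduction — pushing the error down to scale $b^{-(i+n+\ell)}$ rather than merely $b^{-(i+\ell)}$ — which is exactly where the comparability $i\ge C'n$ (hence $\hat i\gg n$) and the transversality of Theorem~\ref{thm:estimate} are used; the remaining manipulations of the $.$-operation follow \cite{ren2021dichotomy} verbatim.
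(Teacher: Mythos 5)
Your proposal is correct and follows essentially the same route as the paper: the paper's proof reduces the lemma to the per-$\textbf{u}$ estimate comparing $H((\theta_{\ell})_{\Psi,\,i}.g_{\textbf{u}}\mu,\mathcal{L}_{i+n+\ell}\mid\mathcal{L}_{i+\ell})$ with $H(((\theta_{\ell})_{\Psi,\,i}.\delta_{g_{\textbf{u}}(0,0)})\ast(\Psi g_{\textbf{u}}\mu),\mathcal{L}_{i+n+\ell})$ via Lemma \ref{lem:constantR} and the argument of \cite[Lemma 7.7]{ren2021dichotomy}, which is exactly the linearization-plus-translation computation (support diameter $O(b^{-(i+\ell)})$ to drop the conditioning, transversality-based derivative bound \eqref{eqn:Gamma'diff} to push the error to scale $b^{-(i+n+\ell)}$, then \eqref{eq:ero-minus} and translation invariance) that you carry out explicitly. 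In effect you have supplied the details the paper delegates to the cited reference, and the arithmetic ($\hat i>i\ge C'n$, $n\ge K(\tau)$) is in order.
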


\begin{proof}
Given $\tau^{-1}\ll n\ll i$, utilizing Lemma \ref{lem:constantR} and employing similar techniques as in \cite[Lemma 7.7]{ren2021dichotomy}, we establish the following inequality for each $\textbf{u}\in\Lambda^{\hat{i}}$:
$$\left|\frac{1}{n} H(\eta. g_{\textbf{u}}\mu,\mathcal{L}_{i+n+\ell}|\mathcal{L}_{i+\ell}) - \frac{1}{n}H((\eta.\delta_{g_{\textbf{u}}(0,0)})\ast (\Psi g_{\textbf{u}}\mu),\mathcal{L}_{i+n+\ell})\right| <\tau,$$
where $\eta=  (\theta_{\ell})_{\Psi,\,i}$.
By combining this result with (\ref{def:Z_n}), our claim is thus established.
\end{proof}

\begin{lemma}\label{lem;llow}
           For every $\tau > 0$, $n \geq K(\tau)\ge1$, and $i \geq C'n$, the following inequality holds for all $\ell \geq 1$ and $\Psi \in \mathcal{X}$ such that $\theta_{\ell}(\mathcal{L}_i^{\mathcal{X}}(\Psi)) > 0$:
	$$Z_n(\Psi, i,\ell)\ge\alpha-\tau.$$
\end{lemma}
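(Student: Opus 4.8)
The plan is to discard the first factor in each convolution in \eqref{def:Z_n} via the trivial lower bound for convolution entropy, then to recognize the resulting average as an $O(1/n)$-approximation of $\int_\Sigma\frac1n H(\pi_{\textbf a}\mu,\mathcal{L}_n)\,d\nu^{\,\mathbb{Z}_+}(\textbf a)$, and to conclude from \eqref{eq:LCT}. First I would observe that the hypothesis $\theta_{\ell}(\mathcal{L}_i^{\mathcal X}(\Psi))>0$ forces $\Psi$ to have height $\hat\ell$: the cell $\mathcal{L}_i^{\mathcal X}(\Psi)$ must meet $\text{supp}(\theta_{\ell})=\{\pi_{\textbf j}g_{\textbf i}:\textbf i\in\Lambda^{\hat\ell}\}$, and all elements of a single cell of $\mathcal{L}_i^{\mathcal X}$ share the same height by construction. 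Hence $\Psi=\pi_{\textbf j'}g_{\textbf i'}$ for some $\textbf j'\in\Sigma$ and $\textbf i'\in\Lambda^{\hat\ell}$, so that $\Psi g_{\textbf u}=\pi_{\textbf j'}g_{\textbf i'\textbf u}$ for $\textbf u\in\Lambda^{\hat i}$, and the transformation equation \eqref{TransformA} gives $\Psi g_{\textbf u}\mu=\lambda^{\hat\ell+\hat i}\bigl(\pi_{(\textbf i'\textbf u)^*\textbf j'}\mu\bigr)+\pi_{\textbf j'}g_{\textbf i'\textbf u}(0,0)$, a rescaled--translated copy of $\pi_{(\textbf i'\textbf u)^*\textbf j'}\mu$ with scaling exponent $\hat\ell+\hat i$.

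Next, applying Lemma~\ref{lem:trivialC} to each summand in \eqref{def:Z_n} with $m=i+n+\ell$ and multiplying by $(i+n+\ell)/n$ yields
\[
Z_n(\Psi,i,\ell)\ \ge\ \frac1{b^{\hat i}}\sum_{\textbf u\in\Lambda^{\hat i}}\frac1n H\!\left(\Psi g_{\textbf u}\mu,\mathcal{L}_{i+n+\ell}\right)-O\!\left(\frac1n\right).
\]
By \eqref{eqn:n'} one has $\bigl[(\hat\ell+\hat i)\log_b\lambda\bigr]=-(\ell+i)+O(1)$, so \eqref{lem:affinetransform} together with the commensurability bound \eqref{eq:comparable} for $b$-adic partitions of nearby levels gives $\frac1n H(\Psi g_{\textbf u}\mu,\mathcal{L}_{i+n+\ell})=\frac1n H(\pi_{(\textbf i'\textbf u)^*\textbf j'}\mu,\mathcal{L}_n)+O(1/n)$. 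Writing $(\textbf i'\textbf u)^*=\textbf u^*(\textbf i')^*$ and using that $\textbf u\mapsto\textbf u^*$ is a bijection of $\Lambda^{\hat i}$, the average above equals $\frac1{b^{\hat i}}\sum_{\textbf v\in\Lambda^{\hat i}}\frac1n H(\pi_{\textbf v(\textbf i')^*\textbf j'}\mu,\mathcal{L}_n)+O(1/n)$.

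Finally, enlarging the constant $C'$ if necessary, $i\ge C'n$ guarantees $\gamma^{\hat i}\le b^{-n}$, since $\hat i\ge i/\log_b(1/\lambda)$ and $1/\gamma=b\lambda\in(1,b)$. Then Claim~\ref{cla:minus} and \eqref{eq:ero-minus} show that, uniformly in $\textbf v\in\Lambda^{\hat i}$, the quantity $\frac1n H(\pi_{\textbf v\textbf p}\mu,\mathcal{L}_n)$ changes by only $O(1/n)$ as the tail $\textbf p\in\Sigma$ varies; integrating the identity
\[
\frac1{b^{\hat i}}\sum_{\textbf v\in\Lambda^{\hat i}}\frac1n H(\pi_{\textbf v(\textbf i')^*\textbf j'}\mu,\mathcal{L}_n)=\frac1{b^{\hat i}}\sum_{\textbf v\in\Lambda^{\hat i}}\frac1n H(\pi_{\textbf v\textbf p}\mu,\mathcal{L}_n)+O(1/n)
\]
against $d\nu^{\,\mathbb{Z}_+}(\textbf p)$ and factoring $\nu^{\,\mathbb{Z}_+}$ through its first $\hat i$ coordinates converts the left-hand side into $\int_\Sigma\frac1n H(\pi_{\textbf a}\mu,\mathcal{L}_n)\,d\nu^{\,\mathbb{Z}_+}(\textbf a)+O(1/n)$. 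Since this integral tends to $\alpha$ by \eqref{eq:LCT}, choosing $n\ge K(\tau)$ large enough to absorb all the $O(1/n)$ terms and the convergence error into $\tau$ gives $Z_n(\Psi,i,\ell)\ge\alpha-\tau$. I do not expect a genuine obstacle here, as every step invokes machinery already in place; the only points requiring care are the bookkeeping that makes the scaling exponent $\hat\ell+\hat i$ match the target level $i+n+\ell$ up to $O(1)$ and the choice of $C'$ forcing $\gamma^{\hat i}\le b^{-n}$, both immediate from \eqref{eqn:n'} and $\gamma=1/(b\lambda)\in(1/b,1)$.
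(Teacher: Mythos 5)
Your proposal is correct and follows essentially the same route as the paper's proof: discard the discrete first factor of each convolution via Lemma \ref{lem:trivialC}, use the transformation equation \eqref{TransformA} to identify $\Psi g_{\textbf{u}}\mu$ (up to the rescaling $\lambda^{\hat\ell+\hat i}$ and a translation) with $\pi_{\textbf{u}^*\textbf{i}}\mu$, exploit tail-insensitivity (Claim \ref{cla:minus} with \eqref{eq:ero-minus}, valid since $i\gg n$) to replace the specific tail by an average over $\nu^{\,\mathbb{Z}_+}$, and conclude from the convergence of $\int\frac1n H(\pi_{\textbf{a}}\mu,\mathcal{L}_n)\,d\nu^{\,\mathbb{Z}_+}(\textbf{a})$ to $\alpha$. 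The only cosmetic difference is that you fix the representative $\Psi=\pi_{\textbf{j}'}g_{\textbf{i}'}$ explicitly rather than summing over $\Psi'\in\mathrm{supp}\,(\theta_\ell)_{\Psi,i}$ as the paper does, which is immaterial.
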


\begin{proof}
Let $n\ll i$ and $\eta = (\theta_{\ell})_{\Psi,\,i}$. Utilizing Lemma \ref{lem:trivialC} and the definition of $Z_n$, we obtain:
	\begin{equation}\label{ap1}
	Z_n(\Psi, i,\ell)\ge\sum_{\Psi'\in\text{supp}(\eta) }\eta(\{\Psi'\})\cdot\frac1{b^{\hat{i}}}\sum_{\textbf{u}\in\Lambda^{\hat{i}} }\frac{1}{n}H\left(\Psi'
	g_{\textbf{u}}\mu,\mathcal{L}_{i+n+\ell}\right)-O(\frac1n).
	\end{equation}
Considering $\Psi' \in \text{supp}(\eta)$ and  (\ref{TransformA}), we find $\Psi'g_{\textbf{u}}(x,y) = \lambda^{\hat{\ell}+\hat{i}}\pi_{\textbf{u}^*\textbf{i}}(x,y) + \text{Constant}$ for some $\textbf{i} \in \Sigma$. Combining this with (\ref{ap1}) and $n \ll i$, we obtain:
\begin{equation}\nonumber
\begin{split}
Z_n(\Psi, i,\ell) & \geq \sum_{\Psi' \in \text{supp}(\eta)} \eta(\{\Psi'\}) \cdot \frac{1}{b^{\hat{i}}} \sum_{\textbf{u} \in \Lambda^{\hat{i}}} \int \frac{1}{n} H\left(\pi_{\textbf{u}^*\textbf{a}}\mu,\mathcal{L}_{n}\right) d\nu^{\,\mathbb{Z}_+}(\textbf{a}) - O(\frac{1}{n}) \\
& = \int \frac{1}{n} H\left(\pi_{\textbf{a}}\mu,\mathcal{L}_{n}\right) d\nu^{\,\mathbb{Z}_+}(\textbf{a}) - O(\frac{1}{n}).
\end{split}
\end{equation}
Thus, our claim holds by Theorem \ref{thm:ledrappier}.
\end{proof}

\subsection{Proof of Theorem C}
The following presents the weak law of large numbers for the entropy of measures $\pi_{\textbf{i}}\mu$.
\begin{lemma}\label{lem:boundsinm}
	For any $\varepsilon>0$, $ n\ge N(\varepsilon), i\ge I(\varepsilon,n)$, 
	$$\inf_{\textbf{a}\in\Sigma}\mathbb{\nu}^{\,i}\left(\left\{\textbf{u}\in \varLambda^{i}\::\: \left|\frac{1}{n}
	H(\pi_{\textbf{u}\textbf{a}}\mu, \mathcal{L}_{n})-\alpha\right|<\varepsilon \right\}\right) >1-\varepsilon.$$
\end{lemma}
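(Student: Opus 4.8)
The plan is to write $f_n(\textbf{j}):=\frac1n H(\pi_{\textbf{j}}\mu,\mathcal{L}_n)$ for $\textbf{j}\in\Sigma$ and prove the claim in the equivalent form that, for every $\varepsilon>0$, there is $N(\varepsilon)$ so that for all $n\ge N(\varepsilon)$ and all $i\ge I(\varepsilon,n)$ one has $\nu^{\,i}\big(\{\textbf{u}\in\Lambda^i:|f_n(\textbf{u}\textbf{a})-\alpha|<\varepsilon\}\big)>1-\varepsilon$, with the bound uniform in $\textbf{a}\in\Sigma$. The two inputs I would use are: (i) by Theorem~\ref{thm:ledrappier}(2) and Proposition~\ref{prop:Young}, $f_n(\textbf{j})\to\alpha$ for $\nu^{\,\mathbb{Z}_+}$-a.e.\ $\textbf{j}$ (this is also what lies behind (\ref{eq:LCT})); and (ii) by Claim~\ref{cla:minus}, $\|\pi_{\textbf{u}\textbf{a}}-\pi_{\textbf{u}\textbf{b}}\|_\infty=O(\gamma^i)$ for $\textbf{u}\in\Lambda^i$ and $\textbf{a},\textbf{b}\in\Sigma$, since the difference $\Gamma_{\textbf{u}\textbf{a}}-\Gamma_{\textbf{u}\textbf{b}}$ depends only on the $x$-variable.

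First I would upgrade (i) to a statement valid at a single large $n$: the sets $A_N:=\bigcup_{n\ge N}\{\textbf{j}:|f_n(\textbf{j})-\alpha|\ge\varepsilon/4\}$ decrease to a $\nu^{\,\mathbb{Z}_+}$-null set, so continuity from above gives $N_1(\varepsilon)$ with $\nu^{\,\mathbb{Z}_+}(A_{N_1})<\varepsilon/4$, hence $\nu^{\,\mathbb{Z}_+}(G_n)>1-\varepsilon/4$ for all $n\ge N_1(\varepsilon)$, where $G_n:=\{\textbf{j}:|f_n(\textbf{j})-\alpha|<\varepsilon/4\}$. Next I would turn (ii) into tail-insensitivity of the entropy: choosing $i\ge I(n)$ large enough that $O(\gamma^i)\le b^{-n}$ and applying (\ref{eq:ero-minus}) with $\omega=\mu$ and the maps $\pi_{\textbf{u}\textbf{a}},\pi_{\textbf{u}\textbf{b}}$, one gets $|f_n(\textbf{u}\textbf{a})-f_n(\textbf{u}\textbf{b})|\le C/n$ for all $\textbf{u}\in\Lambda^i$ and all $\textbf{a},\textbf{b}\in\Sigma$. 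Then I would fix $n\ge N(\varepsilon):=\max\{N_1(\varepsilon),\lceil 4C/\varepsilon\rceil\}$ and $i\ge I(\varepsilon,n):=I(n)$, put $B_n:=\{\textbf{u}\in\Lambda^i:\textbf{u}\textbf{b}\notin G_n\text{ for }\nu^{\,\mathbb{Z}_+}\text{-a.e.\ }\textbf{b}\}$, and note that since concatenation $(\textbf{u},\textbf{b})\mapsto\textbf{u}\textbf{b}$ pushes $\nu^{\,i}\times\nu^{\,\mathbb{Z}_+}$ forward to $\nu^{\,\mathbb{Z}_+}$, Fubini gives $\nu^{\,i}(B_n)\le\nu^{\,\mathbb{Z}_+}(G_n^c)<\varepsilon/4$ (and in particular $B_n$ is measurable). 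For any $\textbf{a}\in\Sigma$ and any $\textbf{u}\in\Lambda^i\setminus B_n$ there is $\textbf{b}$ with $\textbf{u}\textbf{b}\in G_n$, so $|f_n(\textbf{u}\textbf{a})-\alpha|\le|f_n(\textbf{u}\textbf{a})-f_n(\textbf{u}\textbf{b})|+|f_n(\textbf{u}\textbf{b})-\alpha|<C/n+\varepsilon/4<\varepsilon$; thus the target set contains $\Lambda^i\setminus B_n$ and the bound $>1-\varepsilon$ holds uniformly in $\textbf{a}$, whence the infimum over $\textbf{a}$ is also $>1-\varepsilon$.

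I do not expect a genuine analytic obstacle, since all the substantive content is already available; the delicate point is purely the order of the quantifiers. One must choose $n$ first — large enough both for the continuity-from-above step and so that $C/n<\varepsilon/4$ — and only afterwards choose $i$ large relative to $n$, so that the scale matching $O(\gamma^i)\le b^{-n}$ required by (\ref{eq:ero-minus}) holds; this is precisely why the threshold $I(\varepsilon,n)$ must be allowed to depend on $n$. It is also worth recording that it is the pointwise a.e.\ convergence $f_n\to\alpha$, not merely the convergence of the integrals in (\ref{eq:LCT}), that makes the continuity-from-above step legitimate, and that the uniformity over $\textbf{a}$ in the conclusion is exactly what the tail-insensitivity estimate coming from Claim~\ref{cla:minus} delivers.
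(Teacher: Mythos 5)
Your argument is correct, and the quantifier order you stress (fix $n$ first, then take $i$ large depending on $n$) is exactly what the statement requires; but your mechanism for tail-insensitivity differs from the paper's. The paper's proof of this lemma does not use (\ref{eq:ero-minus}): it first invokes condition (H), Theorem \ref{thm:dicho} and \cite[Lemma 4.1]{ren2021dichotomy} to show that every one-dimensional projection $\pi_A(\pi_{\textbf{u}}\mu)$ is atomless, deduces that $\textbf{u}\mapsto\pi_{\textbf{u}}\mu(I)$ is continuous for each cell $I$ (atomlessness is what rules out mass on cell boundaries, the only obstruction to continuity of the entropy at a fixed level $n$), and then runs the compactness/a.e.-convergence scheme of \cite[Lemma 4.2]{ren2021dichotomy}. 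You replace the continuity step by the cruder but quantitative stability estimate: Claim \ref{cla:minus} gives $\|\pi_{\textbf{u}\textbf{a}}-\pi_{\textbf{u}\textbf{b}}\|_\infty=O(\gamma^i)$, and (\ref{eq:ero-minus}) turns this into a discrepancy $\le C/n$ between $\frac1nH(\pi_{\textbf{u}\textbf{a}}\mu,\mathcal{L}_n)$ and $\frac1nH(\pi_{\textbf{u}\textbf{b}}\mu,\mathcal{L}_n)$ once $O(\gamma^i)\le b^{-n}$; the additive $O(1)$ entropy loss is harmless after dividing by $n\ge 4C/\varepsilon$. The continuity-from-above step and the Fubini decomposition of $\nu^{\,\mathbb{Z}_+}$ over the first $i$ coordinates are unproblematic ($\nu^{\,i}$ lives on a finite set), granting the measurability of $\textbf{j}\mapsto H(\pi_{\textbf{j}}\mu,\mathcal{L}_n)$ that the paper also uses implicitly in (\ref{eq:LCT}). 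The trade-off: the paper's route produces the continuity of $\textbf{u}\mapsto\pi_{\textbf{u}}\mu(I)$, but consumes condition (H); yours is more elementary, gives an explicit $O(1/n)$ rate in the tail comparison, and proves the lemma without (H), needing only the general framework in which Claim \ref{cla:minus}, Theorem \ref{thm:ledrappier} and Proposition \ref{prop:Young} hold.
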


\begin{proof}
Given  linear space $A<\mathbb{R}^d$ with $\text{dim }A=1$, utilizing (\ref{def:H}), Theorem \ref{thm:dicho}, and \cite[Lemma 4.1]{ren2021dichotomy}, we find that the measure $\pi_A(\pi^{\phi}_{\textbf{u}}\mu^{\phi})=\pi^{\,\pi_A\phi}_{\textbf{u}}(\mu^{\,\pi_A\phi})$ has no atom for each $\textbf{u}\in\Sigma$. Hence, for any interval $I\subset \mathbb{R}^d$, the function $\textbf{u}\mapsto \pi^{\phi}_{\textbf{u}}\mu^{\phi}(I)$ is continuous. Consequently, we establish the claim using methods similar to \cite[Lemma 4.2]{ren2021dichotomy}.
\end{proof}

\begin{corollary}\label{cor:positive}
	There exist positive real numbers $r$ and $p_2$ such that for all integers $1\ll n\ll \ell$, the following holds:
	$$\mathbb{P}_{0\le i<C\ell}^{\theta_{\ell}} \left(\frac{1}{n}H\left((\theta_{\ell})_{\Psi,\,i}, \mathcal{L}_{i+n}^{\mathcal{X}}\right)>r\right)>p_2.$$
\end{corollary}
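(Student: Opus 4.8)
The plan is to extract positive \emph{conditional} entropy at the coarse scale from the gap $\alpha<\log_{\lambda^{-1}}b$, and then to transfer it to the small-scale components of $\theta_\ell$ via the decomposition in Lemma~\ref{lem:thetadec}. First, since $\mathcal{L}_{C\ell}^{\mathcal{X}}$ refines $\mathcal{L}_0^{\mathcal{X}}$, one has
\[
\frac1{C\ell}H\left(\theta_\ell,\mathcal{L}_{C\ell}^{\mathcal{X}}\mid\mathcal{L}_0^{\mathcal{X}}\right)=\frac1{C\ell}H\left(\theta_\ell,\mathcal{L}_{C\ell}^{\mathcal{X}}\right)-\frac1{C\ell}H\left(\theta_\ell,\mathcal{L}_0^{\mathcal{X}}\right),
\]
and Lemmas~\ref{lem:thetajLcn} and~\ref{lem:thetanL0} show that, as $\ell\to\infty$, the first term on the right tends to $\tfrac1C\log_{\lambda^{-1}}b$ while the second tends to $\tfrac1C\alpha$. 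Using the standing hypothesis $\alpha<\log_{\lambda^{-1}}b$ and setting $c_0:=\tfrac1{4C}\left(\log_{\lambda^{-1}}b-\alpha\right)>0$, I would conclude that for every $\ell$ beyond an absolute threshold,
\[
\frac1{C\ell}H\left(\theta_\ell,\mathcal{L}_{C\ell}^{\mathcal{X}}\mid\mathcal{L}_0^{\mathcal{X}}\right)\ge 2c_0 .
\]

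Next I would invoke inequality~(\ref{eqn:thetadec}) of Lemma~\ref{lem:thetadec} with the choice $\tau=c_0$: this produces a constant $C'(c_0)>0$ so that, provided $\ell>C'(c_0)\,n$,
\[
\mathbb{E}^{\theta_\ell}_{0\le i<C\ell}\left[\frac1n H\left((\theta_\ell)_{\Psi,\,i},\mathcal{L}_{i+n}^{\mathcal{X}}\right)\right]\ \ge\ \frac1{C\ell}H\left(\theta_\ell,\mathcal{L}_{C\ell}^{\mathcal{X}}\mid\mathcal{L}_0^{\mathcal{X}}\right)-c_0\ \ge\ c_0 .
\]
For the matching upper bound, Lemma~\ref{lem:boundspart} supplies a constant $A$ with each element of $\mathcal{L}_i^{\mathcal{X}}$ containing at most $A$ elements of $\mathcal{L}_{i+1}^{\mathcal{X}}$, hence at most $A^n$ elements of $\mathcal{L}_{i+n}^{\mathcal{X}}$; combined with (\ref{eq:upper-boun-entro}) this gives the uniform bound $\tfrac1nH\left((\theta_\ell)_{\Psi,\,i},\mathcal{L}_{i+n}^{\mathcal{X}}\right)\le\log_b A=:M_0$, independent of $\Psi$, $i$, $n$ and $\ell$.

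Finally, I would run a one-line Markov argument: viewing $(i,\Psi)$ with $i$ uniform on $\{0,1,\ldots,C\ell-1\}$ and $\Psi$ distributed according to $\theta_\ell$, the random variable $X:=\tfrac1nH\left((\theta_\ell)_{\Psi,\,i},\mathcal{L}_{i+n}^{\mathcal{X}}\right)$ satisfies $0\le X\le M_0$ and $\mathbb{E}X\ge c_0$, whence $c_0\le\mathbb{E}X\le\tfrac{c_0}{2}+M_0\,\mathbb{P}\left(X>c_0/2\right)$ and therefore $\mathbb{P}\left(X>c_0/2\right)\ge c_0/(2M_0)$. Taking $r:=c_0/2$ and $p_2:=c_0/(3M_0)$ — both depending only on $b$, $\lambda$ and $\alpha$ — completes the argument for all $1\ll n\ll\ell$. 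I do not expect a genuine obstacle here: the only thing requiring care is the order in which the parameters are chosen (fix $\tau=c_0$, which fixes $C'(c_0)$, then require $\ell$ to exceed both the absolute threshold coming from the two limit lemmas and $C'(c_0)\,n$), and the essential input — that the coarse-scale conditional entropy of $\theta_\ell$ remains bounded below by a positive constant — is exactly the content of the hypothesis $\alpha<\log_{\lambda^{-1}}b$ combined with Lemmas~\ref{lem:thetanL0} and~\ref{lem:thetajLcn}.
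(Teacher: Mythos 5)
Your argument is correct and is essentially the paper's own proof written out in full detail: both derive a positive lower bound on $\mathbb{E}^{\theta_\ell}_{0\le i<C\ell}\bigl[\tfrac1n H((\theta_\ell)_{\Psi,\,i},\mathcal{L}_{i+n}^{\mathcal{X}})\bigr]$ from (\ref{eqn:thetadec}) together with Lemmas \ref{lem:thetanL0} and \ref{lem:thetajLcn} and the hypothesis $\alpha<\log_{\lambda^{-1}}b$, then combine the uniform upper bound coming from Lemma \ref{lem:boundspart} with a reverse-Markov estimate to produce $r$ and $p_2$. No gap; your version merely makes explicit the choice of $\tau$, the threshold on $\ell$, and the final probabilistic step that the paper compresses into ``using probability theory.''
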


\begin{proof}
	Combining   (\ref{eqn:thetadec}),  Lemma \ref{lem:thetanL0}, Lemma \ref{lem:thetajLcn} and $1\ll n\ll \ell$,  we have:
\begin{equation}\label{eq:F.1.1}
	\mathbb{E}_{0\le i<C\ell}^{\theta_{\ell}} \left(\frac{1}{n}H\left((\theta_{\ell})_{\Psi,\,i}, \mathcal{L}_{i+n}^{\mathcal{X}}\right)\right)\ge\frac1{2C}(\log_{\lambda^{-1}}b-\alpha).
\end{equation}
Note that by Lemma \ref{lem:boundspart}, $\frac{1}{n}H\left((\theta_{\ell})_{\Psi,\,i}, \mathcal{L}_{i+n}^{\mathcal{X}}\right) \le \log A$. Combining this inequality with (\ref{eq:F.1.1}) and using probability theory, our claim holds for some constants $r, p_2 > 0$.
\end{proof}

\begin{proof}[Proof of Theorem C]
Let $K > 0$ be determined by Lemma \ref{lem:constantR}, and let $r, p_2 > 0$ be from Corollary \ref{cor:positive}. Choose $p = p(r)$ and $p_1 = p_1(r) > 0$ according to Lemma \ref{lem:entinceta}. Define $R = 4K/\lambda^2$ and $p_0 = p_1/2$. For any $\delta > 0$, select $\tau > 0$ such that $\tau \ll \min\{1,r, p, p_1, p_2, \delta\}$. Choose integers $I, n, \ell > 0$ satisfying $\tau^{-1} \ll n \ll I \ll \ell$.

Given $\tau^{-1} \ll \ell$ and the fact that (\ref{eq:Q.1}) holds for $\textbf{j}$, therefore 
\begin{equation}\nonumber
\frac{1}{C\ell} H(\pi_{\textbf{j}}\mu, \mathcal{L}_{(C+1)\ell}\mid\mathcal{L}_{\ell})\le \alpha+\tau.
\end{equation}	
Combining this with  (\ref{eqn:thetamudec}) and Lemma \ref{lem:dot2convolution}, we have
\begin{equation}\nonumber
\mathbb{E}^{\theta_{\ell}}_{0\le i<C\ell}\left(Z_n\left(\Psi, i,\ell\right)\right)\le \alpha+ O(\tau).
\end{equation}
 Combining this with  Lemma \ref{lem;llow} with Lemma \ref{lem:P}, we have
\begin{equation}\nonumber
\mathbb{P}^{\theta_{\ell}}_{0\le i<C\ell}\left(\,Z_n(\Psi, i,\ell)\le\alpha+O(\tau^{1/2})\,\right)\ge1-O(\tau^{1/2}).
\end{equation}
Combining this with  Corollary  \ref{cor:positive}, we can fix a $\Psi \in\text{supp}(\theta_{\ell})$ and $I\le i\le n$ such that
$$
  Z_n(\Psi, i,\ell)\le\alpha+O(\tau^{1/2})  \quad\quad\text{and}  \quad\quad    \frac{1}{n}H\left((\theta_{\ell})_{\Psi,\,i},\mathcal{L}_{i+n}^{\mathcal{X}}\right)>r.
$$

Let $\xi=(\theta_{\ell})_{\Psi,\,i}$ and $\Psi(x,y)=\lambda^{\hat{\ell}}\pi_{\textbf{i}}(x,y)+Constant$. Employing a similar argument as in Lemma \ref{lem;llow}, it follows that for each $\textbf{u}\in\Lambda^{\hat{i}}$,
\begin{equation}\label{eq:Q.3}
  \frac{1}{n}H\left((\xi.\delta_{g_{\textbf{u}}(0,0)})\ast (\Psi g_{\textbf{u}}\mu),\mathcal{L}_{i+n+\ell}\right)\ge\frac{1}{n}H(\pi_{\textbf{u}^*\textbf{i}}\mu,\mathcal{L}_{n})-O(\frac{1}{n}).
\end{equation}
Combining Lemma \ref{lem:boundsinm} with the definition of $\nu^{\,\hat{i}}$, we obtain
\begin{equation}\label{eq:Q.4}
\mathbb{\nu}^{\,\hat{i}}\left(\left\{\textbf{u}\in \varLambda^{\hat{i}}\::\:\left| \frac{1}{n}
	H(\pi_{\textbf{u}^*\textbf{i}}\mu, \mathcal{L}_{n})-\alpha\right|<\tau \right\}\right) >1-\tau.
\end{equation}
Combining this with \eqref{eq:Q.3}, we find
\begin{equation}\nonumber
\mathbb{\nu}^{\,\hat{i}}\left(\left\{\textbf{u}\in \varLambda^{\hat{i}}\::\:
 \frac{1}{n}H\left((\xi.\delta_{g_{\textbf{u}}(0,0)})\ast (\Psi g_{\textbf{u}}\mu),\mathcal{L}_{i+n+\ell}\right)
     \ge \alpha-O(\tau)\right\}
     \right) >1-O(\tau).
\end{equation}
Furthermore, by combining $Z_n(\Psi, i,\ell)\le\alpha+O(\tau^{1/2})$  and Lemma \ref{lem:P}, we have
\begin{equation}\label{ap12}
\mathbb{\nu}^{\,\hat{i}}\left(\left\{\textbf{u}\in \varLambda^{\hat{i}}\::\:
 \frac{1}{n}H\left((\xi.\delta_{g_{\textbf{u}}(0,0)})\ast (\Psi g_{\textbf{u}}\mu),\mathcal{L}_{i+n+\ell}\right)
     \le \alpha+O(\tau^{1/4})\right\}
     \right) >1-O(\tau^{1/4}).
\end{equation}

Using $\frac{1}{n}H\left(\xi,\mathcal{L}_{i+n}^{\mathcal{X}}\right)>r$ and Lemma \ref{lem:entinceta}, we have
    \begin{equation}\nonumber
      \mathbb{\nu}^{\,\hat{i}}\left(\left\{\textbf{u}\in \varLambda^{\hat{i}}\::\:
 \frac{1}{n}H\left(\xi.\delta_{g_{\textbf{u}}(0,0)},\mathcal{L}_{i+n+\ell}\right)
     \ge p_1\right\}
     \right) >p.
     \end{equation}
Combining this with (\ref{eq:Q.4}) and (\ref{ap12}), there exists $\textbf{u}\in \varLambda^{\hat{i}}$ such that
$$  \frac{1}{n}H\left((\xi.\delta_{g_{\textbf{u}}(0,0)})\ast (\Psi g_{\textbf{u}}\mu),\mathcal{L}_{i+n+\ell}\right)
     \le \alpha+O(\tau^{1/4}),$$
$$\left| \frac{1}{n}
	H(\pi_{\textbf{u}^*\textbf{i}}\mu, \mathcal{L}_{n})-\alpha\right|<\tau \quad\text{and}\quad \frac{1}{n}H\left(\xi.\delta_{g_{\textbf{u}}(0,0)},\mathcal{L}_{i+n+\ell}\right)
     \ge p_1. $$
This implies that  (C.1), (C.2), (C.3) holds for
 $$\eta:=\lambda^{-\hat{\ell}-\hat{i}}(\xi.\delta_{g_{\textbf{u}}(0,0)})\quad\text{and}\quad\textbf{z}:=\textbf{u}^*\textbf{i}.$$
Finally   Lemma \ref{lem:constantR} implies  $\text{diam}\left(\text{supp}(\eta)\right)\le R$.
\end{proof}

\bibliographystyle{plain}             

\begin{thebibliography}{1}

\bibitem{Anttila2024}
Roope {Anttila}, Bal{\'a}zs {B{\'a}r{\'a}ny}, and Antti {K{\"a}enm{\"a}ki}.
\newblock {Level sets of prevalent H{\"o}lder functions}.
\newblock {\em arXiv e-prints}, page arXiv:2402.08520, February 2024.

\bibitem{Baranski}
Krzysztof Bara\'{n}ski.
\newblock On the complexification of the {W}eierstrass non-differentiable
  function.
\newblock {\em Ann. Acad. Sci. Fenn. Math.}, 27(2):325--339, 2002.

\bibitem{baranski2011dimension}
Krzysztof Bara{\'n}ski.
\newblock On the dimension of graphs of weierstrass-type functions with rapidly
  growing frequencies.
\newblock {\em Nonlinearity}, 25(1):193, 2011.

\bibitem{baranski2014dimension}
Krzysztof Bara\'{n}ski, Bal\'{a}zs B\'{a}r\'{a}ny, and Julia Romanowska.
\newblock On the dimension of the graph of the classical {W}eierstrass
  function.
\newblock {\em Adv. Math.}, 265:32--59, 2014.

\bibitem{barany2019hausdorff}
Bal\'{a}zs B\'{a}r\'{a}ny, Michael Hochman, and Ariel Rapaport.
\newblock Hausdorff dimension of planar self-affine sets and measures.
\newblock {\em Invent. Math.}, 216(3):601--659, 2019.

\bibitem{Besicovitch}
Abram~S Besicovitch and Harold~Douglas Ursell.
\newblock Sets of fractional dimensions (v): On dimensional numbers of some
  continuous curves.
\newblock In {\em Classics On Fractals}, pages 170--179. CRC Press, 2019.

\bibitem{Bishorp}
Christopher~J Bishop and Yuval Peres.
\newblock {\em Fractals in probability and analysis}, volume 162.
\newblock Cambridge University Press, 2017.

\bibitem{falconer1985geometry}
Kenneth~J Falconer.
\newblock The geometry of fractal sets cambridge university press. 1985.
\newblock 1985.

\bibitem{Falconer}
Kenneth~J Falconer.
\newblock The hausdorff dimension of self-affine fractals.
\newblock In {\em Math. Proc. Cambridge Philos. Soc.}, volume 103, pages
  339--350. Cambridge University Press, 1988.

\bibitem{Fan2002}
Ai-Hua Fan, Ka-Sing Lau, and Hui Rao.
\newblock Relationships between different dimensions of a measure.
\newblock {\em Monatsh. Math.}, 135(3):191--201, 2002.

\bibitem{Feng2023}
De-Jun Feng.
\newblock Dimension of invariant measures for affine iterated function systems.
\newblock {\em Duke Math. J.}, 172(4):701--774, 2023.

\bibitem{Feng2009}
De-Jun Feng and Huyi Hu.
\newblock Dimension theory of iterated function systems.
\newblock {\em Comm. Pure Appl. Math.}, 62(11):1435--1500, 2009.

\bibitem{gao2022}
Rui Gao and Weixiao Shen.
\newblock Low complexity of optimizing measures over an expanding circle map.
\newblock {\em arXiv preprint arXiv:2206.05467}, 2022.

\bibitem{Hardy}
Godefroy~Harold Hardy.
\newblock Weierstrass’s non-differentiable function.
\newblock {\em Trans. Amer. Math. Soc.}, 17(3):301--325, 1916.

\bibitem{Heurteaux2003}
Yanick Heurteaux.
\newblock Weierstrass functions with random phases.
\newblock {\em Trans. Amer. Math. Soc.}, 355(8):3065--3077, 2003.

\bibitem{hochman2014self}
Michael Hochman.
\newblock On self-similar sets with overlaps and inverse theorems for entropy.
\newblock {\em Ann. of Math. (2)}, 180(2):773--822, 2014.

\bibitem{Hochman2015}
Michael {Hochman}.
\newblock {On self-similar sets with overlaps and inverse theorems for entropy
  in $\mathbb{R}^d$}.
\newblock {\em arXiv e-prints}, page arXiv:1503.09043, March 2015.

\bibitem{Hochman2022}
Michael Hochman and Ariel Rapaport.
\newblock Hausdorff dimension of planar self-affine sets and measures with
  overlaps.
\newblock {\em J. Eur. Math. Soc. (JEMS)}, 24(7):2361--2441, 2022.

\bibitem{HuLau1993}
Tian~You Hu and Ka-Sing Lau.
\newblock Fractal dimensions and singularities of the {W}eierstrass type
  functions.
\newblock {\em Trans. Amer. Math. Soc.}, 335(2):649--665, 1993.

\bibitem{Hunt1998}
Brian~R. Hunt.
\newblock The {H}ausdorff dimension of graphs of {W}eierstrass functions.
\newblock {\em Proc. Amer. Math. Soc.}, 126(3):791--800, 1998.

\bibitem{Jiang2023}
Lai Jiang and Huo-Jun Ruan.
\newblock Box dimension of generalized affine fractal interpolation functions.
\newblock {\em J. Fractal Geom.}, 10(3-4):279--302, 2023.

\bibitem{Kaplan1984}
James~L. Kaplan, John Mallet-Paret, and James~A. Yorke.
\newblock The {L}yapunov dimension of a nowhere differentiable attracting
  torus.
\newblock {\em Ergodic Theory Dynam. Systems}, 4(2):261--281, 1984.

\bibitem{ledrappier1985metric}
F.~Ledrappier and L.-S. Young.
\newblock The metric entropy of diffeomorphisms. {I}. {C}haracterization of
  measures satisfying {P}esin's entropy formula.
\newblock {\em Ann. of Math. (2)}, 122(3):509--539, 1985.

\bibitem{ledrappier1992dimension}
Fran\c{c}ois Ledrappier.
\newblock On the dimension of some graphs.
\newblock In {\em Symbolic dynamics and its applications ({N}ew {H}aven, {CT},
  1991)}, volume 135 of {\em Contemp. Math.}, pages 285--293. Amer. Math. Soc.,
  Providence, RI, 1992.

\bibitem{Mandelbrot1977}
Benoit~B. Mandelbrot.
\newblock {\em Fractals: form, chance, and dimension}.
\newblock W. H. Freeman and Co., San Francisco, CA, revised edition, 1977.
\newblock Translated from the French.

\bibitem{Mauldin1986}
R.~Daniel Mauldin and S.~C. Williams.
\newblock On the {H}ausdorff dimension of some graphs.
\newblock {\em Trans. Amer. Math. Soc.}, 298(2):793--803, 1986.

\bibitem{Solomyak1}
Yuval Peres and Boris Solomyak.
\newblock Absolute continuity of {B}ernoulli convolutions, a simple proof.
\newblock {\em Math. Res. Lett.}, 3(2):231--239, 1996.

\bibitem{Przytycki1989}
F.~Przytycki and M.~Urba\'{n}ski.
\newblock On the {H}ausdorff dimension of some fractal sets.
\newblock {\em Studia Math.}, 93(2):155--186, 1989.

\bibitem{XieQian}
Min Qian, Jian-Sheng Xie, and Shu Zhu.
\newblock {\em Smooth ergodic theory for endomorphisms}.
\newblock Springer, 2009.

\bibitem{Rapaport2023}
Ariel {Rapaport}.
\newblock {On self-affine measures associated to strongly irreducible and
  proximal systems}.
\newblock {\em arXiv e-prints}, page arXiv:2212.07215, December 2022.

\bibitem{Rapaport}
Ariel {Rapaport}.
\newblock {Dimension of diagonal self-affine sets and measures via
  non-conformal partitions}.
\newblock {\em arXiv e-prints}, page arXiv:2309.03985, September 2023.

\bibitem{Ren2023}
Haojie Ren.
\newblock Box dimension of the graphs of the generalized {W}eierstrass-type
  functions.
\newblock {\em Discrete Contin. Dyn. Syst.}, 43(10):3830--3838, 2023.

\bibitem{ren2021dichotomy}
Haojie Ren and Weixiao Shen.
\newblock A dichotomy for the {W}eierstrass-type functions.
\newblock {\em Invent. Math.}, 226(3):1057--1100, 2021.

\bibitem{Fraydoun1988}
Fraydoun Rezakhanlou.
\newblock The packing measure of the graphs and level sets of certain
  continuous functions.
\newblock {\em Math. Proc. Cambridge Philos. Soc.}, 104(2):347--360, 1988.

\bibitem{Romanowska}
Julia Romanowska.
\newblock Measure and {H}ausdorff dimension of randomized {W}eierstrass-type
  functions.
\newblock {\em Nonlinearity}, 27(4):787--801, 2014.

\bibitem{Schied2024}
Alexander Schied and Zhenyuan Zhang.
\newblock Weierstrass bridges.
\newblock {\em Trans. Amer. Math. Soc.}, 2024.

\bibitem{shen2018hausdorff}
Weixiao Shen.
\newblock Hausdorff dimension of the graphs of the classical {W}eierstrass
  functions.
\newblock {\em Math. Z.}, 289(1-2):223--266, 2018.

\bibitem{Shu2010}
Lin Shu.
\newblock Dimension theory for invariant measures of endomorphisms.
\newblock {\em Comm. Math. Phys.}, 298(1):65--99, 2010.

\bibitem{Solomyak2}
Boris Solomyak.
\newblock On the random series {$\sum\pm\lambda^n$} (an {E}rd\"{o}s problem).
\newblock {\em Ann. of Math. (2)}, 142(3):611--625, 1995.

\bibitem{tsujii2001fat}
Masato Tsujii.
\newblock Fat solenoidal attractors.
\newblock {\em Nonlinearity}, 14(5):1011--1027, 2001.

\bibitem{VarjuAC}
P\'{e}ter~P. Varj\'{u}.
\newblock Absolute continuity of {B}ernoulli convolutions for algebraic
  parameters.
\newblock {\em J. Amer. Math. Soc.}, 32(2):351--397, 2019.

\bibitem{VarjuD}
P\'{e}ter~P. Varj\'{u}.
\newblock On the dimension of {B}ernoulli convolutions for all transcendental
  parameters.
\newblock {\em Ann. of Math. (2)}, 189(3):1001--1011, 2019.

\bibitem{Young}
Lai~Sang Young.
\newblock Dimension, entropy and {L}yapunov exponents.
\newblock {\em Ergodic Theory Dynam. Systems}, 2(1):109--124, 1982.

\bibitem{ZhangNa}
Na~Zhang.
\newblock The hausdorff dimension of the graphs of fractal functions. (in
  chinese).
\newblock {\em Master Thesis.}, 2018.






\end{thebibliography}

\end{document}